\def\blfootnote{\xdef\@thefnmark{}\@footnotetext}
\newtheorem{thm}{Theorem}[section]
\newtheorem{cor}[thm]{Corollary}
\newtheorem{lem}[thm]{Lemma}
\newtheorem{conv}[thm]{Convention}
\newtheorem{fact}[thm]{Fact}
\theoremstyle{definition}
\newtheorem{defn}[thm]{Definition}
\theoremstyle{remark}
\newtheorem{rem}[thm]{Remark}
\newtheorem{const}[thm]{Construction}
\newfont{\eufm}{eufm10}
\renewcommand{\phi}{\varphi}
\newcommand{\R}{\mathbb R}
\newcommand{\N}{\mathbb N}
\newcommand{\Z}{\mathbb Z}
\renewcommand{\H}{\mathbb{H}}
\newcommand{\MCG}{\operatorname{MCG}}
\newcommand{\NS}{\mathcal{NS}}
\newcommand{\genus}{\operatorname{genus}}
\newcommand{\B}{\mathcal{B}}
\newcommand{\GL}{\mathcal{GL}}
\newcommand{\EL}{\mathcal{EL}}
\newcommand{\ELW}{\mathcal{ELW}}
\newcommand{\ML}{\mathcal{ML}}
\newcommand{\toCH}{\substack{CH \\ \to}}
\newcommand{\toH}{\substack{H \\ \to}}
\begin{document}

\title{Geometry of the graphs of nonseparating curves: covers and boundaries}
\author{Alexander J. Rasmussen}

\date{\today}
\maketitle

\begin{abstract}
We investigate the geometry of the graphs of nonseparating curves for surfaces of finite positive genus with potentially infinitely many punctures. This graph has infinite diameter and is known to be Gromov hyperbolic by work of the author. We study finite covers between such surfaces and show that lifts of nonseparating curves to the nonseparating curve graph of the cover span quasiconvex subgraphs which are infinite diameter and not coarsely equal to the nonseparating curve graph of the cover. In the finite type case, we also reprove a theorem of Hamenst\"{a}dt identifying the Gromov boundary with the space of ending laminations on full genus subsurfaces. We introduce several tools based around the analysis of bicorn curves and laminations which may be of independent interest for studying the geometry of nonseparating curve graphs of infinite type surfaces and their boundaries.
\end{abstract}


\section{Introduction}

In this paper we use the technique of \textit{bicorn curves} to investigate the geometry of the graphs of nonseparating curves. Let $S$ be an orientable surface of finite positive genus with either finitely many or infinitely many punctures. The \textit{nonseparating curve graph} $\NS(S)$ is an infinite diameter graph (see \cite{av} and \cite{dfv}) acted on by the mapping class group $\MCG(S)$ (the group of homeomorphisms of $S$ considered up to isotopy). Its vertices are the isotopy classes of nonseparating simple closed curves on $S$. Two vertices are joined by an edge when the isotopy classes have disjoint representatives (when $\genus(S)>1$) or when the isotopy classes have representatives that intersect at most once (when $\genus(S)=1$). We proved in \cite{nonsepbicorns} that $\NS(S)$ is $\delta$-hyperbolic, where the constant $\delta$ is independent of the particular surface $S$. This extended results of Hamenst\"{a}dt (\cite{nonsepmulti}) and Masur-Schleimer (\cite{disk}) on hyperbolicity (with hyperbolicity constant depending on the surface) in the case that $S$ has \textit{finitely many} punctures.

We extend the methods of \cite{nonsepbicorns} in this paper to prove the following theorems:

\begin{thm}
\label{coverthm}
Let $k\in \N$. There exists $F(k)>0$ with the following property. Let $\tilde{S}\to S$ be a degree $k$ cover where $S$ and $\tilde{S}$ are orientable and have finite positive genus and either finitely many or infinitely many punctures. Then the full subgraph of $\NS(\tilde{S})$ spanned by lifts of nonseparating curves on $S$ is $F(k)$-quasiconvex. Moreover, this subgraph is infinite diameter and not coarsely equal to $\NS(\tilde{S})$ itself.
\end{thm}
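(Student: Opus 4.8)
The plan is to identify the subgraph $X$ in the statement with the coarse image of a quasi-isometric embedding. Write $p\colon\tilde S\to S$ for the covering map, and let $\mathcal L\colon\NS(S)\to\NS(\tilde S)$ be the preimage relation sending a nonseparating curve $c$ on $S$ to the set of nonseparating components of $p^{-1}(c)$. A transfer argument shows $p^{-1}(c)$ always has a nonseparating component, so this set is nonempty, and its elements are pairwise disjoint, so it has diameter $\le 1$ in $\NS(\tilde S)$. By definition the coarse image of $\mathcal L$ is exactly $X$. Once $\mathcal L$ is known to be a quasi-isometric embedding with constants depending only on $k$, the two "moreover" assertions for $X$ — quasiconvexity and infinite diameter — follow formally: the quasi-isometric image of a geodesic hyperbolic space inside a hyperbolic space is quasiconvex, and $\NS(S)$ has infinite diameter by \cite{av}, \cite{dfv}. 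So the whole theorem reduces to showing $\mathcal L$ is such a quasi-isometric embedding.

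For the "upper'' direction I would produce, given vertices $\tilde a,\tilde b\in X$ projecting to curves $a,b$ on $S$, a path in $X$ from $\tilde a$ to $\tilde b$ of length comparable to $d_{\NS(S)}(a,b)$. Take a bicorn path $a=c_0,c_1,\dots,c_m=b$ in $\NS(S)$; by \cite{nonsepbicorns} this is a $(\lambda_0,\epsilon_0)$-quasigeodesic with universal constants. Choose a nonseparating lift $\tilde c_i\in\mathcal L(c_i)$ for each $i$, with $\tilde c_0=\tilde a$ and $\tilde c_m=\tilde b$ (permitted since all lifts of a fixed curve are mutually disjoint). Consecutive $\tilde c_i,\tilde c_{i+1}$ are disjoint when $\genus(S)>1$, and intersect at most $k$ times when $\genus(S)=1$; in the latter case one joins them inside $X$ by a path of length bounded in terms of $k$, using a nonseparating curve on $S$ disjoint from $c_i\cup c_{i+1}$ together with a lift. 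Concatenating gives a path $\tilde P\subseteq X$ from $\tilde a$ to $\tilde b$ of length $\asymp_k m\asymp d_{\NS(S)}(a,b)$. Since $\NS(\tilde S)$ is $\delta$-hyperbolic with $\delta$ universal (\cite{nonsepbicorns}), to conclude both the quasi-isometric embedding and the quasiconvexity of $X$ it is enough to know that $\tilde P$ is an unparameterized quasigeodesic of $\NS(\tilde S)$ with constants depending only on $k$; using that the bicorn path downstairs is a quasigeodesic, this reduces to the single inequality
\[
 d_{\NS(S)}\bigl(p(\tilde x),p(\tilde y)\bigr)\ \le\ K(k)\,d_{\NS(\tilde S)}(\tilde x,\tilde y)+K(k)
\]
for all lifts $\tilde x,\tilde y$ of curves on $S$.

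Establishing this inequality is the heart of the matter and is exactly where the analysis of bicorn curves and their limiting laminations enters. Starting from the bicorn path between $\tilde x$ and $\tilde y$ in $\NS(\tilde S)$ (a uniform quasigeodesic by \cite{nonsepbicorns}), one wants to build a path of length $\lesssim_k N := d_{\NS(\tilde S)}(\tilde x,\tilde y)$ from $p(\tilde x)$ to $p(\tilde y)$ in $\NS(S)$. The naive idea of pushing curves forward by $p$ fails: although $\tilde x$ and $\tilde y$ are embedded, the covering map folds, and the $p$-images of the intermediate curves — indeed of the individual bicorns built from $\tilde x$ and $\tilde y$ — need not be simple, with an a priori unbounded number of self-intersections. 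Controlling this folding is the main obstacle. The approach I would take is to work with bicorn curves directly: an arc of the embedded lift $\tilde x$ maps to an embedded arc of $p(\tilde x)$, and likewise for $\tilde y$, so a bicorn on $\tilde S$ made from $\tilde x$ and $\tilde y$ projects to a "singular bicorn'' on $S$ — a concatenation of an arc of $p(\tilde x)$ and an arc of $p(\tilde y)$ — whose failure to be simple is accounted for only by crossings in $p(\tilde x)\cap p(\tilde y)$. One then has to show that such singular bicorns can be resolved, uniformly in $k$, into genuine nonseparating bicorn curves on $S$ without moving far in $\NS(S)$, for instance by passing to a limiting bicorn lamination on $S$ and extracting honest nonseparating curves carried by it, or by a surgery argument organized by the covering. (The degree of a component of $p^{-1}(c)$ over $c$ may exceed $1$; this forces some bookkeeping, absorbed into the dependence on $k$.) Throughout, one must check that no estimate depends on the topology of $S$ beyond the degree $k$, so that the argument also covers the infinite-type case; this is precisely what the universal hyperbolicity of $\NS(\tilde S)$ and the universal quality of bicorn paths from \cite{nonsepbicorns} are designed to allow.

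For the final assertion that $X$ is not coarsely equal to $\NS(\tilde S)$, I would argue by a limit-set computation. As $X$ is quasiconvex in the hyperbolic space $\NS(\tilde S)$, a coarsely dense $X$ would have limit set all of $\partial\NS(\tilde S)$. On the other hand, any sequence in $X$ tending to a point of $\partial\NS(\tilde S)$ is a sequence of lifts $\tilde c_n\subseteq p^{-1}(c_n)$, and (in the finite-type case, after passing to a subsequence with $c_n\to\mu$ in $\PML(S)$) the $\tilde c_n$ can accumulate only on sublaminations of $p^{-1}(\mu)$; hence the limit set of $X$ lies in the set of ending laminations of $\tilde S$ carried by preimages of laminations from $S$. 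Combining this with the identification of $\partial\NS(\tilde S)$ as the space of ending laminations on full genus subsurfaces — reproved in this paper, following Hamenst\"{a}dt — and a dimension count, one sees that this set is a proper closed subset of $\partial\NS(\tilde S)$: a generic filling ending lamination on $\tilde S$ is carried by no preimage. Thus $X$ cannot be coarsely dense. In the infinite-type case one reduces to this situation by projecting to a finite-type subsurface of $\tilde S$ carrying all of its genus, where the same obstruction applies. I expect the genus-one low-complexity cases, where $\NS$ degenerates to a Farey-type graph, and the precise reduction in the infinite-type case to require separate but routine attention.
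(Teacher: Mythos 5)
Your plan hinges on reducing the theorem to the claim that the lift map $\mathcal L$ is a quasi\-isometric embedding, which you then try to establish via the inequality $d_{\NS(S)}(p(\tilde x),p(\tilde y))\le K(k)\,d_{\NS(\tilde S)}(\tilde x,\tilde y)+K(k)$. That inequality is false, and the paper gives an explicit counterexample (its Figure 14): take a degree-two cover $\tilde S\to S$ in which a finite-type witness $V\subset S$ for $\NS(S)$ lifts to a subsurface $\tilde V\subset\tilde S$ that is \emph{not} a witness for $\NS(\tilde S)$. A pseudo-Anosov $\phi$ supported on $V$ produces curves $\phi^n(c)$ with $d_{\NS(S)}(\phi^n(c),\phi^m(c))\to\infty$, while their lifts are all contained in $\tilde V$ and so have diameter two in $\NS(\tilde S)$. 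Thus $\mathcal L$ is \emph{not} a quasi-isometric embedding in general, and neither quasiconvexity nor infinite diameter can ``follow formally'' the way you claim. In particular, the ``heart of the matter'' you isolate --- resolving singular bicorns on $S$ coming from projected bicorns on $\tilde S$ --- is aimed at proving a false statement.

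The correct route (the paper's Theorem \ref{quasiconvex}) goes the other way and never needs a lower bound on $d_{\NS(S)}$. Lift a bicorn path $c_0=a,\dots,c_n=b$ from $\NS(S)$; then $i(\tilde c_i,\tilde c_{i+1})\le 2k$, so the lifts form a coarse path, and --- the decisive point --- each $\tilde c_i$ is a nonseparating $2l$-corn with $l\le k$ between the multicurves $\Phi(a)$ and $\Phi(b)$. The whole burden is then carried by Theorem \ref{2kcornsclose} and Corollary \ref{2kcornshausclose}, which say nonseparating $2k$-corns lie uniformly Hausdorff-close to the geodesic $[\Phi(a),\Phi(b)]$. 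This pins the lifted coarse path to $[\tilde a,\tilde b]$ and gives quasiconvexity directly, with constants depending only on $k$, while saying nothing at all about a reverse distance inequality. Infinite diameter (Lemma \ref{infdiam}) then requires its own argument --- one must \emph{produce} a finite-type witness $V\subset S$ whose full preimage is connected and is a witness for $\NS(\tilde S)$, and run a lifted pseudo-Anosov there; your counterexample above shows not every witness works, so some exhaustion argument is genuinely needed. Your sketch for ``not coarsely equal'' is in the right spirit (a limit-set obstruction), but the paper pins down the missing boundary point concretely: choose $\tilde L\in\ELW(\tilde V)$ that is not the preimage of any lamination on $S$, and use Corollary \ref{nonsepbicorn} plus Corollary \ref{2kcornshausclose} to show no sequence of lifts can converge to $F(\tilde L)$. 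A ``dimension count'' is neither made precise nor needed; also note the paper's boundary description (Theorem \ref{boundarythm}) is only proved for finite-type surfaces, so invoking it wholesale in the cover theorem --- which also covers infinite type --- would require care you haven't supplied.
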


The second result was first proved by Hamenst\"{a}dt using different methods:

\begin{restatable}[Hamenst\"{a}dt, \cite{nonsepmulti} Corollary 3.10]{thm}{boundarythm}
\label{boundarythm}
Let $S$ be a \textit{finite type} orientable surface of positive genus. Then the Gromov boundary of $\NS(S)$ is $\MCG(S)$-equivariantly homeomorphic to the space of minimal laminations filling full genus subsurfaces of $S$, equipped with the coarse Hausdorff topology.
\end{restatable}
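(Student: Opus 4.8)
The plan is to reprove the theorem by running the strategy of Klarreich's identification of the Gromov boundary of the curve complex with the space of ending laminations, but entirely through the bicorn machinery of \cite{nonsepbicorns}, so as to keep track of the extra structure visible to $\NS(S)$: nonseparating-ness, and full genus subsurfaces. Write $\mathcal{E}(S)$ for the set of minimal laminations filling full genus subsurfaces of $S$, equipped with the coarse Hausdorff topology, so that $L_n\to L$ exactly when every Hausdorff-convergent subsequence of $(L_n)$ has limit containing $L$ as a sublamination. Since $S$ is of finite type the space of geodesic laminations on $S$ is compact, and since $\NS(S)$ is $\delta$-hyperbolic by \cite{nonsepbicorns}, $\partial\NS(S)$ is the set of equivalence classes of quasigeodesic rays based at a fixed nonseparating curve $a_0$. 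The elementary fact that forces ``full genus'' to appear is the following: if $(x_i)$ is a sequence of nonseparating curves with $d_{\NS(S)}(a_0,x_i)\to\infty$ and $\Lambda$ is a Hausdorff subsequential limit of $(x_i)$, then no nonseparating curve of $S$ can be disjoint from $\Lambda$ (in genus one: no nonseparating curve can intersect $\Lambda$ at most once), since otherwise such a curve $d$ would be disjoint from all sufficiently late $x_i$ and hence $d_{\NS(S)}(a_0,x_i)\le d_{\NS(S)}(a_0,d)+1$; a short Euler characteristic computation shows that a full genus subsurface automatically has connected boundary (so that essential curves in its complement are separating in $S$), which is why a subsurface meeting every nonseparating curve is precisely a full genus one.

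Next I would build the candidate homeomorphism $\Phi\colon\mathcal{E}(S)\to\partial\NS(S)$. Given a nonseparating curve $a$ and $L\in\mathcal{E}(S)$, form the set $\B(a,L)$ of nonseparating bicorn curves between $a$ and $L$ — curves obtained by concatenating a subarc of $a$ with a compact leaf-segment of $L$ having the same two endpoints on $a\cap L$ — exactly as in \cite{nonsepbicorns} with the second curve replaced by the lamination. The input from \cite{nonsepbicorns}, re-proved in the limit of ever-longer leaf-segments, is that $\B(a,L)$ is a uniform unparametrized quasigeodesic ray; moreover, since $L$ is minimal, the bicorns built from longer and longer leaf-segments converge to $L$ in the coarse Hausdorff topology, as their Hausdorff limits consist of $L$ together with a subarc of the fixed curve $a$, whose only minimal sublamination is $L$. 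Define $\Phi(L)$ to be the endpoint of this ray. That $\Phi(L)$ is independent of $a$ follows from the bicorn estimates of \cite{nonsepbicorns}: $\B(a,L)$ and $\B(a',L)$ stay within finite Hausdorff distance of one another, being controlled by the finite set $\B(a,a')$, and so have the same endpoint.

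The heart of the proof, giving both surjectivity of $\Phi$ and a formula for its inverse, is the \emph{Key Lemma}: for every quasigeodesic ray $(x_i)$ in $\NS(S)$ based at $a_0$, the vertices $x_i$ converge in the coarse Hausdorff topology to a unique minimal lamination $L\in\mathcal{E}(S)$, and $(x_i)$ fellow-travels the canonical ray $\B(a_0,L)$, so that the boundary point represented by $(x_i)$ is $\Phi(L)$. Granting this, $\Phi$ is surjective, and injectivity of $\Phi$ reduces to showing that $\B(a_0,L)$ and $\B(a_0,L')$ diverge whenever $L\ne L'$ in $\mathcal{E}(S)$; so $\Phi$ is a bijection and we put $\Psi=\Phi^{-1}$. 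Continuity must then be verified in both directions, since neither space is compact. For $\Phi$: if $L_n\to L$ in the coarse Hausdorff topology, one can choose the approximating bicorns compatibly so that $\B(a_0,L_n)$ agrees with $\B(a_0,L)$ on initial segments of length tending to infinity, giving $\Phi(L_n)\to\Phi(L)$. For $\Phi^{-1}$: if $\xi_n\to\xi$ in $\partial\NS(S)$, representatives fellow-travel on segments of length tending to infinity, so the vertices far out along them Hausdorff converge compatibly, whence the associated laminations satisfy $\Psi(\xi_n)\to\Psi(\xi)$ in the coarse Hausdorff topology. Every construction above is natural under homeomorphisms of $S$, so $\Phi$ is $\MCG(S)$-equivariant, which would complete the proof.

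The main obstacle is the Key Lemma, and it is here — not in the mere divergence $d_{\NS(S)}(a_0,x_i)\to\infty$ — that the quasigeodesic hypothesis must genuinely be used. The easy half is that any Hausdorff subsequential limit $\Lambda$ meets every nonseparating curve of $S$, forcing the subsurface it fills to have full genus. The difficulty is to rule out $\Lambda$ ``splitting'': a priori $\Lambda$ could be a disjoint union of minimal laminations filling several subsurfaces, none of full genus, or it could fill a proper full genus subsurface while $(x_i)$ fails to coarse-Hausdorff converge to a single minimal piece. I would handle this by feeding the explicit bicorn quasigeodesics $\B(x_0,x_i)$, which fellow-travel the ray, into the hyperbolicity of $\NS(S)$: their vertices are concatenations of an $x_0$-arc with an $x_i$-arc, and one controls the degeneration of these arcs as $i\to\infty$, using that any piece split off by $\Lambda$ and filling a subsurface $W'$ that is not full genus would leave a nonseparating curve in $S\setminus W'$ disjoint from a cofinal subsequence of the $x_i$, contradicting divergence; running this for every potential piece forces the limiting data to be one minimal full genus lamination, and the same bicorn-end analysis gives the fellow-traveling of $(x_i)$ with $\B(a_0,L)$. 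A secondary obstacle is the injectivity statement for $L\ne L'$ filling the \emph{same} full genus subsurface $W$: here one uses that distinct minimal laminations filling $W$ necessarily intersect — $i(L,L')>0$, since $L$ fills $W$ and so $L'$ cannot be isotoped off it — together with a bicorn estimate showing that bicorns built from long leaf-segments of $L$ and of $L'$ have intersection number tending to infinity, hence are eventually far apart in $\NS(S)$, so that $\B(a_0,L)$ and $\B(a_0,L')$ diverge and $\Phi(L)\ne\Phi(L')$.
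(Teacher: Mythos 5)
Your overall architecture matches the paper's: build infinite bicorn paths $\B(a,\vec{l},p)$ for $L\in\ELW(S)$ to get a map $F\colon\ELW(S)\to\partial\NS(S)$, prove it is well-defined, continuous, and injective, and then carry the weight of the proof in surjectivity via Hausdorff-limit analysis of a divergent sequence. This is precisely what Sections~\ref{infpathsec}--\ref{surjsec} do. One aside is wrong: ``a short Euler characteristic computation shows that a full genus subsurface automatically has connected boundary'' is false --- remove two disjoint twice-punctured disks from a positive-genus surface with at least four punctures and you have a full genus essential subsurface with two boundary components. The correct statement, which does what you want, is that the complement of a full genus essential subsurface is a disjoint union of punctured disks, and no essential curve in a punctured disk is nonseparating; this is why $V$ is a witness iff $\genus(V)=\genus(S)$.

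The genuine gap is in the Key Lemma. Given a Hausdorff subsequential limit $\Lambda$ of the $x_i$, you argue that a minimal piece of $\Lambda$ filling a non-full-genus $W'$ ``would leave a nonseparating curve in $S\setminus W'$ disjoint from a cofinal subsequence of the $x_i$.'' That implication fails: $x_i$ Hausdorff-approximates \emph{all} of $\Lambda$, including isolated leaves and other minimal pieces that may cross your candidate curve $e$, so $e$ need not be disjoint from any $x_i$. The correct move (the paper's Case~(i) in Section~\ref{surjsec}) is to use the homology-of-lamination-bicorns result (Corollary~\ref{nonsepbicorn}) to produce a \emph{nonseparating bicorn} between $x_i$ and the minimal piece, contained in $W'$ and hence disjoint from $e$; this bicorn lies in $\B(x_i,x_j)$ for $j\gg i$ and so is uniformly close to $[x_i,x_j]$, which bounds the Gromov product $(x_i\cdot x_j)_{a_0}$. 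More seriously, your sketch does not address two further configurations of $\Lambda$ that must be ruled out: (a) a minimal piece filling a subsurface $W'$ containing \emph{no} nonseparating curve of $S$ (here the homologically-produced bicorn is separating, and one needs the nontrivially-separating-bicorn lemma of Section~\ref{nontrivsepsec} to pass to a nonseparating bicorn in the bounded-diameter subgraph $\NS^\perp(W')$); and (b) every minimal piece of $\Lambda$ living inside a trivially separating punctured disk, so $\Lambda$ still meets every nonseparating curve via its isolated leaves and no ``outside'' curve $e$ exists at all --- the paper handles this (Section~\ref{caseiii}) by collapsing the punctured disks, extracting a nonseparating cycle in the resulting graph, lifting it to a $2k$-corn, and invoking the $2k$-corn quasiconvexity theorem (Corollary~\ref{2kcornshausclose}). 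These configurations are what force the technical apparatus of Sections~\ref{2kcornsec}--\ref{lambicornsec}, and they cannot be absorbed into ``run this for every potential piece.''
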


Here $V$ is \textit{full genus} if $\genus(V)=\genus(S)$. Our methods also produce an explicit family of coarse paths converging to any given point on the Gromov boundary. These coarse paths are known as ``infinite bicorn paths.'' Our methods are strong enough to prove:

\begin{cor}
Let $S$ be as in Theorem \ref{boundarythm} and let $\{c_n\}_{n=1}^\infty$ be a sequence of vertices of $\NS(S)$. Then $\{c_n\}$ converges to a point on the boundary $\partial \NS(S)$ if and only if $c_n$ coarse Hausdorff converges to a minimal lamination $L$ filling a full genus subsurface of $S$. In this case the point of $\partial \NS(S)$ represented by $\{c_n\}$ is identified with $L$ via the homeomorphism described in Theorem \ref{boundarythm}.
\end{cor}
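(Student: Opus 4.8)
The plan is to deduce the Corollary from Theorem~\ref{boundarythm} together with the machinery of infinite bicorn paths alluded to in the introduction. Recall that a sequence $\{c_n\}$ of vertices of a Gromov hyperbolic space converges to a point on the Gromov boundary precisely when it is a \emph{quasigeodesic-like} sequence, or more concretely when the Gromov products $(c_m \mid c_n)_{c_0}$ tend to infinity as $m,n \to \infty$. So the content of the Corollary is a concrete translation: converging to $\partial \NS(S)$ should be characterized by the purely surface-topological condition of coarse Hausdorff convergence to a minimal lamination filling a full genus subsurface, and moreover the resulting boundary point must be the one attached to $L$ under the homeomorphism of Theorem~\ref{boundarythm}.

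First I would address the forward direction. Suppose $\{c_n\}$ converges to a boundary point $\xi \in \partial \NS(S)$. Under the homeomorphism of Theorem~\ref{boundarythm}, $\xi$ corresponds to a minimal lamination $L$ filling a full genus subsurface $V$. I want to show $c_n \toCH L$. The natural route is to use that the homeomorphism of Theorem~\ref{boundarythm} is presumably realized concretely: a boundary point is represented by (equivalence classes of) sequences of curves, and the identification with $L$ is set up so that any sequence representing $\xi$ coarse Hausdorff accumulates only onto laminations whose minimal filling sublamination is $L$. Since $\{c_n\}$ represents $\xi$, this applies to $\{c_n\}$ itself. More carefully, if some subsequence $c_{n_j}$ coarse Hausdorff converged to a lamination $L'$ with minimal filling component $L' \ne L$, then by the characterization of the homeomorphism (together with the fact that a convergent sequence and any of its subsequences define the same boundary point) we would get $L' = L$, a contradiction; hence the only coarse Hausdorff limit of $\{c_n\}$ is a lamination containing $L$ as its filling core, i.e.\ $c_n \toCH L$.

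For the reverse direction, suppose $c_n$ coarse Hausdorff converges to a minimal lamination $L$ filling a full genus subsurface. The key tool is the explicit family of infinite bicorn paths converging to any given boundary point: by the results developed in this paper, $L$ is the coarse Hausdorff limit of such an infinite bicorn path $\gamma$, which is a quasigeodesic ray in $\NS(S)$ and hence converges to a well-defined $\xi \in \partial \NS(S)$, and under Theorem~\ref{boundarythm} this $\xi$ is identified with $L$. It then remains to show $\{c_n\}$ converges to the same $\xi$. Here I would invoke a nearest-point-projection or Gromov-product estimate: because $\NS(S)$ is $\delta$-hyperbolic and $c_n$ coarse Hausdorff converges to $L$, the curves $c_n$ must eventually be ``pointed toward'' $L$, meaning their distance to the ray $\gamma$ stays bounded relative to their distance to $c_0$ — equivalently $(c_n \mid \gamma(t))_{c_0} \to \infty$ as $n, t \to \infty$. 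The mechanism is that a curve $c_n$ whose geodesic to $c_0$ veered away from $L$ would have a definite-length ``good'' subsegment disjoint from deep parts of the bicorn path, forcing $c_n$ to contain an arc that remains a bounded-complexity distance from $L$ in a way incompatible with coarse Hausdorff convergence; I expect this to be packaged using the unicorn/bicorn surgery estimates and the behavior of subsurface projections to $V$ already established. Combining these, $\{c_n\}$ and $\gamma$ are asymptotic, so $\{c_n\} \to \xi$, and by construction $\xi$ is identified with $L$.

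The main obstacle will be the reverse direction, and specifically making rigorous the step that coarse Hausdorff convergence of $\{c_n\}$ to $L$ forces $\{c_n\}$ to converge \emph{in $\NS(S)$} to the boundary point carried by $L$. Coarse Hausdorff convergence is a weak condition — it only controls accumulation of the curves as subsets of the surface, and a priori permits the $c_n$ to wander wildly in $\NS(S)$ while their supports happen to fill up $L$. The heart of the argument is therefore a \emph{rigidity} statement: that this weak geometric condition, in the presence of $\delta$-hyperbolicity and the fine structure of bicorn paths, actually pins down the large-scale direction of $\{c_n\}$. I would expect to handle this by comparing $\{c_n\}$ against the infinite bicorn paths directly, showing that any curve very close (in coarse Hausdorff sense) to $L$ lies uniformly close in $\NS(S)$ to a far-out vertex of the canonical bicorn path to $L$, using that bicorns between $c_n$ and an initial curve $c_0$ interpolate a path whose terminal behavior is governed by how $c_n$ sits relative to $L$. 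This is precisely the kind of statement the ``infinite bicorn paths'' technology is built to deliver, so I anticipate the proof is short modulo citing those constructions, but verifying that the coarse Hausdorff hypothesis is exactly strong enough — no more, no less — is where the care is needed.
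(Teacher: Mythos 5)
Your overall strategy matches the paper's: the forward direction pins down the coarse Hausdorff limit by studying Hausdorff-accumulation of subsequences, and the reverse direction compares $\{c_n\}$ against the infinite bicorn path to $L$ via a thin-triangle/Gromov-product estimate. But there are two places where your sketch is vaguer than it needs to be, and one of them risks circularity.

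In the forward direction you appeal to ``the characterization of the homeomorphism'' to argue that any coarse Hausdorff accumulation of a representing sequence must contain $L$. That is dangerously close to assuming the corollary. What actually does the work in the paper is the full-strength surjectivity machinery: take an arbitrary Hausdorff-convergent subsequence $c_{n_i}\toH M$; the case analysis in Sections~\ref{casei}--\ref{caseiii} (the contradiction arguments for the three cases where no filling sublamination exists) shows $M$ must contain some $L_0\in\ELW(S)$; Lemma~\ref{surjectivity} then forces $F(L_0)=[\{c_{n_i}\}]=[\{c_n\}]=F(L)$, and \emph{injectivity} of $F$ (Lemma~\ref{injectivity}) gives $L_0=L$. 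You should name these inputs rather than attribute it to the homeomorphism abstractly. In the reverse direction, the precise mechanism you are missing is simpler and more direct than your ``veering away'' contrapositive: coarse Hausdorff convergence $c_n\toCH L$ means that for every $\epsilon>0$, once $n$ is large enough $c_n$ contains a subsegment that is $\epsilon$-Hausdorff close to the arc of the leaf $l$ of radius $1/\epsilon$ centered at $p$. Consequently each $b_m\in\B(a,\vec{l},p)$ is a genuine element of $\B(a,c_n)$ once $n$ is large, and since $\B(a,c_n)$ is $E(1)$-Hausdorff close to a geodesic $[a,c_n]$, a thin-triangle argument yields the lower bound $(c_n\cdot b_m)_a\geq d(a,b_m)-2E(1)-3\delta$, from which $\liminf_{n,n'}(c_n\cdot c_{n'})_a=\infty$ follows and the identification with $F(L)$ drops out of Lemma~\ref{surjectivity}. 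Your ``definite-length good subsegment disjoint from deep parts of the bicorn path'' heuristic is not what the paper uses and would be noticeably harder to make rigorous, so pivot to the direct argument.
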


In order to prove these results we develop new tools for studying the geometry of $\NS(S)$ using bicorns. The main tool is Corollary \ref{2kcornshausclose} which states that nonseparating $2k$-corns are uniformly Hausdorff close to geodesics. Here is a slightly less general version of the statement:

\begin{thm}
There exists a function $E:\N \to \R$ with the following property. Let $S$ be an orientable surface with $0<\genus(S)<\infty$. Let $a$ and $b$ be vertices of $\NS(S)$. Then the subgraph of $\NS(S)$ spanned by nonseparating $2l$-corns between $a$ and $b$ with $l\leq k$ is $E(k)$-Hausdorff close to any geodesic $[a,b]$. The function $E$ grows at most quadratically.
\end{thm}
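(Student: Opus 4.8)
The plan is to verify, for the family of subgraphs $\mathcal{L}_k(a,b)\subseteq\NS(S)$ spanned by $a$, $b$, and all nonseparating $2l$-corns between $a$ and $b$ with $l\le k$, the hypotheses of a thin-triangles criterion for a family of connected subgraphs --- the criterion used to establish hyperbolicity of $\NS(S)$ in \cite{nonsepbicorns}, in the style of Bowditch and of Przytycki--Sisto --- with every constant controlled polynomially in $k$. Since $\NS(S)$ is already known to be $\delta$-hyperbolic with $\delta$ independent of $S$, such a criterion yields, in one stroke, \emph{both} inclusions $\mathcal{L}_k(a,b)\subseteq N_{E(k)}([a,b])$ and $[a,b]\subseteq N_{E(k)}(\mathcal{L}_k(a,b))$ for every geodesic $[a,b]$. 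Concretely I would establish: (C) $\mathcal{L}_k(a,b)$ is coarsely connected, with jumps bounded in terms of $k$, and contains $a$ and $b$; (B) if $d(a,b)\le 1$ then $\mathcal{L}_k(a,b)$ has uniformly bounded diameter --- essentially trivial, since then $i(a,b)\le 1$; and (T) for every vertex $e$, $\mathcal{L}_k(a,b)\subseteq N_{C(k)}\!\left(\mathcal{L}_k(a,e)\cup\mathcal{L}_k(e,b)\right)$. The Hausdorff constant $E(k)$ then depends only on the constants appearing in (C), (B), (T) and on $\delta$, and a careful accounting --- in which the innermost-arc estimates and handle repairs discussed below scale with $k$ --- produces an $E$ of at most quadratic growth.

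For (C) I would use the nesting structure on corns already exploited in \cite{nonsepbicorns}: writing a $2l$-corn as a cyclic concatenation $\alpha_1\beta_1\cdots\alpha_l\beta_l$ with $\alpha_i\subseteq a$ and $\beta_i\subseteq b$, one may shrink, enlarge, or delete a single constituent arc at a time --- choosing innermost sub-arcs, so that each move changes the curve by a bounded amount in $\NS(S)$ and stays within the $\le 2k$-corns --- and interleave these moves with the handle-borrowing modification of \cite{nonsepbicorns} for repairing a corn that happens to be separating, thereby connecting any nonseparating $2l$-corn to both $a$ and $b$ inside $\mathcal{L}_k(a,b)$. As a byproduct, the construction of nonseparating corn paths from \cite{nonsepbicorns} --- whose curves are nonseparating $\le 2k_0$-corns for a universal $k_0$ and which are uniformly Hausdorff close to geodesics --- already gives $[a,b]\subseteq N_{E(k)}(\mathcal{L}_k(a,b))$ whenever $k\ge k_0$.

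The substance of the argument, and where I expect the main obstacle, is (T). Given a nonseparating $2l$-corn $c=\alpha_1\beta_1\cdots\alpha_l\beta_l$ of $(a,b)$ with $l\le k$ and a third vertex $e$, the template is the bicorn case of \cite{nonsepbicorns} (after Przytycki--Sisto): take an innermost sub-arc $\eta$ of $e$ cut off by $c$, so that the interior of $\eta$ is disjoint from $c$; if both endpoints of $\eta$ lie on $a$-arcs of $c$, close $\eta$ up with a sub-arc of $a$ to produce a nonseparating $\le 2k$-corn between $a$ and $e$ meeting $c$ at most once, symmetrically if both endpoints lie on $b$-arcs, and handle the ``mixed'' configurations by instead spanning three consecutive crossings of $e$ with $c$. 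The difficulty, invisible when $l=1$, is that the sub-arc of $a$ (or the several $a$-arcs of $c$) used to close $\eta$ up may cross the $b$-arcs $\beta_1,\dots,\beta_l$ of $c$ many times, so that the resulting curve is neither close to $c$ in $\NS(S)$ nor a clean corn of $(a,e)$; making innermost and outermost choices that simultaneously control these crossings across all $\le l$ arcs of $c$, and reassembling the output as a genuine $\le 2k$-corn of $(a,e)$ or of $(e,b)$, is the technical heart and the source of the $k$-dependence in $C(k)$. Superimposed on this is the nonseparating requirement: a separating splice is repaired by borrowing a handle, costing $O(1)$ further arcs --- this is precisely where $0<\genus(S)<\infty$ enters and why honest bicorns do not suffice --- so one works throughout with corn-complexity $2k+O(1)$ and renormalizes at the end. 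Feeding the resulting bounds, which are polynomial in $k$, into the criterion's estimate for the Hausdorff distance gives the claimed at-most-quadratic $E(k)$.
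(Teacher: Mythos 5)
Your plan is genuinely different from the paper's. The paper does not re-run the Bowditch guessing-geodesics machinery for the $2k$-corn family. Instead it proves directly, by induction on $k$, that every nonseparating $2k$-corn between $A$ and $B$ is within distance $D(k)$ of a nonseparating \emph{bicorn} between $A$ and $B$, with $D(k)=k^2+2k$; it then combines this with the already-established fact that nonseparating bicorns are $E_1$-Hausdorff close to geodesics for a universal $E_1$, giving $E(k)=E_1+\max_{l\le k}D(l)$. The quadratic growth comes purely from the induction: moving from a nonseparating $2k$-corn to a nonseparating $2l$-corn with $l<k$ costs $O(k)$ in $\NS$-distance (by the intersection-number bound), and these costs telescope to $O(k^2)$. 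No third vertex $e$ ever appears; the reduction is internal to the pair $(A,B)$, so the thin-triangle axiom (T) is never needed.

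Your approach has a genuine gap in condition (T), which you correctly flag as the hard part but do not carry out, and the obstacle is more serious than ``controlling crossings.'' After taking an innermost arc $\eta$ of $e$ cut off by a $2l$-corn $c=\alpha_1\beta_1\cdots\alpha_l\beta_l$, if the two endpoints of $\eta$ land on \emph{different} $a$-arcs $\alpha_i$ and $\alpha_j$ of $c$ with $i\neq j$, the subarc $\alpha'$ of $a$ joining them is not a subarc of $c$ (it passes outside the corn), so the bicorn $\eta\cup\alpha'$ of $(a,e)$ may cross the $b$-arcs of $c$ arbitrarily many times and is in general far from $c$ in $\NS(S)$. In the bicorn case $l=1$ this configuration cannot arise: there is a single $a$-arc, the closing subarc is a subarc of $c$ itself, and the resulting bicorn of $(a,e)$ is disjoint from $c$. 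This is exactly where the $2k$-corn structure breaks the Przytycki--Sisto template. If you instead close $\eta$ with the path \emph{along} $c$ from $\alpha_i$ to $\alpha_j$, you obtain a curve with $a$-, $b$-, and $e$-arcs, which lies in neither $\mathcal{L}_k(a,e)$ nor $\mathcal{L}_k(e,b)$. Either way, the ``renormalize at the end'' step is not bookkeeping: it is an honest corn-complexity-reduction lemma, of precisely the kind the paper proves --- at which point the Bowditch scaffolding is superfluous and the paper's two-step argument (reduce to bicorns, quote the bicorn result) is the shorter route.
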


\noindent In addition, we prove results about bicorns between nonseparating curves and laminations. We also analyze what happens if two nonseparating curves have a bicorn which is separating but does not bound a punctured disk. We hope that these tools will be of independent interest.

\subsection{Big mapping class groups}

The graphs $\NS(S)$ are particularly interesting when $S$ is infinite type, since one may attempt to use them to study the big mapping class group $\MCG(S)$. In addition to the \textit{loop graphs} $L(S)$ for surfaces $S$ containing an isolated puncture, the graphs $\NS(S)$ are among the few infinite diameter hyperbolic graphs associated to infinite type surfaces which are currently known. The graphs $\NS(S)$ are also useful since the surface $S$ is not required to have an isolated puncture for the definition.

However, a main difficulty for carrying out this program is that the Gromov boundary $\partial \NS(S)$ is not currently understood when $S$ is infinite type. In the case that $S$ is an infinite type surface with an isolated puncture, the Gromov boundary of the loop graph of $S$ has been described by Bavard-Walker in \cite{boundary} and \cite{simultaneous}. This description has proven very useful in understanding the corresponding big mapping class groups $\MCG(S)$: see \cite{boundary}, \cite{simultaneous}, and \cite{wwpd}.

Some of the arguments used in this paper to characterize the boundary of $\NS(S)$ when $S$ is \textit{finite type} would generalize easily to the case when $S$ is \textit{infinite type}. One would like to describe $\partial \NS(S)$ as a space of geodesic laminations when $S$ is infinite type. The main obstacle to generalizing the characterization in this paper of $\partial \NS(S)$ in the finite type case is that there is no ``structure theorem'' for geodesic laminations on infinite type surfaces. Whereas on a finite type surface every geodesic lamination is a finite union of minimal sublaminations and isolated leaves, in the infinite type case there is no such general description. We hope that after understanding better the structure of geodesic laminations on infinite type surfaces, the arguments in this paper might be of use in characterizing $\partial \NS(S)$ when $S$ is infinite type.

\subsection{Organization}

In Section \ref{background} we recall necessary background on hyperbolic metric spaces and surfaces.

In Sections \ref{2kcornsec} through \ref{lambicornsec} we develop tools that will be used in the proofs of our two main theorems. Namely in Section \ref{2kcornsec} we analyze $2k$-corns and show that nonseparating $2k$-corns are close to nonseparating bicorns. These in turn are Hausdorff close to geodesics. In Section \ref{nontrivsepsec} we analyze bicorns which are separating but do not bound punctured disks, showing that they are disjoint from nonseparating bicorns. In Section \ref{lambicornsec} we analyze bicorns between the leaves of a lamination and a nonseparating curve and their properties in homology.

In Section \ref{coversec} we prove Theorem \ref{coverthm}.

In Sections \ref{infpathsec} through \ref{surjsec} we prove Theorem \ref{boundarythm}.

The proofs of Theorems \ref{coverthm} (in Section \ref{coversec}) and \ref{boundarythm} (in Sections \ref{infpathsec} through \ref{surjsec}) may be read independently of each other. However, both proofs rely on the tools developed in Sections \ref{2kcornsec} through \ref{lambicornsec}.

\bigskip

\noindent \textbf{Acknowledgements.} I would like to thank Yair Minsky and Javier Aramayona for valuable conversations related to this work. I was partially supported by NSF Grant DMS-1610827.

\section{Background}
\label{background}

\subsection{Hyperbolic metric spaces}

Let $X$ be a geodesic metric space. We will always use the notation $d(\cdot, \cdot)$ to denote the distance function implicitly associated to $X$ . In this paper we will consider graphs as geodesic metric spaces by identifying their edges with the unit interval $[0,1]$. Given $x,y\in X$ we will frequently denote by $[x,y]$ a geodesic in $X$ from $x$ to $y$, even though it may not be uniquely determined by its endpoints. If $A\subset X$ then we denote by $B_r(A)$ the closed $r$-neighborhood of $A$ in $X$. If $x\in X$ then we will denote by $d(x,A)$ the distance from $x$ to $A$ (i.e. $d(x,A)=\inf_{a\in A} d(x,a)$).

\begin{defn}
We say that $X$ is \textit{$\delta$-hyperbolic} if given $x,y,z\in X$, we have $[x,z]\subset B_\delta([x,y]\cup [y,z])$.
\end{defn}

Given $a,x,y\in X$ we denote by $(x\cdot y)_a$ the \textit{Gromov product} defined by \[(x\cdot y)_a=\frac{1}{2}(d(a,x)+d(a,y) -d(x,y)).\]

\begin{lem}
Let $X$ be $\delta$-hyperbolic. Then we have that \[|(x\cdot y)_a-d(a,[x,y])|\leq 2\delta.\]
\end{lem}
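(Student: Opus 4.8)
The plan is to prove the two inequalities $(x\cdot y)_a - d(a,[x,y]) \leq 2\delta$ and $d(a,[x,y]) - (x\cdot y)_a \leq 2\delta$ separately, using $\delta$-hyperbolicity (in the thin-triangles-like form stated in the excerpt) and the triangle inequality. First I would pick a geodesic $[x,y]$ and a point $p$ on it realizing (or nearly realizing) $d(a,[x,y])$; unwinding the definition of the Gromov product, the goal is essentially to compare the ``middle'' of the geodesic $[x,y]$ — the point at distance $(x\cdot y)_a$ from $x$ along it — with the closest-point projection of $a$.

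For the bound $d(a,[x,y]) \leq (x\cdot y)_a + 2\delta$: by the hyperbolicity hypothesis, $[x,y] \subset B_\delta([x,a]\cup[a,y])$. Take the point $m$ on $[x,y]$ with $d(x,m) = (x\cdot y)_a$ (note $0 \le (x\cdot y)_a \le d(x,y)$ by the triangle inequality, so $m$ exists). Then $m$ is within $\delta$ of some point $w$ on $[x,a]\cup[a,y]$; a short computation with the triangle inequality, using that $d(x,m)=(x\cdot y)_a = \tfrac12(d(a,x)+d(a,y)-d(x,y))$ and the corresponding expression $d(y,m) = d(x,y) - (x\cdot y)_a = (x\cdot y)_a$-type identity, shows that whichever segment $w$ lies on, $d(a,w) \le \delta$, hence $d(a,[x,y]) \le d(a,m) \le d(a,w) + d(w,m) \le 2\delta$. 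I should double-check the two cases ($w \in [x,a]$ versus $w\in[a,y]$) carefully; this is the routine part.

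For the reverse bound $(x\cdot y)_a \leq d(a,[x,y]) + 2\delta$: let $p \in [x,y]$ with $d(a,p) = d(a,[x,y])$. Since $p$ lies on a geodesic from $x$ to $y$, we have $d(x,p)+d(p,y) = d(x,y)$. Then
\[
(x\cdot y)_a = \tfrac12\bigl(d(a,x)+d(a,y)-d(x,y)\bigr) \le \tfrac12\bigl((d(a,p)+d(p,x)) + (d(a,p)+d(p,y)) - d(x,y)\bigr) = d(a,p),
\]
which already gives $(x\cdot y)_a \le d(a,[x,y])$ — in fact with no $\delta$ needed for this direction. So the $2\delta$ slack is only required for one inequality, and the lemma follows.

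The main obstacle — really the only place any care is needed — is the case analysis in the first inequality: verifying that the point $w\in[x,a]\cup[a,y]$ close to the midpoint $m$ of $[x,y]$ is in fact close to $a$ itself, which uses that $d(x,m)$ and $d(y,m)$ are each comparable to the distances from $x$ and $y$ to $a$ up to the Gromov product. One has to track the triangle inequalities in both subcases and confirm the constant comes out to $2\delta$ (not worse); this is a standard lemma, and the estimate is tight up to the multiplicative constant on $\delta$.
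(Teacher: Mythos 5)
The paper itself does not prove this lemma (it is cited as a standard fact and used implicitly), so I can only assess the correctness of your argument on its own terms.

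Your second inequality $(x\cdot y)_a \le d(a,[x,y])$ is correct as written, with no $\delta$ needed. However, your argument for the first inequality contains a genuine error that propagates. You choose $m\in[x,y]$ at distance $(x\cdot y)_a$ from $x$ and assert in parentheses that $0\le(x\cdot y)_a\le d(x,y)$ ``by the triangle inequality.'' This is false: the Gromov product $(x\cdot y)_a=\tfrac12\bigl(d(a,x)+d(a,y)-d(x,y)\bigr)$ can greatly exceed $d(x,y)$ (for instance $x=y\ne a$ gives $(x\cdot y)_a=d(a,x)>0=d(x,y)$; or take $d(a,x)=d(a,y)=100$, $d(x,y)=1$, giving $(x\cdot y)_a=99.5$). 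The triangle inequality only yields $|d(a,x)-d(a,y)|\le d(x,y)$, i.e.\ $(x\cdot y)_a\ge 0$, not an upper bound. So your point $m$ may not exist on the geodesic. Related symptoms: you claim $d(a,w)\le\delta$ and conclude $d(a,[x,y])\le 2\delta$, a bound that cannot be right since it is independent of $(x\cdot y)_a$ entirely.

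The fix is to choose $m$ differently: let $m\in[x,y]$ be at distance $(y\cdot a)_x=\tfrac12\bigl(d(x,y)+d(x,a)-d(y,a)\bigr)$ from $x$, which genuinely satisfies $0\le(y\cdot a)_x\le d(x,y)$ by the triangle inequality, so $m$ exists; equivalently $d(y,m)=(x\cdot a)_y$. By slimness, $m$ is within $\delta$ of some $w\in[x,a]\cup[a,y]$. If $w\in[x,a]$, then $d(x,w)\ge d(x,m)-\delta=(y\cdot a)_x-\delta$, so $d(a,w)=d(x,a)-d(x,w)\le d(x,a)-(y\cdot a)_x+\delta=(x\cdot y)_a+\delta$ (using the identity $(x\cdot y)_a+(y\cdot a)_x=d(a,x)$). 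The case $w\in[a,y]$ is symmetric. Either way $d(a,m)\le d(a,w)+\delta\le(x\cdot y)_a+2\delta$, which gives the desired bound. Your overall strategy is the standard one; the error is in which Gromov product parametrizes the distinguished point on $[x,y]$.
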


\begin{lem}
Let $X$ be $\delta$-hyperbolic. Then, given $x,y,z,a \in X$ we have \[(x\cdot z)_a \geq \min \{ (x\cdot y)_a, (y\cdot z)_a\} -5\delta.\]
\end{lem}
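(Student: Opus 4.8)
The plan is to reduce everything to the preceding lemma, which says that the Gromov product $(x\cdot y)_a$ differs from $d(a,[x,y])$ by at most $2\delta$, and then to feed the thin-triangles definition of $\delta$-hyperbolicity into this comparison. So the whole argument is: translate Gromov products into distances to geodesics, apply thinness of the triangle with vertices $x,y,z$, translate back.

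First I would fix a geodesic $[x,z]$ and choose a point $p \in [x,z]$ nearest to $a$, so that $d(a,p)=d(a,[x,z])$; such a $p$ exists because a geodesic segment is compact (alternatively one takes a near-minimizer and lets the error go to $0$). By the definition of $\delta$-hyperbolicity, $p \in B_\delta([x,y]\cup[y,z])$, so there is a point $q$ on $[x,y]\cup[y,z]$ with $d(p,q)\leq \delta$. If $q\in[x,y]$ then
\[
d(a,[x,y]) \leq d(a,q) \leq d(a,p)+\delta = d(a,[x,z])+\delta,
\]
and if instead $q\in[y,z]$ the same computation gives $d(a,[y,z]) \leq d(a,[x,z])+\delta$. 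In either case
\[
\min\{\, d(a,[x,y]),\, d(a,[y,z]) \,\} \leq d(a,[x,z])+\delta .
\]

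Next I would invoke the preceding lemma three times: it gives $(x\cdot z)_a \geq d(a,[x,z]) - 2\delta$, together with $d(a,[x,y]) \geq (x\cdot y)_a - 2\delta$ and $d(a,[y,z]) \geq (y\cdot z)_a - 2\delta$. Chaining these with the displayed inequality yields
\[
(x\cdot z)_a \;\geq\; d(a,[x,z]) - 2\delta \;\geq\; \min\{\, d(a,[x,y]),\, d(a,[y,z]) \,\} - 3\delta \;\geq\; \min\{\,(x\cdot y)_a,\, (y\cdot z)_a\,\} - 5\delta,
\]
which is exactly the claim.

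There is no substantive obstacle here; the only points needing a word of care are the existence of the nearest-point $p$ on $[x,z]$ (compactness of geodesic segments, or an $\varepsilon$-argument) and the harmless case split according to whether the $\delta$-close point $q$ lands on $[x,y]$ or on $[y,z]$. One could instead derive the inequality purely from the four-point reformulation of hyperbolicity, but routing through the two lemmas already stated keeps the constants explicit and the proof a few lines long.
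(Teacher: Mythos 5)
Your proof is correct, and it is the natural argument given the way the paper sets up its background: the paper states this inequality without proof as a standard fact, immediately after the lemma comparing $(x\cdot y)_a$ with $d(a,[x,y])$, so routing through that comparison plus the thin-triangles definition is precisely what the author intends the reader to have in mind. The arithmetic checks out ($-2\delta$ from converting $(x\cdot z)_a$, $-\delta$ from the thin-triangle step, $-2\delta$ from converting back on the $[x,y]$ or $[y,z]$ side, totaling $-5\delta$), and your side remarks on the existence of a nearest point and the harmless case split are accurate. Nothing to change.
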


These lemmas will frequently be used implicitly without mentioning them by name.

We now recall the notion of the \textit{Gromov boundary} of the hyperbolic metric space $X$. For this, fix a point $a\in X$. We say that the sequence $\{x_n\}_{n=1}^\infty\subset X$ \textit{converges at infinity} if $\lim_{m,n\to \infty} (x_n\cdot x_m)_a= \infty$. This notion is independent of the base point $a$. There is a natural equivalence relation on such sequences defined as follows. If $\{x_n\}_{n=1}^\infty$ and $\{y_n\}_{n=1}^\infty$ are sequences which converge at infinity then we consider them to be equivalent if $\lim_{m,n\to \infty} (x_n \cdot y_m)_a=\infty$. The Gromov boundary $\partial X$ consists of the equivalence classes of sequences which converge at infinity. If $\{x_n\}_{n=1}^\infty$ converges at infinity then we will denote by $[\{x_n\}]$ the equivalence class of $\{x_n\}$ in $\partial X$. The set $\partial X$ carries a natural topology defined as follows. Given $\xi,\eta\in \partial X$ we denote by $(\xi\cdot \eta)_a$ the quantity \[\inf \left\{ \liminf_{m,n\to \infty} (x_n\cdot y_m)_a : \{x_n\} \in \xi, \{y_n\}\in \eta\right\}.\] If $\{\xi_i\}_{i=1}^\infty \subset \partial X$ and $\xi \in \partial X$ then we say that $\xi_i\to \xi$ as $i\to \infty$ if $(\xi_i\cdot \xi)_a\to \infty$ as $i\to \infty$. A subset $C\subset \partial X$ is defined to be closed if given any sequence $\{\xi_i\}\subset C$ such that $\xi_i\to \xi$ in $\partial X$, we have $\xi \in C$.

\subsection{Surfaces, curves, and arcs}

In this paper, $S$ denotes an orientable surface of finite positive genus with either finitely many or infinitely many punctures. In all that follows, it will not matter if any of the isolated punctures of $S$ are replaced by boundary components. However, we will assume for simplicity that $\partial S=\emptyset$. We will also assume that $\chi(S)<0$ if $S$ is finite type. Thus the surface $S$ admits a complete hyperbolic metric of the \textit{first kind} (see, for instance \cite{simultaneous} Section 3 for a proof of this fact in the infinitely-punctured case). Recall that given a hyperbolic metric on $S$, the universal cover $\tilde{S}$ may be identified with $\H^2$ and we say that the metric is of the first kind if the limit set of $\pi_1(S)$ in the action on $\tilde{S}$ is equal to all of $\partial \tilde{S}=S^1$. We will fix a complete hyperbolic metric of the first kind on $S$.

Let $a$ and $b$ be isotopy classes of essential (see the definition below) simple closed curves on $S$. We define the \textit{geometric intersection number} $i(a,b)$ to be the minimum number of points of intersection between curves representing $a$ and curves representing $b$. There are unique simple closed geodesics in the isotopy classes $a$ and $b$. These geodesic representatives are in \textit{minimal position} meaning that they intersect the minimal number of times of curves in their respective isotopy classes. We will frequently conflate essential simple closed curves with their geodesic representatives and so we assume in particular that all essential simple closed curves have been put pairwise in minimal position.

If $\vec{a}$ and $\vec{b}$ are \textit{oriented} simple closed curves and $S$ has been endowed with an orientation, then we denote by $\hat{i}(\vec{a},\vec{b})$ the \textit{algebraic intersection number} of $\vec{a}$ and $\vec{b}$ (see for instance \cite{difftop} Chapter 3 for the definition).

Our object of study in this paper is the nonseparating curve graph $\NS(S)$.

\begin{defn}
The graph $\NS(S)$ has vertex set $\NS(S)^0$ equal to the set of isotopy classes of nonseparating simple closed curves on $S$. 
\begin{itemize}
\item If $\genus(S)=1$ then two isotopy classes $a$ and $b$ are joined by an edge whenever $i(a,b)\leq 1$.
\item If $\genus(S)>1$ then two isotopy classes $a$ and $b$ are joined by an edge whenever $i(a,b)=0$.
\end{itemize}
\end{defn}

\begin{defn}
A simple closed curve $c$ on $S$ is \textit{essential} if $c$ does not bound a disk or once-punctured disk in $S$. A subsurface $V\subset S$ is \textit{essential} if each boundary component of $V$ is an essential simple closed curve. A \textit{multicurve} on $S$ is a collection of pairwise disjoint essential simple closed curves on $S$, no two of which are isotopic.
\end{defn}

We are also interested in how curves and surfaces intersect in $S$:

\begin{defn}
Let $c$ be a simple closed curve and $V\subset S$ a subsurface. We say that $c$ intersects $V$ \textit{essentially} if $c$ cannot be isotoped to be disjoint from $V$.
\end{defn}

We will be particularly interested in the following class of subsurfaces of $S$:

\begin{defn}
Let $V\subset S$ be an essential subsurface. We say that $V$ is a \textit{witness} for $\NS(S)$ if every nonseparating simple closed curve in $S$ intersects $V$ essentially.
\end{defn}

It is not hard to see that $V$ is a witness for $\NS(S)$ if and only if $\genus(V)=\genus(S)$.

The idea of a witness was first articulated (in a more general context) by Masur-Schleimer in \cite{disk} where they used the term \textit{hole}. Schleimer has suggested that the term witnes should be used instead. Witnesses are interesting from the perspective of the geometry of $\NS(S)$:

\begin{lem}[Durham-Fanoni-Vlamis, \cite{dfv} Lemma 8.1]
If $V\subset S$ is a witness for $\NS(S)$ and has finite type then the inclusion $i:\NS(V)\to \NS(S)$ is an isometric embedding.
\end{lem}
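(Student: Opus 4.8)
My plan is to prove the two halves of ``isometric embedding'' separately: that $i$ is $1$-Lipschitz (easy), and that it does not decrease distances (the real content). For the first half I would check that $i$ is well defined and injective on vertices — a nonseparating simple closed curve $c$ in $V$ has a dual simple closed curve $d\subset V$ with $\hat i(c,d)=\pm1$, an equality that persists in $S$, so $c$ stays nonseparating, and injectivity on isotopy classes is the incompressibility of the essential subsurface $V$ — and that adjacency is preserved: curves contained in $V$ are in minimal position in $V$ iff in $S$, so $i_V=i_S$ on them, and the edge relations of $\NS(V)$ and $\NS(S)$ coincide because $\genus(V)=\genus(S)$. This gives $d_{\NS(S)}(a,b)\le d_{\NS(V)}(a,b)$ for $a,b\in\NS(V)^0$.

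For the reverse inequality it suffices to show that every $\NS(S)$-geodesic between two vertices of $V$ can be replaced by a path of the same length lying entirely in $\NS(V)$. First I would record two structural consequences of the witness hypothesis. (i) Every component $W'$ of $W:=\overline{S\setminus V}$ is planar: a positive-genus $W'$ would contain curves $\alpha,\beta$ with $\hat i(\alpha,\beta)=\pm1$, making $\alpha$ a nonseparating curve of $S$ disjoint from $V$, contrary to $V$ being a witness. Hence any essential simple closed curve inside $W$ is separating in $S$, so \emph{a nonseparating curve of $S$ disjoint from $\partial V$ is isotopic into $V$}. (ii) A curve $c\subset V$ nonseparating in $S$ is nonseparating in $V$: if not, $[c]=0$ in $H_1(V;\Z/2)$, hence in $H_1(S;\Z/2)$, a contradiction.

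Next I would fix $a,b\in\NS(V)^0$ and choose an $\NS(S)$-geodesic $a=c_0,\dots,c_n=b$, with vertices realized in pairwise minimal position and in minimal position with $\partial V$ (e.g. as geodesics, together with the geodesic representative of $\partial V$), minimizing $I:=\sum_{j=1}^{n-1} i(c_j,\partial V)$ among all such geodesics. If $I=0$, each $c_j$ is disjoint from $\partial V$, hence isotopic into $V$ and nonseparating there by (i) and (ii); consecutive $c_j$ stay adjacent in $\NS(V)$ since $i_V=i_S$, so the geodesic is already a path in $\NS(V)$ and we are done. If $I>0$, I would pick $j$ with $i(c_j,\partial V)>0$, choose two points $p,q$ of $c_j\cap\partial V$ consecutive along a component of $\partial V$, and surger $c_j$ along the subarc $\delta$ of $\partial V$ between them: this yields curves whose mod-$2$ homology classes sum to $[c_j]\ne0$, so one of them — call it $c_j'$ — is essential and nonseparating in $S$, and has strictly fewer intersections with $\partial V$ than $c_j$. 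If $c_j'$ can be taken disjoint from $c_{j-1}$ and $c_{j+1}$, then replacing $c_j$ by $c_j'$ produces an $\NS(S)$-geodesic from $a$ to $b$ with smaller $I$, a contradiction, and the proof is complete.

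The hard part will be exactly this last step: the surgery arc $\delta\subset\partial V$ need not be disjoint from the neighbors $c_{j-1}$ and $c_{j+1}$, which themselves may cross $\partial V$, so naively surgering can increase intersections with the adjacent vertices. I expect the fix to be an innermost-type argument performed on the \emph{whole} collection $\bigcup_i(c_i\cap W)$ inside the planar pieces of $W$ — using planarity to guarantee an ``outermost'' region of some $W'$ cut off by an arc of a single $c_j$ and missed by the curves that constrain it — or, equivalently, choosing which curve to surger and in what order so that intersections with neighbors never go up. One also has to confirm that the retained surgery component is essential, but this is automatic: an inessential component is null-homologous, forcing the other component to carry the nonzero class $[c_j]$ and hence to be the nonseparating, essential curve we keep.
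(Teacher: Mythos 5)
This lemma is not proved in the paper; it is simply quoted from Durham--Fanoni--Vlamis (\cite{dfv}, Lemma 8.1), so there is no in-paper argument to compare against. I can therefore only evaluate your proposal on its own terms. The overall strategy (prove $1$-Lipschitzness, then push an $\NS(S)$-geodesic into $V$ by reducing intersections with $\partial V$) is the natural one, and the $1$-Lipschitz half is fine. But there are two genuine gaps in the hard direction.

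First, a structural point. You observe that components of $S\setminus V$ are planar and conclude ``hence any essential simple closed curve inside $W$ is separating in $S$.'' Planarity alone does not give this: a planar complementary component with two or more boundary circles could, a priori, contain a boundary-parallel curve that is nonseparating in $S$. The missing step is that the boundary circles of $V$ are themselves separating in $S$ (otherwise pushing one slightly off $V$ produces a nonseparating curve disjoint from $V$, contradicting the witness property), and this forces each component of $S\setminus V$ to have exactly one boundary circle, i.e.\ to be a punctured disk. This is what makes (i) work, and it is also what you need for (ii): your claim that ``$c$ separating in $V$ implies $[c]=0$ in $H_1(V;\Z/2)$'' is false (take $c$ boundary-parallel in a pair of pants). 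The correct statement uses $H_1(\overline V)$, where $\overline V$ is the closed surface obtained by filling punctures and capping boundary; once one knows the complementary pieces are punctured disks, the inclusion $V\hookrightarrow S$ induces an identification $\overline V\cong\overline S$, and then ``separating in $V$'' and ``separating in $S$'' become literally the same condition via Fact \ref{homfact}. Your mod-$2$ homology of the open subsurface is not the right invariant.

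Second, and more seriously, the surgery step that is supposed to strictly decrease $I=\sum i(c_j,\partial V)$ is left as a sketch, and you say so yourself. Surgering $c_j$ along an arc $\delta\subset\partial V$ can increase $i(c_j',c_{j\pm1})$ precisely because $c_{j\pm1}$ may themselves cross $\delta$, so the replacement is not automatically still an $\NS(S)$-path, let alone a geodesic. The remedy you gesture at (an innermost argument applied to the whole family $\bigcup_i(c_i\cap W)$ inside the planar pieces, or a careful order of surgeries) is plausible, but it is exactly the content of the lemma and would need to be carried out; in particular one must choose the outermost region in $W'$ so that the arc of $\partial V$ used in the surgery is disjoint from \emph{all} three curves $c_{j-1},c_j,c_{j+1}$, not just from $c_j$, and one must also check that the surgered curve is nonseparating (your homological observation handles this, but only after the complementary-pieces-are-punctured-disks fact is in place, since that is what guarantees the excised bicorn is null-homologous in $\overline S$). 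As written, the proposal identifies the right difficulty but does not resolve it, so it is not a complete proof.
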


Throughout this paper we will be interested in subarcs of various one-dimensional submanifolds of $S$. For the following, recall that a \textit{ray} is an embedding of the interval $[0,\infty)\subset \R$ into $S$.

\begin{conv}
Let $l$ be a simple ray, bi-infinite geodesic, or compact arc. If $p,q\in l$ then we denote by $l|[p,q]$ the unique embedded subarc of $l$ with endpoints $p$ and $q$.

If $c$ is a simple closed curve on $S$, then we denote by $\vec{c}$ the curve $c$ endowed with an orientation. If $p,q\in \vec{c}$ then $\vec{c}\setminus \{p,q\}$ consists of two embedded subarcs, each of which has an induced orientation from $\vec{c}$. We denote by $\vec{c}|[p,q]$ the unique arc in this pair which is oriented \textit{from} $p$ \textit{to} $q$.

If $\vec{l}$ is an oriented simple bi-infinite geodesic and $p\in \vec{l}$ then we denote by $\vec{l}|[p,\infty)$ the closed (containing $p$) ray of $\vec{l}$ which \emph{begins} at $p$, with respect to the orientation on $\vec{l}$. Similarly, $\vec{l}|(p,\infty)$ denotes the open ray $\vec{l}|[p,\infty) \setminus \{p\}$. We define $\vec{l}|(-\infty,p]$ to be the closed ray of $\vec{l}$ which \emph{ends} at $p$ and define $\vec{l}|(-\infty,p)=\vec{l}|(-\infty,p] \setminus \{p\}$.
\end{conv}

\subsubsection{Nonseparating curves and homology}

For a punctured finite genus surface $S$ we denote by $E(S)$ the space of ends of $S$ and by $\overline{S}$ the end compactification of $S$, $\overline{S}=S\cup E(S)$, which is a closed surface. We identify $E(S)$ with a (closed, nowhere dense, totally disconnected) subset of $\overline{S}$. We have an embedding $S \hookrightarrow \overline{S}$

We will use repeatedly the following fact:

\begin{fact}
\label{homfact}
A simple closed curve $c$ in $S$ is separating in $S$ if and only if the homology class of $c$ in $H_1(\overline{S};\Z)$ is zero when $c$ is endowed with either orientation.
\end{fact}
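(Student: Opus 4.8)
The statement to prove is Fact~\ref{homfact}: a simple closed curve $c$ in $S$ is separating in $S$ if and only if its homology class in $H_1(\overline{S};\Z)$ vanishes (for either orientation).

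\medskip

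\noindent\textbf{Proof proposal.}

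The plan is to reduce the statement to the classical fact about closed surfaces by passing to the end compactification, and to handle the subtlety introduced by the punctures carefully. First I would observe that since $E(S)$ is totally disconnected and nowhere dense in $\overline{S}$, a simple closed curve $c\subset S\subset\overline{S}$ separates $S$ if and only if it separates $\overline{S}$: indeed, removing a totally disconnected closed set from a surface does not change the set of connected components of the complement of $c$, because each component of $\overline{S}\setminus c$ is an open surface and meets $S$ (a totally disconnected set cannot contain an open subset of a component). So the claim becomes: $c$ separates $\overline{S}$ if and only if $[c]=0$ in $H_1(\overline{S};\Z)$.

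For the closed surface $\overline{S}$ this is standard, but I would still sketch the argument since it is the heart of the matter. If $c$ separates $\overline{S}$, writing $\overline{S}=A\cup_c B$ with $A,B$ the closures of the two complementary pieces, then $c=\partial A$ is a boundary, hence $[c]=0$. Conversely, suppose $c$ is nonseparating; then $\overline{S}\setminus c$ is connected, so one can find a simple closed curve $d$ in $\overline{S}$ meeting $c$ transversely in exactly one point (push off from $c$ on both sides and join the two sides by an arc through the connected complement). Then the algebraic intersection number $\hat{i}([c],[d])=\pm 1$ with respect to the intersection pairing on $H_1(\overline{S};\Z)$, which forces $[c]\neq 0$. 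This also makes clear that the conclusion is independent of the choice of orientation on $c$, since reversing orientation only changes $[c]$ to $-[c]$, and $[c]=0\iff -[c]=0$.

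The one step requiring genuine care — and the place I would expect to spend the most effort — is the passage between $S$ and $\overline{S}$ in the \emph{infinitely}-punctured case, where $E(S)$ may be an infinite (e.g.\ Cantor) set and $\overline{S}$, while a compact surface, is not of finite type in the usual CW sense; one should check that $H_1(\overline{S};\Z)$ still carries a well-defined algebraic intersection pairing and that the "find a dual curve" argument goes through. This is fine because a simple closed curve $c$ is contained in a compact subsurface, and both $c$ and any transverse dual curve $d$ can be taken inside a \emph{finite-type} compact subsurface $\Sigma\subset\overline{S}$ containing $c$ in its interior; the intersection pairing and the nonvanishing of $[c]$ can be verified inside $\Sigma$ and then pushed forward under $H_1(\Sigma;\Z)\to H_1(\overline{S};\Z)$, using that the map $H_1(\Sigma;\Z)\to H_1(\overline{S};\Z)$ sends the (Poincar\'e dual of the) class of $d$ appropriately. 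Thus the argument localizes to a finite-type piece and the infinite-type case presents no real additional difficulty once this localization is set up.
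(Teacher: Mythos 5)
The paper records this as a \emph{Fact} with no proof supplied, so there is no argument of the author's to compare against; I'll simply assess your proposal on its own terms. Your two-step reduction is correct: (1) $c$ separates $S$ if and only if it separates $\overline{S}$, because deleting the closed, totally disconnected set $E(S)$ from the connected surface $\overline{S}\setminus c$ (or from either of its two components) does not change connectivity — though note that your parenthetical justification (``a totally disconnected set cannot contain an open subset'') only rules out components disappearing, not components splitting; the clean statement you actually need is that a connected manifold of dimension $\ge 2$ minus a closed totally disconnected subset remains connected. (2) On the closed surface $\overline{S}$ the equivalence with $[c]=0$ follows from the boundary/dual-curve argument exactly as you describe.

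One substantive correction to the final paragraph: the concern that $\overline{S}$ ``is not of finite type in the usual CW sense'' is misplaced. Since $S$ has finite genus and $E(S)$ is a compact, totally disconnected, nowhere dense set of ends, the Ker\'ekj\'artó--Richards classification gives that $\overline{S}$ is a \emph{closed} orientable surface of genus $\genus(S)$, hence a finite CW complex with the usual intersection pairing on $H_1$. The Cantor set $E(S)$ is just an embedded subset of that closed surface, not extra topology. Your localization to a compact finite-type subsurface is therefore harmless but unnecessary; the classical dual-curve argument applies directly to $\overline{S}$.
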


From now on, all homology groups will be understood to have coefficients in $\Z$. If $\vec{c}$ is an oriented simple closed curve then $[\vec{c}]$ will denote the homology class of $\vec{c}$ in $H_1(\overline{S})$ unless some other homology group is specified.

We also have:

\begin{fact}
A simple closed curve $c$ in $S$ is nonseparating in $S$ if and only if there exists a simple closed curve $d$ in $S$ with $i(c,d)=1$.
\end{fact}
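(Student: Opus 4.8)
The statement is the classical characterization: a simple closed curve $c$ in $S$ is nonseparating if and only if there is a simple closed curve $d$ with $i(c,d) = 1$.

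The plan is to prove both directions, using Fact \ref{homfact} and basic algebraic topology (the intersection pairing on $H_1$ of the closed surface $\overline{S}$). First I would dispose of the easy direction: suppose such a $d$ exists with $i(c,d)=1$. Then (after orienting both curves) the algebraic intersection number $\hat i(\vec c, \vec d)$ equals $\pm 1$, since the geometric intersection number is $1$ and a single transverse intersection point contributes $\pm 1$. The algebraic intersection number descends to a bilinear pairing on $H_1(\overline{S};\Z)$, so $\hat i(\vec c,\vec d) = \pm 1$ forces $[\vec c] \neq 0$ in $H_1(\overline{S};\Z)$ (a zero class pairs trivially with everything). By Fact \ref{homfact}, $c$ is nonseparating. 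Conversely, and this is the substantive direction, suppose $c$ is nonseparating. By Fact \ref{homfact}, $[\vec c]\neq 0$ in $H_1(\overline S;\Z)$. Since $\overline S$ is a closed orientable surface, the intersection pairing on $H_1(\overline S;\Z)$ is unimodular (it is a symplectic form with respect to a standard basis), so there is a class $\alpha \in H_1(\overline S;\Z)$ with $\langle [\vec c], \alpha\rangle = 1$. I would then want to realize $\alpha$, or rather produce from this algebraic input a \emph{simple} closed curve $d$ in $S$ meeting $c$ exactly once.

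The cleanest route to the simple curve $d$: cut $\overline S$ along $c$ to get a surface $\overline S_c$ with two boundary circles $c', c''$ coming from $c$. Because $c$ is nonseparating, $\overline S_c$ is connected, so I can choose an embedded arc $\beta$ in $\overline S_c$ running from a point of $c'$ to the corresponding point of $c''$; regluing, $\beta$ closes up to a simple closed curve $d$ in $\overline S$ with $i(c,d) = 1$. To ensure $d$ lies in $S$ and is essential, I push $\beta$ off the punctures/ends of $S$ (there are at most countably many, and they are nowhere dense, so a small isotopy of the arc avoids them) and note that a curve meeting $c$ transversely in one point cannot bound a disk or once-punctured disk, hence is essential — so $d$ is a genuine vertex of $\NS(S)$ in the relevant sense. (In fact $d$ is itself nonseparating by the easy direction, but that is not needed.)

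The main obstacle, and the only place requiring care, is the passage from the homological statement "$[\vec c]$ is nonzero, hence pairs nontrivially with some class" to the \emph{geometric} statement producing an embedded $d$ with $i(c,d)=1$; one must make sure the arc $\beta$ in the cut surface can be chosen embedded and disjoint from the ends/punctures. The connectedness of $\overline S_c$ (equivalently, nonseparation of $c$) is exactly what makes such an arc exist, and the infinite-type subtlety — infinitely many punctures — is handled because $E(S)$ is closed and totally disconnected in $\overline S$, so a compact arc has a neighborhood-avoiding isotopy. I would remark that this Fact is standard (see, e.g., Farb–Margalit, \emph{A Primer on Mapping Class Groups}, for the closed case) and include only the short argument above to keep the exposition self-contained.
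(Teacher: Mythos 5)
The paper states this as an unproved Fact, treating it as standard, so there is no proof of record to compare against. Your argument is correct: the forward direction via the algebraic intersection pairing on $H_1(\overline{S};\Z)$ and Fact \ref{homfact}, and the converse by cutting $\overline{S}$ along $c$ (connected precisely because $c$ is nonseparating), choosing an embedded arc joining corresponding boundary points, and regluing; the resulting $d$ meets $c$ transversely in one point, and since $|\hat{i}(\vec c,\vec d)|=1$ the intersection cannot be removed, giving $i(c,d)=1$. Your attention to the infinite-type subtlety --- isotoping the arc off $E(S)$, which is closed, nowhere dense and totally disconnected in $\overline{S}$ --- is exactly the point one needs to flag here; the detour through unimodularity of the intersection form is true but unused once you pass to the cut-and-paste construction, and the essentiality of $d$ is not actually required by the statement.
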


We proved the following in \cite{nonsepbicorns}:

\begin{lem}[Rasmussen, \cite{nonsepbicorns} Lemma 2.1, Lemma 3.1]
\label{intersectionnumber}
Let $a,b\in \NS(S)^0$. Then $d_{\NS(S)}(a,b)\leq 2i(a,b)+1$.
\end{lem}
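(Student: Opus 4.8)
The plan is to induct on the geometric intersection number $n=i(a,b)$, the inductive step being a surgery that replaces $a$ by a nonseparating curve disjoint from $a$ and hitting $b$ strictly fewer times. The base cases are handled directly. If $n=0$ then $a,b$ are joined by an edge of $\NS(S)$, so $d(a,b)\le 1$. If $n=1$ and $\genus(S)=1$ then $a,b$ are again adjacent by definition. If $n=1$ and $\genus(S)>1$, I would take a closed regular neighborhood $N$ of $a\cup b$ --- a one-holed torus with connected (and, since $\genus(S)>1$, essential) boundary $\gamma$ --- and produce a nonseparating curve $c$ of $S$ disjoint from $a\cup b$: if $\gamma$ is nonseparating in $S$, take $c=\gamma$; otherwise $S$ cut along $\gamma$ has a component $S'$ meeting $N$ only in $\gamma$ with $\genus(S')=\genus(S)-1\ge 1$, and any curve in $S'$ nonseparating there is nonseparating in $S$ (its complement in $S$ is $N\cup_\gamma(S'\setminus c)$, still connected) and disjoint from $a\cup b$. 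Either way $d(a,b)\le 2\le 2n+1$.

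For the inductive step, assume $n\ge 2$ and that the bound holds below $n$. Put $a,b$ in minimal position and pick intersection points $p,q$ that are consecutive along $b$, bounding a subarc $\beta\subset b$ whose interior is disjoint from $a$. Write $a=\alpha_1\cup\alpha_2$ for the two subarcs of $a$ from $p$ to $q$, and let $c_i=\alpha_i\cup\beta$, smoothed at $p$ and $q$ to an embedded simple closed curve. Orienting these so that the two occurrences of $\beta$ cancel gives $[\vec c_1]+[\vec c_2]=[\vec a]$ in $H_1(\overline S)$ (smoothing corners does not change homology classes); since $a$ is nonseparating, Fact \ref{homfact} gives $[\vec a]\ne 0$, so at least one of $c_1,c_2$ --- call it $c$ --- is nonseparating, hence essential and a vertex of $\NS(S)$. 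Because $c$ is built from an arc of $a$ together with $\beta$, whose interior avoids $a$, it can be isotoped entirely off $a$, so $i(a,c)=0$ and thus $d(a,c)\le 1$ (an edge, in either genus). On the other hand, the $n-2$ points of $a\cap b$ other than $p,q$ are distributed among the interiors of $\alpha_1$ and $\alpha_2$, and after this isotopy these are the only intersections of $c$ with $b$, the $\beta$-part contributing none; hence $i(c,b)\le n-2<n$. By the inductive hypothesis $d(c,b)\le 2(n-2)+1$, so $d(a,b)\le 1+2(n-2)+1=2n-2\le 2n+1$, completing the induction; the argument is uniform in $\genus(S)$, with the genus-one base case $n\le 1$ given directly by the definition of adjacency.

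The one place requiring genuine care is the local bookkeeping near $p$ and $q$ when isotoping $c=c_i$ simultaneously off $a$ and into minimal position with $b$: at each of these points $c$ turns the corner from the $a$-strand to the $\beta\subset b$-strand, and one must verify in local coordinates (with $a$ and $b$ the coordinate axes) that pushing the smoothed corner into the appropriate quadrant destroys all nearby intersections with both $a$ and $b$ without creating new ones, while each interior intersection of $\alpha_i$ with $b$ survives as a single transverse point --- so that the count of intersections of $c$ with $b$ is exactly the number of interior intersections of $\alpha_i$ with $b$. I expect this local analysis, together with the $\genus(S)>1$, $n=1$ base case, to be the fiddliest part of the write-up; by contrast the homological dichotomy that keeps $c$ nonseparating is immediate from Fact \ref{homfact}, and the rest of the inductive step is just arithmetic with the intersection count.
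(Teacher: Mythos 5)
Your argument is the standard bicorn surgery, which is essentially what the cited reference uses. But there is a genuine gap in the inductive step, in the claim that $c=\alpha_i\cup\beta$ ``can be isotoped entirely off $a$, so $i(a,c)=0$.'' This is not always true. The correct general statement is only $i(a,c)\leq 1$: after pushing the overlap $\alpha_i$ to one side of $a$ and smoothing the corners, the direction of the push near $p$ is forced by which side of $a$ the arc $\beta$ departs on, and likewise near $q$ by which side $\beta$ arrives on; if these two sides disagree (equivalently, if the two endpoints of $\beta$ lift to \emph{different} boundary circles of the cut surface $S\mid a$), the pushed $\alpha_i$ must cross $a$ exactly once. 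This is not ruled out by minimal position and your local bookkeeping at each corner cannot detect it, since it is a global compatibility constraint between the two corners. A concrete example: on the torus with $a$ the $(1,0)$-curve and $b$ the $(1,2)$-curve we have $i(a,b)=2$, and both bicorns $c_1,c_2$ have classes $(0,1)$ and $(1,1)$, hence $i(a,c_j)=1$; connect-summing with another torus away from $a\cup b$ produces a genus-$2$ example where $d(a,c_j)=2$, contradicting your ``$d(a,c)\leq 1$ (an edge, in either genus).''

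The gap does not sink the lemma, because your base case already supplies the needed estimate. Since $i(a,c)\leq 1<n$, the regular-neighborhood argument you gave for $n=1$ (together with the $n=0$ case) yields $d(a,c)\leq 2$ unconditionally. Then $d(a,b)\leq d(a,c)+d(c,b)\leq 2+\bigl(2(n-2)+1\bigr)=2n-1\leq 2n+1$, and the induction closes. So the fix is a one-line change, but as written the proof asserts a false intermediate claim, and the phrase ``an edge, in either genus'' is exactly where the error surfaces for $\genus(S)>1$.
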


Here we make an \textit{important note}: \cite{nonsepbicorns} Lemmas 2.1 and 3.1 are stated for \textit{compact} surfaces $S$. However, the proofs go through when $S$ has (finitely many or infinitely many) punctures using Fact \ref{homfact}.

\subsection{Bicorns}
\label{bicornsection}

Here we define the main tools in this paper and recall some of their uses.
\begin{defn}
\label{bicorndefn}
Let $a$ and $b$ be simple closed curves on $S$. Let $\alpha$ and $\beta$ be subarcs of $a$ and $b$, respectively. If $\alpha$ and $\beta$ meet at their endpoints and nowhere in their interiors then the union $\alpha\cup \beta$ is a simple closed curve which we call a \textit{bicorn} between $a$ and $b$. We call the points of intersection of $\alpha$ and $\beta$ the \textit{corners} of the bicorn. We will also abuse the definition and consider $a$ and $b$ themselves to be bicorns.

If $A$ and $B$ are simple multicurves on $S$ then a bicorn between $A$ and $B$ is a bicorn between a curve $a\in A$ and a curve $b\in B$. 

More generally, if $L$ and $M$ are collections of pairwise disjoint simple geodesics on $S$ (closed or not, complete or not), then a bicorn between $L$ and $M$ is a curve $\alpha\cup \beta$ where $\alpha$ is a subarc of some geodesic in $L$, $\beta$ is a subarc of some geodesic in $M$ and $\alpha,\beta$ meet at their endpoints and nowhere in their interiors.
\end{defn}

If $a$ and $b$ are both nonseparating simple closed curves then we denote by $\B(a,b)$ the full subgraph of $\NS(S)$ spanned by the nonseparating bicorns between $a$ and $b$. Since we consider $a$ and $b$ themselves to be bicorns we have $a,b\in \B(a,b)$. Similarly if $A$ and $B$ are nonseparating muticurves then we denote by $\B(A,B)$ the full subgraph of $\NS(S)$ spanned by the nonseparating bicorns between $A$ and $B$ (consider the curves in $A$ and the curves in $B$ themselves as bicorns as well).

In \cite{nonsepbicorns} we used bicorn methods to prove the following Theorem:

\begin{thm}[\cite{nonsepbicorns}, Theorem 1.1 and Corollary 1.2]
\label{unihyp}
There exists $\delta>0$ with the following property. For any finite type or infinite type orientable surface $S$ with finite positive genus, $\NS(S)$ is $\delta$-hyperbolic.
\end{thm}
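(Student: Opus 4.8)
The plan is to deduce uniform hyperbolicity from a ``guessing geodesics'' criterion (a criterion going back to Masur--Schleimer and Bowditch; see also Hensel--Przytycki--Webb): if one can assign to every pair of vertices $a,b$ a connected subgraph $G(a,b)$ containing $a$ and $b$ such that, with constants independent of the ambient space, (a) $G(a,b)$ has uniformly bounded diameter whenever $a$ and $b$ are adjacent, and (b) the triangles $G(a,b),G(b,c),G(c,a)$ are uniformly thin, i.e.\ $G(a,b)\subset B_R(G(a,c)\cup G(c,b))$ for a uniform $R$, then the graph is $\delta$-hyperbolic with $\delta$ depending only on $R$. For $\NS(S)$ I would take $G(a,b)=\B(a,b)$, the full subgraph spanned by the nonseparating bicorns between $a$ and $b$; since $a,b\in\B(a,b)$, it remains to prove connectedness of $\B(a,b)$, the bounded-diameter condition for adjacent $a,b$, and the thin-triangles condition, all with surface-independent constants.

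For connectedness I would realize $a$ and $b$ by their geodesic representatives in minimal position and build a chain of nonseparating bicorns $a=c_0,c_1,\dots,c_n=b$ in which each $c_{i+1}$ is obtained from $c_i=\alpha\cup\beta$ by a single \emph{corner slide}: push one corner along $b$ (or along $a$) across the next point of intersection. A corner slide changes the intersection number of the bicorn with $a$ (or with $b$) by one and changes $i(c_i,c_{i+1})$ by a bounded amount, so $d_{\NS(S)}(c_i,c_{i+1})$ is uniformly bounded by Lemma \ref{intersectionnumber}. The delicate point --- and the main obstacle --- is that a bicorn between two nonseparating curves need not itself be nonseparating, nor need a surgered bicorn be; here one uses Fact \ref{homfact} to detect when $[\,\vec\alpha\cup\vec\beta\,]=0$ in $H_1(\overline S)$, and when a slide produces a separating curve (or one bounding a punctured disk) one performs one further bounded modification --- replacing $\beta$ by the complementary subarc of $b$, or sliding past one more intersection point --- to return to $\NS(S)$ while moving only a bounded distance. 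Carrying out this homological bookkeeping \emph{with bounds independent of $S$} is the crux of the argument.

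The bounded-diameter condition is then easy: if $a$ and $b$ are adjacent then $i(a,b)\le 1$ (or $=0$, according to genus), so every bicorn between $a$ and $b$ has bounded intersection with $a$, and Lemma \ref{intersectionnumber} places all of $\B(a,b)$ in a ball of uniform radius about $a$. For the thin-triangles condition I would argue in the unicorn/bicorn style: given a nonseparating bicorn $\alpha\cup\beta\in\B(a,b)$ and a third vertex $c$, the arc $\alpha\subset a$ together with an appropriate subarc of $c$ forms a bicorn between $a$ and $c$, and similarly $\beta$ yields a bicorn between $c$ and $b$; one shows $\alpha\cup\beta$ has bounded geometric intersection with at least one of these two bicorns --- after, as above, a bounded surgery to keep everything nonseparating --- so it lies uniformly close in $\NS(S)$ to $\B(a,c)\cup\B(c,b)$. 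Feeding these three facts into the criterion yields a uniform $\delta$. Finally, the passage from the compact case to surfaces with (finitely or infinitely) many punctures requires no new idea: fixing a complete hyperbolic metric of the first kind on $S$, geodesic representatives and minimal position behave as in the compact case, and Fact \ref{homfact} supplies the homological criterion for (non)separation, so the chain-of-bicorns and thin-triangles arguments apply verbatim and produce the same $\delta$.
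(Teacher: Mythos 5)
Your proposal follows essentially the same route as the cited proof in \cite{nonsepbicorns}: apply Bowditch's guessing-geodesics criterion to the bicorn subgraphs $\B(a,b)$, establish connectedness by a chain of corner slides, and use the homological criterion (Fact \ref{homfact}) to stay in $\NS(S)$ and to extend the compact-surface argument to surfaces with punctures. The one technical refinement in the actual proof is that the criterion is applied in the auxiliary graph $\NS^*(S)$ whose edges connect curves with $i\leq 2$, after which Lemma \ref{intersectionnumber} transfers hyperbolicity to $\NS(S)$; this absorbs cleanly the bounded intersection-number jumps your corner slides produce, but it is a packaging choice rather than a different argument.
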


We also have the following corollary of our methods. To state the corollary recall the following definitions:

\begin{defn}
Let $R>0$ and $\Gamma$ be a graph. We say that a subgraph $\Delta\subset \Gamma$ is \textit{$R$-coarsely connected} if for every pair of vertices $a,b\in \Delta$, there exists a sequence $a_0=a, a_1,\ldots, a_n=b$ of vertices in $\Delta$ with $d(a_i,a_{i+1})\leq R$ for all $i$.
\end{defn}

\begin{defn}
Let $X$ be a metric space and $A,B$ compact subsets of $X$. The \textit{Hausdorff distance} between $A$ and $B$ is equal to the smallest $\epsilon>0$ such that $A$ is contained in the $\epsilon$-neighborhood of $B$ and $B$ is contained in the $\epsilon$-neighborhood of $A$. If the Hausdorff distance between $A$ and $B$ is at most $\delta$ then we say that $A$ and $B$ are $\delta$-Hausdorff close.
\end{defn}

\begin{cor}
\label{bicornshausclose}
There exists $C>0$ with the following property. Let $S$ be a finite type or infinite type orientable surface with finite positive genus and $a,b\in \NS(S)^0$. Then $\B(a,b)$ is 5-coarsely connected and $C$-Hausdorff close to any geodesic $[a,b]\subset \NS(S)$ from $a$ to $b$.
\end{cor}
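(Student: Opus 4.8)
The plan is to extract both conclusions from the machinery already used to prove Theorem \ref{unihyp}. The hyperbolicity proof in \cite{nonsepbicorns} proceeds by verifying Hamenst\"{a}dt's hyperbolicity criterion (or a slimness-of-bicorn-triangles condition) for the family of subgraphs $\{\B(a,b)\}$; in particular, that proof establishes that each $\B(a,b)$ is coarsely connected with a uniform constant, that $\B(a,b)$ and $\B(b,c)$ together coarsely contain $\B(a,c)$, and that $\B(a,b)$ is uniformly ``thin.'' I would first isolate from that argument the precise statement that $\B(a,b)$ is $5$-coarsely connected: this amounts to the surgery lemma showing that an innermost bicorn surgery changes a nonseparating bicorn to another nonseparating bicorn at controlled distance (passing through at most a bounded number of intermediate curves), where Fact \ref{homfact} is used to guarantee one can always surger so as to stay nonseparating. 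This is essentially recorded already in \cite{nonsepbicorns} and I would cite it, verifying the constant is $5$.

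Next, for the Hausdorff-closeness statement, I would run the standard argument that a coarsely connected, coarsely path-like family of subgraphs satisfying the hyperbolicity criterion is uniformly close to geodesics. Concretely: since $\B(a,b)$ is $5$-coarsely connected and contains $a,b$, choose a coarse path $a=a_0,a_1,\dots,a_n=b$ inside $\B(a,b)$ with consecutive jumps $\le 5$; this is an unparametrized quasigeodesic provided we also know the $a_i$ make definite progress, which again follows from the criterion (the Gromov products $(a_i\cdot a_j)_{a}$ behave correctly because $\B$ is closed under ``restriction'', i.e. a bicorn between sub-bicorns of a bicorn between $a$ and $b$ is again in $\B(a,b)$). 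By the Morse lemma in the $\delta$-hyperbolic space $\NS(S)$, this quasigeodesic lies in a uniform neighborhood of any geodesic $[a,b]$, giving one inclusion. For the reverse inclusion I would show every point of $[a,b]$ is within a uniform distance of $\B(a,b)$: given a vertex $c$ on $[a,b]$, the subgraphs $\B(a,c)$ and $\B(c,b)$ are each nonempty and coarsely connected, and the coarse retraction underlying the hyperbolicity criterion sends $c$ to a point of $\B(a,b)$ at controlled distance — alternatively, one uses that $[a,c]$ and $[c,b]$ are each uniformly close to $\B(a,c)\subset\B(a,b)$ and $\B(c,b)\subset\B(a,b)$ and that $c$ lies on both.

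The main obstacle I anticipate is making the ``definite progress'' claim rigorous in a way uniform over all surfaces, including the infinite-type case: coarse connectedness alone does not give a quasigeodesic, so I need the extra input that the bicorn family is what \cite{nonsepbicorns} calls (following Hamenst\"{a}dt) a system of subgraphs witnessing hyperbolicity, from which the unparametrized quasigeodesic constant — and hence the Morse constant, and hence $C$ — is extracted uniformly. Since $\delta$ in Theorem \ref{unihyp} is already uniform, and the combinatorial surgery constants ($5$, and the restriction property) are manifestly independent of $S$, the resulting $C$ is a universal constant. I would organize the write-up as: (1) cite/restate $5$-coarse connectedness; (2) cite the restriction and coarse-concatenation properties of $\B$ from \cite{nonsepbicorns}; (3) deduce the coarse path is a uniform unparametrized quasigeodesic; (4) apply the Morse lemma for both inclusions; (5) conclude $C$ is universal. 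No new surface topology is needed beyond what Fact \ref{homfact} and Lemma \ref{intersectionnumber} already provide.
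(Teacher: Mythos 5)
Your overall approach is the paper's approach: both rely on the fact that the proof of Theorem \ref{unihyp} verifies Bowditch's hyperbolicity criterion for the family $\{\B(a,b)\}$, and both extract the Hausdorff-closeness conclusion from there. However, there are two issues worth flagging.

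First, one of your backup arguments contains a false inclusion. You write that for $c$ a vertex of $[a,b]$, ``one uses that $[a,c]$ and $[c,b]$ are each uniformly close to $\B(a,c)\subset\B(a,b)$ and $\B(c,b)\subset\B(a,b)$.'' The containment $\B(a,c)\subset\B(a,b)$ is false in general: a bicorn between $a$ and $c$ is assembled from a subarc of $a$ and a subarc of $c$, and there is no reason for such a curve to be a bicorn between $a$ and $b$ when $c$ is merely a nonseparating curve on a geodesic (not itself a bicorn, and not composed of subarcs of $a$ and $b$). Bicorns do satisfy a restriction property, but only in the direction you stated earlier (a bicorn between bicorns is a bicorn), which does not give this.

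Second, you correctly identify ``definite progress'' as the crux, but leave it as an anticipated obstacle rather than closing it. In the paper this is not a separate step to supply: Bowditch's criterion (\cite{uniform} Proposition~3.1) already delivers, as part of its conclusion, that the family of subgraphs satisfying its hypotheses is uniformly Hausdorff close to geodesics. So once one has verified the hypotheses (which \cite{nonsepbicorns} does), the Hausdorff closeness is an output, not something to be re-derived via an unparametrized-quasigeodesic-plus-Morse argument. The paper then does a small book-keeping step you omit: the criterion is verified in the auxiliary graph $\NS^*(S)$ (edges for intersection number $\le 2$), so one obtains Hausdorff closeness of $\B^*(a,b)$ to geodesics in $\NS^*(S)$, and then transfers to $\NS(S)$ via the uniform quasi-isometry induced by the identity on vertices (Lemma \ref{intersectionnumber}) together with the Morse lemma. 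Your proposal would land in the right place if you replaced the ``deduce it is an unparametrized quasigeodesic'' step with a direct citation of Bowditch's conclusion, and dropped the incorrect $\B(a,c)\subset\B(a,b)$ alternative.
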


\begin{proof}
In the proof of Theorem \ref{unihyp} from \cite{nonsepbicorns} we consider a graph $\NS^*(S)$ with the same vertex set as $\NS(S)$ but with two vertices being joined by an edge if the corresponding curves have intersection number $\leq 2$ (in \cite{nonsepbicorns} the graph $\NS^*(S)$ is simply denoted by $\NS(S)$). Denote by $\B^*(a,b)$ the subgraph of $\NS^*(S)$ spanned by the nonseparating bicorns between $a$ and $b$.

We prove in \cite{nonsepbicorns} Claim 3.3 that $\B^*(a,b)$ is connected. This proves that $\B(a,b)$ is 5-coarsely connected by Lemma \ref{intersectionnumber}. (We note here that although the proof of \cite{nonsepbicorns} Theorem 1.1 is given for compact surfaces, the entire proof goes through for non-compact surfaces using Fact \ref{homfact}. Hyperbolicity when $S$ is infinite type was originally stated as a \textit{corollary}, Corollary 1.2.)

To see that $\B(a,b)$ is $C$-Hausdorff close to $[a,b]$ we recall briefly the proof of Theorem \ref{unihyp}. The proof uses Bowditch's hyperbolicity criterion \cite{uniform} Proposition 3.1. We show that the subgraphs $\B^*(a,b)$ satisfy all the hypotheses of \cite{uniform} Proposition 3.1. Hence, in addition to the fact that $\NS^*(S)$ is hyperbolic, we have that $\B^*(a,b)$ is $C_0$-Hausdorff closed to any geodesic $[a,b]^*$ from $a$ to $b$ in $\NS^*(S)$, for any $a,b\in \NS^*(S)^0$. Moreover, the identity map $f:\NS^*(S)^0\to \NS(S)^0$ on vertices extends to a uniform quasi-isometry $f:\NS^*(S)\to \NS(S)$ by Lemma \ref{intersectionnumber}. Hence $\B(a,b)^0=f(\B^*(a,b)^0)$ is uniformly Hausdorff close to $f([a,b]^*)$. The set $f([a,b]^*)$ is a uniform quality quasi-geodesic from $a$ to $b$ in $\NS(S)$. Therefore it is uniformly Hausdorff close to any geodesic $[a,b]$ from $a$ to $b$ in $\NS(S)$ by the Morse Lemma.
\end{proof}

\subsection{Laminations}

Let $S$ be a surface endowed with a complete hyperbolic metric.

\begin{defn}
Let $L$ be a compact geodesic lamination on $S$. We say that $L$ is \textit{minimal} if every leaf of $L$ is dense in $L$. Moreover, we say that $L$ \textit{fills} a finite type subsurface $V\subset S$ if every essential simple closed curve in $V$ intersects $L$.
\end{defn}

If $S$ is finite type, then we denote by $\ELW(S)$ the space of minimal geodesic laminations on $S$ which fill finite type witnesses for $\NS(S)$ with the \textit{coarse Hausdorff topology}. The notation $\ELW(S)$ stands for \textit{ending laminations on witnesses}. Recall:

\begin{defn}
If $L$ and $M$ are compact geodesic laminations then denote by $d_{\operatorname{Haus}}(L,M)$ the Hausdorff distance between $L$ and $M$. The space $\GL(S)$ of compact geodesic laminations is a metric space with the distance $d_{\operatorname{Haus}}$. This metric space is compact. We denote by $L_n\toH M$ if $\{L_n\}_{n=1}^\infty \subset \GL(S)$, $M\in \GL(S)$ and $L_n$ converges to $M$ in the metric space $\GL(S)$. We say that $L_n$ \textit{Hausdorff converges} to $M$.

We say that $\{L_n\}\subset \GL(S)$ \textit{coarse Hausdorff converges} to a lamination $M$ and write $L_n \toCH M$ if for every subsequence $\{L_{n_i}\}_{i=1}^\infty$ such that $L_{n_i}$ Hausdorff converges to a lamination $N$, we have $M\subset N$. The \textit{coarse Hausdorff topology} on $\ELW(S)$ is defined by the condition that $A\subset \ELW(S)$ is closed if and only if whenever $\{L_n\} \subset A$ with $L_n\toCH L\in \ELW(S)$, we have $L\in A$.
\end{defn}

The topological space $\ELW(S)$ does not depend on the particular choice of hyperbolic metric on $S$. We will also denote by $\ML(S)$ the space of \textit{measured laminations} on $S$ with the weak${}^*$ topology.

\section{Nonseparating $2k$-corns are close to nonseparating bicorns}
\label{2kcornsec}

In this section we prove one of our main theorems, Theorem \ref{2kcornsclose} which says that nonseparating $2k$-corns are close to nonseparating bicorns. First we define what we mean by a $2k$-corn.

\begin{defn}
Let $A=\{a_1,\ldots,a_m\}$ and $B=\{b_1,\ldots,b_n\}$ be multicurves on $S$. Let $\alpha_1,\ldots,\alpha_k$ be a collection of pairwise disjoint closed subarcs of $A$ and $\beta_1,\ldots,\beta_k$ be a collection of pairwise disjoint closed subarcs of $B$. Denote by $p_i$ and $q_i$ the endpoints of the arc $\alpha_i$. Suppose that (indices being taken modulo $k$):
\begin{itemize}
\item $\alpha_i\cap \beta_{i-1}= \{p_i\}$,
\item $\alpha_i\cap \beta_i=\{q_i\}$,
\item $\alpha_i\cap \beta_j=\emptyset$ if $j\notin \{i,i-1\}$.
\end{itemize}
Then the closed curve \[c=\alpha_1\cup \beta_1\cup \alpha_2 \cup \beta_2 \cup \ldots \cup \alpha_k \cup \beta_k\] is simple. We call $c$ a \textit{$2k$-corn} between $A$ and $B$. We call the arcs $\alpha_i$ \textit{$A$-sides} of $c$ and the arcs $\beta_i$ \textit{$B$-sides} of $c$. We say that the $B$-sides $\beta_i$ and $\beta_j$ are \textit{adjacent} if $i=j+1$ or $i=j-1$ modulo $k$. We call the points $p_i$ and $q_i$ the \textit{corners} of $c$.
\label{2kcorndefn}
\end{defn}

\begin{defn}
Let $c=\alpha_1\cup \beta_1 \cup \ldots \cup \alpha_k \cup \beta_k$ be a $2k$-corn between $A$ and $B$. Let $\gamma$ be a closed subarc of $A$. We say that $\gamma$ \textit{joins} the side $\beta_i$ to the side $\beta_j$ if $\gamma$ intersects $c$ exactly at its endpoints and has an endpoint on $\beta_i$ and an endpoint on $\beta_j$. We say that $\gamma$ is \textit{parallel} to the side $\alpha_i$ if $\gamma$ joins $\beta_{i-1}$ to $\beta_i$.
\label{joining}
\end{defn}

\begin{defn}
Let $A,B$, and $c$ be as in Definition \ref{joining}. Let $\gamma_1$ and $\gamma_2$ be closed subarcs of $A$ joining two $B$-sides of $c$. We say that $\gamma_1$ is \textit{nested} inside $\gamma_2$ if one of the following two cases occurs:
\begin{enumerate}[(i)]
\item $\gamma_1$ and $\gamma_2$ both join the side $\beta_i$ of $c$ to itself, $\gamma_2$ has endpoints $r$ and $s$ on $\beta_i$, and $\beta_i|[r,s]$ contains the endpoints $\gamma_1\cap \beta_i$.
\item $\gamma_1$ and $\gamma_2$ are both parallel to the side $\alpha_i$ of $c$, $\gamma_2$ has endpoint $r$ on $\beta_{i-1}$ and endpoint $s$ on $\beta_i$, and $\beta_{i-1}|[r,p_i]\cup \beta_i|[q_i,s]$ contains the endpoints $\gamma_1\cap (\beta_{i-1}\cup \beta_i)$ (where $p_i$ and $q_i$ are as in Definition \ref{2kcorndefn}).
\end{enumerate}
\end{defn}

\begin{thm}
\label{2kcornsclose}
Let $k\geq 2$. Then there exists $D=D(k)>0$ with the following property. Let $S$ be a finite genus orientable surface and let $A$ and $B$ be two multicurves on $S$ such that each $a\in A$ and each $b\in B$ is nonseparating. Then any nonseparating $2k$-corn between $A$ and $B$ is $D$-close to a nonseparating bicorn between $A$ and $B$.
\end{thm}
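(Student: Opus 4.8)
The plan is to reduce the number of corners of a $2k$-corn two at a time, each reduction replacing the curve by a nearby $2l$-corn with $l<k$, until we arrive at a bicorn; the uniform bound $D(k)$ then follows by induction on $k$. The key to each reduction is to find, among the $A$-sides of the $2k$-corn $c=\alpha_1\cup\beta_1\cup\cdots\cup\alpha_k\cup\beta_k$, a subarc of $A$ that can be used as a ``shortcut'' to amalgamate two consecutive $B$-sides into one, or to cut $c$ into smaller pieces. Concretely, I would look at how the multicurve $A$ meets the curve $c$: each component $a\in A$ containing some $\alpha_i$ either is disjoint from $c$ away from the corners (so $\alpha_i$ is essentially all of $a$, a degenerate case) or returns to meet $c$, producing subarcs $\gamma$ of $A$ joining two $B$-sides of $c$ in the sense of Definition~\ref{joining}. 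Using an innermost (nested-minimal) such $\gamma$, together with a subarc of $c$ between the two feet of $\gamma$, one builds a new simple closed curve $c'$ which is a $2l$-corn between $A$ and $B$ with fewer corners, and which intersects $c$ in a controlled way (at most a bounded number of points), so that $d_{\NS(S)}(c,c')$ is bounded in terms of $k$ by Lemma~\ref{intersectionnumber}.

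The main case analysis is whether the reduced curve $c'$ is nonseparating. Here Fact~\ref{homfact} is the essential tool: in $H_1(\overline S)$ the class $[\vec c]$ is a sum of the classes of the oriented $A$-subarcs and $B$-subarcs closed up appropriately, and when we surger along $\gamma$ we split $[\vec c]$ into the classes of two curves $c_1',c_2'$ whose (oriented) sum is homologous to $[\vec c]$ (up to the class of $\gamma$-pieces that cancel). Since $[\vec c]\neq 0$, at least one of $c_1',c_2'$ is nonseparating, and that one is a $2l$-corn between $A$ and $B$ with $l<k$ that is a bounded distance from $c$. If instead some $\alpha_i$ is all of a component $a\in A$ (the degenerate/boundary case where the "shortcut'' is a whole curve of $A$), then $c$ is disjoint from $a$ or meets it once, so $c$ is already within distance $1$ or $2$ of the bicorn $a\in A\subset\B(A,B)$, and we are done directly. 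One has to check that the nested/parallel bookkeeping of Definition~\ref{joining} guarantees such a surgery arc always exists when $k\geq 2$ and no $\alpha_i$ is degenerate; this is a finite combinatorial argument on the cyclic pattern of corners, handled by taking $\gamma$ innermost so that the two pieces it creates are themselves honest $2l$-corns with smaller $l$.

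I expect the main obstacle to be the careful verification that the surgered curves are genuinely $2l$-corns between $A$ and $B$ (i.e.\ that the arcs still alternate between $A$ and $B$ and meet only at endpoints) rather than some more degenerate configuration, and that the parameter $l$ strictly drops; this requires handling separately the case where $\gamma$ joins $\beta_i$ to itself versus the case where $\gamma$ is parallel to some $\alpha_i$, as in the two clauses of the nestedness definition. A secondary technical point is controlling $i(c,c')$: since $\gamma\subset A$ and $c$'s $A$-sides lie on $A$, these arcs are disjoint from the $\alpha$-sides, so the only intersections of $c'$ with $c$ come from $\gamma$ crossing the $B$-sides and from the shared subarc of $c$, giving a bound like $O(k)$; feeding this into Lemma~\ref{intersectionnumber} and summing the geometric series over the at most $k$ reduction steps yields $D(k)$, and tracking the constant shows it grows at most polynomially in $k$ (consistent with the quadratic bound advertised in the introduction).
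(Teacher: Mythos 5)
Your high-level outline matches the paper's in spirit — induction on $k$, surgery along arcs of $A$ that join $B$-sides of $c$, and the homology fact (Fact~\ref{homfact}) to guarantee at least one piece of a surgered curve is nonseparating. However, there is a genuine gap in the step you flag as ``the main obstacle'' and then leave unresolved, and it is not a bookkeeping issue: the claim that an innermost arc $\gamma$ always produces two pieces that are both $2l$-corns with $l<k$ is simply false. Trace through the two relevant cases. If $\gamma$ joins a $B$-side $\beta_i$ of $c$ to itself with feet $r,s$, the two surgery pieces are a \emph{bicorn} $\gamma\cup\beta_i|[r,s]$ and an \emph{outer} curve in which $\beta_i|[r,s]$ is replaced by $\gamma$; the outer curve still has $k$ $A$-sides and $k$ $B$-sides, so it is again a $2k$-corn. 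If $\gamma$ is parallel to an $A$-side $\alpha_i$, the inner piece is a bicorn or $4$-corn and the outer piece again has $k$ $A$-sides and $k$ $B$-sides. In either case, the homology identity only tells you one of the two pieces is nonseparating, and that piece may well be the outer one, which has \emph{not} dropped in $l$. So the induction can stall. The only configuration where surgery unambiguously gives two pieces with strictly smaller $l$ is when $\gamma$ joins two \emph{non-adjacent} $B$-sides, and nothing forces such a $\gamma$ to exist.

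The paper avoids this by a different mechanism, which is where the real work lies. It first manufactures a specific nonseparating bicorn $d$ between $A$ and $B$ (using the homology trick applied to the intersection points of a fixed $a\in A$ with a fixed $B$-side $\beta^*$), and then analyzes the intersection pattern of $d$'s $a$-side with $c$. If any of a list of ``bad'' behaviors occurs (an arc joining non-adjacent $B$-sides; a pair of linked arcs returning to a $B$-side; an arc joining left to right side of $c$; two unnested parallel arcs; etc.), each one separately yields a nonseparating bicorn or $4$-corn at distance $O(1)$ from $c$ — not by the homology split of $c$, but by observing that the new curve has odd intersection number with $c$ or with another auxiliary curve. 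If \emph{none} of these occur, the arcs of $d$ are forced into a rigid nested pattern, and the paper constructs a curve $c_0$ that is simultaneously disjoint from $c$, homologous to $c$ (hence nonseparating), and disjoint from $d$; this shows $d(c,d)\le 2$ directly, with no further induction needed. So the conclusion of the hard case is not ``we found a smaller $2l$-corn'' but ``we found the bicorn $d$ at bounded distance outright.'' Your proposal is missing this second branch entirely, and without it the induction has no base in the stalled cases. You would need to either (a) introduce the auxiliary bicorn $d$ and the $c_0$ construction, or (b) find a genuinely different argument that handles arcs joining a $B$-side to itself and arcs parallel to $A$-sides; the naive ``innermost surgery plus homology'' does not.
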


Before giving the proof, we give a brief summary. The proof is related to that of Claim 3.5 from \cite{nonsepbicorns} and is by induction on $k$. Given a nonseparating $2k$-corn $c$ between $A$ and $B$ we must find a uniformly close nonseparating $2l$-corn with $l<k$. There are four steps in the inductive part of the proof, described below:
\begin{enumerate}[Step 1.]
\item First we construct a nonseparating bicorn $d$ between $A$ and $B$ with $A$-side contained in an $A$-side of $c$.
\item The bicorn $d$ may intersect $c$ many times. We analyze the points of intersection of $d$ with $c$ and the arcs of $d$ between consecutive points of intersection. We analyze many possible behaviors of these arcs and show that each of these behaviors is sufficient to easily produce a nonseparating $2l$-corn which is close to $c$, with $l<k$.
\item If none of the behaviors from Step 2 occur, the arcs of $d$ between consecutive points of intersection are strongly constrained. In this step we analyze what the intersection pattern between $d$ and $c$ is forced to look like.
\item Following the analysis in Step 3, we find that we may construct a nonseparating curve $c_0$ by replacing subarcs of $c$ by the ``outermost'' subarcs of $d$. The sequence $c,c_0,d$ is then a path of length two from $c$ to the nonseparating bicorn $d$. This will complete the proof.
\end{enumerate}

\begin{proof}[Proof of Theorem \ref{2kcornsclose}]
The proof is by induction on $k$. We will prove the induction step (therefore assuming $k>2$) first and return to the base case $k=2$ at the end of the proof.

Suppose the theorem has been proven for all $2\leq l<k$. Let $c$ be a nonseparating $2k$-corn between $A$ and $B$. We claim that there exists a nonseparating $2l$-corn $c'$ of $A$ with $B$ with $l<k$ and $d(c,c')\leq 2k+1$. By induction, there will exist a nonseparating bicorn $c''$ of $A$ with $B$ such that $d(c'',c')\leq D(l)$. This will complete the induction step, as we see that we may then set \[D(k)=\max\{ D(l):l<k\}+2k+1.\]

\noindent \textbf{Step 1 --- Constructing a nonseparating bicorn}

Choose $a\in A$. If $a$ intersects each $B$-side of $c$ at most once, then we have $i(a,c)\leq k$, $d(a,c)\leq 2k+1$ by Lemma \ref{intersectionnumber}, and the induction step is proven in this case. Otherwise, consider a $B$-side $\beta^*$ of $c$ with $|\beta^*\cap a|\geq 2$. Orient $a$ and number the points of intersection of $a$ with $\beta^*$ as $x_1, x_2, \ldots, x_m$ in the order that they appear along the resulting oriented curve $\vec{a}$. For each $i$, $c_i=\vec{a}|[x_i,x_{i+1}]\cup \beta^*|[x_i,x_{i+1}]$ is a bicorn between $a$ and $B$ (indices being taken modulo $m$) and we have (in $H_1(\overline{S};\Z)$) \[0\neq [\vec{a}] = \sum_{i=1}^m [c_i],\] where each $c_i$ is oriented such that the orientation on its $a$-side matches the orientation of $\vec{a}$. Hence some $c_i$ is nonseparating. We will rename $c_i$ to $d$. The nonseparating bicorn $d$ has the property that its $a$-side intersects $\beta^*$ exactly twice, at its endpoints.

\noindent \textbf{Step 2.1 --- Analysis of arcs joining non-adjacent sides of $c$}

Now we look at the consecutive points of intersection of the $a$-side of $d$ with the $B$-sides of $c$. Orient $d$ and enumerate the points of intersection $y_1,\ldots,y_n$ in the order that they appear along $d$, so that 
\begin{itemize}
\item $\vec{d}|[y_n,y_1]$ is the $B$-side of $d$ and 
\item $\vec{d}|[y_i,y_{i+1}]$ is contained in the $a$-side of $d$ for $i<n$.
\end{itemize} If some arc $\delta_i=\vec{d}|[y_i,y_{i+1}]$ with $i<n$ joins two $B$-sides of $c$ which are not adjacent, then we may express $[c]$ as $[c']+[c'']$ where $c'$ is a $2k_1$-corn and $c''$ is a $2k_2$-corn where $k_1,k_2<k$ and $c,c',$ and $c''$ are oriented appropriately (see Figure \ref{cut2kcorn}). Furthermore, $i(c,c')$ and $i(c,c'')$ are both at most 1. Since $c$ is nonseparating, we see that either $c'$ or $c''$ is nonseparating. If for instance $c'$ is nonseparating then we have $d(c,c')\leq 3<2k+1$ and this completes the induction in this case since $k_1<k$. So we may suppose without loss of generality that each arc $\delta_i$ for $i=1,\ldots,n-1$ either 
\begin{itemize}
\item joins a $B$-side of $c$ to itself,
\item is parallel to an $A$-side of $c$, or
\item is equal to an $A$-side of $c$.
\end{itemize}

\begin{figure}[h]
\centering
\def\svgwidth{0.7\textwidth}
\begingroup%
  \makeatletter%
  \providecommand\color[2][]{%
    \errmessage{(Inkscape) Color is used for the text in Inkscape, but the package 'color.sty' is not loaded}%
    \renewcommand\color[2][]{}%
  }%
  \providecommand\transparent[1]{%
    \errmessage{(Inkscape) Transparency is used (non-zero) for the text in Inkscape, but the package 'transparent.sty' is not loaded}%
    \renewcommand\transparent[1]{}%
  }%
  \providecommand\rotatebox[2]{#2}%
  \newcommand*\fsize{\dimexpr\f@size pt\relax}%
  \newcommand*\lineheight[1]{\fontsize{\fsize}{#1\fsize}\selectfont}%
  \ifx\svgwidth\undefined%
    \setlength{\unitlength}{1607.12713767bp}%
    \ifx\svgscale\undefined%
      \relax%
    \else%
      \setlength{\unitlength}{\unitlength * \real{\svgscale}}%
    \fi%
  \else%
    \setlength{\unitlength}{\svgwidth}%
  \fi%
  \global\let\svgwidth\undefined%
  \global\let\svgscale\undefined%
  \makeatother%
  \begin{picture}(1,0.67958038)%
    \lineheight{1}%
    \setlength\tabcolsep{0pt}%
    \put(0,0){\includegraphics[width=\unitlength,page=1]{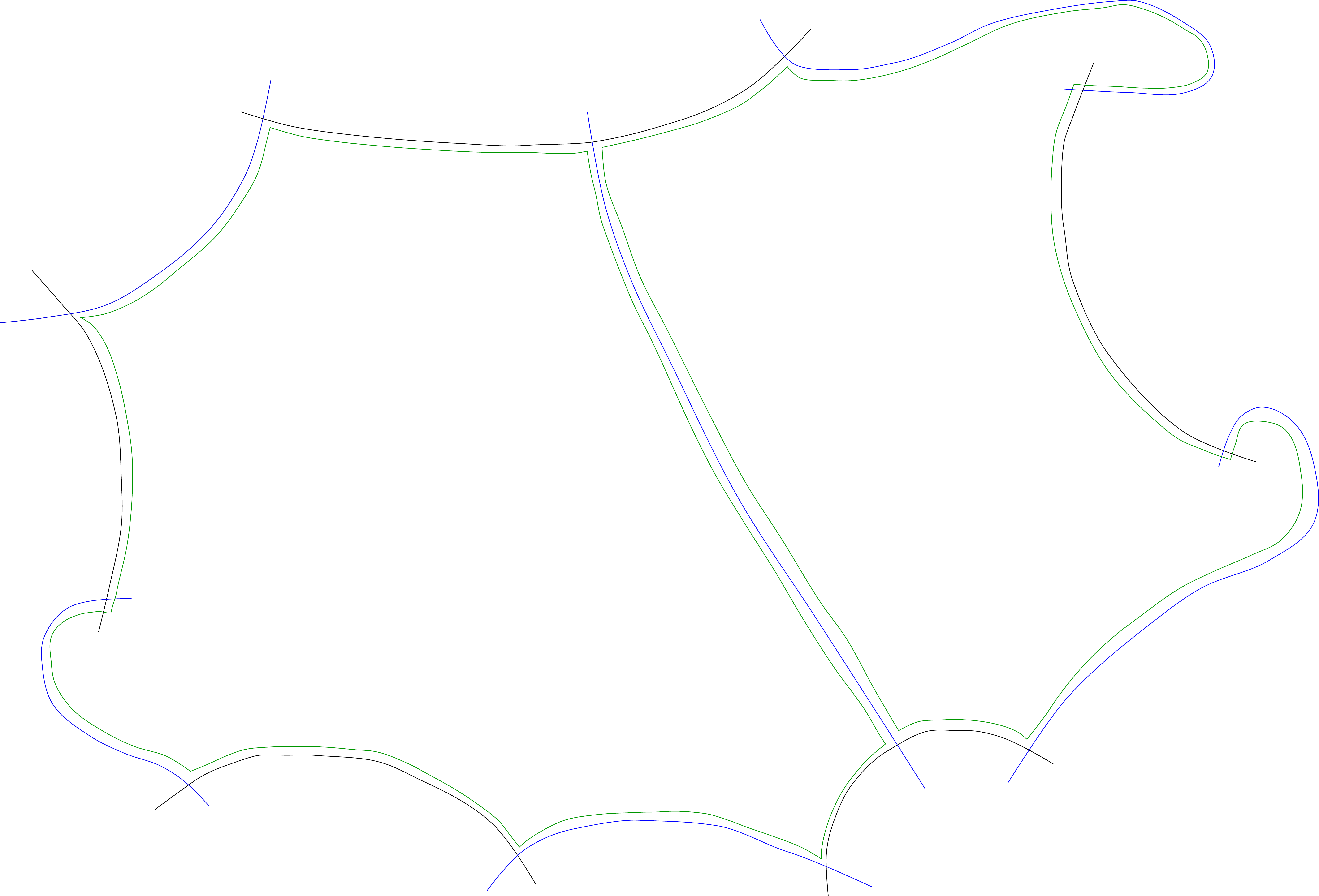}}%
    \put(0.30659852,0.54229092){\color[rgb]{0,0.58823529,0}\makebox(0,0)[lt]{\lineheight{1.25}\smash{\begin{tabular}[t]{l}$c'$\end{tabular}}}}%
    \put(0.51213289,0.56114728){\color[rgb]{0,0.58823529,0}\makebox(0,0)[lt]{\lineheight{1.25}\smash{\begin{tabular}[t]{l}$c''$\end{tabular}}}}%
  \end{picture}%
\endgroup%

\caption{If an arc of $a$ joins two non-adjacent $B$-sides of $c$, then we find a uniformly close nonseparating $2l$-corn with $l<k$.}
\label{cut2kcorn}
\end{figure}

\noindent \textbf{Step 2.2 --- Analysis of arcs joining sides of $c$ to themselves}

We now rule out (without loss of generality) several other behaviors of the arcs $\delta_i$. First we discuss the case of arcs joining a $B$-side $\beta$ to itself. For this purpose, orient $\beta$.
\begin{enumerate}[(i)]
\item If $\delta_i$ joins the left side of $\beta$ to the right side of $\beta$ then $c'=\delta_i\cup \beta|[y_i,y_{i+1}]$ intersects $c$ exactly once. Hence we have $c'\in \B(A,B)$ and $d(c,c')\leq 3$. See the left side of Figure \ref{samesidebicorn}.
\item If $\delta_i$ and $\delta_j$ both join the right side of $\beta$ to itself or the left side of $\beta$ to itself and $\beta|[y_i,y_{i+1}]$ contains \textit{exactly one} of the endpoints $\{y_j,y_{j+1}\}$ of $\delta_j$, then the bicorns $c'=\delta_i\cup \beta|[y_i,y_{i+1}]$ and $c''=\delta_j\cup \beta|[y_j,y_{j+1}]$ intersect each other exactly once and are disjoint from $c$. Hence $c'\in \B(A,B)$ and $d(c,c')=1$. See the right side of Figure \ref{samesidebicorn}.
\end{enumerate}
In either case we have found a nonseparating bicorn which is $3$-close to $c$ and the induction step is therefore complete. So we may assume without loss of generality that neither case (i) nor case (ii) above occurs.

\begin{figure}[h]
\centering
\begin{tabular}{c c}

\def\svgwidth{0.4\textwidth}
\begingroup%
  \makeatletter%
  \providecommand\color[2][]{%
    \errmessage{(Inkscape) Color is used for the text in Inkscape, but the package 'color.sty' is not loaded}%
    \renewcommand\color[2][]{}%
  }%
  \providecommand\transparent[1]{%
    \errmessage{(Inkscape) Transparency is used (non-zero) for the text in Inkscape, but the package 'transparent.sty' is not loaded}%
    \renewcommand\transparent[1]{}%
  }%
  \providecommand\rotatebox[2]{#2}%
  \newcommand*\fsize{\dimexpr\f@size pt\relax}%
  \newcommand*\lineheight[1]{\fontsize{\fsize}{#1\fsize}\selectfont}%
  \ifx\svgwidth\undefined%
    \setlength{\unitlength}{1223.20445143bp}%
    \ifx\svgscale\undefined%
      \relax%
    \else%
      \setlength{\unitlength}{\unitlength * \real{\svgscale}}%
    \fi%
  \else%
    \setlength{\unitlength}{\svgwidth}%
  \fi%
  \global\let\svgwidth\undefined%
  \global\let\svgscale\undefined%
  \makeatother%
  \begin{picture}(1,0.34245892)%
    \lineheight{1}%
    \setlength\tabcolsep{0pt}%
    \put(0,0){\includegraphics[width=\unitlength,page=1]{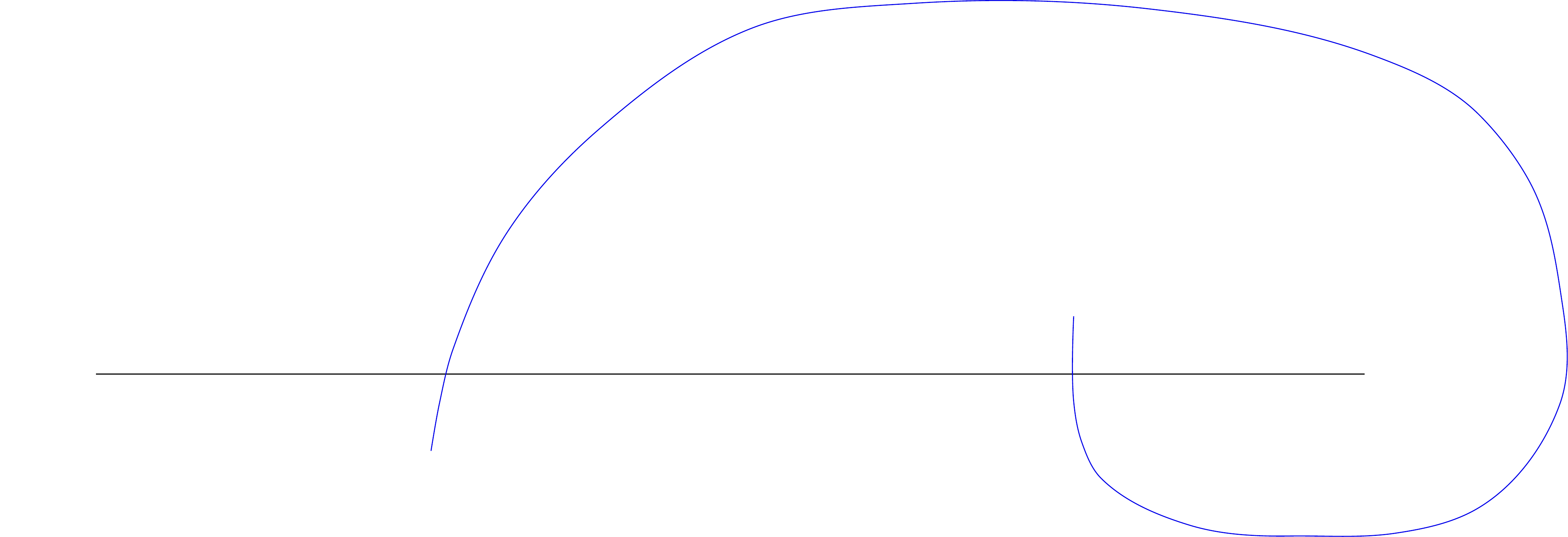}}%
    \put(-0.0018562,0.10387984){\color[rgb]{0,0,0}\makebox(0,0)[lt]{\lineheight{1.25}\smash{\begin{tabular}[t]{l}$\beta$\end{tabular}}}}%
    \put(0,0){\includegraphics[width=\unitlength,page=2]{bsidea.pdf}}%
    \put(0.46238114,0.27205636){\color[rgb]{0,0.58823529,0}\makebox(0,0)[lt]{\lineheight{1.25}\smash{\begin{tabular}[t]{l}$c'$\end{tabular}}}}%
    \put(0,0){\includegraphics[width=\unitlength,page=3]{bsidea.pdf}}%
  \end{picture}%
\endgroup%
 &

\def\svgwidth{0.4\textwidth}
\begingroup%
  \makeatletter%
  \providecommand\color[2][]{%
    \errmessage{(Inkscape) Color is used for the text in Inkscape, but the package 'color.sty' is not loaded}%
    \renewcommand\color[2][]{}%
  }%
  \providecommand\transparent[1]{%
    \errmessage{(Inkscape) Transparency is used (non-zero) for the text in Inkscape, but the package 'transparent.sty' is not loaded}%
    \renewcommand\transparent[1]{}%
  }%
  \providecommand\rotatebox[2]{#2}%
  \newcommand*\fsize{\dimexpr\f@size pt\relax}%
  \newcommand*\lineheight[1]{\fontsize{\fsize}{#1\fsize}\selectfont}%
  \ifx\svgwidth\undefined%
    \setlength{\unitlength}{1223.20445143bp}%
    \ifx\svgscale\undefined%
      \relax%
    \else%
      \setlength{\unitlength}{\unitlength * \real{\svgscale}}%
    \fi%
  \else%
    \setlength{\unitlength}{\svgwidth}%
  \fi%
  \global\let\svgwidth\undefined%
  \global\let\svgscale\undefined%
  \makeatother%
  \begin{picture}(1,0.34245892)%
    \lineheight{1}%
    \setlength\tabcolsep{0pt}%
    \put(0,0){\includegraphics[width=\unitlength,page=1]{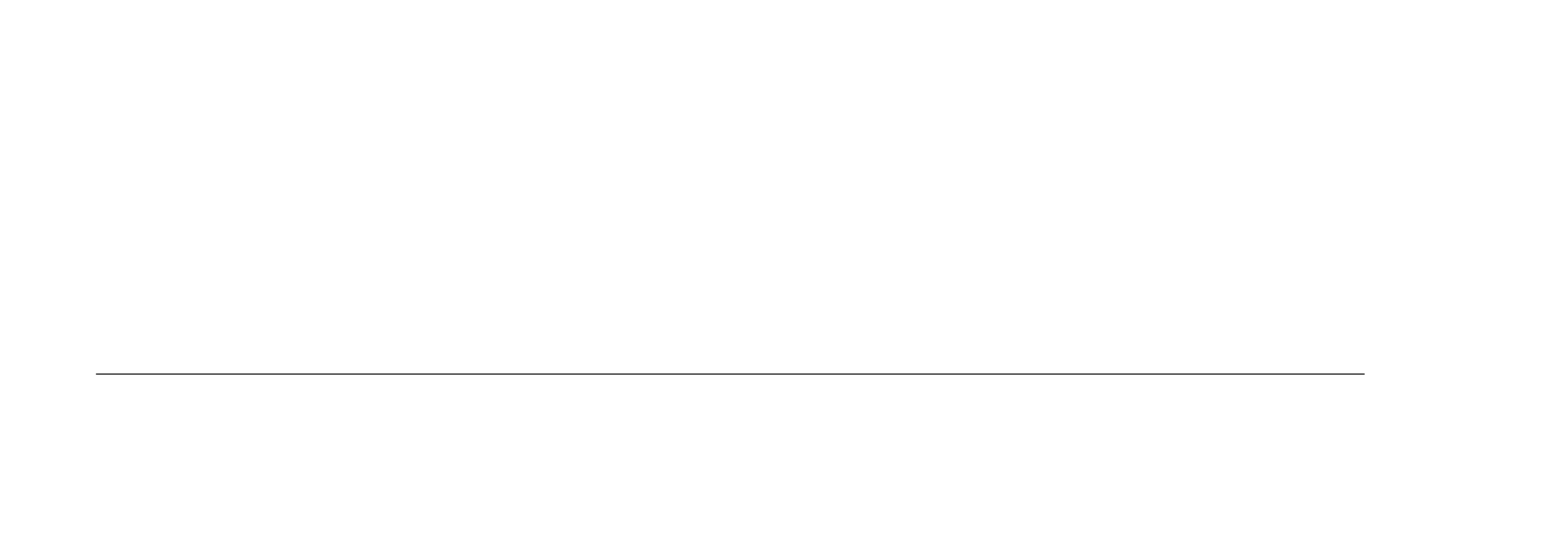}}%
    \put(-0.0018562,0.10387984){\color[rgb]{0,0,0}\makebox(0,0)[lt]{\lineheight{1.25}\smash{\begin{tabular}[t]{l}$\beta$\end{tabular}}}}%
    \put(0,0){\includegraphics[width=\unitlength,page=2]{bsideb.pdf}}%
    \put(0.38540458,0.25401665){\color[rgb]{0,0.58823529,0}\makebox(0,0)[lt]{\lineheight{1.25}\smash{\begin{tabular}[t]{l}$c'$\end{tabular}}}}%
    \put(0.63416564,0.26365175){\color[rgb]{0,0.58823529,0}\makebox(0,0)[lt]{\lineheight{1.25}\smash{\begin{tabular}[t]{l}$c''$\end{tabular}}}}%
    \put(0,0){\includegraphics[width=\unitlength,page=3]{bsideb.pdf}}%
  \end{picture}%
\endgroup%

\end{tabular}
\caption{Uniformly close bicorns in cases (i) and (ii).}
\label{samesidebicorn}
\end{figure}

\noindent \textbf{Step 2.3 --- Analysis of arcs joining adjacent sides of $c$}

Now we discuss the case of arcs $\delta_i$ which are parallel to $A$-sides of $c$. For this purpose, orient the curve $c$. In each of the behaviors (iii)-(vi) below, we will find a nonseparating bicorn or 4-corn which is $3$-close to $c$. Thus the induction step will be complete if any of the behaviors (iii)-(vi) occur.

\begin{enumerate}[(i)]
\setcounter{enumi}{2}
\item If the arc $\delta_i$ is not one of the $A$-sides of $c$ but is parallel to one of the $A$-sides of $c$ and joins the left side of $c$ to the right side of $c$ then we find a bicorn (if $\delta_i$ shares an endpoint with an $A$-side of $c$) or a 4-corn (otherwise) $c'$ with $i(c,c')=1$ (see Figure \ref{parallel1}). Hence $c'$ is nonseparating and $d(c,c')\leq 3$.

\item Consider $B$-sides $\beta'$ and $\beta''$ separated by the $A$-sides $\alpha'$ and $\alpha''$ and the $B$-side $\beta$. Orient the sides $\alpha'$, $\alpha''$, $\beta'$, $\beta''$, and $\beta$ to agree with the orientation of $c$ and suppose the sides have been named so that the concatenation $\beta' * \alpha' * \beta * \alpha'' * \beta''$ is a positively-oriented subpath of $c$. If
\begin{itemize}

\item $\delta_i$ joins the left side of $\beta$ to the left side of $\beta'$ and $\delta_j$ joins the left side of $\beta$ to the left side of $\beta''$,

\item and $(\delta_i\cap \beta) > (\delta_j\cap \beta)$ with respect to the orientation of $\beta$,

\end{itemize}
then we find two 4-corns or bicorns $c'$ and $c''$ between $A$ and $B$ with $i(c',c'')=1$ and $i(c',c)=i(c'',c)=0$ (see Figure \ref{parallel2}). Note that we are using that $k\geq 3$ here, so that $\beta'\neq \beta''$. Hence $c'$ is nonseparating and $d(c,c')=1$.

\item Consider $\delta_i$ and $\delta_j$ which join the left side of $c$ to itself. Suppose that $\delta_i$ joins a $B$-side $\beta$ of $c$ to itself and $\delta_j$ joins $\beta$ to an adjacent $B$-side $\beta'$ of $c$. Suppose that $\delta_i$ and $\delta_j$ have the property that $\beta|[y_i,y_{i+1}]$ contains the point $\delta_j\cap \beta$. Then we find a bicorn $c'$ and a 4-corn or bicorn $c''$ with $i(c',c'')=1$ and $i(c,c')=i(c,c'')=0$ (see Figure \ref{parallel3}). We have $c'\in \B(A,B)$ and $d(c,c')=1$.

\item If $\delta_i$ and $\delta_j$ both join the left side of $c$ to itself and are both parallel to the $A$-side $\alpha$ of $c$ and are not nested then we again find two 4-corns or bicorns $c'$ and $c''$ between $A$ and $B$ with $i(c,c')=i(c,c'')=0$ and $i(c',c'')=1$. See Figure \ref{parallel4}. Hence $c'$ is nonseparating and $d(c,c')=1$.

\end{enumerate}

Thus we may assume without loss of generality that none of the behaviors (iii)-(vi) occur.

\begin{figure}[h]
\centering
\begin{tabular}{c c c}

\def\svgwidth{0.33\textwidth}
\begingroup%
  \makeatletter%
  \providecommand\color[2][]{%
    \errmessage{(Inkscape) Color is used for the text in Inkscape, but the package 'color.sty' is not loaded}%
    \renewcommand\color[2][]{}%
  }%
  \providecommand\transparent[1]{%
    \errmessage{(Inkscape) Transparency is used (non-zero) for the text in Inkscape, but the package 'transparent.sty' is not loaded}%
    \renewcommand\transparent[1]{}%
  }%
  \providecommand\rotatebox[2]{#2}%
  \newcommand*\fsize{\dimexpr\f@size pt\relax}%
  \newcommand*\lineheight[1]{\fontsize{\fsize}{#1\fsize}\selectfont}%
  \ifx\svgwidth\undefined%
    \setlength{\unitlength}{928.15795898bp}%
    \ifx\svgscale\undefined%
      \relax%
    \else%
      \setlength{\unitlength}{\unitlength * \real{\svgscale}}%
    \fi%
  \else%
    \setlength{\unitlength}{\svgwidth}%
  \fi%
  \global\let\svgwidth\undefined%
  \global\let\svgscale\undefined%
  \makeatother%
  \begin{picture}(1,0.81446456)%
    \lineheight{1}%
    \setlength\tabcolsep{0pt}%
    \put(0,0){\includegraphics[width=\unitlength,page=1]{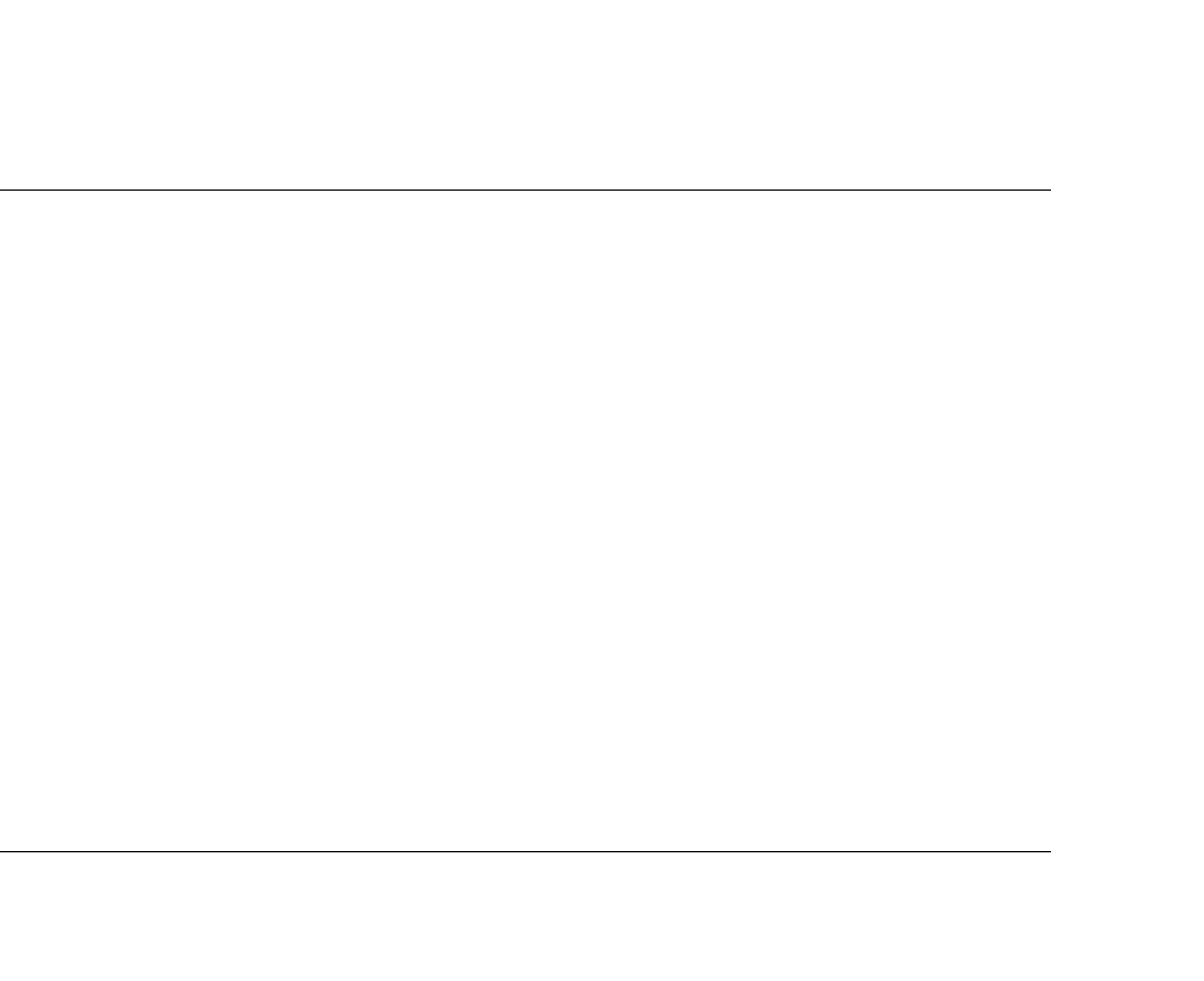}}%
    \put(0.6949248,0.4073293){\color[rgb]{0,0,1}\makebox(0,0)[lt]{\lineheight{1.25}\smash{\begin{tabular}[t]{l}$d$\end{tabular}}}}%
    \put(0,0){\includegraphics[width=\unitlength,page=2]{oppside1.pdf}}%
    \put(0.72840125,0.69822803){\color[rgb]{0,0.58823529,0}\makebox(0,0)[lt]{\lineheight{1.25}\smash{\begin{tabular}[t]{l}$c'$\end{tabular}}}}%
    \put(0.6949248,0.4073293){\color[rgb]{0,0,1}\makebox(0,0)[lt]{\lineheight{1.25}\smash{\begin{tabular}[t]{l}$d$\end{tabular}}}}%
  \end{picture}%
\endgroup%
 &

\def\svgwidth{0.33\textwidth}
\begingroup%
  \makeatletter%
  \providecommand\color[2][]{%
    \errmessage{(Inkscape) Color is used for the text in Inkscape, but the package 'color.sty' is not loaded}%
    \renewcommand\color[2][]{}%
  }%
  \providecommand\transparent[1]{%
    \errmessage{(Inkscape) Transparency is used (non-zero) for the text in Inkscape, but the package 'transparent.sty' is not loaded}%
    \renewcommand\transparent[1]{}%
  }%
  \providecommand\rotatebox[2]{#2}%
  \newcommand*\fsize{\dimexpr\f@size pt\relax}%
  \newcommand*\lineheight[1]{\fontsize{\fsize}{#1\fsize}\selectfont}%
  \ifx\svgwidth\undefined%
    \setlength{\unitlength}{928.15795898bp}%
    \ifx\svgscale\undefined%
      \relax%
    \else%
      \setlength{\unitlength}{\unitlength * \real{\svgscale}}%
    \fi%
  \else%
    \setlength{\unitlength}{\svgwidth}%
  \fi%
  \global\let\svgwidth\undefined%
  \global\let\svgscale\undefined%
  \makeatother%
  \begin{picture}(1,0.81446456)%
    \lineheight{1}%
    \setlength\tabcolsep{0pt}%
    \put(0,0){\includegraphics[width=\unitlength,page=1]{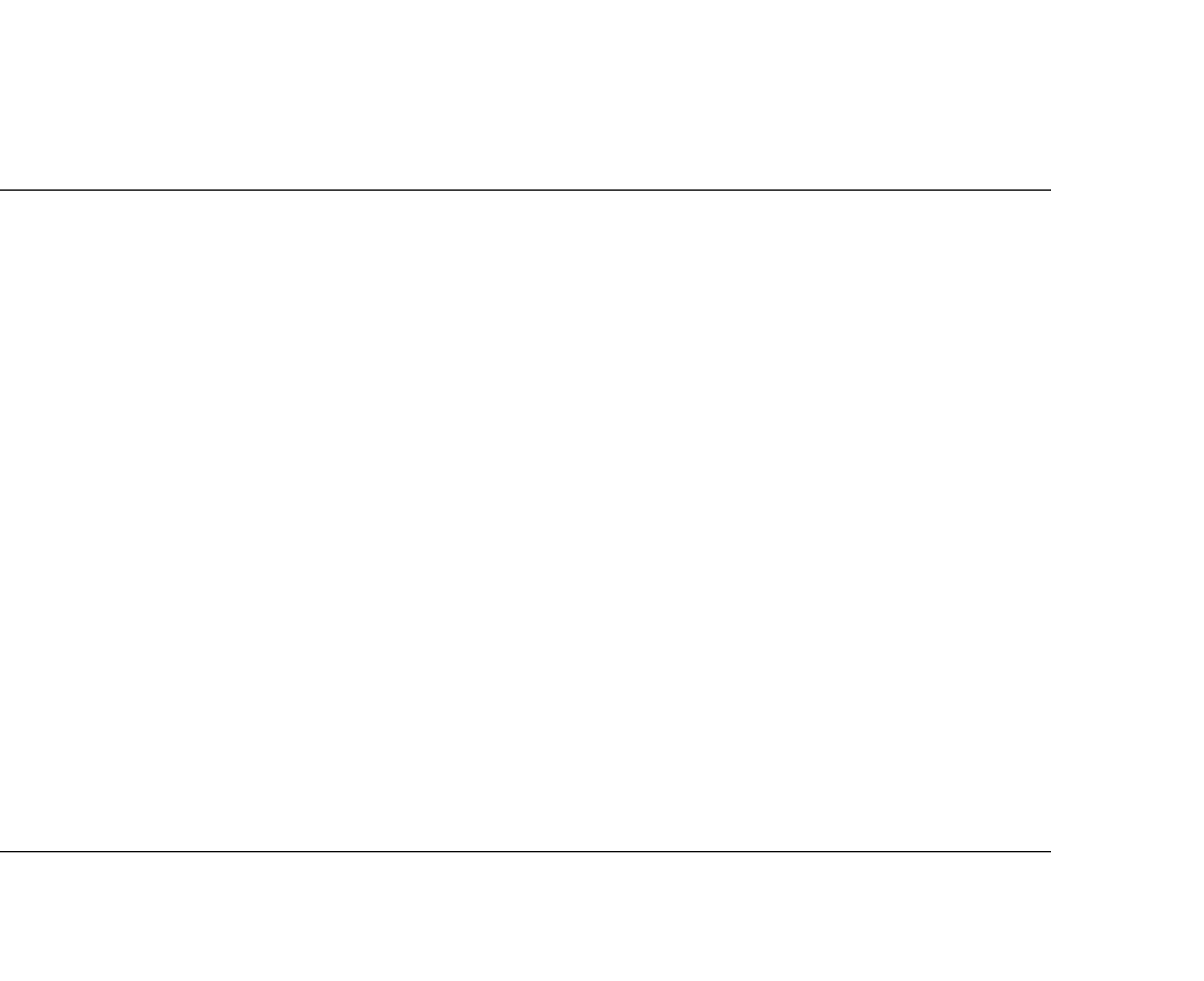}}%
    \put(0.6949248,0.4073293){\color[rgb]{0,0,1}\makebox(0,0)[lt]{\lineheight{1.25}\smash{\begin{tabular}[t]{l}$d$\end{tabular}}}}%
    \put(0,0){\includegraphics[width=\unitlength,page=2]{oppside2.pdf}}%
    \put(0.70185097,0.69822803){\color[rgb]{0,0.58823529,0}\makebox(0,0)[lt]{\lineheight{1.25}\smash{\begin{tabular}[t]{l}$c'$\end{tabular}}}}%
    \put(0.6949248,0.4073293){\color[rgb]{0,0,1}\makebox(0,0)[lt]{\lineheight{1.25}\smash{\begin{tabular}[t]{l}$d$\end{tabular}}}}%
    \put(0,0){\includegraphics[width=\unitlength,page=3]{oppside2.pdf}}%
  \end{picture}%
\endgroup%
 &

\def\svgwidth{0.33\textwidth}
\begingroup%
  \makeatletter%
  \providecommand\color[2][]{%
    \errmessage{(Inkscape) Color is used for the text in Inkscape, but the package 'color.sty' is not loaded}%
    \renewcommand\color[2][]{}%
  }%
  \providecommand\transparent[1]{%
    \errmessage{(Inkscape) Transparency is used (non-zero) for the text in Inkscape, but the package 'transparent.sty' is not loaded}%
    \renewcommand\transparent[1]{}%
  }%
  \providecommand\rotatebox[2]{#2}%
  \newcommand*\fsize{\dimexpr\f@size pt\relax}%
  \newcommand*\lineheight[1]{\fontsize{\fsize}{#1\fsize}\selectfont}%
  \ifx\svgwidth\undefined%
    \setlength{\unitlength}{928.15795898bp}%
    \ifx\svgscale\undefined%
      \relax%
    \else%
      \setlength{\unitlength}{\unitlength * \real{\svgscale}}%
    \fi%
  \else%
    \setlength{\unitlength}{\svgwidth}%
  \fi%
  \global\let\svgwidth\undefined%
  \global\let\svgscale\undefined%
  \makeatother%
  \begin{picture}(1,0.81446456)%
    \lineheight{1}%
    \setlength\tabcolsep{0pt}%
    \put(0,0){\includegraphics[width=\unitlength,page=1]{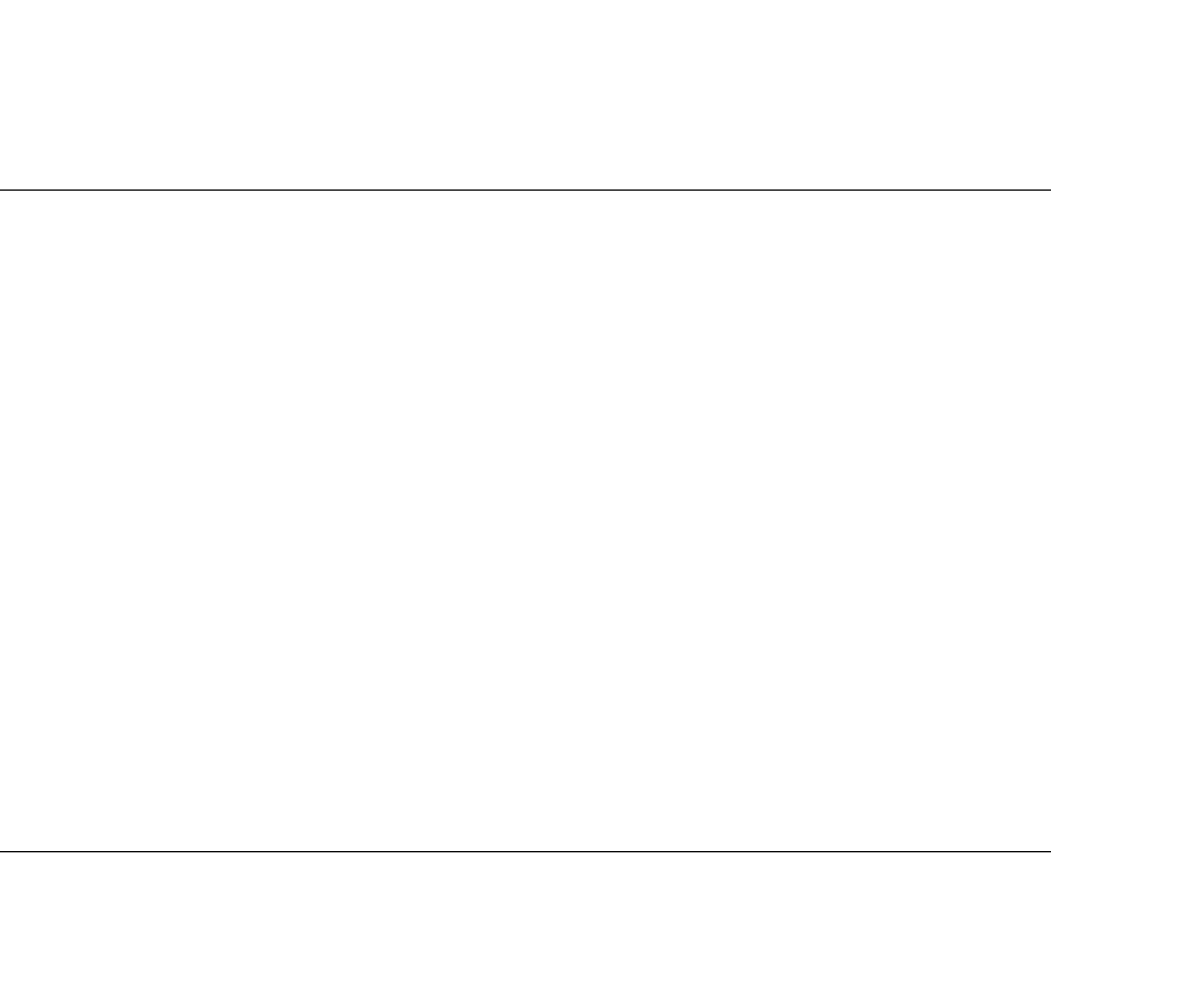}}%
    \put(0.63258935,0.4073293){\color[rgb]{0,0,1}\makebox(0,0)[lt]{\lineheight{1.25}\smash{\begin{tabular}[t]{l}$d$\end{tabular}}}}%
    \put(0,0){\includegraphics[width=\unitlength,page=2]{oppside3.pdf}}%
    \put(0.69072005,0.69294747){\color[rgb]{0,0.58823529,0}\makebox(0,0)[lt]{\lineheight{1.25}\smash{\begin{tabular}[t]{l}$c'$\end{tabular}}}}%
    \put(0,0){\includegraphics[width=\unitlength,page=3]{oppside3.pdf}}%
  \end{picture}%
\endgroup%

\end{tabular}
\caption{If an arc of $d$ is parallel to an $A$-side of $c$ and joins the left side of $c$ to the right side, then we find a uniformly close nonseparating 4-corn or bicorn $c'$.}
\label{parallel1}
\end{figure}

\begin{figure}[h]

\centering
\def\svgwidth{0.5\textwidth}
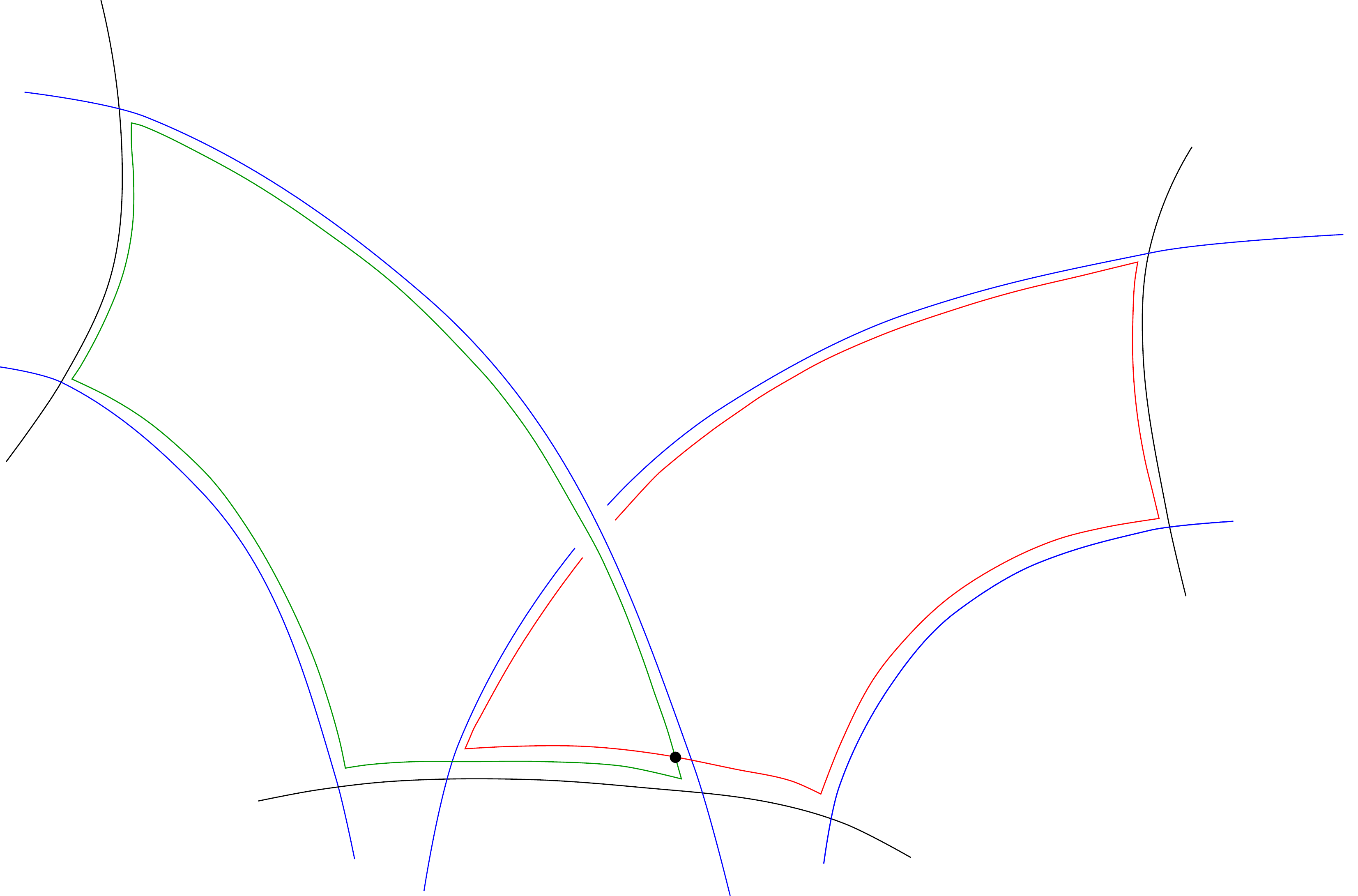

\caption{Behavior (iv) and uniformly close nonseparating 4-corns or bicorns $c'$ and $c''$.}
\label{parallel2}
\end{figure}

\begin{figure}[h]

\centering
\def\svgwidth{0.35\textwidth}
\begingroup%
  \makeatletter%
  \providecommand\color[2][]{%
    \errmessage{(Inkscape) Color is used for the text in Inkscape, but the package 'color.sty' is not loaded}%
    \renewcommand\color[2][]{}%
  }%
  \providecommand\transparent[1]{%
    \errmessage{(Inkscape) Transparency is used (non-zero) for the text in Inkscape, but the package 'transparent.sty' is not loaded}%
    \renewcommand\transparent[1]{}%
  }%
  \providecommand\rotatebox[2]{#2}%
  \newcommand*\fsize{\dimexpr\f@size pt\relax}%
  \newcommand*\lineheight[1]{\fontsize{\fsize}{#1\fsize}\selectfont}%
  \ifx\svgwidth\undefined%
    \setlength{\unitlength}{573.14224844bp}%
    \ifx\svgscale\undefined%
      \relax%
    \else%
      \setlength{\unitlength}{\unitlength * \real{\svgscale}}%
    \fi%
  \else%
    \setlength{\unitlength}{\svgwidth}%
  \fi%
  \global\let\svgwidth\undefined%
  \global\let\svgscale\undefined%
  \makeatother%
  \begin{picture}(1,1.31188645)%
    \lineheight{1}%
    \setlength\tabcolsep{0pt}%
    \put(0,0){\includegraphics[width=\unitlength,page=1]{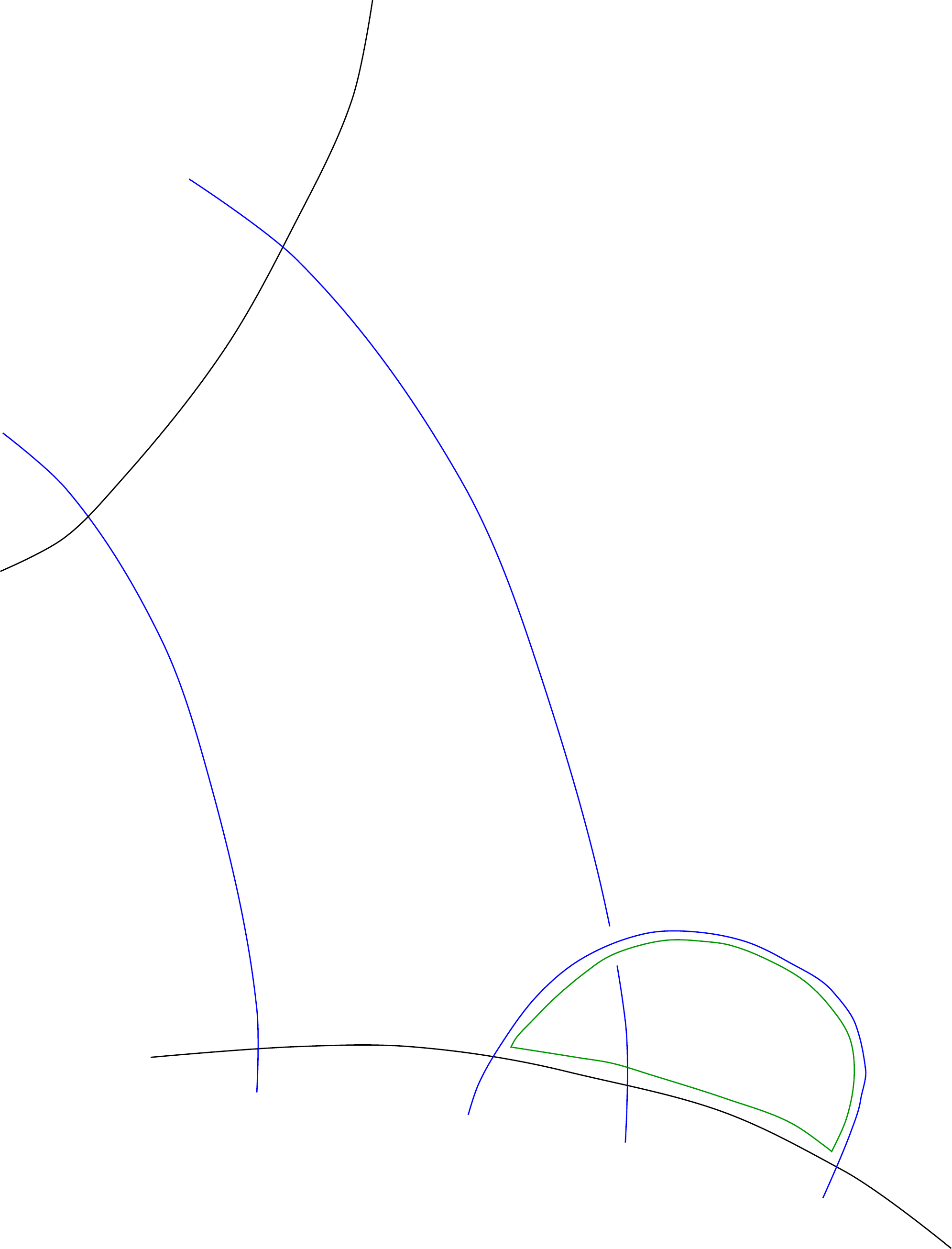}}%
    \put(0.72849018,0.25168741){\color[rgb]{0,0.58823529,0}\makebox(0,0)[lt]{\lineheight{1.25}\smash{\begin{tabular}[t]{l}$c'$\end{tabular}}}}%
    \put(0.51049585,0.12211673){\color[rgb]{0,0,0}\makebox(0,0)[lt]{\lineheight{1.25}\smash{\begin{tabular}[t]{l}$\beta$\end{tabular}}}}%
    \put(0.12715626,0.92845195){\color[rgb]{0,0,0}\makebox(0,0)[lt]{\lineheight{1.25}\smash{\begin{tabular}[t]{l}$\beta'$\end{tabular}}}}%
    \put(0,0){\includegraphics[width=\unitlength,page=2]{adjacentandsame.pdf}}%
    \put(0.13747357,0.75213893){\color[rgb]{1,0,0}\makebox(0,0)[lt]{\lineheight{1.25}\smash{\begin{tabular}[t]{l}$c''$\end{tabular}}}}%
  \end{picture}%
\endgroup%
 \\

\caption{Behavior (v) and a uniformly close nonseparating bicorn $c'$}
\label{parallel3}
\end{figure}

\begin{figure}[h]

\centering
\def\svgwidth{0.4\textwidth}
\begingroup%
  \makeatletter%
  \providecommand\color[2][]{%
    \errmessage{(Inkscape) Color is used for the text in Inkscape, but the package 'color.sty' is not loaded}%
    \renewcommand\color[2][]{}%
  }%
  \providecommand\transparent[1]{%
    \errmessage{(Inkscape) Transparency is used (non-zero) for the text in Inkscape, but the package 'transparent.sty' is not loaded}%
    \renewcommand\transparent[1]{}%
  }%
  \providecommand\rotatebox[2]{#2}%
  \newcommand*\fsize{\dimexpr\f@size pt\relax}%
  \newcommand*\lineheight[1]{\fontsize{\fsize}{#1\fsize}\selectfont}%
  \ifx\svgwidth\undefined%
    \setlength{\unitlength}{909.20239066bp}%
    \ifx\svgscale\undefined%
      \relax%
    \else%
      \setlength{\unitlength}{\unitlength * \real{\svgscale}}%
    \fi%
  \else%
    \setlength{\unitlength}{\svgwidth}%
  \fi%
  \global\let\svgwidth\undefined%
  \global\let\svgscale\undefined%
  \makeatother%
  \begin{picture}(1,0.75303995)%
    \lineheight{1}%
    \setlength\tabcolsep{0pt}%
    \put(0,0){\includegraphics[width=\unitlength,page=1]{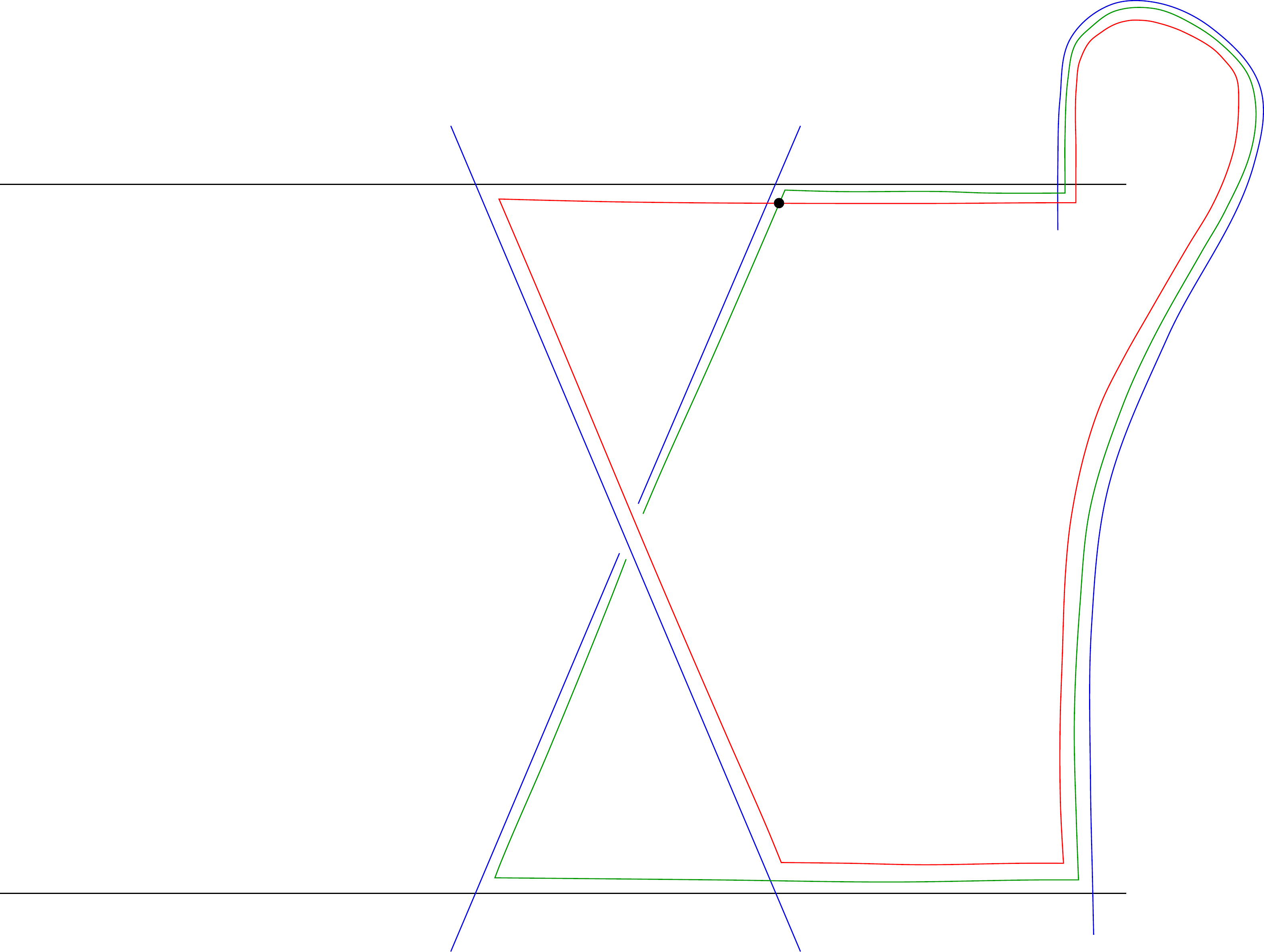}}%
    \put(0.55857433,0.40584594){\color[rgb]{0,0.58823529,0}\makebox(0,0)[lt]{\lineheight{1.25}\smash{\begin{tabular}[t]{l}$c'$\end{tabular}}}}%
    \put(0.44544537,0.50836908){\color[rgb]{1,0,0}\makebox(0,0)[lt]{\lineheight{1.25}\smash{\begin{tabular}[t]{l}$c''$\end{tabular}}}}%
    \put(0.3853456,0.20079967){\color[rgb]{0,0,1}\makebox(0,0)[lt]{\lineheight{1.25}\smash{\begin{tabular}[t]{l}$d$\end{tabular}}}}%
    \put(0.87660394,0.23664969){\color[rgb]{0,0,1}\makebox(0,0)[lt]{\lineheight{1.25}\smash{\begin{tabular}[t]{l}$\alpha$\end{tabular}}}}%
  \end{picture}%
\endgroup%

\caption{Behavior (vi) and uniformly close nonseparating 4-corns or bicorns $c'$ and $c''$.}
\label{parallel4}
\end{figure}

\noindent \textbf{Step 3 --- Analysis of the pattern of intersection of $d$ with $c$}

Finally then recall that we are assuming without loss of generality that every arc $\delta_i$ for $i=1,\ldots,n-1$ either
\begin{enumerate}[(1)]
\item is an $A$-side of $c$,
\item joins the left (or right) of a $B$-side of $c$ to itself, or
\item is parallel to an $A$-side of $c$ and joins the left or (right) side of $c$ to itself.
\end{enumerate}

We now consider \textit{only} the arcs $\delta_i$ which join the left side of $c$ to itself. We are further assuming
\begin{itemize}
\item if two such arcs $\delta_i$ and $\delta_j$ both join a given $B$-side of $c$ to itself then they are nested;
\item if two such arcs $\delta_i$ and $\delta_j$ are both parallel to a given $A$-side of $c$ then they are nested;
\item if $\delta_i$ has type (2) above and $\delta_j$ has type (3) above then they do not have the configuration shown in Figure \ref{parallel3};
\item if $\delta_i$ and $\delta_j$ both have type (3) above then they do not have the configuration shown in Figure \ref{parallel2}.
\end{itemize}

We find that the arcs $\delta_j$ joining the left side of $c$ to itself must form a pattern as shown in Figure \ref{nonseparatingcurve}. Formally, this implies the following properties. Note that the endpoints of $\beta_i$ are $q_i$ and $p_{i+1}$. Then if $\beta_i$ is oriented from $q_i$ to $p_{i+1}$ there are points $q_i<r_i<s_i<p_{i+1}$ such that for any arc $\delta_j$ joining the left side of $c$ to itself:
\begin{itemize}
\item if $\delta_j$ joins $\beta_i$ to $\beta_{i-1}$ (i.e. is parallel to $\alpha_i$) then $\delta_j\cap \beta_i<r_i$;
\item if $\delta_j$ joins $\beta_i$ to $\beta_{i+1}$ (i.e. is parallel to $\alpha_{i+1}$) then $\delta_j\cap \beta_i>s_i$.
\end{itemize}

From this property and the ruling out of behaviors (iii)-(vi), we may define a set of arcs $\delta'_i$ for $i=1,\ldots,k$ as follows. Either $\delta'_i$ is parallel to $\alpha_i$ and is an ``outermost'' arc with this property or $\delta'_i$ is $\alpha_i$ itself if there is no such arc parallel to $\alpha_i$. Formally:
\begin{itemize}
\item for each $i=1,\ldots,k$, either
\begin{enumerate}[(1)]
\item $\delta_i'=\alpha_i$, or
\item $\delta_i'$ is equal to some $\delta_j$ which is parallel to the $A$-side $\alpha_i$ of $c$ and joins the left side of $c$ to itself,
\end{enumerate}
\item denoting by $z_i$ the endpoint of $\delta'_i$ on $\beta_i$ and $w_i$ the endpoint of $\delta'_{i+1}$ on $\beta_i$ (indices being taken mod $k$), if any $\delta_j$ intersects the interior of the arc $\beta_i|[z_i,w_i]$ and joins the left side of $c$ to itself, then $\delta_j$ must join $\beta_i|[z_i,w_i]$ to itself.
\end{itemize}

Moreover, for each $i=1,\ldots,k$, there is a (possibly empty) collection of ``outermost'' arcs $\{\delta''^j_i\}_{j=1}^{m_i}$ of $d$ joining the left side of $\beta_i$ to itself. Formally:
\begin{itemize}
\item each $\delta''^j_i$ is equal to some $\delta_l$ joining the left side $\beta_i$ to itself;
\item the $\delta''^j_i$ are arranged such that if $u_i^j$ and $v_i^j$ are the endpoints of $\delta''^j_i$ we have \[z_i<u_i^1<v_i^1<u_i^2<v_i^2< \ldots < u_i^{m_i} <v_i^{m_i}<w_i\] in the orientation on $\beta_i$;
\item each arc \[\beta_i|[z_i,u_i^1], \beta_i|[v_i^1,u_i^2], \beta_i|[v_i^2,u_i^3], \ldots, \beta_i|[v_i^{m_i-1},u_i^{m_i}], \beta_i|[v_i^{m_i},w_i]\] does not meet any $\delta_j$ in its interior. 
\end{itemize}

\begin{figure}[h]
\centering
\def\svgwidth{1.15\textwidth}
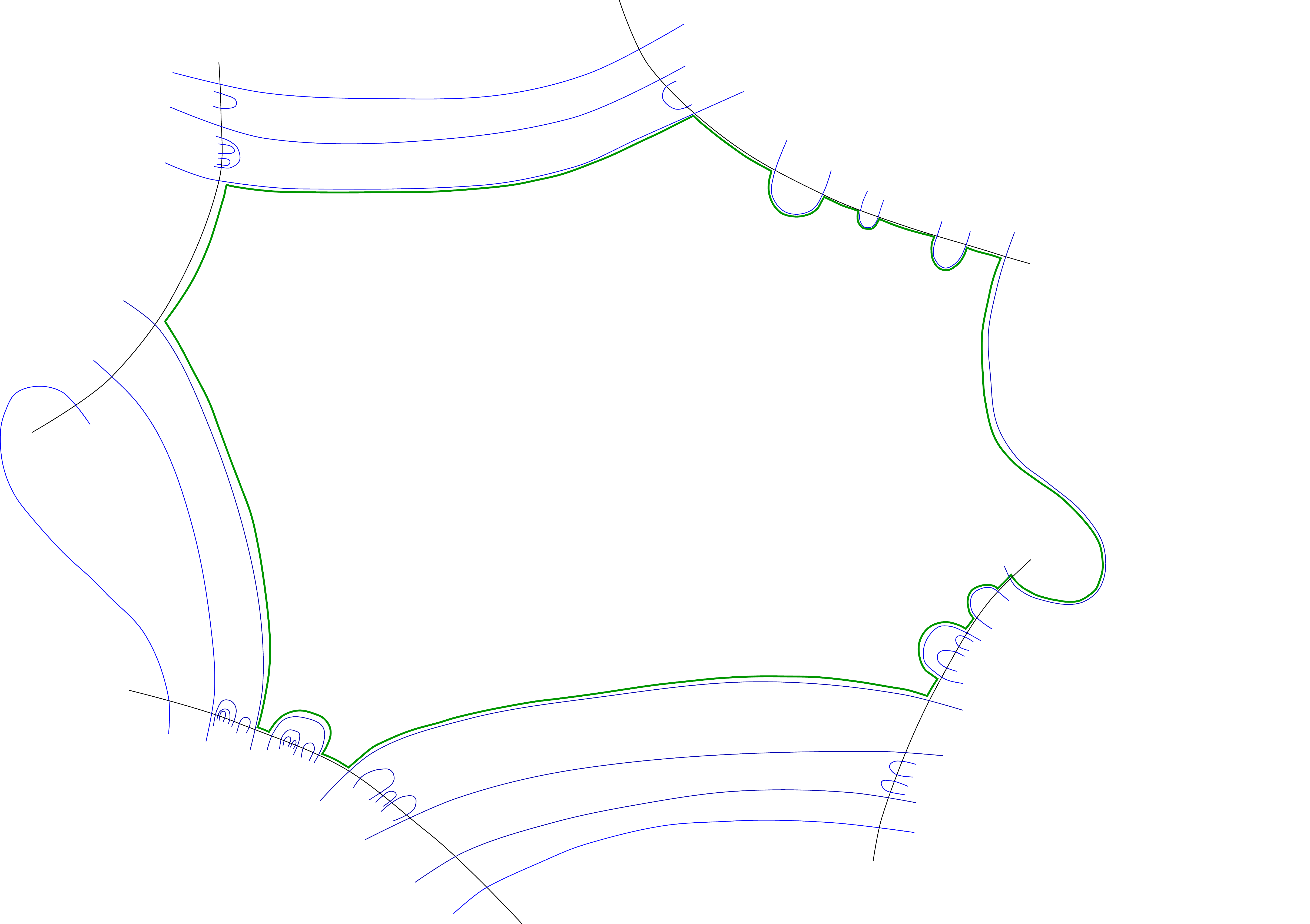

\caption{The arrangement of the arcs $\delta_i$ after behaviors (i)-(vi) have been ruled out. This also shows the construction of the nonseparating curve $c_0$.}
\label{nonseparatingcurve}
\end{figure} 

\noindent \textbf{Step 4 --- Construction of a path to $d$}

Each $\delta_i'$ which is not equal to $\alpha_i$ defines either a bicorn together with a subarc of $\beta_{i-1}$ or $\beta_i$ (if it shares an endpoint with $\alpha_i$) or a 4-corn together with $\alpha_i$ and subarcs of $\beta_{i-1}$ and $\beta_i$ (otherwise) which we call $c_i'$. Namely, recall that $z_i$ is the endpoint of $\delta'_i$ on $\beta_i$ and $w_{i-1}$ is the endpoint of $\delta'_i$ on $\beta_{i-1}$ and we set \[c_i'=\alpha_i\cup \beta_i|[q_i,z_i] \cup \delta_i' \cup \beta_{i-1}|[w_{i-1},p_i]\] (noting that the arc $\beta_i|[q_i,z_i]$ is degenerate if $q_i=z_i$ and similarly for $\beta_{i-1}|[w_{i-1},p_i]$). Moreover, each $\delta''^j_i$ defines a bicorn together with a subarc of $\beta_i$ which we call $c''^j_i$. Namely, \[c''^j_i=\delta_i''^j\cup \beta_i|[u_i^j,v_i^j].\] If any of the curves $c_i'$ or $c_i''^j$ is nonseparating, then the induction is complete by the usual arguments. Otherwise, we orient each $c_i'$ and each $c''^j_i$ such that the resulting orientation matches that of $c$ along their overlap. The arcs where the $c_i'$ and $c''^j_i$ overlap with $c$ are moreover pairwise disjoint. We obtain a simple closed curve $c_0$ by replacing each of these arcs of $c$ with the corresponding complementary subarcs of $c'_i$ or $c''^j_i$. In $H_1(\overline{S})$, we have \[[c_0]=[c] + \sum_{i=1}^k [c'_i]+ \sum_{i=1}^k \sum_{j=1}^{m_i} [c''^j_i]\] when $c_0$ is oriented appropriately. Moreover, since each homology class in the sum other than $[c]$ is 0, we see that $c_0$ (which is homologous to $c$) is nonseparating. We have both $i(c_0,c)=0$ and $i(c_0,d)=0$ so in this case we have $d(c,d)\leq 2$ and $d\in \B(A,B)$.

\noindent \textbf{Base case}

Now we consider the base case $k=2$. We claim that we may set $D(2)=8$. Let $c$ be a nonseparating 4-corn and choose $a\in A$. If $a$ intersects each of the $B$-sides of $c$ at most once then we see that $i(a,c)\leq 2$ and $c$ is $5$-close to $a\in \B(A,B)$. Otherwise we easily find an arc of $a$ intersecting some $B$-side $\beta^*$ of $c$ exactly at its endpoints, and for which the resulting bicorn $d$ of $a$ with $\beta^*$ is nonseparating, as before. As before, we consider the consecutive points of intersection $y_1,\ldots,y_n$ of the $a$-side of $d$ with the $B$-sides of $c$. Again, we choose the numbering such that $\vec{d}|[y_i,y_{i+1}]$ is contained in the $a$-side of $d$ for $i<n$ and $\vec{d}|[y_n,y_1]$ is the $B$-side of $d$. Each arc $\delta_i=\vec{d}|[y_i,y_{i+1}]$ for $i=2,\ldots,n-1$ joins the other $B$-side $\beta\neq \beta^*$ of $c$ to itself. If any of these arcs have the behavior described in points (i) or (ii) above then we find a nonseparating bicorn $c'$ with $i(c,c')\leq 1$ and thus the proof of the theorem is complete. Otherwise we orient $c$ and consider the arcs $\delta_i$ which join the left side of $c$ to itself. There is a collection of ``outermost'' arcs $\delta'_1,\ldots,\delta'_m$ such that any $\delta_i$ joining the left side of $\beta$ to itself is nested inside one of the $\delta'_j$. Each $\delta'_j$ defines a bicorn $c'_j$ together with a subarc of $\beta$. Namely if $u_j$ and $v_j$ are the endpoints of $\delta'_j$ then $c'_j=\delta'_j\cup \beta|[u_j,v_j]$. If any of these bicorns $c'_j$ is nonseparating then the proof is complete. Otherwise we define a simple closed curve $c_0$ by removing the arc of $c'_j$ along $\beta$ (that is $\beta|[u_j,v_j]$) from $c$ and replacing it by $\delta_j'$. Again, $c_0$ is disjoint from $c$, homologous to $c$, and thus nonseparating. We have $i(c_0,d)\leq 3$ (see the proof of Claim 2.5 from \cite{nonsepbicorns}) so $d(c,d)\leq d(c,c_0)+d(c_0,d)\leq 1+7=8$. See Figure \ref{4corn} for a summary. So $d\in \B(A,B)$ and is 8-close to $c$. We may take $D(2)=8$ and this completes the proof of the base case.

\begin{figure}[h]
\centering
\def\svgwidth{0.8\textwidth}
\begingroup%
  \makeatletter%
  \providecommand\color[2][]{%
    \errmessage{(Inkscape) Color is used for the text in Inkscape, but the package 'color.sty' is not loaded}%
    \renewcommand\color[2][]{}%
  }%
  \providecommand\transparent[1]{%
    \errmessage{(Inkscape) Transparency is used (non-zero) for the text in Inkscape, but the package 'transparent.sty' is not loaded}%
    \renewcommand\transparent[1]{}%
  }%
  \providecommand\rotatebox[2]{#2}%
  \newcommand*\fsize{\dimexpr\f@size pt\relax}%
  \newcommand*\lineheight[1]{\fontsize{\fsize}{#1\fsize}\selectfont}%
  \ifx\svgwidth\undefined%
    \setlength{\unitlength}{837.04185185bp}%
    \ifx\svgscale\undefined%
      \relax%
    \else%
      \setlength{\unitlength}{\unitlength * \real{\svgscale}}%
    \fi%
  \else%
    \setlength{\unitlength}{\svgwidth}%
  \fi%
  \global\let\svgwidth\undefined%
  \global\let\svgscale\undefined%
  \makeatother%
  \begin{picture}(1,0.72383745)%
    \lineheight{1}%
    \setlength\tabcolsep{0pt}%
    \put(0,0){\includegraphics[width=\unitlength,page=1]{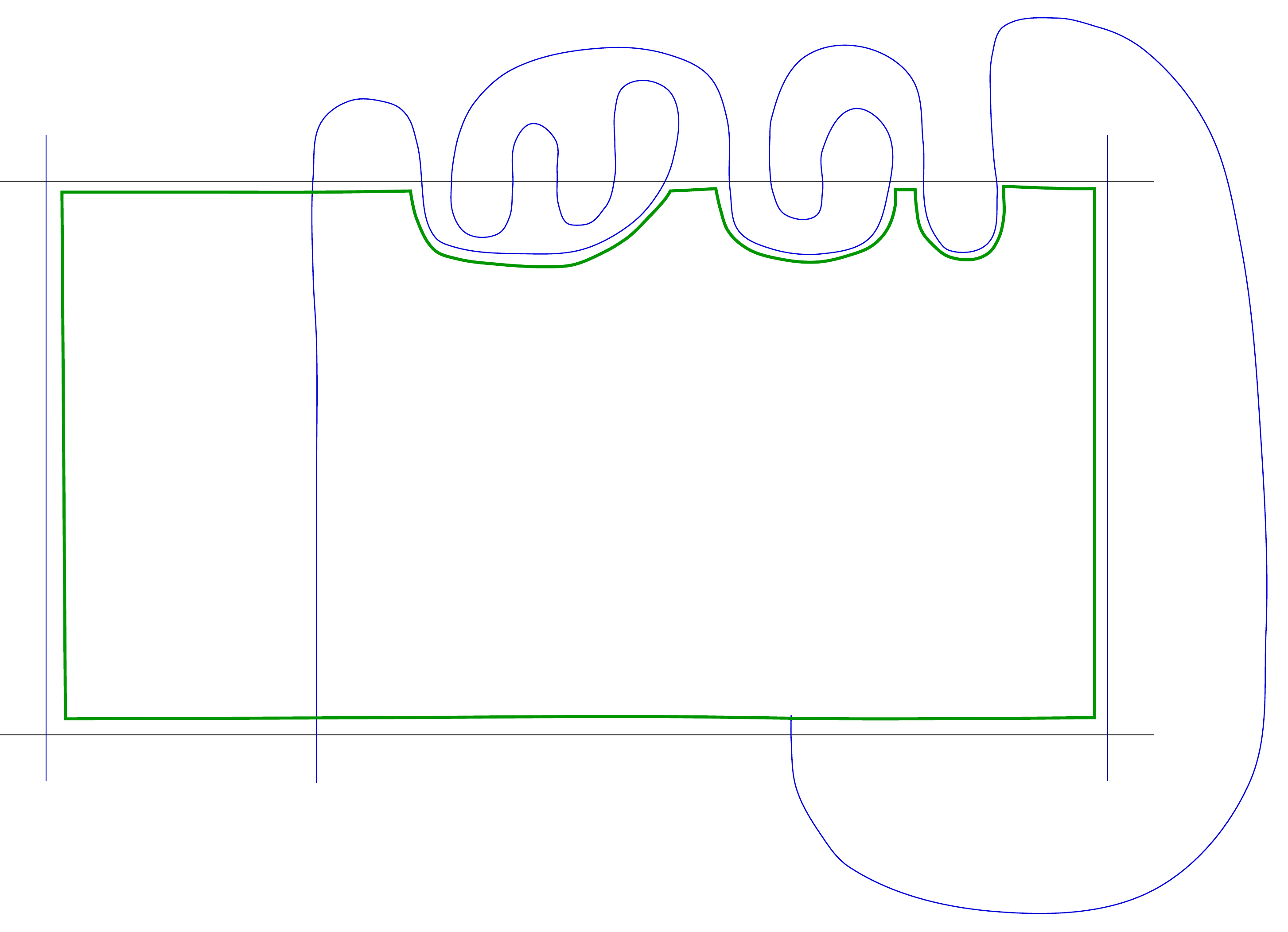}}%
    \put(0.37032143,0.48996821){\color[rgb]{0,0,1}\makebox(0,0)[lt]{\lineheight{1.25}\smash{\begin{tabular}[t]{l}$\delta'_1$\end{tabular}}}}%
    \put(0.57397095,0.48996821){\color[rgb]{0,0,1}\makebox(0,0)[lt]{\lineheight{1.25}\smash{\begin{tabular}[t]{l}$\delta'_2$\end{tabular}}}}%
    \put(0.71245272,0.48996821){\color[rgb]{0,0,1}\makebox(0,0)[lt]{\lineheight{1.25}\smash{\begin{tabular}[t]{l}$\delta'_3$\end{tabular}}}}%
    \put(0.05805868,0.35465439){\color[rgb]{0,0.58823529,0}\makebox(0,0)[lt]{\lineheight{1.25}\smash{\begin{tabular}[t]{l}$c_0$\end{tabular}}}}%
    \put(0,0){\includegraphics[width=\unitlength,page=2]{4corn.pdf}}%
    \put(0.20759809,0.4409781){\color[rgb]{0.77254902,0,0}\makebox(0,0)[lt]{\lineheight{1.25}\smash{\begin{tabular}[t]{l}$d$\end{tabular}}}}%
    \put(0,0){\includegraphics[width=\unitlength,page=3]{4corn.pdf}}%
  \end{picture}%
\endgroup%

\caption{The proof of the base case of Theorem \ref{2kcornsclose}.}
\label{4corn}
\end{figure} 
\end{proof}

\begin{rem}
\label{Dcalculation}
In fact, the above proof shows that we may take \[D(k)=8+\sum_{i=3}^k (2i+1)=k^2+2k\]
\end{rem}

\begin{defn}
If $A$ and $B$ are a pair of multicurves, such that each $a\in A$ and each $b\in B$ is nonseparating and $k\geq 1$ then we denote by $A_k(A,B)$ the set of \textit{nonseparating} $2k$-corns between $A$ and $B$. We denote by $\B_k(A,B)$ the subgraph of $\NS(S)$ spanned by $\bigcup_{l=1}^k A_l(A,B)$.
\end{defn}

Let $A$ and $B$ be multicurves such that each $a\in A$ and each $b\in B$ is nonseparating. For any choices of curves $a,a'\in A$ and $b,b'\in B$ and for any choices of geodesics $\gamma$ from $a$ to $b$ and $\gamma'$ from $a'$ to $b'$ in $\NS(S)$, the geodesics $\gamma$ and $\gamma'$ are uniformly Hausdorff close (with constant depending only on the constant of hyperbolicity of $\NS(S)$) by the Morse Lemma. We denote by $[A,B]$ the union of all geodesics from curves in $A$ to curves in $B$ in $\NS(S)$.

\begin{cor}
\label{2kcornshausclose}
Let $k\geq 1$. Then there exists $E=E(k)$ with the following property. Let $S$ be a finite genus orientable surface and $A$ and $B$ be multicurves on $S$ such that each $a\in A$ and each $b\in B$ is nonseparating. Then $\B_k(A,B)$ is $E$-Hausdorff close to $[A,B]$. Moreover, the optimal $E(k)$ grows at most quadratically with $k$.
\end{cor}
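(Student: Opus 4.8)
The plan is to combine Theorem \ref{2kcornsclose} (nonseparating $2k$-corns are $D(k)$-close to nonseparating bicorns between $A$ and $B$) with the description of $\B(A,B)$ coming from Corollary \ref{bicornshausclose}, and to bootstrap from single curves to multicurves. First I would handle the case in which $A=\{a\}$ and $B=\{b\}$ are single curves. In that case $\B_k(\{a\},\{b\})$ contains $\B(a,b)$ as a subgraph (every bicorn is a $2$-corn, i.e.\ it lies in $A_1(\{a\},\{b\})$), and Corollary \ref{bicornshausclose} tells us $\B(a,b)$ is $C$-Hausdorff close to any geodesic $[a,b]$. So $[a,b]\subset B_C(\B(a,b))\subset B_C(\B_k(\{a\},\{b\}))$, giving one of the two containments needed for Hausdorff closeness with constant $C$. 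For the reverse containment, let $c\in A_l(\{a\},\{b\})$ with $l\le k$ be a nonseparating $2l$-corn. By Theorem \ref{2kcornsclose} (applied with parameter $l$, or trivially if $l=1$) there is a nonseparating bicorn $c''\in\B(a,b)$ with $d(c,c'')\le D(l)\le D(k)$, and by Corollary \ref{bicornshausclose} we have $d(c'',[a,b])\le C$. Hence $d(c,[a,b])\le D(k)+C$. Combining the two directions, $\B_k(\{a\},\{b\})$ is $(D(k)+C)$-Hausdorff close to $[a,b]$.

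Next I would pass to general multicurves $A=\{a_1,\dots,a_m\}$ and $B=\{b_1,\dots,b_n\}$. Here the key point is that all the relevant objects are uniformly coarsely the same regardless of which component curves we pick. Since the curves within a multicurve are pairwise disjoint, any $a_i,a_j\in A$ satisfy $d_{\NS(S)}(a_i,a_j)\le 1$ (or $\le 2$ if $\genus(S)=1$, via Lemma \ref{intersectionnumber}); likewise for $B$. Consequently, by the thin-quadrilaterals consequence of $\delta$-hyperbolicity together with the Morse Lemma, the geodesic $[a_i,b_j]$ and the geodesic $[a_{i'},b_{j'}]$ are uniformly Hausdorff close for all choices of indices, with constant depending only on $\delta$; thus $[A,B]$ is within a uniform bounded Hausdorff distance of any single $[a_i,b_j]$. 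On the other side, any $2l$-corn between $A$ and $B$ uses only one component $a\in A$ and one component $b\in B$ on each of its sides — wait, actually a $2l$-corn may use several components of $A$ for its different $A$-sides; so here I would instead argue directly: each $\alpha$-side of $c$ lies on some curve in $A$, each $\beta$-side on some curve in $B$, and Theorem \ref{2kcornsclose} already produces (for $l\ge2$) a nonseparating bicorn between $A$ and $B$ uniformly close to $c$; for $l=1$ a $2$-corn \emph{is} a bicorn between $A$ and $B$, hence it lies in $\B(a,b)$ for the particular $a,b$ it uses, hence it is $C$-close to $[a,b]$, hence uniformly close to $[A,B]$. Conversely $[A,B]$ is contained in a bounded neighborhood of any $[a_i,b_j]$, which is $C$-close to $\B(a_i,b_j)\subset\B_k(A,B)$. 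Assembling these with the single-curve estimate gives a uniform constant $E(k)$; tracking it through, $E(k)\le D(k)+C+C'$ for a universal constant $C'$ absorbing the Morse/thin-quadrilateral comparisons, and since $D(k)=k^2+2k$ by Remark \ref{Dcalculation}, $E(k)$ grows at most quadratically.

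The main obstacle I anticipate is purely bookkeeping rather than conceptual: making sure that the two Hausdorff-closeness directions are both genuinely established — it is easy to show $\B_k(A,B)$ lies near $[A,B]$, but one must remember to also show $[A,B]$ lies near $\B_k(A,B)$, which is why invoking $\B(A,B)\subset\B_k(A,B)$ together with Corollary \ref{bicornshausclose} is essential. A secondary subtlety is that Corollary \ref{bicornshausclose} and Theorem \ref{2kcornsclose} are stated for single curves versus multicurves respectively, and Theorem \ref{2kcornsclose} requires $k\ge2$ while we want $k\ge1$; the $k=1$ (bicorn) case must be dispatched separately using only Corollary \ref{bicornshausclose}, and the multicurve-to-single-curve reduction must be done carefully so that the constant stays independent of $m,n$ and of the particular surface $S$ — this works precisely because the comparison constants depend only on the universal hyperbolicity constant $\delta$ of Theorem \ref{unihyp}.
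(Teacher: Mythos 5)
Your proposal is correct and follows essentially the same route as the paper: you observe $\B(A,B)\subset\B_k(A,B)$, use Corollary \ref{bicornshausclose} together with the Morse Lemma to get $[A,B]$ uniformly close to $\B(A,B)$, apply Theorem \ref{2kcornsclose} to push each $2l$-corn to a nearby bicorn, and combine; the paper packages the multicurve comparison slightly more compactly by invoking Theorem \ref{2kcornsclose} directly in the multicurve setting rather than reducing first to single curves, but the constants and the quadratic growth via $D(k)$ are identical.
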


\begin{proof}
First of all, by the Morse Lemma and Corollary \ref{bicornshausclose}, there exists $E_1>0$ independent of $S,k,A,$ and $B$ such that $[A,B]$ is $E_1$-Hausdorff close to $\B(A,B)$. In particular, $[A,B]$ is contained in the $E_1$-neighborhood of $\B(A,B)\subset \B_k(A,B)$.

On the other hand, by Theorem \ref{2kcornsclose}, any element of $\B_k(A,B)$ is $E_2(k)$-close to $\B(A,B)$ where \[E_2(k)=\max \{D(l):1\leq l \leq k\}.\] Therefore any element of $\B_k(A,B)$ is $(E_1+E_2(k))$-close to $[A,B]$ by the previous paragraph.

Taking $E(k)=E_1+E_2(k)$, we have that $[A,B]$ is $E(k)$-Hausdorff close to $\B_k(A,B)$, as claimed. Since by Remark \ref{Dcalculation} we may take $D(k)=k^2+k$, we may also take $E_2(k)=k^2+k$ and this shows that the optimal $E(k)$ grows at most quadratically.
\end{proof}

\section{Nontrivially separating bicorns are disjoint from nonseparating bicorns}
\label{nontrivsepsec}

\begin{defn}
Let $c$ be a separating simple closed curve in $S$. We say that $c$ is \textit{trivially separating} if it bounds a punctured disk. Otherwise we say that $c$ is \textit{nontrivially separating}.
\end{defn}

Thus, if $c$ is nontrivially separating then each component of $S\setminus c$ contains a nonseparating simple closed curve. We denote by $\NS^\perp(c)$ the subgraph spanned by nonseparating simple closed curves disjoint from $c$. It is a diameter two subgraph of $\NS(S)$.

\begin{lem}
Let $a\in \NS(S)^0$ and let $b$ be a simple geodesic. Suppose that the curve $c$ is a bicorn between $a$ and $b$ and that $c$ is nontrivially separating. Then there is a bicorn between $a$ and $b$ which lies in $\NS^\perp(c)$.
\label{nontrivsep}
\end{lem}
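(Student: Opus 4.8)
The plan is to eliminate $c$ by a case analysis according to how much of $a$ already lies on one side of $c$. Write $c=\alpha\cup\beta$ with $\alpha$ a subarc of $a$, $\beta$ a subarc of $b$, and corners $p,q$; let $\alpha'=\overline{a\setminus\alpha}$ be the complementary subarc of $a$, so that $a=\alpha\cup\alpha'$, with $\alpha'$ running from $p$ to $q$ and its interior disjoint from $\alpha$. (If the degenerate case $c=b$ occurs one simply runs the argument below with $a$ in place of $\alpha'$; I will assume $c\neq b$.) Since $c$ is separating, write $S\setminus c=S_1\sqcup S_2$. If $\alpha'\cap\beta=\emptyset$, then $\alpha'$ meets $c$ only at $p,q$, so the interior of $\alpha'$ lies in a single component, say $S_1$, whence $a=\alpha\cup\alpha'\subseteq\overline{S_1}$; pushing $a$ off the boundary $c$ of $\overline{S_1}$ shows $i(a,c)=0$, and since $a$ is nonseparating and is by convention a bicorn between $a$ and $b$, we are done. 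So from now on I assume $\alpha'\cap\beta\neq\emptyset$.

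\textbf{Construction of candidate bicorns.} In this case I would cut $\alpha'$ at its intersections with $\beta$. These intersections are finite in number and transverse (two compact geodesic arcs sharing no subarc); order them $t_1,\dots,t_m$ ($m\geq 1$) along $\alpha'$ and set $t_0=p$, $t_{m+1}=q$. This decomposes $\alpha'$ into subarcs $\alpha'_0,\dots,\alpha'_m$, where $\alpha'_j$ runs from $t_j$ to $t_{j+1}$. The interior of each $\alpha'_j$ is disjoint from $\alpha$ and from $\beta$, hence from $c$, so $\alpha'_j$ lies in a single closed complementary piece $\overline{S_{i_j}}$ with $i_j\in\{1,2\}$. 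Set $e_j:=\alpha'_j\cup\beta|[t_j,t_{j+1}]$. Each $e_j$ is a genuine bicorn between $a$ and $b$ (the two arcs meet exactly at $t_j,t_{j+1}$), and since $\beta|[t_j,t_{j+1}]\subseteq\beta\subseteq c=\partial\overline{S_{i_j}}$ we have $e_j\subseteq\overline{S_{i_j}}$, so $e_j$ can be isotoped into $S_{i_j}$ and $i(e_j,c)=0$. Thus every $e_j$ is already a bicorn between $a$ and $b$ that is disjoint from $c$, and all that remains is to find one that is nonseparating.

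\textbf{The homological core.} The key identity is $\sum_{j=0}^m[e_j]=[a]$ in $H_1(\overline S;\Z)$. Orienting each $e_j$ so that $\alpha'_j$ is traversed from $t_j$ to $t_{j+1}$, one gets, as $1$-chains, $\sum_j e_j=\alpha'-\sum_{j=0}^m\beta|[t_j,t_{j+1}]=a-\zeta$, where $\zeta:=\alpha+\sum_{j=0}^m\beta|[t_j,t_{j+1}]$ (with $\alpha$ oriented from $q$ to $p$). The chain $\zeta$ has boundary $(p-q)+(q-p)=0$, so it is a $1$-cycle, and it is supported on $\alpha\cup\beta=c$; since $H_1(c;\Z)=\Z\langle[c]\rangle$ and $[c]=0$ in $H_1(\overline S)$ by Fact \ref{homfact}, the class $[\zeta]$ vanishes. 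Hence $\sum_j[e_j]=[a]\neq 0$, so $[e_j]\neq 0$ for some $j$; that $e_j$ is nonseparating by Fact \ref{homfact} and therefore lies in $\NS^\perp(c)$, which is the required bicorn between $a$ and $b$.

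\textbf{Where the difficulty lies.} The step that needs the most care is this last homological bookkeeping — specifically the observation that the arcs $\beta|[t_j,t_{j+1}]$ are indexed by the order of the $t_i$ along $\alpha'$, not along $\beta$, so they need not reassemble into $\beta$; nonetheless their sum together with $\alpha$ is a cycle carried by the separating curve $c$, which is exactly what forces it to be null-homologous. One can avoid tracking orientations by passing to $\Z/2$ coefficients, since for a curve on the closed surface $\overline S$ being separating is equivalent to vanishing in $H_1(\overline S;\Z/2)$. Everything else — finiteness and transversality of $\alpha'\cap\beta$, verifying that each $e_j$ is an honest bicorn, and the isotopy pushing $e_j$ off the boundary of a complementary subsurface — is routine.
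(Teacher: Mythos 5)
Your proof is correct and takes essentially the same approach as the paper's: cut $a$ at its intersections with the $b$-side $\beta$ of $c$, observe that each resulting bicorn is isotopic into a complementary component of $c$, and use the homology identity $\sum_j [e_j]=[a]\neq 0$ in $H_1(\overline{S})$ to extract a nonseparating one. The only difference is cosmetic: the paper keeps $c$ itself in the sum so that the residual is a $1$-cycle supported on the contractible arc $\beta$ (trivially null-homologous), whereas you omit $c$ from the sum and absorb $\alpha$ into the residual $\zeta$, which is then a cycle on $c$ killed by $[c]=0$.
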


\begin{rem}
Note that in the statement of Lemma \ref{nontrivsep} it is not necessary for $b$ to be closed or even complete. This generality is used in the proof of Theorem \ref{boundarythm}.
\end{rem}

\begin{proof}[Proof of Lemma \ref{nontrivsep}]
Denote by $\beta$ the $b$-side of $c$ and by $\alpha$ the $a$-side of $c$. Denote by $p$ and $q$ the corners of $c$. Orient $a$ in such a way that the subarc $\alpha$ is oriented from $p$ to $q$ and denote by $\vec{a}$ the resulting oriented curve. Consider the consecutive points of intersection of $\vec{a}$ with $\beta$ in the order that they appear along $\vec{a}$, starting with $p$. Denote them by \[p_1=p, p_2=q, p_3,\ldots, p_n.\] For each $i$, $\vec{a}|[p_i,p_{i+1}]\cup \beta|[p_i,p_{i+1}]$ (indices being taken modulo $n$) is a bicorn between $a$ and $b$ which we denote by $c_i$ (in particular $c_1=c$). Note also that each $c_i$ is isotopic into one of the components of $S\setminus c$ (or both in the case of $c=c_1$). Orienting each $c_i$ appropriately, we have \[\sum_{i=1}^n [c_i] = \left[\sum_{i=1}^n c_i\right]= \left[\sum_{i=1}^n \vec{a}|[p_i,p_{i+1}] \right] +\left[\sum_{i=1}^n \beta|[p_i,p_{i+1}]\right]= [\vec{a}] + \left[\sum_{i=1}^n \beta|[p_i,p_{i+1}]\right].\] Note that $\sum \beta|[p_i,p_{i+1}]$ is a singular $1$-cycle contained in $\beta$. In particular, it is null-homologous so that we have simply $\sum_{i=1}^n [c_i]=[\vec{a}]$ and hence some $c_i$ is nonseparating. For this $i$ we have $c_i \in \NS^\perp(c)$, as we previously remarked, and also $c_i \in \B(a,b)$.
\end{proof}

\section{Lamination bicorns}
\label{lambicornsec}

In this section we analyze bicorns with the leaves of a lamination and their properties in homology.

\begin{lem}
\label{bicornhomology}
Let $L$ be a minimal lamination filling an essential finite type subsurface $V\subset S$. Let $a$ be either a simple closed curve or a simple bi-infinite geodesic which intersects $V$ essentially and does not spiral onto $L$. Then homology classes represented by bicorns between $a$ and $L$ generate the first homology $H_1(V)$.
\end{lem}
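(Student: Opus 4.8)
\medskip

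The plan is to prove that the subgroup $G\le H_1(V)$ generated by the classes of bicorns between $a$ and $L$ equals $H_1(V)$, by comparing $V$ with a small neighborhood of $L$ together with one subarc of $a$. First I would pass to a convenient arc: only bicorns lying in $V$ carry a class in $H_1(V)$, and such a bicorn has its $a$-side inside some component of $a\cap V$, so it suffices to work with a single component $a_0$ of $a\cap V$ which is essential and meets $L$. Such an $a_0$ exists because $a$ meets $V$ essentially: an arc of $a\cap V$ disjoint from $L$ lies in a complementary region of $L$, and since $L$ fills $V$ these regions are disks, once‑punctured disks, and boundary collars, in which a geodesic arc that cannot be isotoped off $V$ cannot lie. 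Because $a_0$ is a geodesic transverse to $L$ and does not spiral onto $L$, the set $K:=a_0\cap L$ is compact and totally disconnected, and each component of $a_0\setminus K$ lies in a single complementary region of $L$. (If $L$ is a single simple closed curve then $V$ is an annular neighborhood of $L$, $H_1(V)=\Z$ is generated by $[L]$, and $L$ is a bicorn between $a$ and $L$; so assume from now on that $L$ is not a closed curve.)

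Let $N_L$ be a closed regular neighborhood of $L$ in $V$ and put $W:=N_L\cup N_0$, where $N_0$ is a regular neighborhood of $a_0$. The first key input is that \emph{the inclusion $W\hookrightarrow V$ is surjective on $H_1$}: since $L$ fills $V$, the components $R_1,\dots,R_p$ of $V\setminus\operatorname{int}N_L$ are disks, once‑punctured disks, or collars of boundary components of $V$ (a complementary annulus with both boundary circles on $L$ would have an essential core disjoint from $L$), each $R_j$ meets $N_L$ in a single circle $S_j\subset N_L$, and the $R_j$ are pairwise disjoint. In the Mayer--Vietoris sequence of $V=N_L\cup(\bigsqcup_j R_j)$ the map $\widetilde H_0(\bigsqcup_j S_j)\to\widetilde H_0(N_L)\oplus\widetilde H_0(\bigsqcup_j R_j)$ sends $[S_j]$ to $[R_j]$ and is hence injective, so $H_1(V)\to\widetilde H_0(\bigsqcup_j S_j)$ is zero; therefore $H_1(N_L)\oplus\bigoplus_j H_1(R_j)\to H_1(V)$ is onto, and as each $H_1(R_j)$ maps into the image of $H_1(N_L)$ (because $S_j\subset N_L$), already $H_1(N_L)\to H_1(V)$, a fortiori $H_1(W)\to H_1(V)$, is onto. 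This is just the standard fact that a filling lamination carries $H_1$ of the surface.

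The second key input is that \emph{the class of every loop in $W$ is a sum of classes of bicorns between $a$ and $L$}. The space $W$ deformation retracts onto the homotopy pushout $L\cup_K a_0$, so any loop in $W$ is freely homotopic to a concatenation $\gamma=\alpha_1\ast\beta_1\ast\cdots\ast\alpha_k\ast\beta_k$ in which each $\alpha_i$ is an arc of a leaf of $L$ and each $\beta_i$ is an arc of $a_0$, with all transition points in $K$; subdividing at further points of $K$, we may assume each $\alpha_i$ meets $a_0$ only at its endpoints $P_i,Q_i\in K$. Then $c_i:=\alpha_i\cup a_0|[Q_i,P_i]$ is a bicorn between $a$ and $L$, and, orienting it as a cycle, $[\gamma]-\sum_i[c_i]$ is a $1$‑cycle supported in the arc $a_0$, hence null‑homologous in $V$; so $[\gamma]=\sum_i[c_i]$ in $H_1(V)$. (When $a_0$ is a simple closed curve the same computation gives $[\gamma]\in\langle[c_i]\rangle+\Z[a_0]$, and $[a_0]=[a]$ is itself a bicorn class.) Combining the two inputs, $H_1(V)=\operatorname{im}\!\big(H_1(W)\to H_1(V)\big)\subseteq G\subseteq H_1(V)$, so $G=H_1(V)$.

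The part I expect to require the most care is the setup around the first input: realizing $W$ as a homotopy pushout of $L$ and $a_0$ along the Cantor set $K$, justifying the decomposition of loops in $W$ into leaf‑arcs and $a_0$‑arcs, and the fact that $N_L\hookrightarrow V$ is surjective on $\pi_1$. One should also take a little care with the reduction step when $V$ has punctures, where the no‑spiraling hypothesis is what keeps $K$ compact and guarantees that the $c_i$ are honest simple closed curves; the homological bookkeeping in the second input is then routine.
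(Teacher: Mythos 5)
Your ``first input'' is correct and a genuinely different route than the paper's. The paper does not use Mayer--Vietoris at all: it builds an explicit finite graph $\Gamma = \alpha \cup \sigma_1 \cup \cdots \cup \sigma_n$ (where the $\sigma_i$ are finitely many representative leaf--arcs chosen via a parallelism equivalence, with a compactness argument to show finiteness), observes that $V\setminus\Gamma$ consists of disks and once--punctured disks so that $\pi_1(\Gamma)\twoheadrightarrow\pi_1(V)$, and then notes that an explicit generating set of loops in $\Gamma$ is homotopic to bicorns. Your Mayer--Vietoris surjectivity of $H_1(N_L)\to H_1(V)$ replaces the $\pi_1$ step cleanly. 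The trade-off is that the paper's $\Gamma$-construction does double duty --- it also supplies the finite collection of generators --- whereas your argument still owes a proof that classes coming from $N_L$ are sums of bicorn classes.

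That is exactly where your ``second input'' breaks down, and the gap is not cosmetic. The space $W$ does \emph{not} deformation retract onto $L\cup_K a_0$ (nor onto the homotopy pushout). Indeed, $N_L$ does not deformation retract onto $L$: for a minimal lamination with no closed leaves, the singular homology $H_1(L)$ is zero --- every singular $1$-simplex lands in a single leaf (leaves are the path components), distinct leaves are disjoint so a $1$-cycle splits as a sum of cycles, one per leaf, and each leaf is a line with trivial $H_1$ --- whereas $H_1(N_L)$ is a nontrivial finitely generated free abelian group. In the other direction, $L\cup_K a_0$ has enormous $H_1$ (for every leaf $\ell$ and pair of points $k,k'\in\ell\cap a_0$, the class ``$[k]-[k']$'' contributes), far from the finite rank of $H_1(W)$. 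Regular neighborhoods of laminations retract onto a train track \emph{spine}, a finite graph, not onto the lamination itself. Consequently the claimed finite alternating decomposition $\gamma=\alpha_1\ast\beta_1\ast\cdots\ast\alpha_k\ast\beta_k$ of a general loop in $W$ does not follow; a loop in $L\cup a_0$ could a priori alternate between $L$ and $a_0$ infinitely often. Restoring the argument requires precisely the finiteness mechanism the paper supplies: either pass to a train track spine $\tau$ of $N_L$ and replace branches of $\tau$ by single leaf-arcs that carry them, or do as the paper does and select finitely many leaf-arcs $\sigma_i$, one per parallelism class, using compactness of $L\cap\alpha$. Either way you are rebuilding $\Gamma$, and once you have it the rest of your homological bookkeeping (the cancellation of the $a_0$-pieces, and the final surjectivity via Mayer--Vietoris) goes through.
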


\begin{proof}
Since $a$ intersects $V$ essentially and does not spiral onto $L$, we have $a\cap L\neq \emptyset$.

Choose a compact subarc $\alpha\subset a$ contained in $V$ such that $\alpha$ intersects $L$ in its interior and not at its endpoints. Orient $\alpha$. Let $p\in \alpha \cap L$ and let $l_p$ be the leaf of $L$ through $p$. Denote by $l_p^+$ the half leaf of $l_p$ based at $p$ and leaving $\alpha$ to the right. Similarly denote by $l_p^-$ the half leaf leaving to the left. By minimality of $L$, both rays $l_p^+$ and $l_p^-$ necessarily intersect $\alpha$ in their interiors. Let $q^r$ be the first point of $l_p^+\cap \alpha$ after $p$ (orienting $l_p^+$ to start from $p$) and denote by $l_p^r$ the subarc $l|[p,q^r]$. Similarly, let $q^\ell$ be the first point on $l_p^-\cap \alpha$ after $p$ and denote by $l_p^\ell$ the subarc $l|[p,q^\ell]$.

Since every half leaf of $L$ intersects $\alpha$, the set $\mathcal{A}=\{l_p^r\}_{p\in L\cap \alpha} \cup \{l_p^\ell\}_{p\in L\cap \alpha}$ is a partition of $L$ into compact arcs. However, note that each arc $\sigma\in \mathcal{A}$ is represented exactly twice as an arc $l_p^r$ or $l_p^\ell$ (once for each of its endpoints). Define an equivalence relation on arcs in $\mathcal{A}$ by $\sigma\sim \tau$ if $\sigma$ and $\tau$ are isotopic through arcs with endpoints on $\operatorname{int}(\alpha)$ and intersecting $\alpha$ nowhere in their interiors.

Note the following fact: if $l_p^r\in \mathcal{A}$ and $q\in L\cap \alpha$ is sufficiently close to $p$ then $l_p^r\sim l_q^r$. If $r$ is replaced by $\ell$ then the fact remains true. The fact holds because if $q\in L\cap \alpha$ is very close to $p$, then $l_p^+$ and $l_q^+$ fellow travel for a very long time. In particular they will fellow travel long enough that the first intersections of $l_p^+$ and $l_q^+$ with $\alpha$ are very close together and $l_p^r$ and $l_q^r$ will stay very close together on their entire lengths.

We claim that there are finitely many $\sim$-classes of arcs in $\mathcal{A}$. To see this, for each $p\in L\cap \alpha$ we may choose an open neighborhood $U_p$ of $p$ in $\alpha$ such that for any $q\in L\cap U_p$ we have $l_p^r\sim l_q^r$ and $l_p^\ell\sim l_q^\ell$. Since $L\cap \alpha$ is compact, we may choose finitely many points $p_1,\ldots, p_k$ of $L\cap \alpha$ such that $U_{p_1}\cup \ldots \cup U_{p_k}$ covers $L\cap \alpha$. Then given $\sigma \in \mathcal{A}$ we have either $\sigma=l_q^r$ or $\sigma=l_q^\ell$ for some $q\in L\cap \alpha$. We have $q\in U_{p_i}$ for some $i$ and hence $\sigma=l_q^r\sim l_{p_i}^r$ in the first case and $\sigma=l_q^\ell \sim l_{p_i}^\ell$ in the second case. Hence every $\sigma \in \mathcal{A}$ is equivalent under $\sim$ to some $l_{p_i}^r$ or $l_{p_i}^\ell$.

Now, choose one representative of each $\sim$-class. Denote by $\Sigma=\{\sigma_1,\ldots,\sigma_n\} \subset \mathcal{A}$ the set of representatives. The union $\alpha \cup \bigcup_{i=1}^n \sigma_i$ is an embedded graph $\Gamma$ in $V$. Since $L$ fills $V$, the components of $V\setminus \Gamma$ are all disks or once-punctured disks. Choosing as a base point $x\in \Gamma$, we thus see that loops contained in $\Gamma$ based at $x$ generate $\pi_1(V,x)$. In fact, we may choose a finite generating set using the arcs $\sigma_i$. For $\sigma_i \in \Sigma$ denote by $s_i$ and $t_i$ the endpoints of $\sigma_i$ on $\alpha$. Consider the loop \[\gamma_i=\alpha|[x,s_i] * \sigma_i * \alpha|[t_i,x]\] where $\alpha|[x,s_i]$ is oriented from $x$ to $s_i$, $\sigma_i$ from $s_i$ to $t_i$, $\alpha|[t_i,x]$ from $t_i$ to $x$, and $*$ denotes concatenation of paths. The loops $\gamma_i$ clearly generate $\pi_1(\Gamma,x)$ and therefore also $\pi_1(V,x)$. Note also that $\gamma_i$ is freely homotopic to the bicorn $c_i=\alpha|[s_i,t_i]\cup \sigma_i$ between $a$ and a leaf of $L$. Now since $\pi_1(V,x)\to H_1(V)$ is surjective, the homology classes $[\gamma_i]$ generate $H_1(V)$. We have $[\gamma_i]=[c_i]$ and $c_i$ is a bicorn between $a$ and $L$.
\end{proof}

\begin{cor}
\label{nonsepbicorn}
Let $S,V,L,$ and $a$ be as in the statement of Lemma \ref{bicornhomology}. Suppose that $V$ contains a curve which is nonseparating in $S$ (possibly separating or inessential in $V$). Then there is a bicorn between $a$ and $L$ which is nonseparating and contained in $V$.
\end{cor}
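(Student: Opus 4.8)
The plan is to combine Lemma \ref{bicornhomology} with Fact \ref{homfact}. First I would observe that every bicorn between $a$ and $L$ constructed in the proof of Lemma \ref{bicornhomology} is contained in $V$: the compact arc $\alpha\subset a$ is chosen inside $V$, and since $L$ fills $V$ it is contained in $V$, so the leaf subarcs used to build the bicorns lie in $V$ as well. Hence it suffices to exhibit a single bicorn between $a$ and $L$ whose homology class in $H_1(\overline S)$ is nonzero; by Fact \ref{homfact} such a bicorn is automatically nonseparating in $S$, and it is contained in $V$ by the above.

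Next I would bring in the hypothesis. Let $\iota\colon V\hookrightarrow \overline S$ be the inclusion and $\iota_*\colon H_1(V)\to H_1(\overline S)$ the induced map. Since $V$ contains a simple closed curve $\gamma$ that is nonseparating in $S$, Fact \ref{homfact} gives $[\gamma]\neq 0$ in $H_1(\overline S)$; because $\gamma$ lies in (or is isotopic into) $V$, this class lies in $\operatorname{im}(\iota_*)$, so $\iota_*$ is not the zero homomorphism. This is precisely where the ambient homology $H_1(\overline S)$ rather than $H_1(V)$ is needed: $\gamma$ is permitted to be inessential in $V$, so a priori $[\gamma]$ could vanish in $H_1(V)$, but its nonvanishing in $H_1(\overline S)$ still forces $\operatorname{im}(\iota_*)\neq 0$ (a $2$-chain in $V$ with boundary $\gamma$ would push forward to one in $\overline S$).

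Finally, by Lemma \ref{bicornhomology} the homology classes in $H_1(V)$ of bicorns $c_1,\dots,c_n$ between $a$ and $L$ generate $H_1(V)$, so the classes $\iota_*[c_j]=[c_j]\in H_1(\overline S)$ generate the subgroup $\operatorname{im}(\iota_*)$. Since $\operatorname{im}(\iota_*)\neq 0$, at least one $[c_j]$ is nonzero in $H_1(\overline S)$, and by Fact \ref{homfact} that $c_j$ is nonseparating in $S$. It is a bicorn between $a$ and $L$ contained in $V$, which is exactly what is claimed. There is no serious obstacle here; the only points requiring care are verifying that the bicorns of Lemma \ref{bicornhomology} genuinely lie in $V$ and keeping track of the fact that "nonseparating in $S$" must be detected via $\operatorname{im}(\iota_*)\subset H_1(\overline S)$ rather than via $H_1(V)$ itself.
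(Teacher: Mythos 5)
Your proof is correct and follows essentially the same route as the paper's. The paper writes the class of the chosen nonseparating curve $c\subset V$ as a sum $[c]=\sum[c_i]$ of bicorn classes in $H_1(V)$ (using Lemma \ref{bicornhomology}), pushes this identity forward to $H_1(\overline S)$, and concludes that since $[c]\neq 0$ there, some $[c_i]\neq 0$; you phrase the same argument via the nonvanishing of the image of $\iota_*\colon H_1(V)\to H_1(\overline S)$. One small remark: your parenthetical worry that $[\gamma]$ might a priori vanish in $H_1(V)$ is moot, since $\iota_*[\gamma]\neq 0$ already forces $[\gamma]\neq 0$ in $H_1(V)$; but this doesn't affect the argument, whose essential content (detect nonseparating-ness in $H_1(\overline S)$, not in $H_1(V)$) is exactly right.
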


\begin{proof}
Choose a curve $c$ in $V$ which is nonseparating in $S$. In $H_1(V)$, we have $[c]=\sum_{i=1}^k [c_i]$ where the $c_i$ are bicorns between $a$ and $L$ and all the curves have been oriented. Since $c$ is homologous to the sum of the $c_i$ in $V$, it is also homologous to the sum of the $c_i$ in $\overline{S}$. Thus, we have $[c]=\sum_{i=1}^k [c_i]$ in $H_1(\overline{S})$. Since $[c]\neq 0$ in $H_1(\overline{S})$, we have $[c_i]\neq 0$ in $H_1(\overline{S})$ for some $i$ and hence this $c_i$ is a nonseparating bicorn between $a$ and $L$ which is contained in $V$.
\end{proof}

\begin{cor}
\label{nontrivsepbicorn}
Let $S,V,L,$ and $a$ be as in the statement of Lemma \ref{bicornhomology}. Suppose that $V$ contains a nontrivially separating curve (possibly inessential in $V$) but no nonseparating curve. Then there is a bicorn between $a$ and $L$ which is nontrivially separating in $S$ and contained in $V$.
\end{cor}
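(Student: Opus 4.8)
The plan is to mirror the proof of Corollary~\ref{nonsepbicorn}, adding two ingredients that are special to this situation: the hypotheses force $V$ to be planar, and bicorns between geodesics cannot bound disks. First I would set up the reduction. The assumption that $V$ contains no curve nonseparating in $S$ says exactly that every simple closed curve in $V$ — in particular every bicorn between $a$ and $L$ contained in $V$ — is separating in $S$; it also forces $\genus(V)=0$, since a curve nonseparating in $V$ is automatically nonseparating in $S$ (if $\gamma$ separated $S$ but not $V$, the connected set $V\setminus\gamma$ would lie in one component of $S\setminus\gamma$, yet $\gamma$ is adjacent to $V\setminus\gamma$ on both sides). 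So it remains to produce a bicorn between $a$ and $L$, contained in $V$, that is essential in $S$. Since $V$ is an essential subsurface, a standard innermost-disk argument shows that a simple closed curve contained in $V$ is essential in $S$ if and only if it is essential in $V$; hence it suffices to produce a bicorn between $a$ and $L$, contained in $V$, that is essential in $V$, for then it is separating but essential in $S$, i.e. nontrivially separating in $S$. (The hypothesis that $V$ contains a nontrivially separating curve is there to guarantee $V$ is complicated enough for this to be possible.)

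Next comes the homological input, taken from Lemma~\ref{bicornhomology}: the homology classes of bicorns between $a$ and $L$ generate $H_1(V)$. A key observation is that no such bicorn bounds a disk in $S$: a bicorn $\alpha\cup\beta$ with $\alpha$ a subarc of $a$ and $\beta$ a subarc of a leaf $l$ of $L$ which bounded a disk would be a bigon between the geodesics $a$ and $l$, impossible — they are in minimal position and $a$ is not a leaf, since $a$ does not spiral onto $L$ but meets it. Because $V$ is planar, a null-homologous simple closed curve in $V$ bounds a disk in $V$; combined with the previous observation, every bicorn between $a$ and $L$ contained in $V$ has nonzero class in $H_1(V)$, and therefore each such bicorn is either essential in $V$ or (being inessential would force it to bound a once-punctured disk) isotopic to a peripheral loop around a puncture of $V$.

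Finally I would show that some bicorn must be essential in $V$. Suppose not: then all the generating classes lie in the subgroup $P\subset H_1(V)$ spanned by the peripheral classes of the punctures of $V$. Capping off every puncture of $V$ (keeping the boundary) gives a planar surface $\overline V$ with $P\subset\ker\bigl(H_1(V)\to H_1(\overline V)\bigr)$; when $V$ has at least two boundary components $H_1(\overline V)\ne 0$, so $P$ is a proper subgroup, contradicting Lemma~\ref{bicornhomology}. I expect the remaining case — $V$ a punctured disk, a single boundary component — to be the main obstacle, because there the peripheral puncture classes already span all of $H_1(V)$ and a purely homological contradiction is unavailable; one must argue geometrically. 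The plan there is to take an innermost once-punctured disk $D$ cut off from $V$ by one of the peripheral bicorns, about a puncture $p$, and use that $L$ fills $V$ — so the complementary once-punctured-disk region of $L$ about $p$, and the leaves bounding it, lie inside $D$ — together with the fact that $a$ does not spiral onto $L$, to relocate inside $D$ a bicorn between $a$ and $L$; since every arc of $a$ lying in $D$ that cuts off a sub-bicorn would either produce a bigon (impossible) or a once-punctured disk about $p$ strictly smaller than $D$ (contradicting minimality), this forces a contradiction with the assumption that all bicorns are peripheral. Once an essential bicorn in $V$ is obtained, the first paragraph finishes the proof.
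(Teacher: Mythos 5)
Your proposal has a genuine gap that stems from misreading the definition of ``nontrivially separating.'' In this paper a curve is \emph{trivially separating} if it bounds a punctured disk, meaning a genus-zero subsurface with one boundary component and any number of punctures --- not just a once-punctured disk. Consequently a curve can be essential in $S$ (not bounding a disk or once-punctured disk) and separating, yet still be trivially separating. Your reduction ``separating and essential in $S$, i.e.\ nontrivially separating in $S$'' is therefore false, and the rest of the argument inherits this error: what you must rule out is not that the bicorn bounds a once-punctured disk, but that it bounds a \emph{multiply}-punctured disk on one side.

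Concretely, your homological criterion is too weak. You work in $\overline V$ ($V$ with the punctures of $S$ filled in) and ask that the bicorn's class be nonzero there, i.e.\ not lie in the subgroup $P$ generated by the peripheral puncture classes. But take a boundary component $e$ of $V$ that is trivially separating in $S$ (bounds a punctured disk outside $V$). A bicorn isotopic to $e$ has class $[e]\ne 0$ in $H_1(\overline V)$, so it passes your test, yet it is trivially separating in $S$. The correct obstruction (and the one the paper uses) is different: writing the boundary components of $V$ as $d_1,\dots,d_k$ (nontrivially separating in $S$) and $e_1,\dots,e_l$ (trivially separating in $S$), a simple closed curve in $V$ is trivially separating in $S$ if and only if its class in $H_1(\overline V)$ lies in the \emph{span of $[e_1],\dots,[e_l]$}. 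This is a larger subgroup than $0$, and the argument must show some bicorn has class outside it. That in turn requires knowing the span of the $[e_j]$ is proper, which the paper secures by first showing $k\ge 2$: at least two components of $S\setminus V$ have positive genus, for otherwise every curve in $V$ would be trivially separating, contradicting the hypothesis.

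This last point also disposes of the case you flagged as ``the main obstacle'' --- $V$ a punctured disk with a single boundary component. That case is vacuous under the hypotheses: if $V$ had only one boundary component and genus zero, every curve in $V$ would bound a punctured disk in $S$, so $V$ would contain no nontrivially separating curve at all. The delicate geometric argument you sketch there is unnecessary. The fix is to replace ``nonzero class in $H_1(\overline V)$'' throughout by ``class outside $\langle[e_1],\dots,[e_l]\rangle$,'' establish that this is a proper subgroup via $k\ge 2$, and then invoke Lemma~\ref{bicornhomology} together with the surjection $H_1(V)\to H_1(\overline V)$ exactly as you do at the end.
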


\begin{proof}
Since $V$ contains no noseparating curve, we have $\genus(V)=0$.

First we claim that at least two boundary components of $V$ are nontrivially separating. To see this, denote the components of $\partial V$ by $c_1,\ldots,c_n$. Since each $c_i$ is separating, $S\setminus V$ is the union of components $W_1,\ldots,W_n$ where $W_i$ is bounded by $c_i$ for each $i$. If only one of the components $W_i$ has positive genus, then it is easy to see that any simple closed curve $d$ in $V$ is trivially separating, see Figure \ref{trivsepboundarycomponents}. Hence at least two components, say $W_1$ and $W_2$ have positive genus. Therefore we see that $c_1$ and $c_2$ are both nontrivially separating.

\begin{figure}[h]
\centering

\def\svgwidth{0.5\textwidth}
\begingroup%
  \makeatletter%
  \providecommand\color[2][]{%
    \errmessage{(Inkscape) Color is used for the text in Inkscape, but the package 'color.sty' is not loaded}%
    \renewcommand\color[2][]{}%
  }%
  \providecommand\transparent[1]{%
    \errmessage{(Inkscape) Transparency is used (non-zero) for the text in Inkscape, but the package 'transparent.sty' is not loaded}%
    \renewcommand\transparent[1]{}%
  }%
  \providecommand\rotatebox[2]{#2}%
  \newcommand*\fsize{\dimexpr\f@size pt\relax}%
  \newcommand*\lineheight[1]{\fontsize{\fsize}{#1\fsize}\selectfont}%
  \ifx\svgwidth\undefined%
    \setlength{\unitlength}{1525.08591059bp}%
    \ifx\svgscale\undefined%
      \relax%
    \else%
      \setlength{\unitlength}{\unitlength * \real{\svgscale}}%
    \fi%
  \else%
    \setlength{\unitlength}{\svgwidth}%
  \fi%
  \global\let\svgwidth\undefined%
  \global\let\svgscale\undefined%
  \makeatother%
  \begin{picture}(1,1.12488558)%
    \lineheight{1}%
    \setlength\tabcolsep{0pt}%
    \put(0,0){\includegraphics[width=\unitlength,page=1]{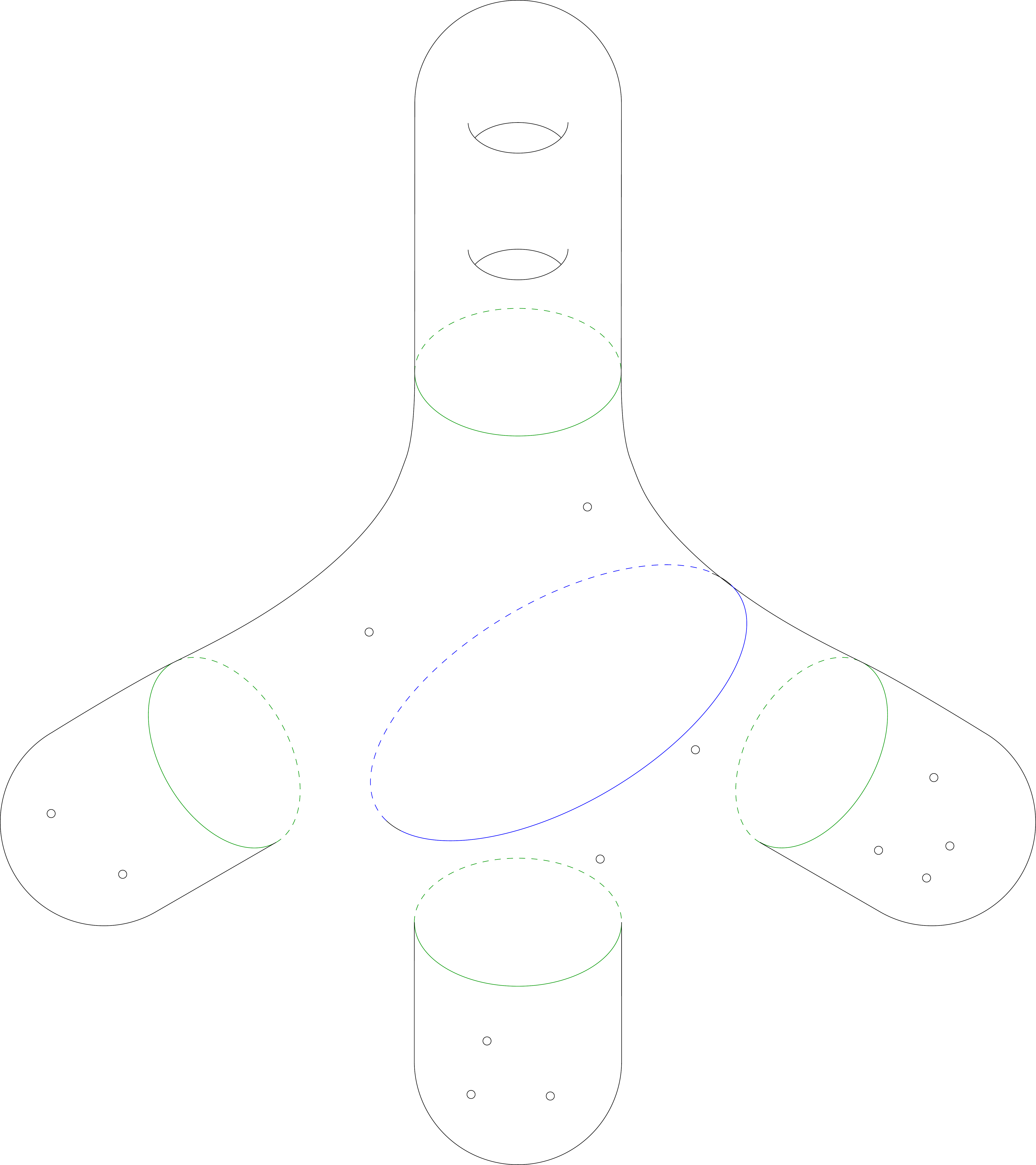}}%
    \put(0.30609995,0.75708867){\color[rgb]{0,0.58823529,0}\makebox(0,0)[lt]{\lineheight{1.25}\smash{\begin{tabular}[t]{l}$c_1$\end{tabular}}}}%
    \put(0.26816299,0.26250294){\color[rgb]{0,0.58823529,0}\makebox(0,0)[lt]{\lineheight{1.25}\smash{\begin{tabular}[t]{l}$c_2$\end{tabular}}}}%
    \put(0.62224138,0.20630004){\color[rgb]{0,0.58823529,0}\makebox(0,0)[lt]{\lineheight{1.25}\smash{\begin{tabular}[t]{l}$c_3$\end{tabular}}}}%
    \put(0.87936975,0.48871971){\color[rgb]{0,0.54901961,0}\makebox(0,0)[lt]{\lineheight{1.25}\smash{\begin{tabular}[t]{l}$c_4$\end{tabular}}}}%
    \put(0.73324216,0.5674038){\color[rgb]{0,0,1}\makebox(0,0)[lt]{\lineheight{1.25}\smash{\begin{tabular}[t]{l}$d$\end{tabular}}}}%
    \put(0,0){\includegraphics[width=\unitlength,page=2]{nontriviallyseparating.pdf}}%
  \end{picture}%
\endgroup%

\caption{At least two components of $S\setminus V$ have positive genus. Otherwise every simple closed curve in $V$ is trivially separating. Small circles denote punctures here.}
\label{trivsepboundarycomponents}
\end{figure}

%
%
%

Denote by $\overline{V}$ the surface $V$ with the punctures of $S$ filled in. We consider the first homology of $\overline{V}$. Denote by $d_1,\ldots,d_k$ ($k\geq 2$) the boundary components of $V$ which are nontrivially separating and by $e_1,\ldots,e_l$ the boundary components of $V$ which are trivially separating. After orienting the curves, the first homology $H_1(\overline{V})$ is freely generated (as an abelian group) by $[d_2],\ldots, [d_k], [e_1],\ldots,[e_l]$ (note that \[[d_1]=[d_2]+\ldots+[d_k]+[e_1]+\ldots+[e_l]\] in $H_1(\overline{V})$ when each  $d_i$ and each $e_j$ is oriented appropriately). In particular, the subspace of $H_1(\overline{V})$ spanned by $[e_1],\ldots, [e_l]$ is a proper subspace. Moreover, a simple closed curve in $V$ is trivially separating in $S$ if and only if its homology class in $H_1(\overline{V})$ is contained in this subspace. Since bicorns between $a$ and $L$ generate $H_1(V)$ and the homomorphism $H_1(V)\to H_1(\overline{V})$ induced by inclusion is surjective, there exists some bicorn $c$ between $a$ and $L$ whose homology class in $H_1(\overline{V})$ is not contained in the span of $[e_1],\ldots,[e_l]$. We have that $c$ is nontrivially separating and contained in $V$ up to isotopy.
\end{proof}

\section{Finite covers}
\label{coversec}

In this section we prove Theorem \ref{coverthm} and discuss related results. The theorem is broken up into three results: Theorem \ref{quasiconvex}, Lemma \ref{infdiam}, and Lemma \ref{notcoarselyequal}.

Let $\pi:\tilde{S}\to S$ be a degree $k<\infty$ cover where $S$ and $\tilde{S}$ have finite positive genus (but possibly infinitely many punctures). Each simple closed curve $c$ on $S$ lifts to a collection of simple closed curves $\tilde{c}\subset \tilde{S}$ (the components of the preimage) for which the restrictions $\pi|\tilde{c}:\tilde{c}\to c$ may be covers of degree greater than one.

\begin{lem}
Let $a\in \NS(S)^0$ and $\tilde{a}$ a lift of $a$ to $\tilde{S}$. Then $\tilde{a}$ is nonseparating.
\end{lem}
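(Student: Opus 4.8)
\medskip
\noindent\emph{Proof proposal.} The plan is to deduce the lemma from the homological characterization of separating curves, Fact \ref{homfact}, by transporting the non-triviality of $[a]$ through the cover. Throughout, let $m\geq 1$ denote the degree of the covering $\pi|_{\tilde a}\colon\tilde a\to a$ (recall that $\tilde a$ is a single component of $\pi^{-1}(a)$, hence an embedded circle).

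First I would pass to the closed surfaces. The finite cover $\pi\colon\tilde S\to S$ extends to a finite, possibly branched, cover $\overline\pi\colon\overline{\tilde S}\to\overline S$ of closed orientable surfaces: over a punctured-disk neighborhood of an end of $S$ the map $\pi$ restricts to a disjoint union of finite connected covers of the punctured disk, each isomorphic to $z\mapsto z^{n}$, and each such component extends over the puncture. Orient $S$ and lift the orientation to $\tilde S$. Since $\tilde a$ is disjoint from the (filled-in) branch locus and $\overline\pi|_{\tilde a}=\pi|_{\tilde a}$ has degree $m$, pushing forward the $1$-cycle $\tilde a$ gives $\overline\pi_{*}[\tilde a]=\pm m[a]$ in $H_1(\overline S;\Z)$.

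Next I would invoke torsion-freeness. Since $\overline S$ is a closed orientable surface of genus $\genus(S)\geq 1$, the group $H_1(\overline S;\Z)$ is free abelian, hence torsion free. As $a$ is nonseparating, Fact \ref{homfact} gives $[a]\neq 0$, so $\pm m[a]\neq 0$ for every $m\geq 1$. Therefore $\overline\pi_{*}[\tilde a]\neq 0$, which forces $[\tilde a]\neq 0$ in $H_1(\overline{\tilde S};\Z)$; applying Fact \ref{homfact} once more, $\tilde a$ is nonseparating.

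I expect the only genuinely technical (and rather mild) point to be the extension of $\pi$ to the branched cover $\overline\pi$ together with the computation $\overline\pi_{*}[\tilde a]=\pm m[a]$; conceptually everything else is immediate once one recalls that $H_1$ of a closed surface is torsion free. If one prefers to avoid the branched cover, the same conclusion follows by a dual-curve argument: by the standard fact that a nonseparating simple closed curve admits a simple closed curve meeting it once, choose $d$ on $S$ with $i(a,d)=1$; orienting $\tilde S$ compatibly with $S$, the $m$ preimages in $\tilde a$ of the unique point of $a\cap d$ are transverse intersections of $\tilde a$ with $\pi^{-1}(d)$ all of the same sign, so $\hat i(\tilde a,\pi^{-1}(d))=\pm m\neq 0$, whence $\hat i(\tilde a,\tilde d_{j})\neq 0$ for some component $\tilde d_{j}$ of $\pi^{-1}(d)$; since the algebraic intersection pairing on $\tilde S$ factors through $H_1(\overline{\tilde S};\Z)$, this again yields $[\tilde a]\neq 0$ there, and hence $\tilde a$ is nonseparating.
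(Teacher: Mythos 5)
Your second (dual-curve) argument is essentially the paper's own proof: the author chooses $\vec{b}$ with $\hat{i}(\vec{a},\vec{b})=1$ (via the stated fact that a nonseparating curve admits a simple closed curve meeting it geometrically once), endows $\tilde{S}$ and the lifts with pullback orientations, and observes that since $\pi$ is locally an orientation-preserving diffeomorphism every intersection of $\vec{\tilde{a}}$ with a lift $\vec{\tilde{b}}$ carries the same sign, forcing $\hat{i}(\vec{\tilde{a}},\vec{\tilde{b}})\neq 0$ and hence $[\tilde{a}]\neq 0$. That part is fine and matches the paper.

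Your primary argument — push $\tilde{a}$ forward under a map of end compactifications and use torsion-freeness of $H_1(\overline{S};\Z)$ — is a genuinely different route, but as written it has a gap in precisely the generality the lemma is stated for. The section allows $S$ (hence $\tilde{S}$) to have infinitely many punctures, in which case $E(S)$ may be non-discrete (e.g.\ Cantor); a non-isolated end has no punctured-disk neighborhood, so the local model $z\mapsto z^{n}$ is unavailable and ``branched cover'' is the wrong description of the extension over such ends. The argument is nevertheless salvageable, and more economically than you suggest: a finite cover is proper, the Freudenthal (end) compactification is functorial for proper maps, so $\pi$ extends to a \emph{continuous} $\overline{\pi}\colon\overline{\tilde{S}}\to\overline{S}$, and your computation $\overline{\pi}_{*}[\tilde{a}]=\pm m[a]$ uses only continuity plus the fact that $\overline{\pi}|_{\tilde{a}}=\pi|_{\tilde{a}}$ is a degree-$m$ circle cover landing in the interior $S\subset\overline{S}$ — you never actually need $\overline{\pi}$ to be a branched covering. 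With that repair the homological route gives the slightly sharper statement $\overline{\pi}_{*}[\tilde{a}]=\pm m[a]$, whereas the dual-curve argument (like the paper's) stays inside $S$ and $\tilde{S}$ and avoids the end compactification of the cover entirely.
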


\begin{proof}
Endow $S$ with an orientation and endow $\tilde{S}$ with the pullback orientation. Let $\vec{a}$ be $a$ with an orientation and choose an oriented curve $\vec{b}\subset S$ with $\hat{i}(\vec{a},\vec{b})=1$.

There exists a lift $\tilde{b}\subset \tilde{S}$ of $b$ with $i(\tilde{a},\tilde{b})\neq 0$. Denote by $\vec{\tilde{a}}$ and $\vec{\tilde{b}}$ the lifts $\tilde{a}$ and $\tilde{b}$ endowed with the pullback orientations from $\vec{a}$ and $\vec{b}$. If $\vec{\tilde{a}}$ and $\vec{\tilde{b}}$ intersect at a point $p=\tilde{a}(s)=\tilde{b}(t)$ then $\{\vec{\tilde{a}}'(s),\vec{\tilde{b}}'(t)\}$ is a positively oriented basis of $T_p(\tilde{S})$ since $\pi$ is an orientation-preserving diffeomorphism locally at $p$. This proves that $\hat{i}(\vec{\tilde{a}},\vec{\tilde{b}})>0$ and hence $\tilde{a}$ is nonseparating.
\end{proof}

Hence we may define a map $\Phi:\NS(S)^0\to 2^{\NS(\tilde{S})^0}$ (where if $X$ is a set then $2^X$ denotes its power set) by sending a nonseparating simple closed curve $a$ to the finite collection of its lifts (in other words $\Phi(a)$ is the full preimage of $a$). Note that if $a,b\in \NS(S)^0$ and $c$ is a nonseparating bicorn between $a$ and $b$ then $\Phi(c)$ is a multicurve and each curve in $\Phi(c)$ is a nonseparating $2l$-corn between the multicurves $A=\Phi(a)$ and $B=\Phi(b)$, for some $l\leq k$. Thus, $\Phi(\B(a,b))\subset \B_k(\Phi(a),\Phi(b))$.

In all that follows, we conflate $\Phi(\NS(S)^0)$ with $\bigcup \Phi(\NS(S)^0)$, thus considering it as a subset of $\NS(\tilde{S})^0$ (rather than as a set of subsets of $\NS(\tilde{S})^0$).

\begin{thm}
There exists $F=F(k)$ such that the image $\Phi(\NS(S)^0)$ is $F$-quasiconvex in $\NS(\tilde{S})$.
\label{quasiconvex}
\end{thm}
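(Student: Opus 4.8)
The plan is to show that any geodesic in $\NS(\tilde{S})$ between two lifts stays uniformly close to $\Phi(\NS(S)^0)$, and to reduce this to the $2k$-corn machinery already developed. First I would reduce to the case of geodesics between \emph{pairs of lifts of curves on $S$}: given $\tilde{a},\tilde{b}\in \Phi(\NS(S)^0)$, say $\tilde{a}$ is a lift of $a\in\NS(S)^0$ and $\tilde{b}$ is a lift of $b\in\NS(S)^0$, I want to control a geodesic $[\tilde{a},\tilde{b}]$ in $\NS(\tilde{S})$. Set $A=\Phi(a)$ and $B=\Phi(b)$, which are multicurves on $\tilde{S}$ all of whose components are nonseparating by the preceding lemma. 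Since $\tilde{a}\in A$ and $\tilde{b}\in B$, the geodesic $[\tilde{a},\tilde{b}]$ is one of the geodesics comprising $[A,B]$, so by Corollary \ref{2kcornshausclose} it lies in the $E(k)$-neighborhood of $\B_k(A,B)$, the subgraph spanned by nonseparating $2l$-corns between $A$ and $B$ with $l\le k$.

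The key remaining step is to observe that every vertex of $\B_k(A,B)$ lies in $\Phi(\NS(S)^0)$, or at least uniformly close to it. Here is the point: a nonseparating $2l$-corn $c$ between $A=\Phi(a)$ and $B=\Phi(b)$ is built from arcs of lifts of $a$ and arcs of lifts of $b$; I claim $c$ is a lift of a (possibly non-simple, but that is fine after the homology argument) curve on $S$ — more carefully, I expect that $c$ need not itself be a lift, so instead I would project $c$ downstairs. The projection $\pi(c)$ is a closed curve on $S$ traced out by subarcs of $a$ and subarcs of $b$; it may fail to be simple, but by the usual homology/surgery argument (decompose the $a$-part of $\pi(c)$ at its self-intersections and at intersections with the $b$-part, observe the sum of the resulting bicorns between $a$ and $b$ is homologous in $H_1(\overline{S})$ to a nonzero class coming from $[c]$'s projection, hence some summand is a nonseparating bicorn $c'$ between $a$ and $b$ on $S$) I obtain a nonseparating bicorn $c'\in\B(a,b)$ whose geometric intersection with $\pi(c)$ is bounded in terms of $k$. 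Lifting $c'$ back up, $\Phi(c')\subset\B_k(A,B)$ and some component $\tilde{c}'$ of $\Phi(c')$ is within bounded distance (depending only on $k$, via Lemma \ref{intersectionnumber} applied upstairs) of $c$. Thus $c$ is uniformly close to $\Phi(\NS(S)^0)$, and composing the two neighborhood bounds gives the desired $F(k)$.

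Alternatively, and perhaps more cleanly, I would avoid projecting $c$ by working directly with the upstairs bicorn graph: $\Phi(\B(a,b))\subseteq\B_k(A,B)$ is already noted in the text, and conversely I would argue that $\B_k(A,B)$ is within bounded distance of $\Phi(\B(a,b))$. Indeed each nonseparating $2l$-corn between $A$ and $B$ ($l\le k$) is $D(l)$-close to a \emph{nonseparating bicorn between the multicurves $A$ and $B$} by Theorem \ref{2kcornsclose}; such a bicorn uses one arc of some $a$-lift and one arc of some $b$-lift, and I need to promote it to (something close to) an actual lift of a curve on $S$. This is where the deck-group action enters: applying the cyclic-surgery/homology argument equivariantly, or pushing down and back up as above, yields a genuine lifted curve within distance $O(k)$. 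I expect this final promotion — showing a nonseparating bicorn between $A$ and $B$ is uniformly close to a full preimage $\Phi(c')$ of some nonseparating curve $c'$ on $S$ — to be the main obstacle, since it is exactly the place where one must pass between the combinatorics upstairs and the surgery argument downstairs while keeping all constants depending only on the degree $k$. Once that is in hand, $F(k)=E(k)+O(k)+\mathrm{const}$ works, with the $O(k)$ coming from Lemma \ref{intersectionnumber} bounding distances in terms of the intersection numbers introduced by the surgery.
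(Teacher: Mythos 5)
Your plan gets the first half right: reducing to a geodesic $[\tilde a,\tilde b]$ between lifts of $a,b\in\NS(S)^0$, observing that it lies in the $E(k)$-neighborhood of $\B_k(\Phi(a),\Phi(b))$ via Corollary~\ref{2kcornshausclose}, is exactly what the paper does. But from there the two arguments diverge, and your second half has a genuine gap.

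You propose to finish by showing that $\B_k(A,B)$ (with $A=\Phi(a)$, $B=\Phi(b)$) is uniformly close to $\Phi(\NS(S)^0)$, via a ``promotion'' step: project a nonseparating $2l$-corn (or, after Theorem~\ref{2kcornsclose}, a nonseparating bicorn $d$ between $A$ and $B$) down to $S$, surgery $\pi(d)$ into bicorns between $a$ and $b$, and use $H_1(\overline S)$ to select a nonseparating one. The problem is that this surgery argument needs $[\pi(d)]\neq 0$ in $H_1(\overline S)$, and $\pi_*\colon H_1(\overline{\tilde S})\to H_1(\overline S)$ is not injective for a nontrivial cover. A nonseparating $d$ upstairs ($[d]\neq 0$) can perfectly well have $\pi_*[d]=0$ --- for instance, in a double cover with deck transformation $\tau$, any class with $\tau_*[d]=-[d]$ is killed by $\pi_*$. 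In that case the surgery produces only separating bicorns and the argument stalls. You correctly flag this promotion step as ``the main obstacle,'' but you do not resolve it, and I do not see a local fix; moreover the claim that $\B_k(A,B)$ is coarsely contained in $\Phi(\NS(S)^0)$ is not actually a step you can hope to prove independently --- it is logically downstream of the theorem you are trying to establish.

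The paper goes the other way and avoids the promotion problem entirely. Instead of pushing $\B_k(A,B)$ down into $\Phi(\NS(S)^0)$, it pushes a path in $\B(a,b)$ up into $\B_k(A,B)$: take the $5$-coarse bicorn path $a=c_0,c_1,\dots,c_n=b$ in $S$ (from the proof of Corollary~\ref{bicornshausclose}), choose any lifts $\widetilde{c_0}=\tilde a,\dots,\widetilde{c_n}=\tilde b$, and note $i(\widetilde{c_i},\widetilde{c_{i+1}})\le 2k$, hence $d(\widetilde{c_i},\widetilde{c_{i+1}})\le 4k+1$ by Lemma~\ref{intersectionnumber}. Each $\widetilde{c_i}$ lies in $\B_k(A,B)$ (this is the containment $\Phi(\B(a,b))\subset\B_k(A,B)$ that you cite), hence is within $E(k)+M$ of $[\tilde a,\tilde b]$ by Corollary~\ref{2kcornshausclose} and the Morse lemma. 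Projecting the $\widetilde{c_i}$ to nearest points $d_i$ on the geodesic yields a coarse path $\tilde a=d_0,\dots,d_n=\tilde b$ along $[\tilde a,\tilde b]$ with uniformly bounded gaps, so every vertex of the geodesic is within $F(k)=3E(k)+3M+4k+1$ of some $\widetilde{c_i}\in\Phi(\NS(S)^0)$. No surgery downstairs, no homology in $S$, no non-injectivity of $\pi_*$ to worry about: the only homological input is that each lifted bicorn is itself nonseparating, which is already established. Your formula $F(k)=E(k)+O(k)+\text{const}$ has the right shape, but the route to it needs to be reversed.
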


\begin{proof}
Consider $\tilde{a}\in \Phi(a)$ and $\tilde{b}\in \Phi(b)$ and a geodesic $[\tilde{a},\tilde{b}]$. We wish to show that $[\tilde{a},\tilde{b}]$ is uniformly close to $\Phi(\NS(S)^0)$.

Choose a sequence $c_0=a,c_1,\ldots,c_n=b$ with $c_i\in \B(a,b)$ and $i(c_i,c_{i+1})\leq 2$ for all $i$ (see the proof of Corollary \ref{bicornshausclose} for the existence of such a sequence). Choose any sequence of lifts $\widetilde{c_0},\widetilde{c_1},\ldots,\widetilde{c_n}$ with $\widetilde{c_0}=\tilde{a},\widetilde{c_n}=\tilde{b}$. We have $i(\widetilde{c_i},\widetilde{c_{i+1}})\leq 2k$ for all $i$ and hence $d(\widetilde{c_i},\widetilde{c_{i+1}})\leq 4k+1$ for all $i$ by Lemma \ref{intersectionnumber}.

We have $\widetilde{c_i}\in \B_k(\Phi(a),\Phi(b))$ for all $i$ and hence $\widetilde{c_i}$ is $E(k)$-close to $[\Phi(a),\Phi(b)]$ for all $i$ by Corollary \ref{2kcornshausclose}. By the Morse Lemma, $[\Phi(a),\Phi(b)]$ is $M$-close to $[\tilde{a},\tilde{b}]$ where $M$ only depends on the constant of hyperbolicity of $\NS(\tilde{S})$ (and in particular may be taken to be independent of $S$ and $\tilde{S}$ since the nonseparating curve graphs are uniformly hyperbolic). For each $i$ we may choose $d_i\in [\tilde{a},\tilde{b}]$ with $d(\widetilde{c_i},d_i)\leq E(k)+M$. We may take $d_0=\tilde{a}$ and $d_n=\tilde{b}$. We have \[d(d_i,d_{i+1})\leq d(d_i,\widetilde{c_i})+d(\widetilde{c_i},\widetilde{c_{i+1}})+d(\widetilde{c_{i+1}},d_{i+1})\leq 2E(k)+2M+4k +1.\]

The sequence $d_0,d_1,\ldots,d_n$ is a $(2E(k)+2M+4k+1)$-coarse path contained in $[\tilde{a},\tilde{b}]$. Hence for each vertex $d\in [a,b]$ we have $d(d,d_i)\leq 2E(k)+2M+4k+1$ for some $i$ and therefore \[d(d,\widetilde{c_i})\leq d(d,d_i)+d(d_i,\widetilde{c_i})\leq (2E(k)+2M+4k+1)+E(k)+M=3E(k)+3M+4k+1.\] Since $\widetilde{c_i}\in \Phi(\NS(S)^0)$ for all $i$, we may take $F(k)=3E(k)+3M+4k+1$.
\end{proof}

A Theorem of Rafi-Schleimer (\cite{covers}, Theorem 8.1), states that if $\tilde{S}\to S$ is a degree $k<\infty$ cover of finite type surfaces and $\Psi:\mathcal{C}(S)^0\to 2^{\mathcal{C}(\tilde{S})^0}$ is defined analogously to our map $\Phi$, then $\Psi$ is a quasi-isometric embedding.

One may wonder whether our map $\Phi$ is in fact a quasi-isometric embedding, which would make Theorem \ref{quasiconvex} a trivial consequence. However this is typically not the case. See Figure \ref{cover}. In the figure a 4-times punctured genus 3 surface $\tilde{S}$ is a $\Z/2\Z$-cover of a 4-times punctured genus 1 surface $S$. The green curve bounds a one-holed torus $\tilde{V}$ which projects to a witness $V$ for $\NS(S)$. If $\phi$ is a pseudo-Anosov supported on $V$ and $c$ is a nonseparating curve in $V$ then $\{\phi^n(c)\}_{n\in \Z}$ is infinite diameter in $\NS(S)$. However, the lifts $\{\Phi(\phi^n(c))\}_{n\in \Z}$ have diameter two. Of course this argument works whenever a finite type witness for $\NS(S)$ lifts to a subsurface of $\NS(\tilde{S})$ which is not a witness.

\begin{figure}[h]
\centering
\def\svgwidth{0.8\textwidth}
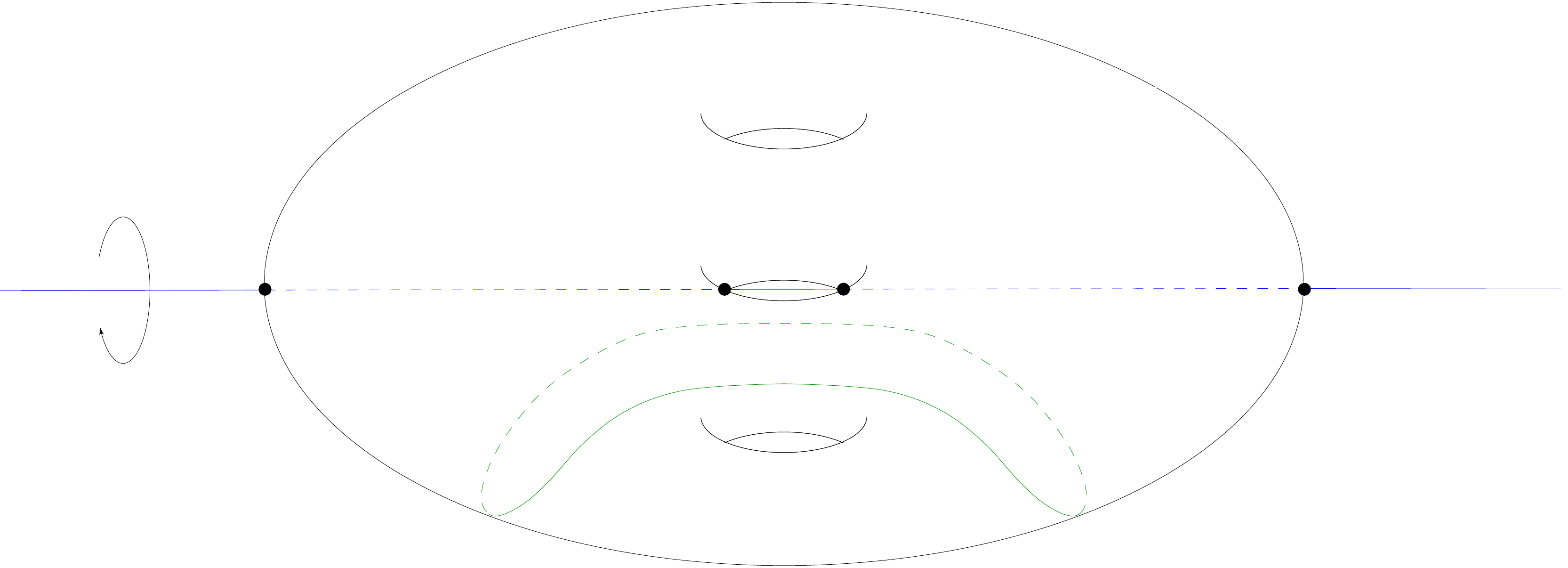

\caption{A $\Z/2\Z$-cover $\tilde{S}$ of the 4-times punctured torus $S$. The green curve bounds a subsurface $\tilde{V}\subset \tilde{S}$ which is not a witness for $\NS(\tilde{S})$, although $\tilde{V}$ is a lift of a subsurface $V\subset S$ which is a witness for $\NS(S)$.}
\label{cover}
\end{figure}

Since $\Phi$ is not a quasi-isometric embedding, the answers to the following questions are not immediately obvious:
\begin{enumerate}[(i)]
\item Is the image of $\Phi$ infinite-diameter?
\item Is the image of $\Phi$ coarsely equal to $\NS(\tilde{S})$?
\end{enumerate}

Recall that given a metric space $X$ and a subspace $Y\subset X$ we say that $Y$ is \textit{coarsely equal} to $X$ if there exists $R>0$ such that $X$ is equal to the $R$-neighborhood of $Y$.

If the answer to (i) were no or the answer to (ii) were yes then Theorem \ref{quasiconvex} would become trivial. Thus we should show that this is not the case.

\begin{lem}
\label{infdiam}
Let $\pi:\tilde{S}\to S$ be a finite degree cover. Then $\Phi(\NS(S)^0)$ has infinite diameter in $\NS(\tilde{S})$.
\end{lem}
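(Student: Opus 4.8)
The plan is to reduce to the finite-type case via the Durham--Fanoni--Vlamis Lemma and then to exhibit an explicit infinite-diameter orbit coming from a lifted pseudo-Anosov. The point is that, as the discussion around Figure~\ref{cover} shows, one cannot just lift a pseudo-Anosov supported on an arbitrary finite-type witness of $\NS(S)$; one must first arrange that the witness lifts to a witness.

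First I would construct a connected finite-type essential subsurface $V\subset S$ which is a witness for $\NS(S)$ and whose full preimage $\tilde V=\pi^{-1}(V)$ is again a connected finite-type essential witness, this time for $\NS(\tilde S)$. Since there are only finitely many cosets of $\pi_1(\tilde S)$ in $\pi_1(S)$ and each is represented by a loop in $S$, since $\genus(S)<\infty$, and since $\genus(\tilde S)<\infty$, one can choose $V$ large enough that (a) $\pi_1(V)\to\pi_1(S)$ meets every such coset (this forces $\tilde V=\pi^{-1}(V)$ to be connected), (b) $\genus(V)=\genus(S)$ (so $V$ is a witness for $\NS(S)$), and (c) $\tilde V$ contains a fixed finite-type subsurface of $\tilde S$ of genus $\genus(\tilde S)$, so that $\genus(\tilde V)=\genus(\tilde S)$ and $\tilde V$ is a witness for $\NS(\tilde S)$; one then passes to an essential representative. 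Now $\pi|_{\tilde V}\colon\tilde V\to V$ is a connected finite-degree cover between finite-type surfaces of positive genus, and by the Durham--Fanoni--Vlamis Lemma both $\NS(V)\hookrightarrow\NS(S)$ and $\NS(\tilde V)\hookrightarrow\NS(\tilde S)$ are isometric embeddings.

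Next I would pick a pseudo-Anosov $\phi$ of the finite-type surface $V$ (these exist since $\genus(V)\geq 1$ and $\chi(V)<0$, so $V$ is not a sphere with few holes). After replacing $\phi$ by a suitable power, $\phi$ lifts to a homeomorphism $\tilde\phi$ of $\tilde V$ with $\pi\circ\tilde\phi=\phi\circ\pi$; the invariant foliations of $\phi$ pull back to minimal filling $\tilde\phi$-invariant foliations, so $\tilde\phi$ is pseudo-Anosov on $\tilde V$. Fix $c\in\NS(V)^0$ and a lift $\tilde c\subset\tilde V$. Applying the lemma at the start of this section to the cover $\tilde V\to V$, the curve $\tilde c$ is nonseparating in $\tilde V$ (hence in $\tilde S$), and $c$, being nonseparating in the witness $V$, is nonseparating in $S$. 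Each $\tilde\phi^n(\tilde c)$ is then a vertex of $\NS(\tilde V)$; since $\tilde\phi$ is pseudo-Anosov it acts loxodromically on $\mathcal{C}(\tilde V)$, and the inclusion $\NS(\tilde V)\to\mathcal{C}(\tilde V)$ is coarsely Lipschitz from below (adjacent vertices of $\NS(\tilde V)$ lie at distance at most $2$ in $\mathcal{C}(\tilde V)$), so $d_{\NS(\tilde V)}(\tilde c,\tilde\phi^n\tilde c)\to\infty$; the isometric embedding $\NS(\tilde V)\hookrightarrow\NS(\tilde S)$ gives the same for $d_{\NS(\tilde S)}$. Finally $\pi(\tilde\phi^n\tilde c)=\phi^n(c)$ is nonseparating in $V$ (as $\phi$ is supported on $V$) hence in $S$, so $\tilde\phi^n(\tilde c)$ is a component of $\pi^{-1}(\phi^n(c))=\Phi(\phi^n(c))$ and thus lies in $\Phi(\NS(S)^0)$. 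Hence $\Phi(\NS(S)^0)$ contains the sequence $\{\tilde\phi^n(\tilde c)\}$, which has infinite diameter in $\NS(\tilde S)$.

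The main obstacle I anticipate is the first step: simultaneously arranging that $V$ and $\tilde V$ are connected finite-type witnesses of their respective surfaces. The pseudo-Anosov input (existence of a power that lifts, and that its lift is pseudo-Anosov, and that pseudo-Anosovs act loxodromically on $\NS$ of finite-type surfaces) is routine, but should be spelled out since it is exactly what fails for witnesses that do not lift to witnesses.
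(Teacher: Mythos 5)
Your proposal is correct and follows essentially the same strategy as the paper: produce a finite-type witness $V$ for $\NS(S)$ whose preimage in $\tilde{S}$ is a connected finite-type witness for $\NS(\tilde{S})$, then lift (a power of) a pseudo-Anosov supported on $V$ to a pseudo-Anosov supported on that preimage and use loxodromicity together with the Durham--Fanoni--Vlamis isometric embedding to get an unbounded orbit inside $\Phi(\NS(S)^0)$. The only cosmetic difference lies in how $V$ is arranged --- you impose an explicit coset-saturation condition on $\pi_1(V)\to\pi_1(S)$, whereas the paper runs an exhaustion of $S$ by finite-type essential subsurfaces and stops once the preimage contains a fixed finite-type witness of $\tilde{S}$ --- but these accomplish the same thing.
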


\begin{proof}
We find a finite type witness $V\subset S$ such that $V$ lifts to a witness $\tilde{V}$ for $\NS(\tilde{S})$ (that is, $\tilde{V}$ is a connected component of the preimage of $V$). This will prove the statement. For in this case we may choose a pseudo-Anosov $\phi$ supported on $V$. Up to replacing $\phi$ by a finite power, $\phi$ lifts to a pseudo-Anosov $\tilde{\phi}$ supported on $\tilde{V}$. For $c$ a nonseparating curve in $V$ and $\tilde{c}$ a lift of $c$ contained in $\tilde{V}$ we have for each $n$ that $\tilde{\phi}^n(\tilde{c})$ is a lift of $\phi^n(c)$. Since $\tilde{\phi}$ acts loxodromically on $\NS(\tilde{S})$, $\{\tilde{\phi}^n(\tilde{c})\}_{n\in \Z}\subset \Phi(\NS(S))$ is an infinite diameter subset.

Choose an exhaustion of $V_1\subset V_2\subset \ldots$ of $S$ by finite type essential subsurfaces. Let $\widetilde{V_i}$ by the full preimage of $V_i$ in $\tilde{S}$, a possibly disconnected subsurface of $\tilde{S}$. Then $\widetilde{V_1}\subset \widetilde{V_2}\subset \ldots$ is an exhaustion of $\tilde{S}$. Choose a finite type witness $W\subset \tilde{S}$ for $\NS(\tilde{S})$. For large enough $n$, we have $W\subset \widetilde{V_n}$. Since $W$ is connected, it is contained in a connected component $\tilde{V}$ of $\widetilde{V_n}$. Hence this component $\tilde{V}$ (which must in fact be equal to all of $\widetilde{V_n}$) is a witness for $\NS(\tilde{S})$. Setting $V=V_n$ we have the desired witness $V$ for $\NS(S)$.
\end{proof}

\begin{lem}
\label{notcoarselyequal}
Let $\pi:\tilde{S}\to S$ be a finite degree cover. Then $\Phi(\NS(S)^0)$ is not coarsely equal to $\NS(\tilde{S})$.
\end{lem}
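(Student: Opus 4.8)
The plan is to construct a sequence of vertices $\tilde c_n$ of $\NS(\tilde S)$ with $d(\tilde c_n,\Phi(\NS(S)^0))\to\infty$, which is exactly what is needed. I would take $\{\tilde c_n\}$ to be the forward orbit of a pseudo-Anosov supported on a carefully chosen finite type witness of $\NS(\tilde S)$, and detect that it escapes $\Phi(\NS(S)^0)$ using subsurface projections. So fix a finite type witness $\tilde W\subset\tilde S$ for $\NS(\tilde S)$ and let $\mathcal{AC}(\tilde W)$ be its arc-and-curve graph. Since $\tilde W$ is a witness, every vertex of $\NS(\tilde S)$ crosses $\tilde W$ essentially, so there is a well defined coarse subsurface projection $\pi_{\tilde W}\colon\NS(\tilde S)^0\to\mathcal{AC}(\tilde W)$; and because adjacent vertices of $\NS(\tilde S)$ have representatives meeting at most once, $\pi_{\tilde W}$ is coarsely Lipschitz: there are $L,C>0$ with $\diam_{\mathcal{AC}(\tilde W)}\big(\pi_{\tilde W}(x)\cup\pi_{\tilde W}(y)\big)\le L\,d(x,y)+C$ for all vertices $x,y$ of $\NS(\tilde S)$. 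I would then arrange $\tilde W$ so that in addition it supports a pseudo-Anosov (automatic: a finite type witness has positive genus and negative Euler characteristic) and $\pi_{\tilde W}(\Phi(\NS(S)^0))$ has finite diameter.

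Granting this last point, choose a pseudo-Anosov $\tilde\psi$ supported on $\tilde W$ and a vertex $\tilde c$ of $\NS(\tilde S)$ lying in $\tilde W$, and set $\tilde c_n=\tilde\psi^n(\tilde c)$. Since $\tilde\psi$ is the identity off $\tilde W$ and fixes $\partial\tilde W$, it induces $\tilde\psi_{\tilde W}\in\MCG(\tilde W)$ with $\pi_{\tilde W}\circ\tilde\psi^n=\tilde\psi_{\tilde W}^{\,n}\circ\pi_{\tilde W}$, and as $\tilde\psi_{\tilde W}$ acts loxodromically on $\mathcal{AC}(\tilde W)$ we get $d_{\mathcal{AC}(\tilde W)}\big(\pi_{\tilde W}(\tilde c_n),\pi_{\tilde W}(\tilde c)\big)\to\infty$. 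Letting $B=\diam_{\mathcal{AC}(\tilde W)}\big(\pi_{\tilde W}(\Phi(\NS(S)^0))\cup\pi_{\tilde W}(\tilde c)\big)<\infty$, for every $\tilde a\in\Phi(\NS(S)^0)$ the coarse Lipschitz bound gives
\[
d(\tilde c_n,\tilde a)\ \ge\ \tfrac1L\Big(d_{\mathcal{AC}(\tilde W)}\big(\pi_{\tilde W}(\tilde c_n),\pi_{\tilde W}(\tilde a)\big)-C\Big)\ \ge\ \tfrac1L\Big(d_{\mathcal{AC}(\tilde W)}\big(\pi_{\tilde W}(\tilde c_n),\pi_{\tilde W}(\tilde c)\big)-B-C\Big)\ \longrightarrow\ \infty,
\]
uniformly in $\tilde a$, so $d(\tilde c_n,\Phi(\NS(S)^0))\to\infty$ and $\Phi(\NS(S)^0)$ is not coarsely equal to $\NS(\tilde S)$. (That the orbit really does leave every bounded set of $\NS(\tilde S)$ is also visible from the fact that $\NS(\tilde W)\hookrightarrow\NS(\tilde S)$ is isometric, by \cite{dfv} Lemma 8.1.)

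Everything therefore reduces to producing a finite type witness $\tilde W$ for $\NS(\tilde S)$ with $\pi_{\tilde W}(\Phi(\NS(S)^0))$ of finite diameter, and this is where the real content lies. The relevant dichotomy is whether $\tilde W$ is \emph{adapted to the cover}, meaning $\pi|_{\tilde W}$ is isotopic to a covering map onto an essential subsurface of $S$. The hard part is the claim that \emph{if $\tilde W$ is not adapted to $\pi$ then $\pi_{\tilde W}(\Phi(\NS(S)^0))$ is bounded}: every $\tilde a\in\Phi(\NS(S)^0)$ is a component of $\pi^{-1}(a)$ for an \emph{embedded} simple closed curve $a$ on $S$, and the arc system $\tilde a\cap\tilde W$ is, up to isotopy in $\tilde W$, pulled back via the local homeomorphism $\pi|_{\tilde W}$ from the embedded arc system $a\cap\pi(\tilde W)$; non-adaptedness forces identifications among the sheets of $\pi|_{\tilde W}$ that bound the complexity of $\tilde a\cap\tilde W$ in $\mathcal{AC}(\tilde W)$ independently of $a$ and of the chosen component. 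This is a bounded-geodesic-image statement for the covering relation $a\mapsto\pi^{-1}(a)$, whose ``holes'' are precisely the adapted subsurfaces; I would either prove it directly along the lines just indicated or extract it from the analysis of covers in \cite{covers}. It remains to exhibit a non-adapted finite type witness: starting from any finite type witness $\tilde W_0$ of $\NS(\tilde S)$ (a regular neighbourhood of a subsurface of genus $\genus(\tilde S)$), I am done if $\tilde W_0$ is already non-adapted; otherwise pick an essential simple closed curve $\tilde\gamma$ crossing $\tilde W_0$ essentially with $\pi(\tilde\gamma)$ not embedded (such a curve exists because the cover is nontrivial), and note that for $n$ large $T_{\tilde\gamma}^{\,n}(\tilde W_0)$ is a finite type witness for $\NS(\tilde S)$ that is not adapted to $\pi$, since $T_{\tilde\gamma}$ does not descend to $S$. (Alternatively, such witnesses exist because $\mathrm{Lift}(\tilde S/S)$ has infinite index in $\MCG(\tilde S)$ while the adapted witnesses form a single coarse orbit.) The main obstacle throughout is the uniform projection bound for non-adapted witnesses; the rest is a routine combination of coarsely Lipschitz projections with the loxodromic dynamics of $\tilde\psi$ on $\mathcal{AC}(\tilde W)$.
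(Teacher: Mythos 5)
Your approach is genuinely different from the paper's. The paper's proof chooses a finite type witness $V\subset S$ whose preimage $\tilde V=\pi^{-1}(V)$ is a witness for $\NS(\tilde S)$, picks a lamination $\tilde L\in\ELW(\tilde V)$ that is \emph{not} a lift (so that $\pi(\tilde L)$ has transverse self-intersections), and shows $F(\tilde L)\in\partial\NS(\tilde S)$ is not in the limit set of $\Phi(\NS(S)^0)$: if $[\{c_n\}]=[\{\tilde a_n\}]$ with $c_n\toCH\tilde L$ and each $\tilde a_n$ a lift of some $a_n\in\NS(S)^0$, then the $a_n$ accumulate along a simple bi-infinite geodesic $l\subset S$ whose lift $\tilde l$ must cross $\tilde L$ transversely (here the non-lifting property is used), and by Corollary \ref{nonsepbicorn} there is a nonseparating bicorn $d$ between $\tilde l$ and $\tilde L$ which lies in $\B(\tilde a_n',c_n)$ for all large $n$, bounding $(\tilde a_n'\cdot c_n)_d$ and giving a contradiction. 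That argument uses only the bicorn machinery developed earlier in the paper and needs no subsurface-projection theory for covers.

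The step you flag yourself — that $\pi_{\tilde W}(\Phi(\NS(S)^0))$ is uniformly bounded whenever $\tilde W$ is a ``non-adapted'' witness — is indeed the crux, and as written it is a heuristic rather than a proof. The phrase ``non-adaptedness forces identifications among the sheets'' does not establish a diameter bound, and the intended extraction from \cite{covers} is not routine: the Rafi--Schleimer analysis is written for finite type surfaces and for the full curve complex, with the relevant ``holes'' being their symmetric/orbifold subsurfaces, and one has to check both that this dichotomy agrees with your notion of adaptedness and that the bound transfers to the map $\pi_{\tilde W}\colon\NS(\tilde S)\to\mathcal{AC}(\tilde W)$ in the possibly infinite type setting of this lemma. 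The existence step is also only sketched: twisting an adapted witness $\tilde W_0$ by $T_{\tilde\gamma}^n$ for a non-descending $\tilde\gamma$ need not visibly destroy adaptedness, and the alternative index argument conflates two different questions. Finally, note that your claim is strictly stronger than what the lemma needs (it would essentially compute the limit set of $\Phi(\NS(S)^0)$ in $\partial\NS(\tilde S)$), so even if salvageable you are doing more work than the statement requires; the paper's proof only has to exhibit a single boundary point missed by the limit set.
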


\begin{proof}
We show that there is a point of $\partial \NS(\tilde{S})$ which does not lie in the limit set of $\Phi(\NS(S)^0)$. This will prove the statement. Recall that the \textit{limit set} of a subset $Y$ of a hyperbolic metric space $X$ consists of the points of $\partial X$ which are limits of sequences in $Y$.

As in the proof of Lemma \ref{infdiam}, we may choose a finite type witness $V\subset S$ for $\NS(S)$ such that the preimage $\tilde{V}=\pi^{-1}(V)$ is a witness for $\NS(\tilde{S})$. We may choose a lamination $\tilde{L} \in \ELW(\tilde{V})$ which is \textit{not} the lift of a lamination on $S$ (in other words, there are geodesics in the image $\pi(\tilde{L})$ which intersect transversely). Since the inclusion $\NS(\tilde{V})\hookrightarrow \NS(\tilde{S})$ is an isometric embedding, by Corollary \ref{boundarypoint} in the next section, we may choose a sequence $\{c_n\}_{n=1}^\infty \subset \NS(\tilde{V})$ with $[\{c_n\}]\in \partial \NS(\tilde{V})\subset \partial \NS(\tilde{S})$ and $c_n\toCH \tilde{L}$.

Suppose that $[\{c_n\}]=[\{\widetilde{a_n}\}]$ where $a_n\in \NS(S)^0$ and $\widetilde{a_n}$ is a lift of $a_n$ for each $n$. We will consider the geodesic representatives of the $a_n$. For any $n$, $a_n$ intersects $V$ essentially. Moreover, we may construct a compact subsurface $V_0\subset V$ with the property that every simple geodesic which intersects $V$ essentially also intersects $V_0$. To construct $V_0$, it suffices to choose certain open neighborhoods of the punctures of $V$ and half-open collar neighborhoods of the boundary components of $V$ which are all pairwise disjoint. We remove these neighborhoods from $V$ to form $V_0$ (see the proof of Lemma 4.2 in \cite{wwpd} for more details). Hence we have that $a_n$ intersects $V_0$ for each $n$. By compactness of the unit tangent bundle $T^1(V_0)$, we may pass to a subsequence and choose tangent vectors $(p_n,v_n)$ along $a_n$ which converge to $(p,v)\in T^1(V_0)$. The bi-infinite geodesic $l$ in $S$ passing through $(p,v)$ is simple and lifts to a bi-infinite geodesic $\tilde{l}$ in $\tilde{S}$.

We claim that $\tilde{l}$ intersects $\tilde{L}$ transversely. To see this, note that $\tilde{l}$ intersects $\tilde{V}$. Since $\tilde{L}$ fills $\tilde{V}$, $\tilde{l}$ must either intersect $\tilde{L}$ transversely or spiral onto $\tilde{L}$ on at least one end. In the latter case, there is a ray $\tilde{l}_0$ contained in $\tilde{l}$ with $\overline{\tilde{l}_0} \setminus \tilde{l}_0=\tilde{L}$. The projection $\pi(\tilde{l}_0)$ is a ray $l_0$ contained in $l$. Moreover, for any leaf $\tilde{m}$ of $\tilde{L}$, the projection $m=\pi(\tilde{m})$ has a self-intersection at some point $q$ (which follows from minimality of $\tilde{L}$). If $m_0$ is a compact subsegment of $m$ self-intersecting at $q$ then for any choice of $\epsilon>0$, there exists a subsegment of $l_0$ which is $\epsilon$-Hausdorff close to $m_0$. Hence we see that $l_0$ also has a self-intersection and this contradicts that $l$ is simple. Thus $\tilde{l}$ intersects $\tilde{L}$ transversely as claimed.

Since $c_n\toCH \tilde{L}$ and the Hausdorff topology on laminations on $\tilde{V}$ is compact, we may pass to a further subsequence to assume that $c_n\toH \tilde{M}$ where $\tilde{M}$ is a geodesic lamination contained in $\tilde{V}$ with $\tilde{L}\subset \tilde{M}$. By Corollary \ref{nonsepbicorn}, there is a nonseparating bicorn $d$ between $\tilde{l}$ and $\tilde{L}$. Let $\lambda$ and $\mu$ be the $\tilde{l}$-arc of $d$ and the $\tilde{L}$-arc of $d$, respectively. For any $\epsilon>0$ and any $n$ large enough, there is a lift $\widetilde{a_n}'$ of $a_n$ (possibly different from $\widetilde{a_n}$) and a segment of $\widetilde{a_n}'$ of length $1/\epsilon$ which is $\epsilon$-Hausdorff close to a subsegment of $\tilde{l}$ with midpoint contained in $\lambda$. Since, $d(\widetilde{a_n},\widetilde{a_n}')\leq 1$, we still have $[\{\widetilde{a_n}'\}]=[\{c_n\}]$. Also for $n$ large enough, there is a segment of $c_n$ of length $1/\epsilon$ which is $\epsilon$-Hausdorff close to a subsegment of the leaf of $\tilde{L}$ containing $\mu$ so that the midpoint of this subsegment is in $\mu$. Hence we see that $d\in \B(\widetilde{a_n}',c_n)$ for all $n$ large enough. Since $\B(\widetilde{a_n}',c_n)$ is $E(1)$-Hausdorff close to $[\widetilde{a_n}',c_n]$, this proves that $(\widetilde{a_n}'\cdot c_n)_d\leq E(1)+2\delta$ for $n$ sufficiently large, where $\delta$ is the constant of hyperbolicity of $\NS(\tilde{S})$. This contradicts that $(\widetilde{a_n}' \cdot c_n)_d\to \infty$ as $n\to \infty$.
\end{proof}

\section{Infinite bicorn paths}
\label{infpathsec}

In the remaining sections of the paper, we prove Theorem \ref{boundarythm} using the technique of \textit{infinite bicorn paths}. Recall the statement:

\boundarythm*

For the rest of the paper, $S$ is assumed to be \textit{finite type}. Thus, $S$ has finite positive genus, finitely many punctures, and no boundary components. As usual, we will replace curves by their geodesic representatives. In particular, simple closed curves are pairwise in minimal position.

In this and the next section, we define a map $F:\ELW(S)\hookrightarrow \partial \NS(S)$. We ultimately prove that $F$ is a homeomorphism. The hardest part of the proof is showing that $F$ is surjective in Section \ref{surjsec}. For this we apply the tools we developed in Sections \ref{2kcornsec} through \ref{nontrivsepsec}.

The following arguments are adapted from arguments in \cite{phothesis} Chapter 5. They are necessarily more complicated due to the fact that we must frequently verify that certain curves are nonseparating. First we construct inductively an infinite coarse path based on a lamination in $\ELW(S)$. The construction takes as input the following data:
\begin{itemize}
\item a nonseparating curve $a$;
\item a leaf $l$ of a lamination $L\in \ELW(S)$;
\item an orientation $\vec{l}$ on $l$;
\item a point $p\in a \cap l$ (note that such a point exists since $L$ is minimal and fills a witness).
\end{itemize}
The output is a path $\B(a,\vec{l},p)=\{a_0,a_1,\ldots\}$ uniquely determined by the data. The curves $a_i$ are bicorns between $a$ and $l$, constructed to have longer and longer $l$-arcs and shorter and shorter $a$-arcs.

\begin{const}[Construction of $\B(a,\vec{l},p)$: Step 1]
By minimality of $L$, the oriented ray $\vec{l}|[p,\infty)$ intersects $a$ infinitely many times in its interior. Let $p_1$ be the first point of $\vec{l}|(p,\infty)\cap a$ after $p$ along $l$. Orient $a$ to obtain the oriented curve $\vec{a}$ such that $\vec{a}|[p,p_1]$ leaves $p$ to the right of $l$ and $\vec{a}|[p_1,p]$ leaves $p$ to the left of $l$. Both of the curves $a_1'=\vec{a}|[p,p_1]\cup l|[p,p_1]$ and $a_1''=\vec{a}|[p_1,p]\cup l|[p,p_1]$ are simple and we have $[a_1']+[a_1'']=[a]$ when the curves $a_1'$ and $a_1''$ are oriented to agree with $\vec{a}$ along their overlap with $\vec{a}$. Hence one of the curves $a_1'$ or $a_1''$ is nonseparating. If $a_1'$ is nonseparating then we set $a_1=a_1'$ and otherwise we set $a_1=a_1''$. We also set $a_0=a$. We have $i(a_0,a_1)\leq 1$ by construction.
\label{step1}
\end{const}

See Figure \ref{firstbicorn}.

\begin{figure}[h]
\centering

\def\svgwidth{0.5\textwidth}
\begingroup%
  \makeatletter%
  \providecommand\color[2][]{%
    \errmessage{(Inkscape) Color is used for the text in Inkscape, but the package 'color.sty' is not loaded}%
    \renewcommand\color[2][]{}%
  }%
  \providecommand\transparent[1]{%
    \errmessage{(Inkscape) Transparency is used (non-zero) for the text in Inkscape, but the package 'transparent.sty' is not loaded}%
    \renewcommand\transparent[1]{}%
  }%
  \providecommand\rotatebox[2]{#2}%
  \newcommand*\fsize{\dimexpr\f@size pt\relax}%
  \newcommand*\lineheight[1]{\fontsize{\fsize}{#1\fsize}\selectfont}%
  \ifx\svgwidth\undefined%
    \setlength{\unitlength}{675.35402794bp}%
    \ifx\svgscale\undefined%
      \relax%
    \else%
      \setlength{\unitlength}{\unitlength * \real{\svgscale}}%
    \fi%
  \else%
    \setlength{\unitlength}{\svgwidth}%
  \fi%
  \global\let\svgwidth\undefined%
  \global\let\svgscale\undefined%
  \makeatother%
  \begin{picture}(1,0.78132227)%
    \lineheight{1}%
    \setlength\tabcolsep{0pt}%
    \put(0,0){\includegraphics[width=\unitlength,page=1]{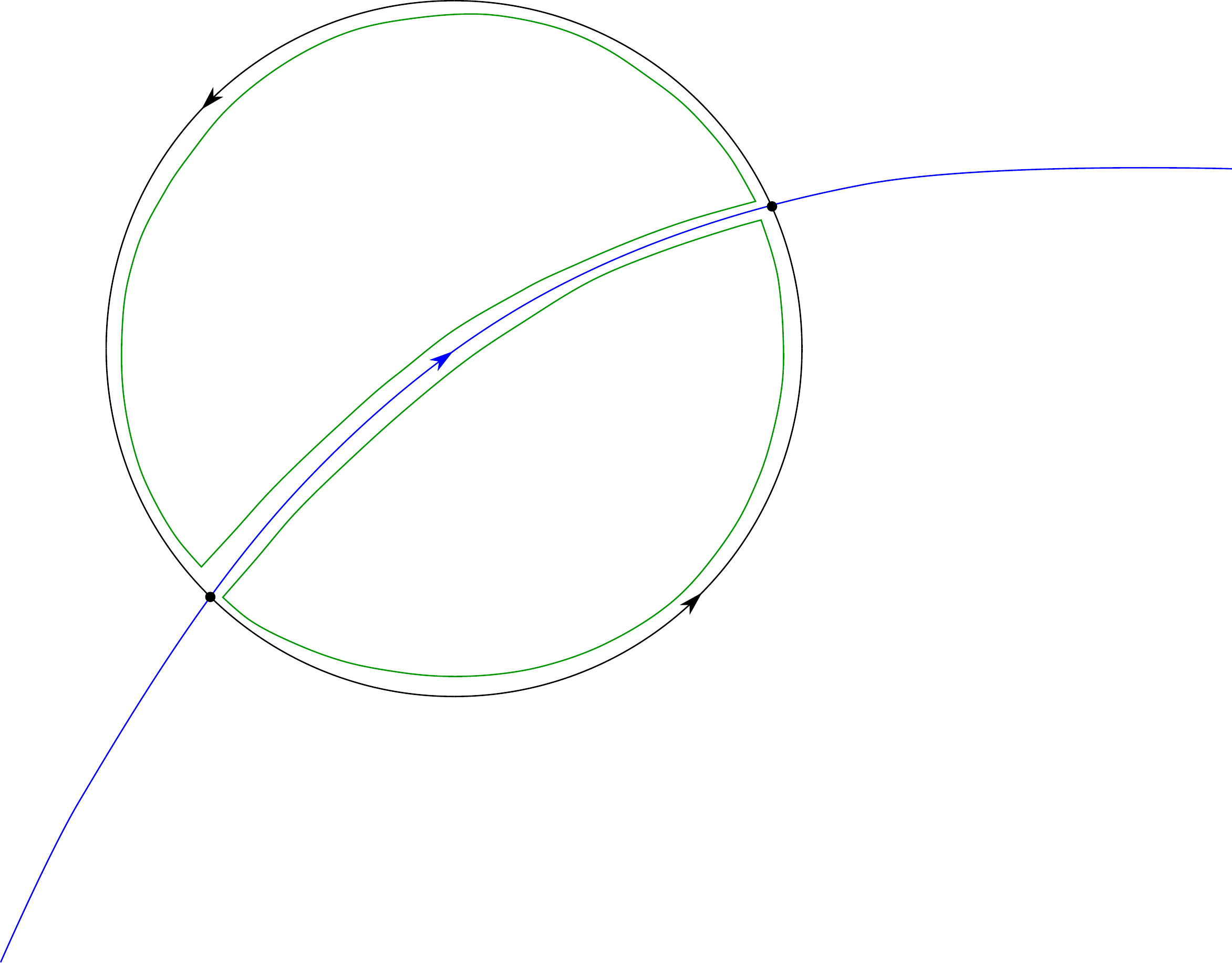}}%
    \put(0.07145002,0.09457065){\color[rgb]{0,0,1}\makebox(0,0)[lt]{\lineheight{1.25}\smash{\begin{tabular}[t]{l}$\vec{l}$\end{tabular}}}}%
    \put(0.56055624,0.23030887){\color[rgb]{0,0,0}\makebox(0,0)[lt]{\lineheight{1.25}\smash{\begin{tabular}[t]{l}$\vec{a}$\end{tabular}}}}%
    \put(0.16792507,0.22245627){\color[rgb]{0,0,0}\makebox(0,0)[lt]{\lineheight{1.25}\smash{\begin{tabular}[t]{l}$p$\end{tabular}}}}%
    \put(0.65142229,0.58367693){\color[rgb]{0,0,0}\makebox(0,0)[lt]{\lineheight{1.25}\smash{\begin{tabular}[t]{l}$p_1$\end{tabular}}}}%
    \put(0.43603604,0.27181561){\color[rgb]{0,0.58823529,0}\makebox(0,0)[lt]{\lineheight{1.25}\smash{\begin{tabular}[t]{l}$a_1'$\end{tabular}}}}%
    \put(0.15895065,0.59601676){\color[rgb]{0,0.58823529,0}\makebox(0,0)[lt]{\lineheight{1.25}\smash{\begin{tabular}[t]{l}$a_1''$\end{tabular}}}}%
  \end{picture}%
\endgroup%

\caption{The first bicorn $a_1$ is defined to be $a_1'$ if $a_1'$ is nonseparating and otherwise it is $a_1''$.}
\label{firstbicorn}
\end{figure}

\begin{const}[Construction of $\B(a,\vec{l},p)$: Step $n$]
Now suppose that nonseparating bicorns $a_0,a_1,\ldots,a_n$ have been constructed with $a$-arcs $\alpha_0\supset \alpha_1 \supset \ldots \supset \alpha_n$ and $l$-arcs $\lambda_0\subset \lambda_1 \subset \ldots \subset \lambda_n$. We suppose that $i(a_i,a_{i+1})\leq 2$ for all $i$. Denote by $p_n$ and $q_n$ the endpoints of $\lambda_n$ with $p_n<q_n$ in the orientation on $\vec{l}$. Since $L$ is minimal and fills a witness, both rays $\vec{l}|[q_n,\infty)$ and $\vec{l}|(-\infty,p_n]$ intersect the nonseparating curve $a_n$ in their interiors. Consequently, they must intersect the arc $\alpha_n$. Let $q_{n+1}^*$ be the \textit{first} point of $\vec{l}|(q_n,\infty) \cap \alpha_n$ along $\vec{l}$ and $p_{n+1}^*$ be the \textit{last} point of $\vec{l}|(-\infty,p_n)\cap \alpha_n$ along $\vec{l}$.

We define arcs of $a$ and $l$ and bicorns between $a$ and $l$ by  (see Figure \ref{bicornconstruction}):
\begin{itemize}
\item $\lambda_{n+1}'=\vec{l}|[p_n,q_{n+1}^*]$, $\alpha_{n+1}'=\alpha_n|[p_n,q_{n+1}^*]$, $a_{n+1}'=\alpha_{n+1}'\cup \lambda_{n+1}'$;
\item $\lambda_{n+1}''=\vec{l}|[p_{n+1}^*,q_n]$, $\alpha_{n+1}''=\alpha_n|[p_{n+1}^*,q_n]$, $a_{n+1}''=\alpha_{n+1}''\cup\lambda_{n+1}''$;
\item $\lambda_{n+1}'''=\lambda_{n+1}'\cup \lambda_{n+1}''=\vec{l}|[p_{n+1}^*,q_{n+1}^*]$, $\alpha_{n+1}'''=\alpha_{n+1}'\cap\alpha_{n+1}''=\alpha_n|[p_{n+1}^*,q_{n+1}^*]$, $a_{n+1}'''=\alpha_{n+1}'''\cup \lambda_{n+1}'''$.
\end{itemize}

By the argument of \cite{nonsepbicorns} Claim 3.4, one of $a_{n+1}',a_{n+1}'',$ and $a_{n+1}'''$ is nonseparating. We define $a_{n+1}$ to be the first element of the ordered set $\{a_{n+1}',a_{n+1}'',a_{n+1}'''\}$ which is nonseparating (where the elements are ordered from left to right) and let $\alpha_{n+1}$ and $\lambda_{n+1}$ be the $a$-side and $l$-side of $a_{n+1}$, respectively. Clearly we have $\alpha_{n+1}\subset \alpha_n$ and $\lambda_{n+1}\supset \lambda_n$. Moreover, $i(a_n,a_{n+1})\leq 2$. This completes the inductive step of the construction of the infinite bicorn path $\B(a,\vec{l},p)$. It is a 5-coarse path in $\NS(S)$.
\label{stepn}
\end{const}

\begin{figure}[h]
\centering
\def\svgwidth{0.8\textwidth}
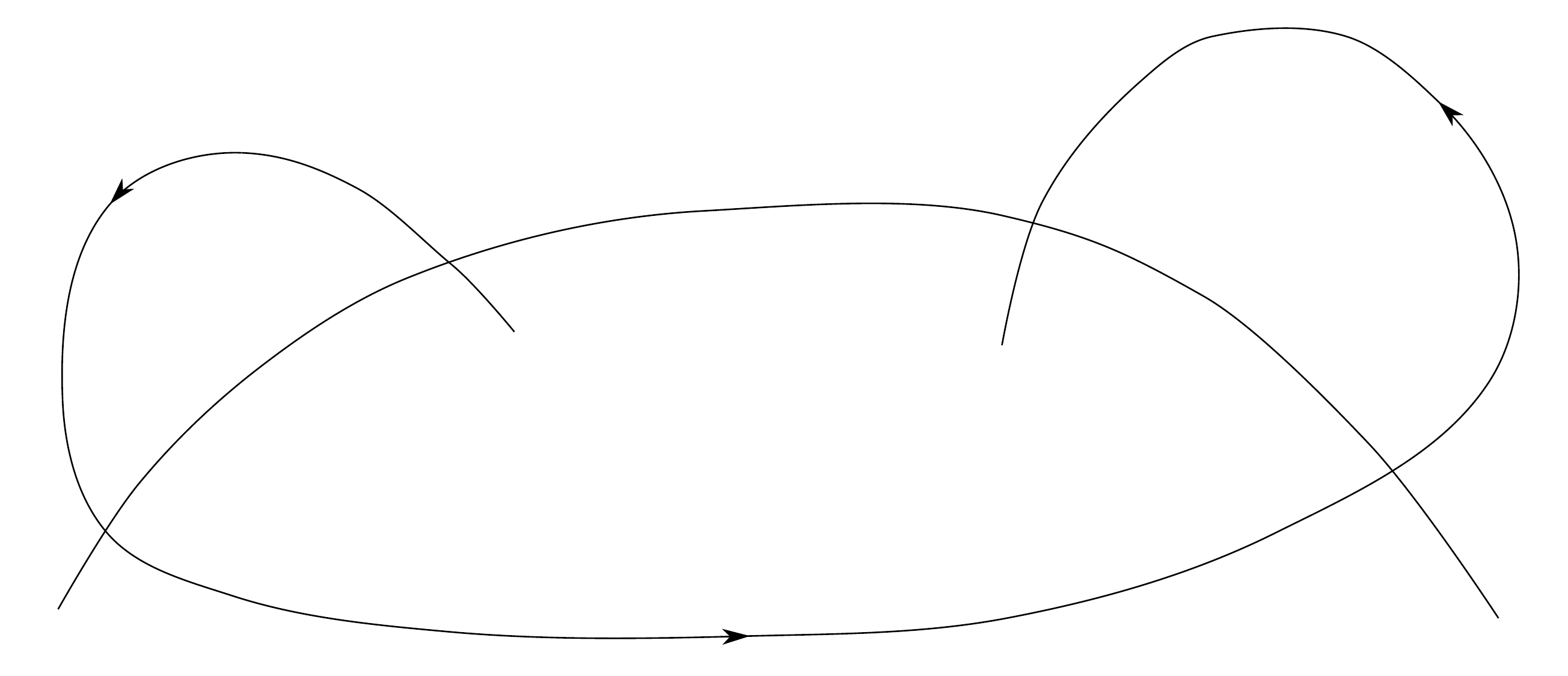

\caption{The construction of the curves $a_{n+1}'$, $a_{n+1}''$, and $a_{n+1}'''$ from $a_n$.}
\label{bicornconstruction}
\end{figure}

Now we equip the geodesic lamination $L$ with a transverse measure. Denote by $\mu \in \ML(S)$ the resulting measured lamination.

\begin{lem}
The transverse measure $i(a_n,\mu)$ converges to 0 
\end{lem}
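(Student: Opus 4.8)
The plan is to control $i(a_n,\mu)$ by the transverse $\mu$-mass of the $a$-side $\alpha_n$ of the bicorn $a_n=\alpha_n\cup\lambda_n$, to show this quantity is monotone, and to identify its limit with the transverse mass of the nested intersection $\alpha_\infty:=\bigcap_n\alpha_n$; the content is then entirely in showing $\alpha_\infty$ carries no $\mu$-mass. First I would note that since $\lambda_n$ lies in the leaf $l$ of $L=\operatorname{supp}(\mu)$ while $a$, and hence $\alpha_n$, is a geodesic transverse to $L$, the only transverse crossings of the piecewise–geodesic curve $a_n$ with $L$ occur along $\alpha_n$; pushing the two corners slightly off $L$ (permissible since transverse measures are non-atomic) gives $i(a_n,\mu)\le\mu(\alpha_n)$, where $\mu(\alpha_n)$ is the mass $\mu$ assigns to the transversal $\alpha_n$. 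Since $\alpha_0\supsetneq\alpha_1\supsetneq\cdots$ and $\mu$ is finite on the compact transversal $a$, the numbers $\mu(\alpha_n)$ decrease to $\mu(\alpha_\infty)$. If $\alpha_\infty$ is a single point we are done; otherwise it is a nondegenerate subarc $[P,Q]$ of $a$, and it suffices to prove $\operatorname{int}(\alpha_\infty)\cap L=\emptyset$, for then $\mu(\alpha_\infty)=\mu(\partial\alpha_\infty)=0$ by non-atomicity.

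The key structural input is that \emph{the endpoints of every $\alpha_n$ lie on $l$}: this follows by induction, since Constructions \ref{step1} and \ref{stepn} always choose the corners of $a_{n+1}$ among $\{p_n,q_n,p^*_{n+1},q^*_{n+1}\}$, all points of $l$. As $\alpha_n\searrow\alpha_\infty$ and (inspecting the construction) the $p$-side endpoint of $\alpha_n$ only ever moves inward, the points $P,Q$ are the limits of $p_n,q_n$ along $\vec l$, so $P,Q\in\overline l=L$. I would then show $\bigcup_n\lambda_n$ contains a half-leaf of $l$, i.e.\ $p_n\to-\infty$ or $q_n\to+\infty$ along $\vec l$. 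If not, the limits $P,Q$ of $p_n,q_n$ along $\vec l$ are finite points of $l$, necessarily distinct. If both $p_n$ and $q_n$ are eventually constant then $\alpha_n$ is eventually constant, contradicting $\alpha_{n+1}\subsetneq\alpha_n$. Otherwise, say $p_n$ is not eventually constant; then $P\ne p_n,q_n$ for large $n$, so $P\in\operatorname{int}(\alpha_n)$, and transversality of $a$ and $l$ at $P$ forces $\vec l|(P,p_n)\cap\alpha_n=\emptyset$ for large $n$ (this subarc of $\vec l$ shrinks into a flow box around $P$ in which $l$ meets $\alpha_n$ only at $P$), whence $p^*_{n+1}=P$; since $p_n$ then changes infinitely often this makes $p_n=P$ along $\vec l$ for infinitely many $n$, contradicting that $p_n$ strictly decreases along $\vec l$. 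The case of $q_n$ not eventually constant is symmetric.

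With a half-leaf of $l$ inside $\bigcup_n\lambda_n$, minimality of $L$ makes that half-leaf — and hence $\bigcup_n\lambda_n$ — dense in $L$. Now $\operatorname{int}(\alpha_\infty)\subseteq\operatorname{int}(\alpha_n)$ is disjoint from $\lambda_n$ for every $n$, hence disjoint from $\bigcup_n\lambda_n$. If some leaf of $L$ met $\operatorname{int}(\alpha_\infty)$ at a point $z$, then choosing a small flow box at $z$ cut into two halves by $\operatorname{int}(\alpha_\infty)$, density of $\bigcup_n\lambda_n$ in $L$ would force a subarc of $\bigcup_n\lambda_n$ to cross the box from one transversal face to the other, hence to meet $\operatorname{int}(\alpha_\infty)$ — a contradiction. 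Therefore $\operatorname{int}(\alpha_\infty)\cap L=\emptyset$, so $\mu(\alpha_\infty)=0$, and $0\le i(a_n,\mu)\le\mu(\alpha_n)\to 0$.

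I expect the main obstacle to be the dichotomy step: ruling out that the $l$-arcs $\lambda_n$ stabilize onto a bounded arc of $l$. The subtlety is that the construction need not extend the $l$-side on a prescribed side at every stage (the candidate extension may be separating), so one cannot simply assert that $\lambda_n$ grows without bound; the argument instead has to combine transversality of $a$ and $l$ at the would-be limiting corners with the fact that the $a$-sides $\alpha_n$ strictly shrink. Once this is settled, the intersection-number bound, the monotonicity, the ``endpoints on $l$'' induction, and the density argument are all routine.
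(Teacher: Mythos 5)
Your proposal is correct and follows the paper's proof in its essential structure: bound $i(a_n,\mu)$ by the transverse mass $\mu(\alpha_n)$, reduce to showing $\operatorname{int}(\bigcap_n\alpha_n)$ misses $L$, establish that $\bigcup_n\lambda_n$ contains a ray of $l$, and use minimality of $L$ together with the ``first intersection point'' feature of Construction \ref{stepn} to reach a contradiction. The only differences are interchangeable technical choices at two spots: the paper obtains the ray from an injectivity-radius lower bound on the lengths of the increments $\vec{l}|[p_{n+1},p_n]$ and $\vec{l}|[q_n,q_{n+1}]$ rather than from your transversality-at-the-limit argument, and its final contradiction is direct (a point $r$ of the ray lies in $\operatorname{int}(\tilde{\alpha})\subset\alpha_n$ strictly between $q_n$ and $q_{n+1}$, violating the definition of $q_{n+1}^*$) rather than passing through a flow-box density argument.
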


\begin{proof}
The transverse measure of $a_n$ is at most that of the transversal $\alpha_n$. By compactness $\bigcap_{n=0}^\infty \alpha_n$ is an arc or possibly a point which we call $\tilde{\alpha}$. Since $\mu$ has no atoms, if $\tilde{\alpha}$ happens to be a point, there is nothing to prove. So suppose that $\tilde{\alpha}$ is an arc. If $\tilde{\alpha}$ intersects $\mu$ nowhere in its interior then there is again nothing to prove.

Otherwise, $\tilde{\alpha}$ intersects $\mu$ in a Cantor set of points. Note that the union $\tilde{\lambda}=\bigcup_{n=0}^\infty \lambda_n$ contains one of the rays $\vec{l}|[p,\infty)$ or $\vec{l}|(-\infty,p]$. This follows because there is a lower bound to the lengths of the arcs $\vec{l}|[p_{n+1},p_n]$ and $\vec{l}|[q_n,q_{n+1}]$ by injectivity radius considerations. By minimality, both rays $\vec{l}|[p,\infty)$ and $\vec{l}|(-\infty,p]$ must intersect $\tilde{\alpha}$ in its interior. Suppose without loss of generality that $\tilde{\lambda}$ contains $\vec{l}|[p,\infty)$. Then $\vec{l}|[p,\infty)$ intersects the interior of $\tilde{\alpha}$ at a point $r$. Consequently, for some $n$ we have $q_n<r<q_{n+1}$ in the orientation on $\vec{l}$. But $r\in \alpha_n$ and this contradicts that $q_{n+1}$ is the \textit{first} point of $\vec{l}\cap \alpha_n$ after $q_n$.
\end{proof}

\begin{lem}
Let $\mu\in \ML(S)$ with underlying lamination $L\in\ELW(S)$. If $\{c_n\}\subset \NS(S)^0$ with $i(c_n,\mu)\to 0$ then $c_n\toCH L$.
\label{intchconv}
\end{lem}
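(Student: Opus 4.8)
The plan is to verify the definition of coarse Hausdorff convergence directly. Let $\{c_{n_i}\}$ be any subsequence; by compactness of $\GL(S)$ we may pass to a further subsequence (not renamed) with $c_{n_i}\toH N$ for some (compact) geodesic lamination $N$, and it suffices to show $L\subseteq N$. The first step is to show that \emph{no leaf of $N$ crosses a leaf of $L$ transversely}. Suppose some leaf $m$ of $N$ crossed a leaf of $L$ transversely at a point $z$, and choose a short arc $T$ transverse to $L$ through $z$, short enough that $T$ meets $m$ transversely and only at $z$; put $w_0=\mu(T)>0$. For $i$ large we have $m\subseteq N\subseteq B_\epsilon(c_{n_i})$, so $c_{n_i}$ is a geodesic $\epsilon$-fellow-travelling a subsegment of $m$ that passes transversely from one side of $T$ to the other; hence $c_{n_i}$ itself crosses $T$ transversely and so $i(c_{n_i},\mu)\geq w_0/2$, contradicting $i(c_{n_i},\mu)\to 0$. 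Therefore $L\cup N$ is a geodesic lamination.

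Next I would show that $N$ meets $\operatorname{int}(V)$, where $V$ is the finite type witness filled by $L$. Each $c_{n_i}$ is nonseparating, hence intersects $V$ essentially, and as in the proof of Lemma~\ref{notcoarselyequal} there is a fixed compact subsurface $V_0\subseteq\operatorname{int}(V)$ met by every simple geodesic intersecting $V$ essentially. Choosing $x_i\in c_{n_i}\cap V_0$ and passing to a subsequence with $x_i\to x$, we get $x\in N\cap V_0$, so $N$ has a leaf $m$ meeting $\operatorname{int}(V)$.

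Finally I would conclude as follows. By the first step, $m$ is either a leaf of $L$ or is disjoint from $L$. If $m$ is a leaf of $L$, then $\overline m=L$ by minimality of $L$, so $L\subseteq N$ since $N$ is closed. If $m$ is disjoint from $L$, then $m$ lies in the closure of a complementary region $P$ of $L$ in $V$; since $L$ fills $V$, $P$ is an ideal polygon or a once-punctured ideal polygon, and since $N$ is compact no leaf of $N$ runs out a puncture, so each end of $m$ runs into an ideal vertex $v$ of $P$. Such a $v$ is the common ideal endpoint of two sides of $P$, which are leaves or half-leaves of the minimal lamination $L$; because geodesic rays sharing an ideal endpoint converge to one another, the end of $m$ at $v$ has distance tending to $0$ from the corresponding side of $P$, and because the ends of half-leaves of a minimal lamination are dense in it, $\overline m\supseteq L$. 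In either case $L\subseteq\overline m\subseteq N$, which completes the proof.

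The main obstacle is the first step: making rigorous that a transverse crossing of $L$ by a leaf of $N$ forces a \emph{uniformly} positive amount of $\mu$-mass onto the nearby subsegments of $c_{n_i}$. This uses essentially that $\mu$ has full support on the minimal lamination $L$ (so the $\mu$-mass of any honest transversal is bounded below once the transversal is fixed) together with the stability of transverse intersections under $\epsilon$-fellow-travelling of geodesics in the hyperbolic surface $S$; the remaining steps are comparatively soft, relying only on the witness/core-subsurface trick already used in Section~\ref{coversec} and on standard density properties of leaves of minimal filling laminations.
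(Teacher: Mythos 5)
Your strategy matches the paper's (rule out transverse crossings of $L$ by the transversal argument, then use nonseparating together with the witness property to force the conclusion), and Steps~1 and~2 are sound. The gap is in Step~3, in the case that $m$ is disjoint from $L$: you assert that $P$ is an ideal polygon or once-punctured ideal polygon and that each end of $m$ must run into an ideal vertex. When $V$ is a \emph{proper} subsurface of $S$ (so $\partial V\neq\emptyset$), the complementary regions of $L$ in $V$ also include \emph{crowns} meeting $\partial V$, and an end of $m$ in such a region can spiral onto a simple closed geodesic disjoint from $L$ (a component of $\partial V$, or a closed geodesic in the adjacent punctured disk) rather than converging to an ideal vertex. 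The observation that $N$ is compact rules out ends escaping to a puncture, but not spiralling, so as written Step~3 fails for general $V$.

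The conclusion is nonetheless correct, and there is a fix which makes Steps~2 and~3 unnecessary. After Step~1, suppose $L\not\subset N$. No leaf of $N$ is a leaf of $L$ (any leaf of $L$ has closure $L$ by minimality, which would force $L\subset N$ since $N$ is closed), so every leaf of $N$ is disjoint from $L$ and hence $N\cap L=\emptyset$. As $N$ and $L$ are compact, there is $\epsilon>0$ with $B_\epsilon(N)\cap L=\emptyset$; for $i$ large, $c_{n_i}\subset B_\epsilon(N)$ and so $c_{n_i}$ is disjoint from $L$. But $c_{n_i}$ is nonseparating and $V$ is a witness, so $c_{n_i}$ intersects $V$ essentially, whereas any arc of $c_{n_i}\cap V$ that is disjoint from $L$ has both endpoints on $\partial V$, lies in a crown, and is therefore inessential in $V$; hence $c_{n_i}$ must meet $L$, a contradiction. (The paper's own proof is terse at exactly this point --- it asserts a transverse crossing with $L$ whenever some leaf of the Hausdorff limit meets $V$ --- and should be read with the same fix in mind.)
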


\begin{proof}
Suppose that $L$ fills the witness $V\subset S$. Consider a Hausdorff convergent subsequence $c_{n_i}\toH M$ with $M$ a geodesic lamination. Suppose that $L\not\subset M$ and that there exists a leaf $l$ of $M$ intersecting $V$. Then there is a compact arc $\lambda\subset l$ with endpoints in complementary regions of $L$ and intersecting $L$ tranvsersely. Then for all $i$ large enough $c_{n_i}$ contains an arc homotopic to $\lambda$ through a sequence of arcs transverse to $L$. Hence for all such $i$, $i(c_{n_i},\mu)$ is at least as large as the measure of $\lambda$. This contradicts that $i(c_{n_i},\mu)\to 0$. Hence we have that either $L\subset M$ or $M$ is disjoint from $V$. If $M$ is disjoint from $V$, then by compactness of $M$, $c_{n_i}$ is contained in the complement of $V$ for $i$ sufficiently large. But the complement of $V$ consists of a set of punctured disks and this contradicts that $c_{n_i}$ is nonseparating. So we must have $L\subset M$. Since the Hausdorff convergent subsequence $\{c_{n_i}\}$ was arbitrary, this proves that $c_n\toCH L$, as claimed.
\end{proof}

\begin{cor}
Let $a\in \NS(S)^0$, $L\in \ELW(S)$, $\vec{l}$ be a leaf of $L$ endowed with an orientation, and $p\in a\cap \vec{l}$. Write $\B(a,\vec{l},p)=\{a_n\}_{n=0}^\infty$. Then $a_n\toCH L$.
\label{bicornchconv}
\end{cor}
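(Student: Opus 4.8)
The plan is to combine the two lemmas that immediately precede the statement, so the argument is essentially a two-line deduction. First I would fix a transverse measure on $L$ to obtain a measured lamination $\mu\in\ML(S)$ whose underlying geodesic lamination is $L$; such a measure exists because $L$ is a minimal geodesic lamination filling a finite type witness, and this is exactly the choice already made just before the lemma asserting $i(a_n,\mu)\to 0$. In the write-up I would simply invoke that $\mu$ rather than re-justify its existence.

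Next I would observe that, by Constructions \ref{step1} and \ref{stepn}, every term $a_n$ of the infinite bicorn path $\B(a,\vec{l},p)=\{a_n\}_{n=0}^\infty$ is a nonseparating simple closed curve, so $\{a_n\}\subset\NS(S)^0$. The lemma immediately preceding the statement gives $i(a_n,\mu)\to 0$ as $n\to\infty$. Applying Lemma \ref{intchconv} with $c_n=a_n$ then yields $a_n\toCH L$, which is precisely the assertion of the corollary.

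I do not expect any real obstacle: the corollary is a formal consequence of the two lemmas, and the only input not spelled out is the classical existence of a transverse measure of full support on a minimal lamination of a finite type surface (already implicitly used above). If one wanted to be scrupulous, one could add the remark that the conclusion $a_n\toCH L$ is independent of the chosen transverse measure on $L$, since coarse Hausdorff convergence only sees the supports of the laminations involved.
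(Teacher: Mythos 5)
Your proof is correct and is exactly the deduction the paper intends: the corollary is stated immediately after the lemma $i(a_n,\mu)\to 0$ and Lemma \ref{intchconv}, and is meant as an immediate consequence of combining them, just as you do.
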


The proof of the following lemma is nearly identical to that of \cite{phothesis} Proposition 5.2.5.

\begin{lem}
Suppose that $L\in \ELW(S)$ and let $\mu$ be a measured lamination with underlying lamination $L$. If $\{c_n\}\subset \NS(S)^0$ with $i(c_n,\mu)\to 0$ then $[\{c_n\}]\in \partial \NS(S)$.
\label{intnumconvto0}
\end{lem}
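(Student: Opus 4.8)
The plan is to fix a basepoint $a\in\NS(S)^0$ and show directly that $\lim_{n,m\to\infty}(c_n\cdot c_m)_a=\infty$. Since $L$ is minimal and fills a witness, choose a leaf $l$ of $L$, an orientation $\vec l$, and a point $p\in a\cap l$, and let $\B(a,\vec l,p)=\{a_j\}_{j\ge 0}$ be the infinite bicorn path of Constructions \ref{step1}--\ref{stepn}. I would first record (as in \cite{nonsepbicorns}, by the Hensel--Przytycki--Webb-type analysis underlying Corollary \ref{bicornshausclose}) that ordered bicorn paths, finite or infinite, are uniform reparametrized quasi-geodesics in $\NS(S)$; in particular $\{a_j\}$ is a quasi-geodesic ray, so $d(a,a_j)\to\infty$ and $(a_i\cdot a_j)_a\to\infty$. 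Because convergence at infinity and the resulting point of $\partial\NS(S)$ are independent of the basepoint, it then suffices to show that $(c_n\cdot a_j)_a\gtrsim j$ for all $j$ below some threshold $J_n$ with $J_n\to\infty$: standard Gromov-product manipulations with the facts just listed then give $(c_n\cdot c_m)_a\gtrsim\min(J_n,J_m)\to\infty$.

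The core step is a parallel analysis of $\B(a,c_n)$ and $\B(a,\vec l,p)$ for $n$ large. By Lemma \ref{intchconv} we have $c_n\toCH L$, and — this is exactly the geometric input used in the proof of Lemma \ref{notcoarselyequal} — the hypothesis $i(c_n,\mu)\to 0$ together with $c_n\toCH L$ forces $c_n$, for every $\epsilon>0$ and all large $n$, to contain an arc that is $\epsilon$-Hausdorff close to an arbitrarily long subarc of $l$ through a neighbourhood of $p$, the length of such an arc tending to $\infty$ as $n\to\infty$. Running the inductive definitions of $\B(a,\vec l,p)$ and $\B(a,c_n)$ in tandem, starting from $p$: because $c_n$ closely fellow travels $l$ for a long time near $p$, the successive intersections of $c_n$ with $a$ occur very near $p_1,p_2,\dots$, so the $j$-th bicorn $b^n_j$ on $\B(a,c_n)$ has $a$-side essentially $\alpha_j$ and other side running alongside $\lambda_j$, is isotopic to $a_j$ (hence nonseparating, the construction being arranged to preserve nonseparation), and satisfies $i(b^n_j,a_j)\le$ an absolute constant; by Lemma \ref{intersectionnumber} this yields $d_{\NS(S)}(b^n_j,a_j)\le D_0$ for all $j\le J_n$, where $J_n\to\infty$.

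Granting this, the conclusion is formal. Fix $j\le J_n$. Since $\B(a,c_n)$ is a uniform quasi-geodesic from $a$ to $c_n$, Morse implies its $j$-th vertex $b^n_j$ lies within a uniform distance of $[a,c_n]$ at parameter $\asymp j$, so $(c_n\cdot b^n_j)_a\gtrsim j$, and using $d(b^n_j,a_j)\le D_0$ also $(c_n\cdot a_j)_a\gtrsim j$; the same holds with $c_m$ in place of $c_n$ when $j\le J_m$. Taking $j=\min(J_n,J_m)$ and applying $(c_n\cdot c_m)_a\ge\min\{(c_n\cdot a_j)_a,(a_j\cdot c_m)_a\}-5\delta$ gives $(c_n\cdot c_m)_a\gtrsim\min(J_n,J_m)\to\infty$, hence $[\{c_n\}]\in\partial\NS(S)$.

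The main obstacle is the tandem construction in the second paragraph: one must verify, with constants uniform in $n$, that the first $J_n$ bicorns of $\B(a,c_n)$ stay boundedly close to those of $\B(a,\vec l,p)$ and remain nonseparating at every stage. This requires a quantitative form of the fellow-traveling estimate for curves of small $\mu$-length and a careful simultaneous tracking of the homological ``one of the three candidate curves is nonseparating'' dichotomy through both constructions at once. (The genus-one case, where edges record $i\le 1$ rather than $i=0$, affects only constants.) Everything else is a formal consequence of uniform hyperbolicity and the quasi-geodesity of bicorn paths.
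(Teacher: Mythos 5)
Your proposal takes a genuinely different route from the paper's, but it has a gap that is in fact the crux of the matter. You write that the infinite bicorn path $\{a_j\}=\B(a,\vec l,p)$ ``is a quasi-geodesic ray, so $d(a,a_j)\to\infty$,'' citing the analysis behind Corollary~\ref{bicornshausclose}. That corollary concerns the \emph{set} $\B(a,b)$ of bicorns between two \emph{closed curves}, and shows it is Hausdorff close to $[a,b]$; it says nothing about whether an ordered infinite bicorn path with a lamination leaf makes unbounded progress. Even granting that such a path is an ``unparametrized'' quasi-geodesic, that property is compatible with the path remaining in a bounded ball forever, so it does not give $d(a,a_j)\to\infty$, nor the ``parameter $\asymp j$'' claim you invoke at the end. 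In fact $d(a,a_j)\to\infty$ is a special case of what Lemma~\ref{intnumconvto0} is being used to prove (it is precisely the content behind Corollary~\ref{boundarypoint}), so as written your argument is circular: you assume a special case of the conclusion as an ingredient. Establishing $d(a,c_n)\to\infty$ for any sequence with $i(c_n,\mu)\to 0$ is exactly what the first paragraph of the paper's proof does, via the Luo/Masur--Minsky style nested-limit-laminations argument, and your proposal contains no substitute for it.

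Beyond that, the ``tandem construction'' you flag as the main obstacle is indeed a substantial lacuna, not a routine check. You need the $j$-th bicorn of a path built against $c_n$ to equal (or be uniformly close to) $a_j$, with the homological ``which of $a'_{j+1},a''_{j+1},a'''_{j+1}$ is nonseparating'' choice agreeing on both sides. The paper proves exactly such a tandem-tracking statement in the continuity argument (Lemma after Lemma~\ref{bicornrestrict}, in Section~\ref{mapsec}), but there both paths are built against laminations, and that proof relies on Lemma~\ref{intnumconvto0} having already been established. For comparison, the paper's actual proof of Lemma~\ref{intnumconvto0} is shorter and avoids bicorn paths altogether: after the Luo argument shows $d(a,c_n)\to\infty$, it observes that any bicorn $c_{m,n}$ between $c_m$ and $c_n$ satisfies $i(c_{m,n},\mu)\le i(c_m,\mu)+i(c_n,\mu)\to 0$, so by the same Luo argument $c_{m,n}$ is eventually far from $a$; since $\B(c_m,c_n)$ is Hausdorff close to $[c_m,c_n]$ (Corollary~\ref{bicornshausclose}), this forces $(c_m\cdot c_n)_a\to\infty$. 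That trick --- bicorns between low-measure curves have low measure --- is what lets the paper bypass all of the tandem bookkeeping you are proposing. I'd recommend either reproducing the Luo argument to fill your gap, or adopting the paper's bicorn-of-bicorns trick outright.
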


\begin{proof}
The proof is a variation on Luo's proof of infinite diameter of the curve complex (see \cite{mm} Section 4.3).

Let $V$ be the witness filled by $L$. We have by Lemma \ref{intchconv} that $c_n\toCH L$.  Choose a curve $a\in \NS(S)^0$. If $d(a,c_n)\not\to \infty$ then up to taking a subsequence, we have $d(a,c_n)\leq M$ for all $n$ and some $M\in \N$. Up to taking a further subsequence, there exists $N\in \N$ such that $d_{\NS(S)}(a,c_n)=N$ for all $n$ and we may suppose that $c_n\toH L^N$ where $L^N$ is a lamination with $L\subset L^N$. For each $n$, choose a path \[c_0^n=a, c_1^n,\ldots, c_N^n=c_n\] in $\NS(S)$. Since $c_0^n,\ldots,c_N^n$ is a path in $\NS(S)$ we have that $i(c_{N-1}^n,c_N^n)=0$ for all $n$. Taking a further subsequence we may suppose that for each $i$ there exists a lamination $L^i$ with $c_i^n\toH L^i$.

Since $i(c_{N-1}^n,c_N^n)=0$, $L^{N-1}$ and $L^N$ do not intersect transversely. Hence since $L\subset L^N$ we have that either $L\subset L^{N-1}$ or $L^{N-1}$ does not intersect $V$.  Since $c_{N-1}^n$ is nonseparating, it can never be contained in the complement of $V$ and therefore we have that $L\subset L^{N-1}$. An induction proof yields also that $L\subset L^{N-2}, L\subset L^{N-3}, \ldots, L\subset L^0$. However, we have $c_0^n=a$ for all $n$ so that necessarily $L^0=a$. This is a contradiction.

Now suppose that $\liminf_{m,n\to \infty} (c_m\cdot c_n)_a =k<\infty$. We have $d(a,[c_m,c_n])\leq (c_m \cdot c_n)_a+2\delta$ for all $m,n$, so there are pairs $(m,n)$ with $d(a,[c_m,c_n])\leq k+3\delta$ and $m,n$ both arbitrarily large. By Corollary \ref{2kcornshausclose}, $[c_m,c_n]$ and $\B(c_m,c_n)$ are $E(1)$-Hausdorff close, so we may choose $c_{m,n}\in \B(c_m,c_n)$ with $d(a,c_{m,n})\leq k+3\delta+E(1)$ for all $m,n$. However, we have \[i(c_{m,n},\mu)\leq i(c_m,\mu)+i(c_n,\mu)\] and as long as $m$ and $n$ are sufficiently large, $i(c_m,\mu)$ and $i(c_n,\mu)$ are as close as desired to 0. Thus $\lim_{m,n\to \infty} i(c_{m,n},\mu)=0$. By the work in the previous paragraphs, this implies that $\lim_{m,n\to \infty} d(a,c_{m,n})=\infty$. This contradicts the definition of $c_{m,n}$.

Thus $[\{c_n\}]\in \partial \NS(S)$.
\end{proof}

\begin{cor}
Let $a\in \NS(S)^0$, $L\in \ELW(S)$, $\vec{l}$ be a leaf of $L$ endowed with an orientation, and $p\in a\cap \vec{l}$. Then $[\B(a,\vec{l},p)]\in \partial \NS(S)^0$.
\label{boundarypoint}
\end{cor}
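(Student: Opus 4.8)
The plan is to deduce this immediately by combining the two lemmas that precede it: the (unnamed) lemma asserting $i(a_n,\mu)\to 0$, stated just before Lemma~\ref{intchconv}, together with Lemma~\ref{intnumconvto0}. Write $\B(a,\vec{l},p)=\{a_n\}_{n=0}^\infty$ and, exactly as in the construction, equip $L$ with a transverse measure so as to obtain a measured lamination $\mu\in\ML(S)$ with underlying lamination $L\in\ELW(S)$.

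First I would note that $\{a_n\}$ is a genuine infinite sequence of vertices of $\NS(S)$: in Construction~\ref{stepn} the $l$-arcs are strictly nested, $\lambda_0\subset\lambda_1\subset\cdots$ (indeed $\bigcup_n\lambda_n$ contains one of the rays of $\vec{l}$ based at $p$, by the injectivity-radius argument used in the proof of the $i(a_n,\mu)\to 0$ lemma), so the $a_n$ are pairwise distinct and ``converges at infinity'' makes sense for this sequence. Then by the lemma immediately preceding Lemma~\ref{intchconv} we have $i(a_n,\mu)\to 0$, since $i(a_n,\mu)$ is bounded above by the $\mu$-mass of the transversal $\alpha_n$ and the nested arcs $\alpha_n$ shrink to an arc (or point) that cannot meet $\mu$ in its interior, as that would contradict the ``first point'' / ``last point'' choices of $q_{n+1}^*$ and $p_{n+1}^*$.

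With $i(a_n,\mu)\to 0$ established, I would simply apply Lemma~\ref{intnumconvto0} with $c_n=a_n$: its hypotheses hold because $L\in\ELW(S)$ is the underlying lamination of $\mu$, and its conclusion is exactly $[\{a_n\}]=[\B(a,\vec{l},p)]\in\partial\NS(S)$, as desired. There is essentially no obstacle in this corollary itself — all the substance lies in the two lemmas being invoked (and, through Lemma~\ref{intnumconvto0}, in Corollary~\ref{2kcornshausclose} and the Luo-type coarse-geodesic argument). The only point meriting a line of care is the one noted above, namely that the bicorn path really is an infinite sequence, which is transparent from the strict nesting built into the construction.
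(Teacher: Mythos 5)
Your proof is correct and matches the paper's intended argument: the corollary is exactly the composition of the unnamed lemma showing $i(a_n,\mu)\to 0$ for the bicorn path with Lemma~\ref{intnumconvto0}. The extra remark that the bicorn path is a genuine infinite sequence is a reasonable sanity check but is not something the paper pauses over.
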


\begin{cor}
Let $a\in \NS(S)^0$, $L\in \ELW(S)$. Let $l,l'$ be leaves of $L$ and $p\in l\cap a$, $p'\in a\cap l'$. Choose orientations on $l$ and $l'$ and let $\vec{l}$ and $\vec{l'}$ be the corresponding oriented geodesics. Then $[\B(a,\vec{l},p)]=[\B(a,\vec{l'},p')]$.
\label{welldef}
\end{cor}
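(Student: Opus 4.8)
The plan is to show that the two infinite bicorn paths $\B(a,\vec{l},p)=\{a_n\}$ and $\B(a,\vec{l'},p')=\{a'_n\}$ both converge at infinity to a point of $\partial \NS(S)$ which, by Corollary \ref{bicornchconv}, is a limit of sequences coarse Hausdorff converging to $L$. So it suffices to prove the general principle that any two sequences in $\NS(S)^0$ which converge at infinity and which both coarse Hausdorff converge to the same $L\in \ELW(S)$ represent the same boundary point. Indeed, by Corollary \ref{boundarypoint} both $[\{a_n\}]$ and $[\{a'_n\}]$ lie in $\partial \NS(S)$, and by Corollary \ref{bicornchconv} we have $a_n\toCH L$ and $a'_n\toCH L$; once the general principle is established, the corollary follows immediately.

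To prove the general principle, fix a measured lamination $\mu$ with underlying lamination $L$. The key observation, already used inside the proof of Lemma \ref{intnumconvto0}, is that coarse Hausdorff convergence $c_n\toCH L$ forces $i(c_n,\mu)\to 0$: if not, some subsequence has $i(c_{n_i},\mu)$ bounded below, and passing to a Hausdorff-convergent further subsequence $c_{n_i}\toH M$ we would get a leaf of $M$ crossing $V$ with definite transverse measure, so $L\not\subset M$ — contradicting $c_n\toCH L$. (This is essentially the converse direction of the reasoning in Lemma \ref{intchconv}.) Hence both $i(a_n,\mu)\to 0$ and $i(a'_n,\mu)\to 0$. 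Now mimic the last paragraph of the proof of Lemma \ref{intnumconvto0}: for a fixed basepoint $c\in\NS(S)^0$, if $\liminf_{m,n\to\infty}(a_m\cdot a'_n)_c = k<\infty$, then there are arbitrarily large pairs $(m,n)$ with $d(c,[a_m,a'_n])\le k+3\delta$; by Corollary \ref{2kcornshausclose} choose $d_{m,n}\in\B(a_m,a'_n)$ with $d(c,d_{m,n})\le k+3\delta+E(1)$. But $i(d_{m,n},\mu)\le i(a_m,\mu)+i(a'_n,\mu)\to 0$, so by Lemma \ref{intnumconvto0} (applied to the sequence $d_{m,n}$, which has $[\{d_{m,n}\}]\in\partial\NS(S)$, hence $d(c,d_{m,n})\to\infty$) we get a contradiction. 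Therefore $\lim_{m,n\to\infty}(a_m\cdot a'_n)_c=\infty$, which is exactly the statement that $\{a_n\}$ and $\{a'_n\}$ are equivalent, i.e. $[\B(a,\vec{l},p)]=[\B(a,\vec{l'},p')]$.

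I expect the main (and essentially only) obstacle to be verifying the mixed Gromov-product estimate cleanly — in particular making sure that the bicorn graph $\B(a_m,a'_n)$ between two \emph{curves} (not between a curve and a lamination) is the right object to feed into Corollary \ref{2kcornshausclose}, and that the implication ``$i(d_{m,n},\mu)\to 0$ along a subsequence $\Rightarrow$ $d(c,d_{m,n})\to\infty$ along that subsequence'' is genuinely supplied by Lemma \ref{intnumconvto0} rather than requiring an independent argument. Everything else is a routine repackaging of the final paragraph of Lemma \ref{intnumconvto0}'s proof, so the write-up should be short; one can even phrase it as: ``the argument is identical to the last paragraph of the proof of Lemma \ref{intnumconvto0}, with the single sequence $\{c_n\}$ replaced by the interleaved pair $\{a_n\},\{a'_n\}$.''
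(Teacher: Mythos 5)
Your derivation of ``$i(a_n,\mu)\to 0$ and $i(a'_n,\mu)\to 0$'' rests on the claim that coarse Hausdorff convergence $c_n\toCH L$ forces $i(c_n,\mu)\to 0$, and this claim is false; it is also not ``already used inside the proof of Lemma \ref{intnumconvto0}'' --- that proof uses only the forward direction supplied by Lemma \ref{intchconv}. A counterexample already exists on the once-punctured torus: let $L=L_\theta$ be the minimal lamination of irrational slope $\theta$, $\mu=\mu_\theta$ a transverse measure, and choose $q_n\to\infty$ with $\{q_n\theta\}\in[1/3,2/3]$ (possible by equidistribution), setting $p_n=\lfloor q_n\theta\rfloor$. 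Then the curves $c_n$ of slope $p_n/q_n$ satisfy $p_n/q_n\to\theta$, so every Hausdorff-subsequential limit of $c_n$ contains $L_\theta$, i.e.\ $c_n\toCH L_\theta$; yet $i(c_n,\mu_\theta)=|q_n\theta-p_n|=\{q_n\theta\}\geq 1/3$ for all $n$. Your sketch of the converse --- that $i(c_{n_i},\mu)$ bounded below forces a transversal leaf in the Hausdorff limit $M$ --- fails because the intersection can accumulate over long nearly-tangent stretches rather than from a transversal arc that persists in the limit.

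The gap is easy to repair, and once repaired your argument becomes essentially the paper's. You do not need the general principle: the paper proves, in the unnumbered lemma immediately after Construction \ref{stepn}, that $i(a_n,\mu)\to 0$ \emph{specifically for infinite bicorn paths}. Applying that to both $\B(a,\vec l,p)$ and $\B(a,\vec l',p')$, the rest of your argument (bicorn graph $\B(a_m,a'_n)$ into Corollary \ref{2kcornshausclose}, then the first part of Lemma \ref{intnumconvto0}'s proof to force $d(c,d_{m,n})\to\infty$) goes through. The paper packages the same idea more compactly by interleaving $\{a_n\}$ and $\{a'_n\}$ into a single sequence $\{b_n\}$, observing $i(b_n,\mu)\to 0$, and applying Lemma \ref{intnumconvto0} to $\{b_n\}$ directly; ``$\{b_n\}$ converges at infinity'' is exactly your mixed Gromov-product statement.
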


\begin{proof}
Choose a transverse measure on $L$ and define $\mu$ to be the resulting measured lamination. Write $\{a_n\}=\B(a,\vec{l},p)$, $\{a'_n\}=\B(a,\vec{l'},p')$. Define $b_n$ by $b_n=a_{n/2}$ if $n$ is even and $b_n=a'_{(n-1)/2}$ if $n$ is odd. Then $i(b_n,\mu)\to 0$ so $[\{b_n\}]\in \partial \NS(S)$ by Lemma \ref{intnumconvto0}. Hence $[\{a_n\}]=[\{a'_n\}]$.
\end{proof}

\section{The map from $\ELW(S)$ to $\partial \NS(S)$}
\label{mapsec}

In this section we define a map $F:\ELW(S)\to \partial \NS(S)$ and show that it is continuous and injective. It will take more work to show that $F$ is surjective and open. We relegate these proofs to the next section.

Choose $a\in \NS(S)^0$. For $L\in \ELW(S)$ we choose a leaf $l$ of $L$, orient it to obtain the oriented geodesic $\vec{l}$, and choose a point $p\in l\cap a$. We define $F(L)=[\B(a,\vec{l},p)]$. The fact that this is a well-defined map is Corollary \ref{welldef}.

To prove that $F$ is continuous, we need the following lemma:

\begin{lem}
\label{bicornrestrict}
Let $a\in \NS(S)^0$, $L\in \ELW(S)$, $\vec{l}$ a leaf of $L$ endowed with an orientation, and $p\in a\cap \vec{l}$. Write $\B(a,\vec{l},p)=\{a_n\}_{n=0}^\infty$ where $a_0=a$. Then given any $j\in \Z_{\geq 0}$, we have that $a_j\in \B(a,a_k)$ for any $k$ sufficiently large.
\end{lem}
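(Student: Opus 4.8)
The plan is to recognise $a_j$ literally as a bicorn between $a$ and $a_k$, reading off the required arcs directly from the construction of $\B(a,\vec l,p)$ together with the nesting of its arcs.

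Recall that Constructions \ref{step1} and \ref{stepn} produce $a_j=\alpha_j\cup\lambda_j$ with $\alpha_j$ a subarc of $a$ and $\lambda_j$ a subarc of the oriented leaf $\vec l$, and that the two families of arcs are nested: $\alpha_0\supset\alpha_1\supset\cdots$ and $\lambda_0\subset\lambda_1\subset\cdots$. Fix $j$. First I would take any $k\ge j$, so that $\lambda_j\subseteq\lambda_k$. Since $a_k=\alpha_k\cup\lambda_k$ and $\lambda_k$ is one of its two sides, $\lambda_j$ is then an embedded subarc of $a_k$; it is a proper subarc as soon as $k>j$ (as $\lambda_j\subsetneq\lambda_k$ by the injectivity-radius lower bounds used in Step $n$), while for $k=j$ the assertion $a_j\in\B(a,a_j)$ is immediate.

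Next I would note that $\alpha_j$ and $\lambda_j$ meet exactly at the two corners of $a_j$ and nowhere else, because $a_j$ is a simple closed curve. Hence $\alpha_j$ (a subarc of $a$) together with $\lambda_j$ (a subarc of $a_k$) exhibits $a_j$ as a bicorn between $a$ and $a_k$ in the sense of Definition \ref{bicorndefn}. Since $a_j$ is one of the vertices of the coarse path $\B(a,\vec l,p)\subset\NS(S)$ produced by the construction, it is nonseparating, and therefore $a_j\in\B(a,a_k)$ for every $k\ge j$, which is the claim (with ``sufficiently large'' meaning $k\ge j$).

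I do not anticipate a genuine obstacle: the argument is essentially an unwinding of the construction, and the only input beyond the definitions is the nesting $\lambda_j\subseteq\lambda_k$. The single point that wants a little care is that $\B(a,b)$ is by definition a subgraph of $\NS(S)$, i.e.\ a set of isotopy classes, whereas the construction produces honest (non-geodesic) representatives; one should make explicit that it is the isotopy class of $a_j$ that is being placed in $\B(a,a_k)$, and that $\lambda_j$ must be recognised as a subarc of $a_k$ rather than merely as a subarc of the leaf $l$ — which is exactly what the inclusion $\lambda_j\subseteq\lambda_k$ provides.
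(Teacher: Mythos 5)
The proof has a genuine gap, and it is precisely the issue you flag at the end as needing ``a little care.'' The paper's standing convention (Section 2, reiterated at the start of Section 7) is that simple closed curves are conflated with their geodesic representatives, and the subgraph $\B(a,b)\subset\NS(S)$ is built from bicorns between \emph{those} representatives. This convention is not optional here: the way Lemma \ref{bicornrestrict} is used is to feed $\B(a,a_\ell)$ into Corollary \ref{2kcornshausclose} (which gives $E(1)$-Hausdorff closeness of $\B(a,a_\ell)$ to a geodesic $[a,a_\ell]$), and that corollary requires bicorns taken with respect to a fixed minimal-position representative. The piecewise-geodesic model $\alpha_k\cup\lambda_k$ is not such a representative: the very long $l$-arc $\lambda_k$ will typically cross $a$ many times in its interior, so this curve is not even in minimal position with $a$, let alone geodesic.

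So, although $\alpha_j\cup\lambda_j$ is literally a subarc of $a$ union a subarc of $\alpha_k\cup\lambda_k$, this does not exhibit $a_j$ as an element of $\B(a,a_k)$, because that would require $a_j$ to be isotopic to a bicorn between $a$ and the \emph{geodesic} representative of $a_k$ --- and there is no reason a priori that $\lambda_j$ is close to, let alone contained in, that geodesic. The paper's proof addresses exactly this point: it invokes Corollary \ref{bicornchconv} (the geodesic representatives $a_k$ coarse Hausdorff converge to $L$), so that for $k$ large the geodesic $a_k$ contains a subsegment $\epsilon$-Hausdorff close to a long subsegment of $l$ centered at $p$; a nearby subarc of the geodesic $a_k$ can then replace $\lambda_j$ to give a bicorn between the geodesics $a$ and $a_k$ isotopic to $a_j$. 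This also explains the quantifier: your argument would yield ``$k\ge j$,'' but the correct statement is ``$k$ sufficiently large depending on $j$,'' since one must wait until the geodesic $a_k$ tracks $l$ closely enough along all of $\lambda_j$. The nesting $\lambda_j\subseteq\lambda_k$ is a fact about the piecewise-geodesic models, not about the geodesic representatives, and the two are genuinely different objects.
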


\begin{proof}
We have by Lemma \ref{bicornchconv} that $a_k\toCH L$. Thus, for any $\epsilon>0$, we have that for all $k$ sufficiently large, there exists a subsegment of $a_k$ which is $\epsilon$-Hausdorff close to a subsegment of $l$ of length $1/\epsilon$ centered at $p$. This clearly implies that $a_j\in \B(a,a_k)$ for all such $k$, since $a_j$ is a bicorn between $a$ and $l$.
\end{proof}

\begin{lem}
The map $F:\ELW(S)\to \partial \NS(S)$ is continuous.
\end{lem}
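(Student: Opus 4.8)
The plan is to verify \emph{sequential} continuity of $F$, which is enough since the topologies on both $\ELW(S)$ and $\partial\NS(S)$ are specified by declaring their sequentially closed sets. So fix a sequence $L_n\toCH L$ in $\ELW(S)$; writing $a\in\NS(S)^0$ for the base curve used in the definition of $F$, we must show $(F(L_n)\cdot F(L))_a\to\infty$.

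Fix a leaf $l$ of $L$, an orientation $\vec l$, and a point $p\in a\cap l$, so $F(L)=[\B(a,\vec l,p)]$; write $\B(a,\vec l,p)=\{a_k\}_{k=0}^\infty$. The first step is a uniform efficiency estimate for this coarse path: there is a constant $C$, depending only on the hyperbolicity constant $\delta$ of $\NS(S)$ and on the constant of Corollary \ref{bicornshausclose}, such that $\liminf_{k\to\infty}(a_j\cdot a_k)_a\ge d(a,a_j)-C$ for every $j$, and $d(a,a_j)\to\infty$ as $j\to\infty$. For the first assertion, Lemma \ref{bicornrestrict} gives $a_j\in\B(a,a_k)$ for all sufficiently large $k$, while Corollary \ref{bicornshausclose} shows $\B(a,a_k)$ is uniformly Hausdorff close to any geodesic $[a,a_k]$; hence $a_j$ lies a uniformly bounded distance from $[a,a_k]$, which forces $d(a,a_k)\ge d(a,a_j)+d(a_j,a_k)-C$ and therefore $(a_j\cdot a_k)_a\ge d(a,a_j)-C$ for all large $k$. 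That $d(a,a_j)\to\infty$ follows from $[\{a_k\}]\in\partial\NS(S)$ (Corollary \ref{boundarypoint}): since $(a_i\cdot a_k)_a\to\infty$ and $(a_i\cdot a_k)_a\le d(a,a_i)$, we get $d(a,a_i)\to\infty$. The identical estimate holds for any infinite bicorn path representing $F(L_n)$, since it is produced by the same construction.

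The main point is a rigidity statement. For each fixed $j$ there is $\varepsilon_j>0$, depending only on $a$ and on a certain compact subarc $\sigma\subset l$ inspected while building $a_0,\dots,a_j$, with the following property: if $L'\in\ELW(S)$ has a leaf carrying a subarc that is $\varepsilon_j$-Hausdorff close to $\sigma$ with matching orientation, then the data defining $F(L')$ can be chosen so that the first $j+1$ terms of the resulting infinite bicorn path equal $a_0,a_1,\dots,a_j$ as vertices of $\NS(S)$. This is because Constructions \ref{step1}--\ref{stepn} producing $a_0,\dots,a_j$ only examine the fixed compact arc $\sigma$ (which contains $p$) together with the fixed curve $a$, and every choice made in the construction -- which of $a'_{i+1},a''_{i+1},a'''_{i+1}$ is the first nonseparating curve -- is isotopy-invariant, hence unchanged if $\sigma$ is replaced by any sufficiently Hausdorff-close geodesic arc meeting $a$ in the same combinatorial pattern (the point playing the role of $p$ exists by transversality of $l$ with $a$). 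Since $F(L')$ is independent of the defining data (Corollary \ref{welldef}), it may indeed be computed from this path. Finally, $L_n$ admits such a leaf for all large $n$: otherwise some subsequence admits none, and passing to a further subsequence with $L_n\toH N$ (compactness of $\GL(S)$) yields $L\subset N$ by the definition of $\toCH$, hence $\sigma\subset N$, whereupon Hausdorff convergence $L_n\toH N$ forces subarcs of leaves of $L_n$ to converge to $\sigma$ on this compact piece -- the standard fact that Hausdorff-convergent geodesic laminations have leaves converging to leaves on compact subsets -- a contradiction.

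To conclude, fix $M>0$ and, using the first step, choose $j$ with $d(a,a_j)$ exceeding $M$ plus all the uniform constants below. By the rigidity statement, for all large $n$ we may represent $F(L_n)$ by an infinite bicorn path $\{a^n_k\}_{k\ge0}$ with $a^n_i=a_i$ for $i\le j$. The first step applied to $\{a_k\}$ and to $\{a^n_k\}$ gives $\liminf_m(a_j\cdot a_m)_a\ge d(a,a_j)-C$ and $\liminf_k(a_j\cdot a^n_k)_a=\liminf_k(a^n_j\cdot a^n_k)_a\ge d(a,a^n_j)-C=d(a,a_j)-C$; hence for $k$ and $m$ both large, $(a^n_k\cdot a_m)_a\ge\min\{(a^n_k\cdot a_j)_a,(a_j\cdot a_m)_a\}-5\delta\ge d(a,a_j)-C-1-5\delta$, so $\liminf_{k,m}(a^n_k\cdot a_m)_a\ge d(a,a_j)-C-1-5\delta$. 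Since $(F(L_n)\cdot F(L))_a\ge\liminf_{k,m}(a^n_k\cdot a_m)_a-C_1$ for a uniform constant $C_1$ (a standard property of the Gromov boundary), the choice of $j$ gives $(F(L_n)\cdot F(L))_a\ge M$ for all large $n$. Thus $F(L_n)\to F(L)$ and $F$ is continuous. The delicate part is the rigidity statement: making rigorous that a leaf of $L_n$ fellow-traveling $\sigma$ closely enough reproduces the bicorn construction exactly, with no extra intersections with $a$ and the same nonseparating-ness choices. The remainder is routine bookkeeping with the hyperbolicity inequalities.
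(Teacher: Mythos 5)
Your proof is correct and follows essentially the same route as the paper's: both hinge on showing that for $L_n\toCH L$ one can choose leaves of $L_n$ fellow-traveling the chosen leaf of $L$ on a long compact arc, so that the resulting infinite bicorn paths for $F(L_n)$ and $F(L)$ coincide for an arbitrarily long initial segment, and then both use Lemma \ref{bicornrestrict} together with the uniform Hausdorff closeness of $\B(a,\cdot)$ to geodesics to convert this agreement into a growing lower bound on Gromov products. The paper phrases the key step as an explicit induction producing a threshold $K(n)$ with $a_i=a_i^k$ for $i\le n$, $k\ge K(n)$, which is exactly the content of your ``rigidity statement.''
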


\begin{proof}
Consider $\{L_k\}_{k=1}^\infty\subset \ELW(S)$ with $L_k\toCH L\in \ELW(S)$. Choose a leaf $l\in L$. For $p\in a\cap l$, there exist leaves $l_k\subset L_k$ and points $p^k \in a\cap l_k$ with $(l_k,p^k)\to (l,p)$ locally uniformly near $p$. That is, the tangent lines to $l_k$ at $p^k$ converge to the tangent line to $l$ at $p$.

We may orient $l$ to obtain the oriented geodesics $\vec{l}$ and $\vec{l}_k$ such that $\vec{l}_k\to \vec{l}$ as oriented geodesics. Denote $\B(a,\vec{l},p)=\{a_0,a_1,\ldots\}$ and $\B(a,\vec{l}_k,p^k)=\{a^k_0,a^k_1,\ldots\}$.

Letting $a_1'$ and $a_1''$ be the bicorns constructed in Construction \ref{step1} for $\vec{l},a,$ and $p$ and $a'^k_1$ and $a''^k_1$ be the bicorns constructed in Construction \ref{step1} for $\vec{l}_k,a$, and $p^k$, we have that for all $k$ large enough, $a'^k_1$ is homotopic to $a_1'$ and $a''^k_1$ is homotopic to $a_1''$. Therefore we have $a_1=a^k_1$ for all such $k$.

Suppose for induction that $a_n=a^k_n$ for all $k\geq K(n)$. Recall the notation of Construction \ref{stepn}. We let $\alpha_n$ and $\lambda_n$ be the $a$-side and $l$-side of $a_n$, respectively, with endpoints $p_n$ and $q_n$ such that $p_n<q_n$ in the orientation on $\vec{l}$. We let $p_{n+1}^*$ and $q_{n+1}^*$ be the first points of $l\cap \alpha_n$ before $p_n$ and after $q_n$, respectively. Similarly, define $\alpha_n^k,\lambda_n^k,p_n^k,q_n^k,p_{n+1}^{k*},$ and $q_{n+1}^{k*}$ for $a_n^k$. If $k$ is large enough then $\lambda_n^k$ may be made arbitrarily close to $\lambda_n$ and $p_{n+1}^{k^*}$ and $q_{n+1}^{k^*}$ may be made arbitrarily close to $p_{n+1}^*$ and $q_{n+1}^*$, respectively. Thus if $k$ is large enough, we have that $a_{n+1}'^k$ is homotopic to $a_{n+1}'$ where $a_{n+1}'$ and $a_{n+1}'^k$ are defined as in Construction \ref{stepn}. Similarly $a_{n+1}''^k$ and $a_{n+1}'''^k$ are homotopic to $a_{n+1}''$ and $a_{n+1}'''$, respectively. We have a convention about how to choose $a_{n+1}$ and $a_{n+1}^k$, respectively from a specific ordered set of nonseparating curves. Moreover, by what we just argued there is $K(n+1)>0$ such that this ordered set is the same for $(a,\vec{l},p)$ and $(a,\vec{l}_k,p^k)$ whenever $k\geq K(n+1)$. Therefore we have $a_{n+1}^k=a_{n+1}$ whenever $k\geq K(n+1)$.

By what we just argued, there exists an increasing function $K:\N\to \N$ such that we have $a^k_n=a_n$ whenever $k\geq K(n)$. Hence when $k\geq K(n)$, the coarse paths $\B(a,\vec{l}_k,p^k)$ and $\B(a,\vec{l},p)$ agree for length at least $n$. To see that this implies that $F$ is continuous, we argue as follows.

By the previous paragraph, for any $k\geq K(n)$, we we have that for all $\ell$ and $m$ large enough compared to $n$, that \[a_0=a_0^k, a_1=a_1^k,\ldots,a_n=a_n^k\] all lie in $\B(a,a_\ell)\cap \B(a,a_m^k)$, by Lemma \ref{bicornrestrict}. Note that by Corollary \ref{2kcornshausclose}, $\B(a,a_\ell)$ is $E(1)$-Hausdorff close to $[a,a_\ell]$ and $\B(a,a_m^k)$ is $E(1)$-Hausdorff close to $[a,a_m^k]$. Hence, $[a,a_\ell]$ and $[a,a_m^k]$ have segments of length at least $d(a,a_n)-E(1)$ which are $2E(1)$-Hausdorff close. For a fixed $k\geq K(n)$, this implies a lower bound on $(a_\ell\cdot a_m^k)_a$, independent of $\ell$ and $m$ as long as $\ell,m\gg 0$. This lower bound goes to infinity as $d(a,a_n)\to \infty$. In other words \[\liminf_{\ell,m\to \infty} (a_\ell,a_m^k)\to \infty\] as $k\to \infty$. This proves that $[\B(a,\vec{l}_k,p^p)]\to [\B(a,\vec{l},p)]$ as $k\to \infty$.
\end{proof}

\begin{lem}
\label{injectivity}
The map $F:\ELW(S) \to \partial \NS(S)$ is injective.
\end{lem}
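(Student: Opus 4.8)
\emph{Proof proposal.} The plan is to assume $L,L'\in\ELW(S)$ with $L\neq L'$ and $F(L)=F(L')$ and derive a contradiction, adapting the mechanism in the proof of Lemma~\ref{notcoarselyequal}. Fix the basepoint curve $a\in\NS(S)^0$ used to define $F$. Choose leaves $l$ of $L$ and $l'$ of $L'$, orientations $\vec l,\vec{l'}$, and points $p\in a\cap l$, $p'\in a\cap l'$, and write $\B(a,\vec l,p)=\{a_n\}$ and $\B(a,\vec{l'},p')=\{a'_n\}$, so $F(L)=[\{a_n\}]$ and $F(L')=[\{a'_n\}]$ (this is the definition of $F$; by Corollary~\ref{welldef} none of what follows depends on these choices). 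By Corollary~\ref{bicornchconv} we have $a_n\toCH L$ and $a'_n\toCH L'$, and by Corollary~\ref{boundarypoint} both sequences converge at infinity, so the hypothesis $F(L)=F(L')$ says precisely that $(a_n\cdot a'_m)_a\to\infty$ as $m,n\to\infty$. The goal is to contradict this by producing a single curve $d\in\NS(S)^0$ with $d\in\B(a_n,a'_m)$ for all sufficiently large $m,n$: for then $d(a,[a_n,a'_m])\leq d(a,d)+E(1)$ by Corollary~\ref{2kcornshausclose} (with $k=1$) applied to a geodesic $[a_n,a'_m]$, hence $(a_n\cdot a'_m)_a\leq d(a,d)+E(1)+2\delta$ stays bounded.

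The first step is to show that $L$ and $L'$ intersect transversely. Two distinct minimal laminations cannot share a leaf, since a common leaf would be dense in each; hence if $L$ and $L'$ did not cross transversely they would be disjoint. I would rule this out as follows. Since $L$ fills the finite type witness $V$ and $\genus(V)=\genus(S)$, every complementary region of $L$ in $V$ is a finite-sided ideal polygon (possibly once-punctured, possibly with arcs of $\partial V$ in its frontier), which is planar and, because $L$ fills $V$, meets $\partial V$ in at most one boundary circle; assembling such regions with pieces of $S\setminus V$ along components of $\partial V$ only caps off circles, and $\genus(S\setminus V)=0$, so every component $U$ of $S\setminus L$ has $\genus(\overline U)=0$. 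A lamination $L'$ disjoint from $L$ is connected and therefore contained in a single such $\overline U$, so the full genus subsurface $V'$ it fills would satisfy $\genus(V')\leq\genus(\overline U)=0$, a contradiction. Hence $L$ and $L'$ cross transversely, and by density of $l$ in $L$ the leaf $l$ itself crosses $L'$ transversely; moreover $l$ does not spiral onto $L'$, since otherwise $L'\subseteq\overline l=L$, contradicting $L\neq L'$.

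Next I would apply Corollary~\ref{nonsepbicorn} to the bi-infinite geodesic $l$ and the lamination $L'$ (which fills the witness $V'$): $l$ intersects $V'$ essentially because it crosses $L'$, $l$ does not spiral onto $L'$, and $V'$, being a witness, contains a curve nonseparating in $S$. This produces a nonseparating bicorn $d$ between $l$ and $L'$, say $d=\mu\cup\lambda$ with $\mu$ a compact subarc of $l$ and $\lambda$ a compact subarc of a leaf of $L'$. Since $a_n\toCH L$ with $l$ a leaf of $L$, while $a'_m\toCH L'$ with $\lambda$ a subarc of a leaf of $L'$, a compactness argument in $\GL(S)$ exactly as in the proof of Lemma~\ref{notcoarselyequal} gives: for every $\epsilon>0$ and all sufficiently large $n$, $a_n$ contains a subarc $\epsilon$-Hausdorff close to $\mu$ with the two corners of $d$ lying in the interior of that subarc, and likewise for all sufficiently large $m$, $a'_m$ contains a subarc $\epsilon$-Hausdorff close to $\lambda$. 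Taking $\epsilon$ small and using that the curves involved are in minimal position, this forces $d\in\B(a_n,a'_m)$ for all sufficiently large $m,n$. By the first paragraph this bounds $(a_n\cdot a'_m)_a$ as $m,n\to\infty$, contradicting $(a_n\cdot a'_m)_a\to\infty$. Therefore $L=L'$, and $F$ is injective.

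I expect the transversality statement of the second paragraph to be the main point needing care to write down; the fellow-traveling step of the third paragraph, while technical, is essentially identical to an argument already carried out in the proof of Lemma~\ref{notcoarselyequal} and can be quoted from there.
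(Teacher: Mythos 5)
Your proof is correct, but it takes a genuinely different route from the paper's. The paper runs Luo's infinite-diameter argument: since $F(L)=F(L')$ forces the two infinite bicorn paths $\{a_n\}$ and $\{b_n\}$ to be Hausdorff close, one picks a geodesic path of bounded length $N$ from each $a_n$ to a nearby $b_{k_n}$, extracts Hausdorff limits $L^0,\dots,L^N$ of the coordinate sequences, and shows by a disjointness-and-witness induction that $L\subset L^i$ for every $i$; since also $L'\subset L^N$, the two laminations cannot cross, whereas distinct elements of $\ELW(S)$ must cross. You instead reuse the mechanism already deployed in the proof of Lemma~\ref{notcoarselyequal} and in the surjectivity cases: produce a single nonseparating bicorn $d$ between a leaf of $L$ and $L'$ via Corollary~\ref{nonsepbicorn}, show $d\in\B(a_n,a'_m)$ for all large $m,n$ by coarse Hausdorff convergence (this is the same approximation step the paper uses in Lemma~\ref{bicornrestrict}), and bound the Gromov product through Corollary~\ref{2kcornshausclose}. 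Both arguments hinge on the same underlying fact — that distinct laminations in $\ELW(S)$ must intersect transversely — but the paper simply asserts $L\pitchfork M\neq\emptyset$ without proof, whereas you supply the argument (genus-zero complementary regions of a full-genus filling lamination). Your version is therefore somewhat more self-contained and more uniform with the surrounding proofs, at the cost of the one additional lemma you have to verify (that $l$ neither is disjoint from nor spirals onto $L'$) to invoke Corollary~\ref{nonsepbicorn}; the paper's version is shorter once the transversality assertion is granted.
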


\begin{proof}
We again adapt the proof of Luo of infinite diameter of the curve complex.

Suppose that $L\neq M\in \ELW(S)$ and $F(L)=[\B(a,\vec{l},p)]=[\B(a,\vec{m},q)]=F(M)$ where $l$ is a leaf of $L$, $p\in l\cap a$ and $m$ is a leaf of $M$, $q\in m\cap a$. Denote $\B(a,\vec{l},p)=\{a_0,a_1,\ldots\}$ and $\B(a,\vec{m},q)=\{b_0,b_1,\ldots\}$. Since $\B(a,\vec{l},p)$ and $\B(a,\vec{m},q)$ are coarse paths in $\NS(S)$ defining the same point of $\partial \NS(S)$, they are in fact Hausdorff close. That is, there exists $N\in \N$ such that each $a_n\in \B(a,\vec{l},p)$ is distance $\leq N$ from some $b_k \in \B(a,\vec{m},q)$ and vice versa. Up to passing to a subsequence of $\{a_n\}_{n=0}^\infty$ and replacing $N$ by a possibly smaller non-negative integer, we may suppose without loss of generality that each $a_n$ is distance exactly $N$ from some curve $b_{k_n}\in \B(a,\vec{m},q)$.

For each $n$ we may choose a path $x^0_n=a_n, x^1_n,\ldots,x^N_n=b_{k_n}$. We may pass to a further subsequence to assume that each sequence of curves $\{x^i_n\}_{n=0}^\infty$ Hausdorff converges to $L^i\in \GL(S)$.

By Corollary \ref{bicornchconv}, we have $L\subset L^0$. Since $i(x^0_n,x^1_n)=0$ for all $n$, $L^0$ and $L^1$ do not intersect transversely. Let $V\subset S$ be the witness filled by $L$. Since $L^0\pitchfork L^1=\emptyset$ either $L\subset L^1$ or $L^1$ does not intersect $V$. If $L^1$ does not intersect $V$ then $x^1_n$ is contained in the complement of $V$ for all $n$ large enough. This contradicts that $x^1_n$ is nonseparating. Hence $L\subset L^1$. Using the same argument, we see by induction that $L\subset L^N$. But we also have $M\subset L^N$ and $L\pitchfork M\neq \emptyset$. This is a contradiction.
\end{proof}

\section{Surjectivity}
\label{surjsec}

In this section we finish the proof of Theorem \ref{boundarythm}. Surjectivity of the map $F$ is the hardest part of the proof, and we relegate Subsections \ref{casei} through \ref{caseiii} to elements of the proof of surjectivity. In Subsection \ref{finishsec} we finish the proof that $F$ is a homeomorphism and derive a corollary.

Consider a point $[\{a_n\}]\in \partial \NS(S)$. We wish to show that $[\{a_n\}]$ is equal to $F(L)$ for some $L\in \ELW(S)$. To find the lamination $L$, we first pass to a subsequence to assume that $a_n\toH M$ for $M\in \GL(S)$. We will derive a contradiction to the fact that $[\{a_n\}]\in \partial \NS(S)$ unless $M$ contains an ending lamination $L$ filling a witness for $\NS(S)$. With this ending lamination $L$ in hand, we will finally prove that $F(L)=[\{a_n\}]$.

So assume for contradiction that $M$ contains no such ending lamination $L\in \ELW(S)$. By the structure theorem for geodesic laminations on finite type surfaces (see \cite{CEG}, Theorem I.4.2.8), $M$ is necessarily a union of a finite nonempty set of minimal sublaminations along with a finite, possibly empty set of isolated leaves. There are three possibilities:
\begin{enumerate}[(i)]
\item $M$ contains a minimal sublamination filling a subsurface which is not a witness but contains a nonseparating simple closed curve;
\item $M$ contains a minimal sublamination filling a subsurface containing only separating curves but at least one nontrivially separating curve;
\item every minimal sublamination of $M$ is contained in a punctured disk of $S$ (that is, a planar subsurface of $S$ bounded by a trivially separating curve).
\end{enumerate}

We will deal with Cases (i)-(iii) separately in the next three sections, deriving a contradiction in each case. Case (iii) is substantially more involved than the other two.

\subsection{Case (i)}
\label{casei}

Let $M_0\subset M$ be a minimal sublamination filling a subsurface $V\subset S$ which contains a nonseparating simple closed curve $d$. Since $d(d,a_n)\to \infty$ as $n\to \infty$, if $n$ is large enough, $a_n$ intersects $d$, and in particular $V$, essentially. By Corollary \ref{nonsepbicorn}, for any such $n$ there is a nonseparating bicorn $c_n$ between $a_n$ and $M_0$, contained in $V$ up to isotopy. Moreover, we may choose a nonseparating simple closed curve $e$ disjoint from $V$, since $V$ is not a witness.

Now if $c_n$ is a bicorn between $a_n$ and a leaf $l_n$ of $M_0$, for any $m\gg n$ (i.e. for any $m$ sufficiently large compared to $n$) there is a subarc of $a_m$ which is Hausdorff close to a subarc of $l_n$ containing the $l_n$-side of $c_n$. Hence, for $m$ large enough in comparison to $n$, $c_n\in \B(a_n,a_m)$.

Thus, for any $m$ large enough in comparison to $n$, we have \[d(a,[a_n,a_m])\leq d(a,\B(a_n,a_m))+E(1) \leq d(a,c_n)+E(1) \leq d(a,e)+d(e,c_n)+E(1)\leq d(a,e)+1+E(1).\] Hence we have \[(a_n\cdot a_m)_a \leq d(a,[a_n,a_m])+2\delta \leq d(a,e)+1+E(1)+2\delta.\] Since $n$ may be taken to be arbitrarily large, and $m$ may be taken to be arbitrarily large as well, this  contradicts that $(a_n\cdot a_m)_a\to \infty$ as $n,m\to \infty$.

\subsection{Case (ii)}
\label{caseii}

Let $M_0\subset M$ be a minimal sublamination filling a subsurface $V\subset S$ which contains no nonseparating curve but at least one nontrivially separating curve. Then $V$ must have the following form:
\begin{enumerate}[(i)]
\item $\genus(V)=0$,
\item each boundary component of $V$ bounds a component of $S\setminus V$,
\item at least two components of $S\setminus V$ have positive genus.
\end{enumerate}

Denote by $\NS^\perp(V)$ the subgraph of $\NS(S)$ spanned by nonseparating curves which are disjoint from some \textit{nontrivially separating} curve in $V$.

\begin{figure}[h]
\centering

\def\svgwidth{0.7\textwidth}
\begingroup%
  \makeatletter%
  \providecommand\color[2][]{%
    \errmessage{(Inkscape) Color is used for the text in Inkscape, but the package 'color.sty' is not loaded}%
    \renewcommand\color[2][]{}%
  }%
  \providecommand\transparent[1]{%
    \errmessage{(Inkscape) Transparency is used (non-zero) for the text in Inkscape, but the package 'transparent.sty' is not loaded}%
    \renewcommand\transparent[1]{}%
  }%
  \providecommand\rotatebox[2]{#2}%
  \newcommand*\fsize{\dimexpr\f@size pt\relax}%
  \newcommand*\lineheight[1]{\fontsize{\fsize}{#1\fsize}\selectfont}%
  \ifx\svgwidth\undefined%
    \setlength{\unitlength}{1586.72872709bp}%
    \ifx\svgscale\undefined%
      \relax%
    \else%
      \setlength{\unitlength}{\unitlength * \real{\svgscale}}%
    \fi%
  \else%
    \setlength{\unitlength}{\svgwidth}%
  \fi%
  \global\let\svgwidth\undefined%
  \global\let\svgscale\undefined%
  \makeatother%
  \begin{picture}(1,0.97933967)%
    \lineheight{1}%
    \setlength\tabcolsep{0pt}%
    \put(0,0){\includegraphics[width=\unitlength,page=1]{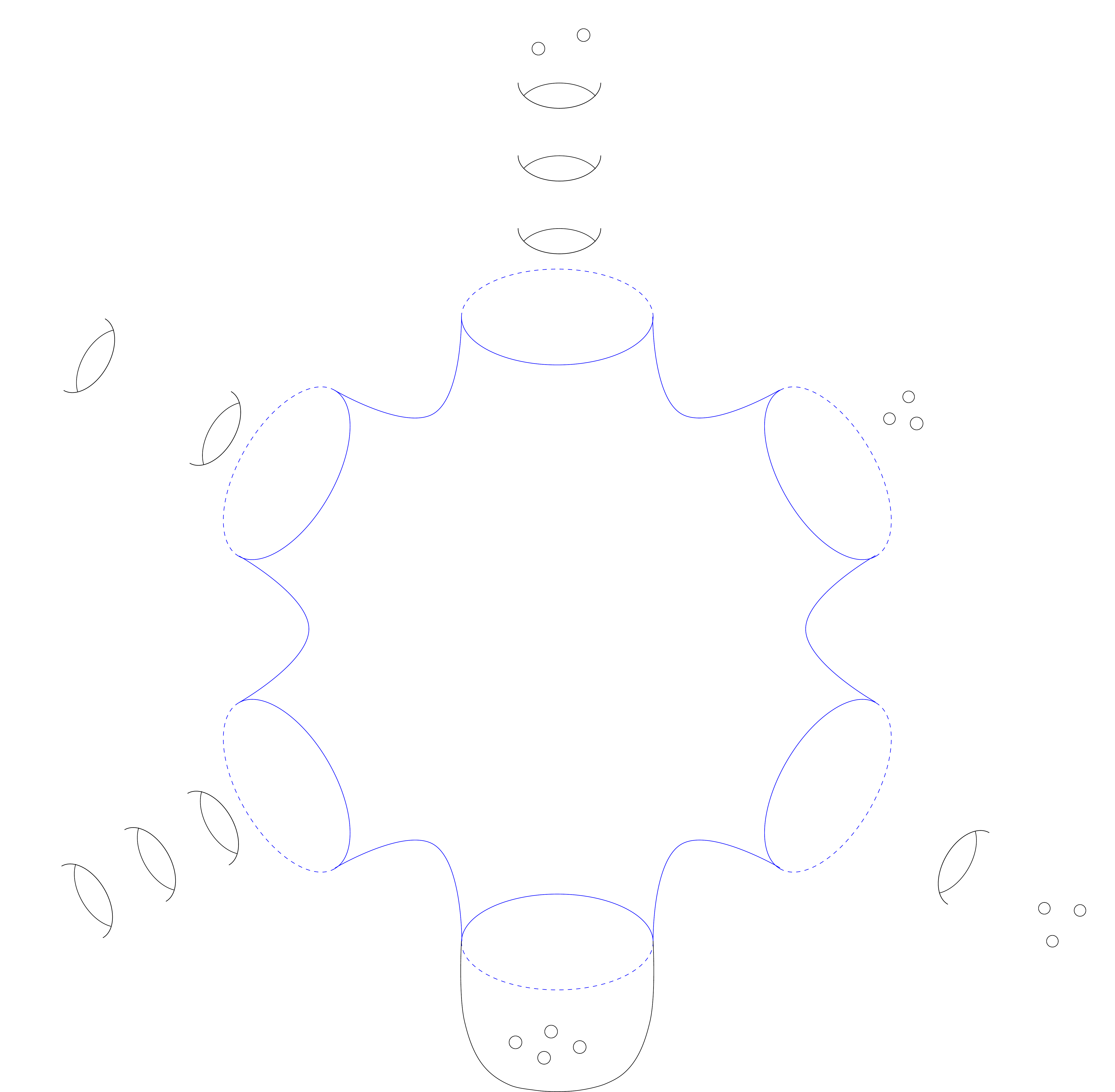}}%
    \put(0.4817294,0.40663361){\color[rgb]{0,0,1}\makebox(0,0)[lt]{\lineheight{1.25}\smash{\begin{tabular}[t]{l}$V$\end{tabular}}}}%
    \put(0,0){\includegraphics[width=\unitlength,page=2]{sepsubsurface.pdf}}%
  \end{picture}%
\endgroup%

\caption{The subsurface $V$ filled by $M_0$.}
\end{figure}

\begin{lem}
\label{nsperpdiam}
The subgraph $\NS^\perp(V)$ has diameter at most four.
\end{lem}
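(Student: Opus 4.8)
The plan is to reduce the statement to the already-recorded fact that, for a single nontrivially separating curve $c$, the subgraph $\NS^\perp(c)$ has diameter at most two in $\NS(S)$ (this is noted in the discussion preceding Lemma~\ref{nontrivsep}). The key step is to exhibit one nonseparating curve $e$ that is disjoint from \emph{every} nontrivially separating curve contained in $V$; then any $a,b\in\NS^\perp(V)$ are each within distance two of $e$, so $d(a,b)\le 4$.

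To build $e$, I would use hypothesis (iii): at least two components of $S\setminus V$ have positive genus, so fix one such component $W$ and take $e$ to be a nonseparating simple closed curve in the interior of $W$ (for instance a standard generator of a handle of $W$). The point requiring care is that $e$ remains nonseparating in all of $S$, not merely in $W$: $W\setminus e$ is connected, $S\setminus\operatorname{int}(W)$ is connected (it contains $V$, and $V$ meets every component of $S\setminus V$), and these two pieces are glued along the nonempty curve $\partial W$, so $S\setminus e$ is connected; thus $e\in\NS(S)^0$.

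Now if $c$ is any nontrivially separating curve contained in $V$, then $c$ may be isotoped into $V$ while $e$ lies in $W$, and $V$ and $W$ have disjoint interiors, so $i(e,c)=0$ and hence $e\in\NS^\perp(c)$. Finally, given $a,b\in\NS^\perp(V)$, by definition there are nontrivially separating curves $c_a,c_b\subset V$ with $i(a,c_a)=0$ and $i(b,c_b)=0$; then $a,e\in\NS^\perp(c_a)$ gives $d(a,e)\le 2$, and $e,b\in\NS^\perp(c_b)$ gives $d(e,b)\le 2$, so the triangle inequality yields $d(a,b)\le 4$.

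I do not expect a genuine obstacle: the argument is short once one has the diameter-two bound for $\NS^\perp(c)$ in hand. The only things to be careful about are the verification that the handle curve $e\subset W$ is nonseparating in $S$ (and not just in $W$), and reading ``disjoint'' throughout as ``zero geometric intersection number,'' which is consistent with the geodesic/minimal-position conventions of the paper.
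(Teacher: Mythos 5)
Your proof is correct. It achieves the same bound as the paper's by a slightly different decomposition: the paper shows that the set of nonseparating curves contained in some component $S_i$ of $S\setminus V$ forms a diameter-two subgraph (because at least two components have positive genus), and then observes that every vertex of $\NS^\perp(V)$ lies at distance one from that set, giving $1+2+1=4$; you instead fix a single ``hub'' vertex $e$ in one positive-genus component $W$ and invoke the already-stated fact that $\NS^\perp(c)$ has diameter two, getting $d(a,e)\le 2$ and $d(e,b)\le 2$ directly. Both are short and both rely on the same two ingredients (positive genus of a complementary component and the two-sidedness of a nontrivially separating curve), so there is no real gain in generality either way, but your route is a clean substitute. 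Two small stylistic remarks: for the nonseparating check on $e$ you can avoid the connectedness argument entirely by taking $e$ to be half of a dual pair in the handle of $W$ (a curve $f\subset W$ with $i(e,f)=1$ certifies $e$ nonseparating in $S$ by the paper's Fact in Section 2.2.1); and you should note that since $c_a$ is an essential curve in $V$ and $e\subset\operatorname{int}W$, the geodesic representatives are already disjoint, so $i(e,c_a)=0$ without appealing to an isotopy.
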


\begin{proof}
Denote by $S_1,\ldots,S_k$ the components of $S\setminus V$. Since at least two of the $S_i$ have positive genus, the subgraph of $\NS(S)$ spanned by nonseparating curves \textit{contained in} some $\NS(S_i)$ has diameter two. Hence it remains to show that every curve in $\NS^\perp(V)$ is disjoint from a nonseparating curve contained in some $\NS(S_i)$.

If $a$ is nonseparating and $c\subset V$ is nontrivially separating and $i(a,c)=0$, then each component of $S\setminus c$ contains some subsurface $S_i$ which contains a nonseparating curve. The curve $a$ is contained in one of the components of $S\setminus c$, so it is disjoint from a nonseparating curve which is contained in one of the $S_i$ in the component of $S\setminus c$ \textit{which does not contain} $a$. This proves the lemma.
\end{proof}

As in the proof of Lemma \ref{nsperpdiam}, we have that the subgraph of $\NS(S)$ consisting of curves disjoint from $V$ has diameter two. Hence if $n$ is sufficiently large, $a_n$ intersects $V$ essentially. By Lemma \ref{nontrivsepbicorn}, there is a nontrivially separating bicorn $c_n$ between $a_n$ and $M_0$ contained in $V$ up to isotopy. 

As in Case (i), we see that if $m$ is sufficiently large compared to $n$, $c_n$ is a bicorn between $a_n$ and $a_m$. By Lemma \ref{nontrivsep}, there is a nonseparating bicorn $d_{n,m}\in \B(a_n,a_m)$ which is disjoint from $c_n$. In particular, $d_{n,m}$ is contained in $\NS^\perp(V)$, a subgraph of $\NS(S)$ of diameter at most four.

Therefore we have for $n\gg 0$ and $m\gg n$ that \[d(a,[a_n,a_m])\leq d(a,\B(a_n,a_m))+E(1)\leq d(a,d_{n,m})+E(1)\leq d(a,\NS^\perp(V))+4+E(1).\] Again, this contradicts that $(a_n\cdot a_m)_a\to \infty$ as $n,m\to \infty$.

%

\subsection{Case (iii)}
\label{caseiii}

The closure of every isolated leaf of $M$ contains a minimal sublamination of $M$. Therefore the set of minimal sublaminations of $M$ is nonempty.

If $L$ is a minimal sublamination of $M$, then it fills a subsurface of $S$ bounded by some nonempty set of trivially separating curves $C(L)=\{c_1,\ldots,c_s\}$, considered up to isotopy (thus, for example if $L$ is a simple closed geodesic then $s=1$). Let $\mathscr{C}$ be the union of the sets $C(L)$ where $L$ ranges over the set of minimal sublaminations of $M$. Then $\mathscr{C}$ carries a partial order $<$ defined by $c<d$ if the punctured disk bounded by $d$ contains $c$. Denote by $\{d_1,\ldots,d_t\}$ the set of elements of $\mathscr{C}$ which are maximal with respect to $<$.

We will consider the geodesic representatives of the $d_i$. The set $\{d_1,\ldots,d_t\}$ has the following properties:
\begin{itemize}
\item Each $d_i$ bounds a punctured disk $D_i$.
\item The subsurface $V=S\setminus \coprod_{i=1}^t D_i$ is a witness for $\NS(S)$.
\item The only leaves of $M$ intersecting $V$ are isolated.
\end{itemize}

We will now isotope the curves $d_i$ slightly (maintaining the notation $d_i$ for the isotoped curves) to curves which may not be geodesic but with a certain desirable property. Note that each $d_i$ is isotopic to a boundary component of exactly one subsurface of $S$ which is filled by a minimal sublamination of $M$. We denote this subsurface by $C_i$ and by $M_i$ the minimal sublamination filling it. If $M_i$ is \textit{not} a simple closed geodesic then we do not isotope $d_i$ at all (so it remains a geodesic). On the other hand, if $M_i$ is a simple closed geodesic then we have $d_i=M_i$. In this case, we take a small collar neighborhood of $d_i=M_i$ with the property that any simple closed geodesic that meets this neighborhood other than $M_i$ itself must intersect $M_i$ (see \cite{borthwick} Lemma 13.14). We isotope $d_i$ to the boundary component of this collar neighborhood which is contained in the component of $S\setminus M_i$ which is \textit{not} a punctured disk. We then have:

\begin{itemize}
\item Any simple closed geodesic which crosses some $d_i$ intersects the minimal lamination $M_i$.
\end{itemize}

Moreover since each $a_n$ is connected and not contained in a punctured disk, we see that:
\begin{itemize}
\item For each $i=1,\ldots, t$, there exists at least one isolated leaf of $M$ crossing $d_i$.
\end{itemize}

Consider the quotient surface $\hat{S}$ defined by collapsing $D_i$ to a point, which we call $p_i$, for $i=1,\ldots, t$. The lamination $M$ projects to a finite graph $\Gamma$ embedded in $\hat{S}$. The graph $\Gamma$ has as vertices the points $p_i$ and as edges the images of the isolated leaves of $M$ that intersect $V$. We have $\genus(\hat{S})=\genus(S)$.

We claim that $\Gamma$ fills $\hat{W}$, a subsurface of $\hat{S}$ of full genus (in the sense that $\Gamma$ intersects every essential simple closed curve in $\hat{W}$). For otherwise there is a nonseparating simple closed curve $\hat{c}$ in $\hat{S}$ disjoint from $\Gamma$. The curve $\hat{c}$ lifts to a nonseparating curve $c$ in $S$ with $c \cap M=\emptyset$. Therefore we have $i(a_n,c)=0$ for all large enough $n$, which contradicts that $a_n$ converges to a point of $\partial \NS(S)$.

We now claim that the first homology of $\hat{W}$ is generated by the homology classes of simple closed curves contained in $\Gamma$. By the fact that $\Gamma$ fills $\hat{W}$, we see that each component of $\hat{W}\setminus \Gamma$ is a disk or annulus. Thus, any closed curve in $\hat{W}$ is freely homotopic (and thus homologous) to a curve in $\Gamma$. This proves that the natural map $H_1(\Gamma)\to H_1(\hat{W})$ induced by inclusion is a surjection. Now, the first homology of any finite graph is generated by homology classes of simple closed curves (as can be seen by taking a spanning tree, and a cycle through each edge not contained in the spanning tree). This shows that $H_1(\Gamma)$, and therefore also $H_1(\hat{W})$, is generated by homology classes of simple closed curves contained in $\Gamma$, as claimed. By Fact \ref{homfact} and surjectivity of the map $H_1(\hat{W})\to H_1(\overline{\hat{W}})$ (where $\overline{\hat{W}}$ denotes $\hat{W}$ with the punctures filled in), this proves that there is a simple closed curve $\hat{c}$ in $\Gamma$ which is nonseparating in $\hat{S}$. We denote $\hat{c}=\widehat{l_1} \cup \ldots \cup \widehat{l_k}$ where the $\widehat{l_i}$ are distinct edges of $\Gamma$, occurring in order along $\hat{c}$. Denote also by $\hat{e}$ a simple closed curve in $\hat{S}$ intersecting $\hat{c}$ exactly once, disjoint from the points $p_i$. We may lift $\hat{e}$ to a simple closed curve $e$ in $S$.

The $\widehat{l_i}$ are images of isolated leaves $l_i$ of $M$. Denote by $D'_i$ and $D'_{i+1}$ the disks containing the ends of $l_i$, indices being taken modulo $k$. Since the curve $\hat{c}$ is simple in $\Gamma$, no $D_j$ is equal to $D'_i$ for multiple values of $i$. If we have $D'_i=D_j$ then we denote by $d'_i$ the boundary component $d_j$ and by $M'_i$ the sublamination $M_j$. The geodesic $l_i$ intersects the boundary components $d'_i$ and $d'_{i+1}$ in one point each. Denote by $\lambda_i$ the subarc of $l_i$ which goes between $d'_i$ and $d'_{i+1}$. For large enough $n$, $a_n$ has a subarc $\alpha_n^i$ with endpoints on $d'_i$ and $d'_{i+1}$ such that $\lambda_i$ and $\alpha_n^i$ are homotopic through a sequence of arcs with endpoints on $d'_i$ and $d'_{i+1}$. We may extend $\alpha^i_n$ to a longer arc $\alpha'^i_n$ with endpoints in $D'_i$ and $D'_{i+1}$ which intersects $M'_i$ and $M'_{i+1}$ in its interior. See Figure \ref{constructing2kcorn}.

Now, by minimality of the lamination $M'_i$, any leaf of $M'_i$ intersects both $\alpha'^i_n$ and $\alpha'^{i-1}_n$ (indices still being taken modulo $k$). Consider such a leaf $l$ of $M'_i$ and an arc $\lambda$ of $l$ intersecting $\alpha'^i_n$ and $\alpha'^{i-1}_n$ at its endpoints and nowhere in its interior. For any $m$ sufficiently large, $a_m$ contains an arc which is extremely close to the leaf $l$ along a long segment containing $\lambda$. In particular, for $m$ large enough compared to $n$, there exists an arc $\beta^i_m$ of $a_m$ contained in $D'_i$ and intersecting $\alpha'^i_n$ and $\alpha'^{i-1}_n$ at its endpoints and nowhere in its interior. Having defined the arcs $\beta^i_m$, we may remove segments at the ends of the arcs $\alpha'^j_n$ (in particular segments contained in the disks $D'_j$ and $D'_{j+1}$) so that the resulting arc $\alpha''^j_{n,m}$ intersects $\beta^j_m$ and $\beta^{j+1}_m$ at its endpoints and nowhere in its interior. Again, see Figure \ref{constructing2kcorn}.

\begin{figure}[h]
\centering
\def\svgwidth{0.6\textwidth}
\begingroup%
  \makeatletter%
  \providecommand\color[2][]{%
    \errmessage{(Inkscape) Color is used for the text in Inkscape, but the package 'color.sty' is not loaded}%
    \renewcommand\color[2][]{}%
  }%
  \providecommand\transparent[1]{%
    \errmessage{(Inkscape) Transparency is used (non-zero) for the text in Inkscape, but the package 'transparent.sty' is not loaded}%
    \renewcommand\transparent[1]{}%
  }%
  \providecommand\rotatebox[2]{#2}%
  \newcommand*\fsize{\dimexpr\f@size pt\relax}%
  \newcommand*\lineheight[1]{\fontsize{\fsize}{#1\fsize}\selectfont}%
  \ifx\svgwidth\undefined%
    \setlength{\unitlength}{714.75121686bp}%
    \ifx\svgscale\undefined%
      \relax%
    \else%
      \setlength{\unitlength}{\unitlength * \real{\svgscale}}%
    \fi%
  \else%
    \setlength{\unitlength}{\svgwidth}%
  \fi%
  \global\let\svgwidth\undefined%
  \global\let\svgscale\undefined%
  \makeatother%
  \begin{picture}(1,1.18572649)%
    \lineheight{1}%
    \setlength\tabcolsep{0pt}%
    \put(0,0){\includegraphics[width=\unitlength,page=1]{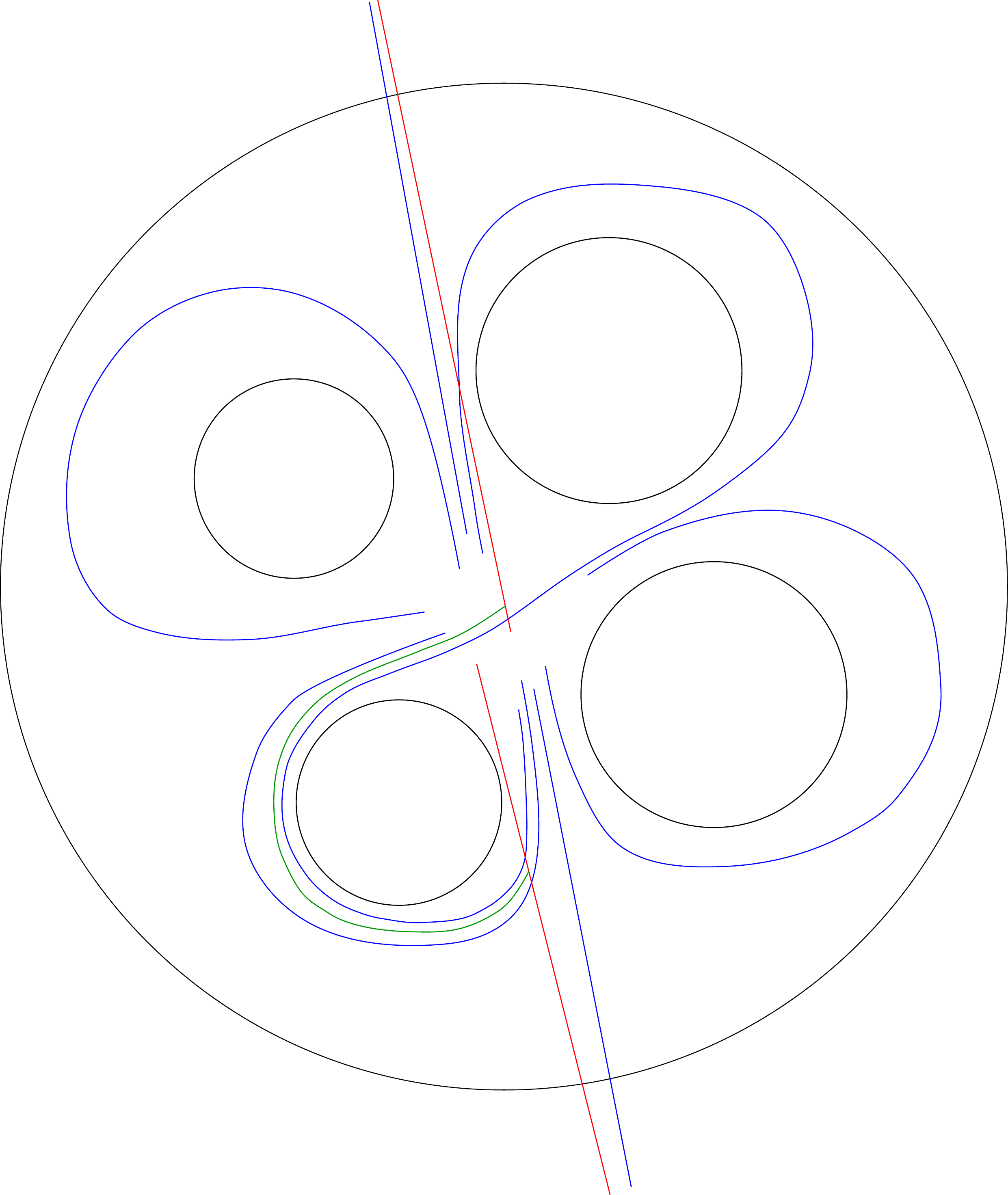}}%
    \put(0.06228529,0.39676935){\color[rgb]{0,0,0}\makebox(0,0)[lt]{\lineheight{1.25}\smash{\begin{tabular}[t]{l}$D_i'$\end{tabular}}}}%
    \put(0.80310245,0.7199587){\color[rgb]{0,0,1}\makebox(0,0)[lt]{\lineheight{1.25}\smash{\begin{tabular}[t]{l}$M'_i$\end{tabular}}}}%
    \put(0.33930478,1.01376723){\color[rgb]{0,0,1}\makebox(0,0)[lt]{\lineheight{1.25}\smash{\begin{tabular}[t]{l}$l_i$\end{tabular}}}}%
    \put(0.43164457,1.01428984){\color[rgb]{1,0,0}\makebox(0,0)[lt]{\lineheight{1.25}\smash{\begin{tabular}[t]{l}$\alpha'^i_n$\end{tabular}}}}%
    \put(0.62786662,0.17378963){\color[rgb]{0,0,1}\makebox(0,0)[lt]{\lineheight{1.25}\smash{\begin{tabular}[t]{l}$l_{i-1}$\end{tabular}}}}%
    \put(0.44528565,0.17378963){\color[rgb]{1,0,0}\makebox(0,0)[lt]{\lineheight{1.25}\smash{\begin{tabular}[t]{l}$\alpha'^{i-1}_n$\end{tabular}}}}%
    \put(0.43178121,0.58232262){\color[rgb]{0,0.58823529,0}\makebox(0,0)[lt]{\lineheight{1.25}\smash{\begin{tabular}[t]{l}$\beta_m^i$\end{tabular}}}}%
  \end{picture}%
\endgroup%

\caption{The construction of the arcs $\alpha'^i_n$ of $a_n$ and $\beta^i_m$ of $a_m$, for $m\gg n$. Such arcs are concatenated to form a nonseparating $2k$-corn $c_{n,m}$.}
\label{constructing2kcorn}
\end{figure}

The union \[c_{n,m}=\beta^1_m \cup \alpha''^1_{n,m} \cup \beta^2_m \cup \alpha''^2_{n,m}\cup \ldots \cup \beta^k_m \cup \alpha''^k_{n,m}\] is a $2k$-corn between $a_n$ and $a_m$. Note that exactly one of the arcs $\lambda_i$ intersects the curve $e$ exactly once. Hence we have $i(c_{n,m},e)=1$. This proves both that $c_{n,m}\in \B_k(a_n,a_m)$ and that $d(c_{n,m},e)\leq 3$. Finally, note that $k$ is bounded by the number of punctures of $S$. Hence by Corollary \ref{2kcornshausclose}, $c_{n,m}$ is $E(P)$-close to $[a_n,a_m]$ where $P$ is the number of punctures of $S$.

Thus for any $n$ sufficiently large and any $m$ sufficiently large compared to $n$, we have \[d(a,[a_n,a_m])\leq d(a,\B_P(a_n,a_m))+E(P)\leq d(a,c_{n,m})+E(P)\leq d(a,e)+d(e,c_{n,m})+E(P)\leq d(a,e)+3+E(P).\] Once again this contradicts that $(a_n\cdot a_m)_a\to \infty$ as $n,m\to \infty$.


\subsection{Finishing the proof}
\label{finishsec}

In this section we finish the proof of Theorem \ref{boundarythm} by showing that the map $F$ is a homeomorphism.

Recall that we started off with $[\{a_n\}]\in \partial \NS(S)$ and passed to a subsequence to assume that $a_n\toH M$ where $M \in \GL(S)$. By the work in Sections \ref{casei}-\ref{caseiii}, we find $L\subset M$ such that $L\in \ELW(S)$.

The proof of the following lemma is essentially identical to the last two paragraphs of the proof of Lemma 5.2.9 in \cite{phothesis}.

\begin{lem}
\label{surjectivity}
We have $F(L)=[\{a_n\}]$.
\end{lem}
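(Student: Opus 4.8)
The goal is to show that the given point $[\{a_n\}]$ of the boundary coincides with $F(L)=[\B(a,\vec{l},p)]$, where $l$ is a leaf of the ending lamination $L\subset M$ that we extracted, $\vec{l}$ an orientation, and $p\in a\cap l$ (such a point exists since $L$ fills a witness). Write $\B(a,\vec{l},p)=\{b_j\}_{j=0}^\infty$. By definition of convergence at infinity via Gromov products, it suffices to show that $\liminf_{j,n\to\infty}(b_j\cdot a_n)_a=\infty$, i.e.\ that the two coarse paths fellow-travel for longer and longer initial segments. The natural mechanism is the bicorn-to-geodesic closeness already established: $\B(c,c')$ is $E(1)$-Hausdorff close to any geodesic $[c,c']$ (Corollary~\ref{2kcornshausclose}), and bicorns between $a$ and leaves of $L$ will be forced to lie on geodesics from $a$ to $a_n$.

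First I would record the key consequence of the Case~(i)--(iii) analysis and the construction: since $a_n\toH M$ and $L\subset M$, for any fixed length $T$ and any $\epsilon>0$, once $n$ is large enough $a_n$ contains a subsegment which is $\epsilon$-Hausdorff close to the subsegment of $l$ of length $T$ centred at $p$. This is exactly the hypothesis that makes $b_j\in\B(a,a_n)$ for all $n$ sufficiently large compared to $j$: indeed each $b_j$ is a bicorn between $a$ and $l$, with its $l$-side a fixed compact arc of $l$, so for $n$ large the $l$-side of $b_j$ is (isotopic into) a common arc of $a_n$, making $b_j$ a bicorn between $a$ and $a_n$ in the literal sense of Definition~\ref{bicorndefn}. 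Thus, for each $j$ there is $N(j)$ with $b_j\in\B(a,a_n)$ whenever $n\geq N(j)$, and we may take $N$ increasing.

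Next, fix $j$ and take $n,n'\geq N(j)$. Then $b_0=a,b_1,\dots,b_j$ all lie in $\B(a,a_n)\cap\B(a,a_{n'})$. By Corollary~\ref{2kcornshausclose}, $\B(a,a_n)$ is $E(1)$-Hausdorff close to $[a,a_n]$ and likewise for $n'$; hence $[a,a_n]$ and $[a,a_{n'}]$ each contain a point within $E(1)$ of $b_j$, so they have initial segments of length at least $d(a,b_j)-E(1)$ that are $2E(1)$-Hausdorff close. A standard hyperbolicity estimate then gives a lower bound for $(a_n\cdot a_{n'})_a$ of the form $d(a,b_j)-E(1)-c\delta$ for a universal constant $c$. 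Since $d(a,b_j)\to\infty$ as $j\to\infty$ — which follows because $b_j\toCH L$ (Corollary~\ref{bicornchconv}) and a fixed curve cannot be boundedly close to curves converging to an ending lamination, exactly as in the proof of Lemma~\ref{intnumconvto0} — we conclude $\liminf_{n,n'\to\infty}(a_n\cdot a_{n'})_a=\infty$, and the same argument with one index ranging over the $b_j$ gives $\liminf_{j,n\to\infty}(b_j\cdot a_n)_a=\infty$. Therefore $\{a_n\}$ is equivalent to $\{b_j\}$ at infinity, i.e.\ $[\{a_n\}]=F(L)$.

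**Main obstacle.**
The routine parts are the hyperbolicity estimates; the delicate point is justifying cleanly that $b_j\in\B(a,a_n)$ for $n\gg j$. One must argue that Hausdorff convergence $a_n\toH M$ with $L\subset M$ genuinely produces, for large $n$, an honest subarc of the \emph{geodesic} $a_n$ that is a subarc of the leaf $l$ up to the isotopy implicit in the bicorn definition — not merely a nearby arc. This is the same convergence-plus-minimal-position bookkeeping used repeatedly (e.g.\ in Lemma~\ref{bicornrestrict} and Case~(i)), and the cited Proposition 5.2.9 of \cite{phothesis} handles the analogous step, so I would invoke that; but I would want to state explicitly the Hausdorff-closeness-implies-bicorn lemma once and reuse it, since everything else is then formal.
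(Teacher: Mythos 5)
Your proposal is correct and takes essentially the same approach as the paper: both proofs hinge on showing that $b_j\in\B(a,a_n)$ for $n\gg j$, so that by Corollary~\ref{2kcornshausclose} $b_j$ is $E(1)$-close to $[a,a_n]$, and then using $d(a,b_j)\to\infty$ to force $(b_j\cdot a_n)_a$ to grow. The paper phrases this as a contradiction via an explicit thin-triangle estimate while you argue the Gromov-product limit directly (implicitly using the standard fact that a divergent diagonal subsequence of Gromov products suffices when both sequences already converge at infinity), but the underlying mechanism is identical.
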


\begin{proof}
Let $\vec{l}$ be a leaf of $L$ endowed with an orientation, $a\in \NS(S)^0$, $p\in a\cap \vec{l}$ and $\B(a,\vec{l},p)=\{b_n\}_{n=0}^\infty$. We wish to show that $[\{a_n\}]=[\{b_n\}]$.

If $[\{a_n\}]\neq [\{b_n\}]$ then there exists $k>0$ such that $(a_n\cdot b_m)_a\leq k$ for all $n,m$. Hence we also have $d(a,[a_n,b_m])\leq k+2\delta$ for all $n,m$.

Given $b_m\in \B(a,\vec{l},p)$, there exists $n_m$ large enough that $a_{n_m}$ is close to $l$ on a long enough segment centered at $p$ that $b_m \in \B(a,a_{n_m})$. There is $z\in [a,a_{n_m}]$ which is $E(1)$-close to $b_m$ by Corollary \ref{2kcornshausclose}.

\begin{figure}[h]
\centering

\def\svgwidth{0.7\textwidth}
\begingroup%
  \makeatletter%
  \providecommand\color[2][]{%
    \errmessage{(Inkscape) Color is used for the text in Inkscape, but the package 'color.sty' is not loaded}%
    \renewcommand\color[2][]{}%
  }%
  \providecommand\transparent[1]{%
    \errmessage{(Inkscape) Transparency is used (non-zero) for the text in Inkscape, but the package 'transparent.sty' is not loaded}%
    \renewcommand\transparent[1]{}%
  }%
  \providecommand\rotatebox[2]{#2}%
  \newcommand*\fsize{\dimexpr\f@size pt\relax}%
  \newcommand*\lineheight[1]{\fontsize{\fsize}{#1\fsize}\selectfont}%
  \ifx\svgwidth\undefined%
    \setlength{\unitlength}{1211.22450941bp}%
    \ifx\svgscale\undefined%
      \relax%
    \else%
      \setlength{\unitlength}{\unitlength * \real{\svgscale}}%
    \fi%
  \else%
    \setlength{\unitlength}{\svgwidth}%
  \fi%
  \global\let\svgwidth\undefined%
  \global\let\svgscale\undefined%
  \makeatother%
  \begin{picture}(1,0.22576902)%
    \lineheight{1}%
    \setlength\tabcolsep{0pt}%
    \put(0,0){\includegraphics[width=\unitlength,page=1]{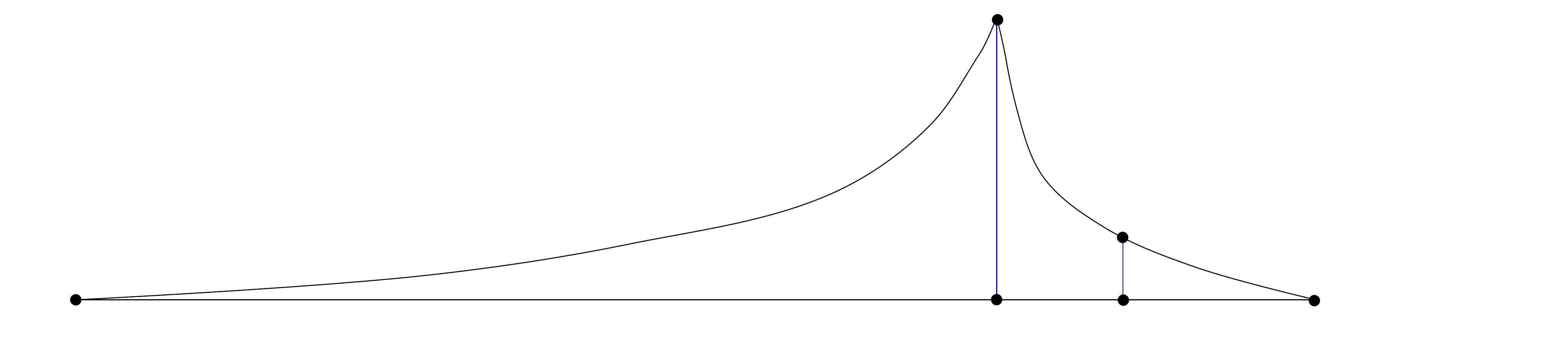}}%
    \put(0.64801436,0.20670902){\color[rgb]{0,0,0}\makebox(0,0)[lt]{\lineheight{1.25}\smash{\begin{tabular}[t]{l}$b_m$\end{tabular}}}}%
    \put(0.8525511,0.02612857){\color[rgb]{0,0,0}\makebox(0,0)[lt]{\lineheight{1.25}\smash{\begin{tabular}[t]{l}$a_{n_m}$\end{tabular}}}}%
    \put(-0.00187456,0.02594113){\color[rgb]{0,0,0}\makebox(0,0)[lt]{\lineheight{1.25}\smash{\begin{tabular}[t]{l}$a$\end{tabular}}}}%
    \put(0.62924952,0.00467429){\color[rgb]{0,0,0}\makebox(0,0)[lt]{\lineheight{1.25}\smash{\begin{tabular}[t]{l}$z$\end{tabular}}}}%
    \put(0.7249502,0.08536308){\color[rgb]{0,0,0}\makebox(0,0)[lt]{\lineheight{1.25}\smash{\begin{tabular}[t]{l}$x$\end{tabular}}}}%
    \put(0.73558361,0.00498269){\color[rgb]{0,0,0}\makebox(0,0)[lt]{\lineheight{1.25}\smash{\begin{tabular}[t]{l}$y$\end{tabular}}}}%
  \end{picture}%
\endgroup%

\caption{The thin triangle showing that $(b_m\cdot a_{n_m})_a$ is large.}
\label{thintriangle}
\end{figure}

Let $x\in [b_m,a_{n_m}]$. We wish to show that $x$ is uniformly far from $a$, so that $(b_m\cdot a_{n_m})_a$ is large. By $\delta$-thinness of the triangle $[b_m,a_{n_m}]\cup [a_{n_m},z]\cup [z,b_m]$, there exists $y\in [z,a_{n_m}]$ with $d(x,y)\leq E(1)+\delta$. Hence, \[d(x,a)\geq d(y,a)-E(1)-\delta\geq d(z,a)-E(1)-\delta\geq d(b_m,a)-2E(1)-\delta.\] See Figure \ref{thintriangle}. However, $d(b_m,a)\to \infty$ as $m\to \infty$. In particular, choosing $m$ large enough, this contradicts that $(a_{n_m} \cdot b_m)_a\leq k$.
\end{proof}

This proves that $F$ is surjective.

We also have the following corollary of our methods:

\begin{cor}
\label{boundarychconv}
Suppose that $\{a_n\}_{n=1}^\infty \subset \NS(S)^0$. Then $a_n$ converges to a point $F(L)$ in $\partial \NS(S)$ if and only if $a_n\toCH L$.
\end{cor}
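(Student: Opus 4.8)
The plan is to combine the work already done in the proof of Theorem \ref{boundarythm} with the continuity and injectivity of $F$. One direction is essentially the definition of $F$ together with Corollary \ref{bicornchconv}: if $\{a_n\}$ converges in $\partial\NS(S)$, then by the surjectivity argument (Sections \ref{casei}--\ref{caseiii} and Lemma \ref{surjectivity}) we may pass to a subsequence $a_{n_i}\toH M$, extract $L\subset M$ with $L\in\ELW(S)$, and conclude $[\{a_n\}]=F(L)$; moreover the Case (i)--(iii) analysis shows that \emph{every} Hausdorff-convergent subsequential limit $M$ of the full sequence must contain a lamination in $\ELW(S)$, and a further argument shows this lamination is forced to be $L$ itself (if two distinct $L,L'\in\ELW(S)$ arose as subsequential coarse-Hausdorff limits, then running Lemma \ref{surjectivity} on each would give $F(L)=[\{a_n\}]=F(L')$, contradicting injectivity of $F$, Lemma \ref{injectivity}). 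Hence $a_n\toCH L$.

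For the converse, suppose $a_n\toCH L$ with $L\in\ELW(S)$. I would first equip $L$ with a transverse measure $\mu\in\ML(S)$. The key step is to show $i(a_n,\mu)\to 0$: if not, there is $\epsilon>0$ and a subsequence with $i(a_{n_i},\mu)\geq\epsilon$; passing to a further Hausdorff-convergent subsequence $a_{n_i}\toH N$, coarse-Hausdorff convergence forces $L\subset N$, and one argues (exactly as in Lemma \ref{intchconv}, run in reverse) that the persistence of definite $\mu$-mass means $N$ must contain a leaf crossing the complementary regions of $L$ transversely --- but since $L$ fills its witness $V$ and $N\supset L$ with $N$ disjoint from nothing forced to be in $V$, one derives that $a_{n_i}$ eventually carries a transversal of bounded geometry, which is compatible; the cleanest route is instead: it suffices to note that $a_n\toCH L$ together with $L$ filling $V$ implies the transversals of $a_n$ to $\mu$ shrink, which is the content one extracts from the structure of Hausdorff limits. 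Granting $i(a_n,\mu)\to 0$, Lemma \ref{intnumconvto0} gives $[\{a_n\}]\in\partial\NS(S)$, and then the ``only if'' direction just proved (applied to $\{a_n\}$, which now converges) identifies the limit point as $F(L')$ where $a_n\toCH L'$; since $a_n\toCH L$ and coarse-Hausdorff limits in $\ELW(S)$ are unique (two laminations in $\ELW(S)$ each contained in every Hausdorff subsequential limit must coincide, as neither can properly contain the other while both fill witnesses), we get $L'=L$, so $a_n\to F(L)$.

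The main obstacle I expect is the implication $a_n\toCH L\implies i(a_n,\mu)\to 0$: coarse-Hausdorff convergence only controls the limit \emph{from below} ($L$ sits inside every Hausdorff limit), so a priori $a_n$ could Hausdorff-accumulate on a strictly larger lamination $N\supsetneq L$ carrying extra measure or extra transversals, and one must rule out that the $a_n$ retain definite $\mu$-mass. The right tool is the argument pattern of Lemma \ref{intchconv}: any Hausdorff subsequential limit $N$ satisfies $L\subset N$, and the extra leaves of $N$ either lie in $V$ (impossible beyond finitely much mass without forcing transverse self-crossings, since $L$ fills $V$) or lie outside $V$ (contributing no $\mu$-mass since $\mu$ is supported on $L\subset V$); combined with the fact that $a_n$ is a simple closed curve whose $\mu$-mass is its total number of $\mu$-weighted intersections with $L$, this forces $i(a_n,\mu)\to 0$. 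I would write this out carefully as the one genuinely new lemma; everything else is assembled from Lemmas \ref{intchconv}, \ref{intnumconvto0}, \ref{injectivity}, Corollaries \ref{bicornchconv}, \ref{boundarypoint}, and Lemma \ref{surjectivity} already in hand.
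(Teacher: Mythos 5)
Your forward direction is correct and is exactly the paper's argument: pass to an arbitrary Hausdorff-convergent subsequence $a_{n_i}\toH M$, invoke the Case (i)--(iii) analysis and Lemma \ref{surjectivity} to find $L_0\subset M$ with $L_0\in\ELW(S)$ and $F(L_0)=[\{a_n\}]=F(L)$, then conclude $L_0=L$ by injectivity (Lemma \ref{injectivity}); since the subsequence was arbitrary, $a_n\toCH L$.

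Your backward direction, however, rests on a false lemma. The claim that $a_n\toCH L$ (with $L\in\ELW(S)$ and $\mu$ a transverse measure on $L$) forces $i(a_n,\mu)\to 0$ is not true: coarse Hausdorff convergence controls only the shape of subsequential Hausdorff limits, not intersection numbers with $\mu$. Concretely, on the once-punctured torus let $L$ be the minimal lamination of irrational slope $\theta$ with transverse measure $\mu$, and let $a_n$ be the simple closed geodesic of slope $p_n/q_n$ where $q_n$ runs through those integers for which $q_n\theta-\lfloor q_n\theta\rfloor\in[1/4,3/4]$ (infinitely many by equidistribution) and $p_n=\lfloor q_n\theta\rfloor$. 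Then $p_n/q_n\to\theta$, so $a_n\toCH L$, yet $i(a_n,\mu)$ is proportional to $q_n\theta-p_n\in[1/4,3/4]$, bounded away from $0$. Your heuristic that ``persistence of definite $\mu$-mass'' should force a Hausdorff limit $N\supset L$ to have a leaf crossing $L$ transversely fails: the segments of $a_n$ carrying the $\mu$-mass can cross $L$ at smaller and smaller angles and at wandering positions, so that they accumulate Hausdorff onto leaves running \emph{parallel} to $L$, and $N$ remains an honest lamination containing $L$.

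The paper proves the backward direction by an argument that never needs $i(a_n,\mu)\to 0$ for the given sequence $\{a_n\}$. It constructs the infinite bicorn path $\{b_m\}=\B(a,\vec{l},p)$ associated to $L$ (whose intersection numbers with $\mu$ do tend to $0$ \emph{by construction}, so Lemma \ref{intnumconvto0} applies to $\{b_m\}$, not to $\{a_n\}$), and then shows that for each fixed $m$, $b_m\in\B(a,a_n)$ for all $n$ sufficiently large. That claim is where $a_n\toCH L$ enters, via a compactness argument: for large $n$, $a_n$ must contain a subsegment $\epsilon$-Hausdorff close to a long subsegment of $l$ centered at $p$, else some Hausdorff subsequential limit of $\{a_n\}$ would miss a neighborhood of $l$ near $p$ and so could not contain $L$. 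A thin-triangle argument as in Lemma \ref{surjectivity} then gives $(a_n\cdot a_{n'})_a\geq d(a,b_m)-2E(1)-8\delta$ for all sufficiently large $n,n'$, whence $\liminf_{n,n'\to\infty}(a_n\cdot a_{n'})_a=\infty$, so $[\{a_n\}]\in\partial\NS(S)$; Lemma \ref{surjectivity} then identifies $[\{a_n\}]=F(L)$. Your backward direction needs to be replaced by an argument of this kind.
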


\begin{proof}
Suppose that $[\{a_n\}]\in \partial \NS(S)$ and $[\{a_n\}]=F(L)$. We wish to show that $a_n\toCH L$. Consider a Hausdorff convergent subsequence $\{a_{n_i}\}$ of $\{a_n\}$ with $a_{n_i}\toH M\in \GL(S)$. By the work in Sections \ref{casei}-\ref{caseiii}, $M$ contains a sublamination $L_0\in \EL(S)$. By Lemma \ref{surjectivity}, we have $F(L_0)=[\{a_{n_i}\}_i]=[\{a_n\}_n]$. Thus $F(L_0)=F(L)$. By Lemma \ref{injectivity}, $L_0=L$. Since the Hausdorff convergent subsequence $\{a_{n_i}\}$ was arbitrary, this proves that $a_n\toCH L$, as claimed.

On the other hand, suppose that $a_n\toCH L$. We wish to show that $[\{a_n\}]=F(L)$. Choose a leaf $\vec{l}$ of $L$ endowed with an orientation, $a\in \NS(S)^0$, and a point $p\in a\cap \vec{l}$. Denote the infinite bicorn path $\B(a,\vec{l},p)$ by $\{b_n\}$.

We claim that for any $m$, $b_m$ is a bicorn between $a_n$ and $a$ for any $n$ sufficiently large. To see this, note that, given $\epsilon>0$, for $n$ sufficiently large $a_n$ contains a subsegment which is $\epsilon$-Hausdorff close to a subsegment of $l$ of radius $1/\epsilon$ centered at $p$. For if this were not the case, we could choose $\epsilon>0$ and a subsequence $\{a_{n_i}\}$ such that $a_{n_i}$ contains no $\epsilon$-Hausdorff close subsegment to that subsegment of $l$ of radius $1/\epsilon$ for any $i$. Choosing a further subsequence to assume that $\{a_{n_i}\}$ Hausdorff converges, it is clear that $a_{n_i}$ cannot Hausdorff converge to any lamination containing $L$. This contradicts that $a_n\toCH L$.

So given $m$, choose $N_m$ large enough that $b_m$ is a bicorn between $a_n$ and $a$ for every $n\geq N_m$. By a thin triangle argument analogous to that of Lemma \ref{surjectivity}, we have that \[(a_n \cdot b_m)_a\geq d(a,b_m)-2E(1)-3\delta\] for all $n\geq N_m$. Consequently if $n,n'\geq N_m$ we have \[(a_n \cdot a_{n'})_a \geq \min\{ (a_n \cdot b_m)_a, (a_{n'}\cdot b_m)_a\}-5\delta \geq d(a,b_m)-2E(1)-8\delta.\] This proves that $\liminf_{n,n'\to \infty} (a_n\cdot a_{n'})_a=\infty$ so that $[\{a_n\}]\in \partial \NS(S)$. Since $a_n\toCH L$, we have by Lemma \ref{surjectivity} that $[\{a_n\}]=F(L)$.
\end{proof}

So far we have proved that $F$ is a continuous bijection. To complete the proof that $F$ is a homeomorphism we must show the following:

\begin{lem}
The map $F:\ELW(S)\to \partial \NS(S)$ is open.
\end{lem}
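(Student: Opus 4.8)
Since we have already shown that $F$ is a continuous bijection, it suffices to prove that $F$ is a closed map (a continuous closed bijection is a homeomorphism, equivalently $F$ is open). Because the topologies on both $\ELW(S)$ and $\partial\NS(S)$ are specified via sequential closure, this reduces to the following statement: given a closed set $A\subset\ELW(S)$ and laminations $L_i\in A$ with $F(L_i)\to\xi$ in $\partial\NS(S)$, and writing $\xi=F(L)$ using surjectivity of $F$, we must show $L\in A$. Since $A$ is sequentially closed it is enough to prove $L_i\toCH L$; and since $\GL(S)$ is compact and coarse Hausdorff convergence is witnessed by Hausdorff-convergent subsequences, it is enough to prove that every Hausdorff limit $R$ of a subsequence $\{L_{i_k}\}$ satisfies $L\subset R$.

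\textbf{The diagonal sequence.} Fix a subsequence with $L_{i_k}\toH R$ and fix $a\in\NS(S)^0$. For each $k$ choose a leaf $\vec{l}_k$ of $L_{i_k}$, a point $p_k\in a\cap l_k$, and form the infinite bicorn path $\B(a,\vec{l}_k,p_k)=\{b^k_n\}_{n=0}^\infty$, which represents $F(L_{i_k})$ by the definition of $F$ and satisfies $b^k_n\toCH L_{i_k}$ as $n\to\infty$ by Corollary \ref{bicornchconv}. The essential feature is that $b^k_n$ is a bicorn between $a$ and the leaf $l_k$, whose $l$-side $\mu^k_n$ is a subarc of $l_k$ and whose union over $n$ contains a half-leaf of $l_k$, hence is $\epsilon$-dense in $L_{i_k}$ for $n$ large by minimality. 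I will choose $n_k\to\infty$ so rapidly that, setting $c_k:=b^k_{n_k}$: (a) $(c_k\cdot F(L_{i_k}))_a\geq k$ (possible since $b^k_n\to F(L_{i_k})$ in $\partial\NS(S)$), and (b) $d_{\operatorname{Haus}}(\mu^k_{n_k},L_{i_k})\leq 1/k$. From (a) together with $F(L_{i_k})\to\xi$ one gets $(c_k\cdot\xi)_a\to\infty$, so $\{c_k\}$ converges at infinity and $[\{c_k\}]=\xi=F(L)$; hence $c_k\toCH L$ by Corollary \ref{boundarychconv}.

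\textbf{Identifying the Hausdorff limit.} Pass to a further subsequence so that $c_k\toH P$ in $\GL(S)$; note $\mu^k_{n_k}\toH R$ by (b) and $L_{i_k}\toH R$, and the bounded cap arcs may be assumed to converge as well. On one hand, $c_k\toCH L$ forces $L\subset P$. On the other hand, I claim $P\subset R$, which combined with the previous inclusion yields $L\subset R$ and completes the proof. Indeed, each $c_k$ is the geodesic representative of the bicorn $\mu^k_{n_k}\cup\alpha^k_{n_k}$, where $\mu^k_{n_k}$ is a long geodesic subarc of a leaf of $L_{i_k}$ and $\alpha^k_{n_k}\subset a$ is a bounded arc; the part of $P$ coming from the $\mu$-arcs lies in $R$ because $\mu^k_{n_k}\subset L_{i_k}\toH R$, and if the bounded cap arcs contributed a nondegenerate geodesic subarc $J^*$ to $P$ (necessarily homotopic rel endpoints into $a$, hence a subarc of $a$), then the leaf of $P$ through an interior point of $J^*$ would coincide with the closed geodesic $a$, so $a\subset P$; but then $a$ would be disjoint from the leaves of $L\subset P$, contradicting that the nonseparating curve $a$ meets the full-genus subsurface filled by $L$. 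Thus the caps contribute nothing and $P\subset R$.

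\textbf{Main obstacle.} The only nonroutine step is this last one, namely controlling the Hausdorff limit of the geodesic representatives of the degenerating bicorns $c_k$ — showing precisely that the short ``cap'' along $a$ dies in the limit while the long leaf-arcs survive and land inside $R$. The remaining ingredients are a mechanical combination of the sequential definitions of the two topologies, Corollaries \ref{bicornchconv} and \ref{boundarychconv}, and a diagonal/Gromov-product estimate; once $P\subset R$ is in hand the lemma follows, and with it the proof that $F$ is a homeomorphism, completing Theorem \ref{boundarythm}.
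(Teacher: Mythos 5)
Your overall strategy (reduce to coarse Hausdorff convergence of the $L_i$ to $L$, consider a Hausdorff-convergent subsequence, build a diagonal sequence of bicorns representing the limit point $F(L)$, and then compare Hausdorff limits) is the same as the paper's. However, you deviate at the crucial step, and the deviation introduces a genuine gap.

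You aim to show $P\subset R$, where $P=\lim_k c_k$ and $R=\lim_k L_{i_k}$, and your justification is the ``cap'' argument: the $\mu$-arcs converge into $R$, and the bounded caps cannot contribute because otherwise $a$ would be a leaf of $P$. This argument is not rigorous. The $c_k$ are the \emph{geodesic representatives} of the piecewise-geodesic bicorns $\mu^k_{n_k}\cup\alpha^k_{n_k}$, not the bicorns themselves, and the Hausdorff limit $P$ is a limit of closed geodesics, not of piecewise geodesics. There is no canonical decomposition of $P$ into a ``part coming from the $\mu$-arcs'' and a ``part coming from the caps,'' and your condition (b) controls only the $\mu$-side of the bicorn, not $c_k$ itself. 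The claim that a cap-contributed arc $J^*$ would be ``homotopic rel endpoints into $a$, hence a subarc of $a$'' does not hold for a Hausdorff limit of subarcs of the geodesics $c_k$: these subarcs are not themselves subarcs of $a$, only near arcs of $a$ after straightening, and the straightened geodesic near the corners can cut the corner by a large amount (the fellow-traveling constant degrades as the angle at the corner shrinks). Worse, even modulo these difficulties, the inclusion $P\subset R$ is likely false in general: the geodesic representatives can contribute extra leaves spiraling onto $R$ that are not in $R$.

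The paper sidesteps all of this. Its diagonal curve $b_i$ is chosen so that $d_{\operatorname{Haus}}(b_i,L^0_i)<1/i$ where $L^0_i$ is the Hausdorff limit of the \emph{geodesic representatives} $a^i_n$ as $n\to\infty$, not merely the $\mu$-sides. Corollary \ref{bicornchconv} gives $L_{k_i}\subset L^0_i$, so passing to the limit yields $M\subset L^0$ where $L^0=\lim_i L^0_i$ and $b_i\toH L^0$. Corollary \ref{boundarychconv} gives $L\subset L^0$. Thus the paper proves the \emph{opposite} inclusion from yours: $R\;(=M)$ and $L$ are both sublaminations of the auxiliary lamination $L^0$, rather than trying to squeeze $L^0$ inside $R$. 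It then concludes $L\pitchfork M=\emptyset$ and finishes with a topological argument: since $L\in\ELW(S)$ fills a full-genus witness $V$ and $M$ cannot be entirely contained in $V^C$ (otherwise $L_{k_i}\subset V^C$ for large $i$, contradicting $L_{k_i}\in\ELW(S)$), one must have $L\subset M$. The fix for your write-up would be to abandon the claim $P\subset R$, instead record as your diagonal condition that $c_k$ (the geodesic) is Hausdorff close to the limit lamination of the $k$-th bicorn path (so that Corollary \ref{bicornchconv} yields $L_{i_k}$ inside that limit), and then close with the ELW non-crossing argument rather than the inclusion $L\subset P\subset R$.
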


\begin{proof}
Let $\{L_k\}_{k=1}^\infty\subset \ELW(S)$ and $L\in \ELW(S)$ such that $F(L_k)\to F(L)$. We want to show that $L_k\toCH L$. Consider a Hausdorff convergent subsequence $L_{k_i}\toH M$. We want to show that $L\subset M$.

Write $F(L_{k_i})=[\B(a,\vec{l_i},p_i)]=[\{a_n^i\}_{n=0}^\infty]$ and $F(L)=[\B(a,\vec{l},p)]=[\{a_n\}_{n=0}^\infty]$. For each $i$, by passing to a subsequence of $\{a_n^i\}$, we may suppose that $a_n^i$ Hausdorff converges to a lamination $L^0_i$. By Corollary \ref{bicornchconv} we have that $L_{k_i}\subset L^0_i$.

Pass to a further subsequence of $\{L_{k_i} \}$ to assume without loss of generality that $L^0_i$ Hausdorff converges to a lamination $L^0$. Since $L_{k_i}\subset L^0_i$ for all $i$, we have $M\subset L^0$.

Since $[\{a_n^i\}]\to [\{a_n\}]$ and $a_n^i\toH L^0_i$, for each $i$ we may choose $n_i$ such that $d_{\operatorname{Haus}}(a_{n_i}^i,L^0_i)<1/i$ and \begin{equation} \label{gromprod} \lim_{i\to \infty} (a_{n_i} \cdot a_{n_i}^i)_a=\infty.\end{equation} Write $b_i=a^i_{n_i}$. We have $[\{b_i\}_i]=[\{a_n\}_n]=F(L)$ by Equation \ref{gromprod}. We have by Corollary \ref{boundarychconv} that $b_i\toCH L$. Note that \[d_{\operatorname{Haus}}(b_i,L^0)\leq d_{\operatorname{Haus}}(b_i,L^0_i)+d_{\operatorname{Haus}}(L^0_i,L^0)\] and both quantities on the right hand side converge to 0 as $i\to \infty$ -- the left term by definition of $b_i$ and the right term by the fact that $L_i^0$ Hausdorff converges to $L^0$. Therefore $b_i\toH L^0$ and since $b_i\toCH L$, we have $L\subset L^0$. But we also have $M\subset L^0$ and therefore $M\pitchfork L=\emptyset$. Since $L\in \ELW(S)$, $M$ either contains $L$ or $M$ is contained in $V^C$ where $V$ is the witness filled by $L$. The latter possibility does not occur since if $M\subset V^C$ then $L_{k_i}\subset V^C$ for all large enough $i$, which contradicts that $L_{k_i}\in \ELW(S)$. Therefore we have $L\subset M$, as desired.
\end{proof}

Thus $F$ is a homeomorphism, as claimed. This completes the proof of Theorem \ref{boundarythm}.

\bibliographystyle{plain}
\bibliography{boundary}

\noindent \textbf{Alexander J. Rasmussen } Department of Mathematics, Yale University, New Haven, CT 06520. \\
E-mail: \emph{alexander.rasmussen@yale.edu}

\end{document}